\newcommand{\N}{\mathbb{N}}
\newcommand{\Z}{\mathbb{Z}}
\newcommand{\R}{\mathbb{R}}
\newcommand{\C}{\mathbb{C}}
\newcommand{\T}{\mathbb{T}}
\newtheorem{defn}{Definition}[section]
\newtheorem{thm}[defn]{Theorem}
\newtheorem{ex}[defn]{Example}
\newtheorem{prop}[defn]{Proposition}
\newtheorem{cor}[defn]{Corollary}
\newtheorem{lemma}[defn]{Lemma}
\newtheorem{ques}[defn]{Question}
\newtheorem{rem}[defn]{Remark}
\title{Binary factors of shifts of finite type}
\author{
 Ian F. Putnam\thanks{Supported in part by a
Discovery Grant from NSERC, Canada},\\
Department of Mathematics and Statistics,\\
University of Victoria,\\
Victoria, B.C., Canada V8W 3R4}
\begin{document}

\maketitle

\begin{abstract}
We construct two new classes of topological 
dynamical systems; one is a factor
of a one-sided shift of finite type while the second is 
a factor of the two-sided shift. The data is a finite graph 
which presents the shift of finite type, a second finite directed
graph and a pair of embeddings of it into the first, 
satisfying certain conditions. The factor is then obtained
from a simple idea based on binary expansion of real numbers.
In both cases, we construct natural metrics on the factors 
and, in the second case, this makes the system a Smale space, 
in the sense of Ruelle. We compute various algebraic invariants
for these systems, including the homology for Smale space
 developed by the author and the K-theory of various $C^{*}$-algebras
 associated to them, 
in terms of the pair of original graphs.
\end{abstract}

\section{Introduction}
\label{intro}
In the subject of topological dynamical systems, a crucial part has been 
played by systems which display some type of hyperbolicity; that is, systems 
which have some type of local expanding or contracting/expanding behaviour.
This began in the smooth category with the study of Anosov
diffeomorphisms \cite{KH:book}. 
Smale's seminal paper \cite{Sm:Diff} showed that,
even for smooth systems, the hyperbolic behaviour may be limited  
to the non-wandering set which may be far from being a submanifold.
Motivated by this, David Ruelle gave a purely topological definition
of hyperbolicity which he called a Smale space \cite{Ru:ThFor}. This broader
framework includes shifts of finite type, which are highly 
combinatorial in nature (see section \ref{prelim}
for the definition). Restricted to irreducible systems, 
these are precisely the 
 Smale 
spaces whose underlying space is totally disconnected.

One of the fundamental 
 ideas for  these systems is that they may be
coded by a Markov partition \cite{KH:book}. Ignoring some minor 
technical issues, the existence of a Markov 
 is equivalent to the condition that there is an almost-one-to-one 
  factor
  map from a shift of finite type onto the given system.
  As noted by Adler in \cite{Ad:MP},
  the simplest example of this is binary 
   expansion of real numbers: the space of one-sided $0,1$-sequences
   maps onto the unit circle via the familiar formula
  sending  a sequence $x_{n}, n \geq 1,$ to 
  $\exp ( 2 \pi i \sum_{n} x_{n}2^{-n} )$. Moreover, this map 
  intertwines the dynamics of the left shift map on the 
  sequences with the squaring map on the circle. ('Binary' may be 
  changed to
  'decimal' by replacing certain $2$'s with $10$'s.)
  One tends to regard decimal expansion as a bijection, but 
  the two spaces here are very different topologically.
  
  Returning to the general situation, we recall the following 
  seminal result. In the form stated, it is due to Rufus Bowen \cite{Bo:MP}, 
  but it builds on the work of many others, including Sinai \cite{Si:MP},
   Adler and Weiss \cite{AW:MP} and others. Let $(X, d)$ be a compact metric 
   space and $\varphi$ be a homeomorphism of $X$ such that 
   $(X, \varphi, d)$ is a irreducible 
   Smale space (see section \ref{smale}
   for the definition). Alternately, let $(X, \varphi)$ be 
   the restriction of an Axiom A system to a basic set.
   Then there is a irreducible shift of finite type
   $(\Sigma, \sigma)$ and a continuous surjection
    $\pi: \Sigma \rightarrow X$ such that 
      $\pi \circ \varphi = \sigma \circ \pi$.
      
   There are several interesting invariants of  a 
  Smale space. First, one can construct a number of different
  $C^{*}$-algebras from a single Smale space. 
  For any Smale space, $(X, d, \varphi)$ and choice of a finite 
 $\varphi$-invariant set $P \subseteq X$, there are $C^{*}$-algebras
 $S(X, \varphi, P)$ and $U(X, \varphi, P)$ based on stable and unstable 
 equivalence, respectively. Each has a canonical isomorphism induced 
 by $\varphi$ and we let
  $R^{s}(X, \varphi, P ) = S(X, \varphi, P) \rtimes_{\varphi} \Z$
 and $R^{u}(X, \varphi, P)  = U(X, \varphi, P) \rtimes_{\varphi} \Z$
 be the associated crossed product $C^{*}$-algebras (see section 
 \ref{c*}).
 
 This was 
  done initially for shifts of finite type by Cuntz and Krieger
  \cite{CK:alg} and later by Ruelle \cite{Ru:AlgHypDiff} for general Smale spaces.
  The K-theory groups of these $C^{*}$-algebras have been 
  investigated quite thoroughly and provide 
  interesting data. For shifts of finite type, these include
  Krieger's dimension group invariant as well as 
  the Bowen-Franks groups \cite{BF:hom}. There are, in fact, two dimension groups
  which are associated with right and left tail-equivalence, respectively.
  These are described  in section 
  \ref{hom} but we note that for a shift of finite type
  associated with a finite directed graph $G$, these
   have simple combinatorial
  descriptions in terms of $G$ and we denote  them by $D^{s}(G)$ and $D^{u}(G)$.
  We let $A_{G}$ denote the adjacency matrix associated with $G$. The matrices
  $A_{G}^{T}$ and $A_{G}$ induce automorphisms of 
  $D^{s}(G)$ and $D^{u}(G)$, respectively.

  In another direction, the author  constructed a homology theory
  for Smale spaces \cite{Pu:HSm} which generalizes Krieger's dimension group
  for shifts of finite type. Indeed, the dimension group is 
  a key part of the construction. The existence of this theory, 
  which provides a Lefschetz formula, was conjectured by Bowen \cite{Bo:CBMS}.
  For any Smale space $(X, d, \varphi)$, 
   there are groups $H^{s}_{N}(X, \varphi)$, 
   $ H^{u}_{N}(X, \varphi),
    N \in \Z$, 
   and each has a canonical automorphism induced by $\varphi$.
  
  It is worth noting that recent work of 
  Proietti and Yamashita \cite{PY:hom1,PY:hom2} shows that there 
  are very close relations between the K-theory of
   the $C^{*}$-algebras and the Smale space homology, as well the 
   cohomology of certain groupoids.

  The main goal of this paper is a kind of 
  reverse-engineering of Bowen's result, where the 
  Smale space is the end product of the construction, rather
  than the initial object of interest. We begin with a 
some combinatorial data: a pair of  
  shifts of finite
 type associated with finite directed graphs $G$ and $H$ with a pair of 
 embeddings of the latter into the former
  $\xi^{0}, \xi^{1}: H \rightarrow G$.  We let $A_{G}$ and $A_{H}$ 
  denote the adjacency matrices for the two graphs. 
  We defer the details of the construction as well 
  as certain hypotheses on our data  to section \ref{constr} and
  focus on the properties of the resulting systems. In 
  particular, the 
  \emph{standing hypotheses} are described in Definition 
  \ref{constr:10}.
   
  As described in detail in section \ref{prelim}, we 
  let $(X^{+}_{G}, \sigma)$ and $(X_{G}, \sigma)$ denote 
  the one and two-sided shifts, respectively, 
   associated with the graph $G$.
 Adding a basic idea modeled on
  binary expansion, we construct topological dynamical systems
  $(X_{\xi}^{+}, \sigma_{\xi})$ and 
  $(X_{\xi}, \sigma_{\xi})$ along with factor maps
  $\pi_{\xi}: (X^{+}_{G}, \sigma) \rightarrow (X_{\xi}^{+}, \sigma_{\xi})$
  and 
   $\pi_{\xi}: (X_{G}, \sigma) \rightarrow (X^{+}, \sigma_{\xi})$.
   Both systems are continuous and surjective  and the second 
   is actually a homeomorphism.
   We also construct specific metrics $d_{\xi}$ on our two spaces
   having nice properties. Indeed, most of the effort lies in 
   producing the metrics.
   
 Let us summarize the main results on these systems.
 The first is a local expansiveness property for the first system. 
  
  \newtheorem*{thm10}{Theorem~\ref{constr:190}}
  \begin{thm10}
  If $x,y$ in $X^{+}_{\xi}$ satisfy $d_{\xi}(x,y) \leq 2^{-2}$, then 
  \[
2 d_{\xi}(x,y) \leq   d_{\xi}(\sigma_{\xi}(x), \sigma_{\xi}(y))
 \leq  8 d_{\xi}(x,y).
  \]
  \end{thm10}

  In fact, the map is actually a local homeomorphism.

   \newtheorem*{thm20}{Theorem~\ref{constr:210}}
   \begin{thm20}
  If $x, y$ are in $X_{\xi}^{+}$ with $d_{\xi}(x, \sigma_{\xi}(y)) \leq 2^{-1}$, 
  then there is $z$ in $X_{\xi}^{+}$ with 
  $d_{\xi}(z,y) \leq 2^{-1} d_{\xi}(x, \sigma_{\xi}(y))$ and $\sigma_{\xi}(z) = x$.
  In particular,
  we
  have 
  \[
  X_{\xi}^{+}(\sigma_{\xi}(y), \epsilon) \subseteq 
  \sigma_{\xi}(X_{\xi}^{+}(y, 2^{-1} \epsilon)),
  \]
  for any $\epsilon < 2^{-1}$
 and the map $\sigma_{\xi}$ is open.
  \end{thm20}
  
  The actual geometric structure of $X_{\xi}^{+}$ is rather curious. 
  We offer an intriguing picture of a single example in section 
  \ref{real},but we also note the following.
  
   \newtheorem*{cor30}{Corollary~\ref{real:80}}
  \begin{cor30}
 The connected subsets of $X_{\xi}^{+}$ are 
either points or circles and both occur.
 \end{cor30}

  The  property analogous to locally expanding 
  for the second system is that it
  possess local coordinates of contracting and expanding directions.
  In short, it is a Smale space. (We review the definition in 
  section \ref{smale}.) 
  
  \newtheorem*{thm40}{Theorem~\ref{smale:30}}   
\begin{thm40}
 $(X_{\xi}, d_{\xi}, \sigma_{\xi})$ is a Smale space.
\end{thm40}

We also note the following (abridged version). Here, $X^{s}(x)$ denotes 
the global stable set of the point $x$ in a Smale space $X$.

 \newtheorem*{thm50}{Theorem~\ref{smale:50}}
\begin{thm50}  
 The map $\pi_{\xi}: (X_{G}, \sigma) \rightarrow (X_{\xi}, \sigma_{\xi})$
  is $s$-bijective; that is, for every $x$ 
in $X_{G}$, $\pi_{\xi}| X^{s}_{G}(x)$ is a bijection
from  $  X^{s}_{G}(x)$  to  $  X^{s}_{\xi}(\pi_{\xi}(x))$.
\end{thm50}

This means, in particular, that the local stable sets of $X_{\xi}$ are Cantor sets.

 The following summarizes our computation of the homology theory.  
   
   \newtheorem*{thm60}{Theorem~\ref{hom:20}} 
 \begin{thm60}
 Under the standing hypotheses, we have 
 \[
 \begin{array}{cccccc}
 H^{s}_{0}(X_{\xi}, \sigma_{\xi}) & \cong & D^{s}(G), &
 (\sigma_{\xi})_{*}^{-1} & \cong & A_{G}, \\
  H^{s}_{1}(X_{\xi}, \sigma_{\xi}) & \cong & D^{s}(H), &
 (\sigma_{\xi})_{*}^{-1} & \cong & A_{H},\\
   H^{u}_{0}(X_{\xi}, \sigma_{\xi}) & \cong & D^{u}(G),&
 (\sigma_{\xi})_{*} & \cong & A_{G}^{T}, \\
  H^{u}_{1}(X_{\xi}, \sigma_{\xi}) & \cong & D^{u}(H), &
 (\sigma_{\xi})_{*} & \cong & A_{H}^{T},\\
   H^{s}_{k}(X_{\xi}, \sigma_{\xi}) & \cong & 0, & k & \neq &  0,1,   \\
  H^{u}_{k}(X_{\xi}, \sigma_{\xi}) & \cong & 0,  & k &  \neq & 0,1.  
  \end{array}
  \]
  In  the first two lines, we regard $\sigma_{\xi}$ as an $s$-bijective
  factor map from $(X_{\xi}, \sigma_{\xi})$ to itself and 
  our description of the induced map on homology
  is interpreted via the isomorphism which precedes it. In the next two lines, 
  we regard $\sigma_{\xi}$ as a $u$-bijective
  factor map. 
 \end{thm60}

The next two results summarize our computations for the K-theory of
the $C^{*}$-algebras.
 
  \newtheorem*{thm70}{Theorem~\ref{c*:40}}
\begin{thm70}
Under the standing hypotheses, 
 we have
\begin{enumerate}
\item 
$K_{0}(S(X_{\xi}, \sigma_{\xi}, P_{\xi})) \cong D^{u}(G)$
as ordered abelian groups
and, under this isomorphism,  the automorphism induced by $\sigma_{\xi}$
is $A_{G}^{T}$.
\item 
$K_{1}(S(X_{\xi}, \sigma_{\xi}, P_{\xi})) \cong D^{u}(H)$
and, under this isomorphism,  the automorphism induced by $\sigma_{\xi}$
is $A_{H}^{T}$.
\item $K_{0}(R^{s}(X_{\xi}, \sigma_{\xi}, P_{\xi})) \cong \Z^{G^{0}}/ (I-A_{G}^{T})\Z^{G^{0}}
\oplus \ker( I - A_{H}^{T}: \Z^{H^{0}}  \rightarrow  \Z^{H^{0}} ).$
\item $K_{1}(R^{s}(X_{\xi}, \sigma_{\xi}, P_{\xi})) \cong \Z^{H^{0}}/ (I-A_{H}^{T})\Z^{H^{0}}
\oplus \ker( I - A_{G}^{T}: \Z^{G^{0}}  \rightarrow  \Z^{G^{0}} ).$
\end{enumerate}
\end{thm70}

\newtheorem*{thm80}{Theorem~\ref{c*:50}}
\begin{thm80}
Under the standing hypotheses, 
 we have
\begin{enumerate}
\item 
$K_{0}(U(X_{\xi}, \sigma_{\xi}, P_{\xi})) \cong D^{s}(G)$
as ordered abelian groups
and, under this isomorphism,  the automorphism induced by $\sigma_{\xi}$
is $A_{G}^{-1}$.
\item 
$K_{1}(U(X_{\xi}, \sigma_{\xi}, P_{\xi})) \cong D^{s}(H)$
and, under this isomorphism,  the automorphism induced by $\sigma_{\xi}$
is $A_{H}^{-1}$.
\item $K_{0}(R^{u}(X_{\xi}, \sigma_{\xi}, P_{\xi})) \cong \Z^{G^{0}}/ (I-A_{G})\Z^{G^{0}}
\oplus \ker( I - A_{H}: \Z^{H^{0}}  \rightarrow  \Z^{H^{0}} ).$
\item $K_{1}(R^{u}(X_{\xi}, \sigma_{\xi}, P_{\xi})) \cong \Z^{H^{0}}/ (I-A_{H})\Z^{H^{0}}
\oplus \ker( I - A_{G}: \Z^{G^{0}}  \rightarrow  \Z^{G^{0}} ).$
\end{enumerate}
\end{thm80} 
   
   A curious consequence of these computations and 
   the Phillips-Kirchberg classification theorem for $C^{*}$-algebras
   is the following.

  \newtheorem*{cor90}{Theorem~\ref{c*:80}} 
   \begin{cor90}
Let $(X, \varphi, d)$ be a mixing Smale space and $Q$ be
 a finite $\varphi$-invariant subset of $X$. There exist finite directed graphs,
$G, H$ and embeddings $\xi^{0}, \xi^{1}: H \rightarrow G$ satisfying 
the standing hypotheses such that \newline
$R^{s}(X_{\xi}, \sigma_{\xi}, P_{\xi}) \cong R^{s}(X, \varphi, Q)$,
for any $P$, a finite $\sigma$-invariant subset of $X_{G}$.
\end{cor90}

   The second section contains basic background information
   on shifts of finite type, or at least edge shifts
   for finite directed graphs.
   The third section is the construction of our factor system.
   At this stage, we begin only with one-sided shifts of finite type
   and construct factors which are continuous topological 
   dynamical systems, but  not invertible. Much of the effort 
  here is focused on the construction of a specific metric with 
  nice properties. In the fourth section, we continue the construction
  to produce invertible systems. This is a standard technique by 
  inverse limits,  but again, we construct a specific metric which makes
  the result a Smale space. We also review the definitions of a Smale space.
  The fifth section is devoted to the computation of the homology theory
  for our Smale spaces. The sixth deals with the various 
  $C^{*}$-algebras associated with our examples, including computations 
  of their K-theories. In the seventh section, we return to the factors 
  of the one-sided shifts of finite type. Geometrically, shifts of finite
  type are not particularly interesting; at least they are no more interesting
  the Cantor ternary set. This not is true of our factors and we
  spend some time giving a description of their geometry. 
  In particular, we show that the spaces can be embedded in $\R^{3}$. 
  We also give a couple of simple examples, which
   can actually be embedded in the plane. 
   
   It is a pleasure to thank Michael Barnsley 
   for helpful conversations. I am particularly indebted  to Mitch 
   Haslehurst for many interesting conversations on these matters, but 
   especially for initially drawing my attention to his pictures 
   for the example in section 7 (the ones I give 
   are slightly different) which piqued my curiosity.


\section{Preliminaries}
\label{prelim}
In this section, we set out some well-known preliminaries
on shift spaces. An excellent reference is the book of
Lind and Marcus \cite{LM:book}. As discussed in the introduction, we
are considering  shifts of finite type. However, we will deal 
exclusively with edge shifts of finite directed graphs. These 
are dynamically equivalent, as a consequence of
 Example 1.5.10 and  Theorem 2.3.2 of \cite{LM:book}.

By a \emph{finite directed graph}, $G$, we mean two 
finite sets $G^{0}$ (the vertex set)
and $G^{1}$ (the edge set) along with maps $i,t: G^{1} \rightarrow G^{0}$.
 A path of length $n$ in $G$ is a finite sequence 
 $p = (p_{1}, p_{2}, \ldots, p_{n})$ 
 in $G^{1}$ such that $t(p_{i}) = i(p_{i+1})$, for all $ 1 \leq i < n$.
 We define $i(p) = i(p_{1})$ and $t(p)=t(p_{n})$. 
 
The \emph{adjacency matrix} for the graph is denoted $A_{G}$ and we  regard it as
the function on $G^{0} \times G^{0}$ whose value at $(v,w)$ is the number of edges
$e$ with $i(e) =w$ and $t(e) = v$. We also regard it an endomorphism 
of $\Z^{G^{0}}$ by matrix multiplication.

We say $G$ is \emph{irreducible} if, for every ordered pair of vertices $v, w$, there 
is a path $p$ with $i(p)=v, t(p)=w$. Equivalently, $G$ is irreducible, if for every
$(v,w)$, there is a positive integer $k$ with $A_{G}^{k}(w,v)$ positive. We say 
$G$ is \emph{primitive}
 if there is a positive integer $k$ such that 
$A_{G}^{k}(w,v)$ is positive, for all $v,w$.

We let $X_{G}^{+}$ be the one-sided infinite path space: an 
element is a sequence $(x_{1}, x_{2}, \ldots )$ in $G^{1}$ with 
$t(x_{n})= i(x_{n+1})$ for all $n \geq 1$. The bi-infinite path space $X_{G}$
is defined analogously with sequences indexed by the integers.
We let $\chi_{G}^{+}$ denote the obvious map from $X_{G}$ to $X_{G}^{+}$, which simply
restricts the domain of the sequence.

 The spaces $X^{+}_{G}$ and $X_{G}$ both have canonical  metrics. For 
 $x,y$ in  $X_{G}^{+}$, we define
\[
d_{G}(x,y) = \inf \{ 2^{-n} \mid n \geq 0, x_{i} = y_{i}, 1 \leq i \leq n \}.
\]
and for 
 $x,y$ in  $X_{G}$, we define
\[
d_{G}(x,y) = \inf \{ 2^{-n} \mid n \geq 0, x_{i} = y_{i}, 1-n \leq i \leq n \}.
\]
Using the same notation should cause no confusion. 
Observe that $\chi_{G}^{+}$ is a contraction.

The spaces $X_{G}^{+}$ and $X_{G}$ both carry dynamical systems. We define
$\sigma$ by $\sigma(x)_{n} = x_{n+1}$.
Here, either $x$ is in $X_{G}^{+}$ and $n \geq 1$ or $x$ is in $X_{G}$ and $n$ 
is an integer, making $\sigma$ a self-map of either space. Both are 
continuous and surjective and the latter is a  homeomorphism.
We make the easy observation that 
$d_{G}(\sigma^{n}(x), \sigma^{n}(y)) \leq 2^{n} d_{G}(x,y)$, 
for $x, y$ in $X_{G}^{+}$.

Let $G, H$ be two finite directed graphs. By a \emph{graph homomorphism}
 from $H$ 
to $G$ we mean a
 function, $\xi: H^{0} \cup H^{1} \rightarrow G^{0} \cup G^{1}$ ,
such that $\xi(H^{0}) \subseteq G^{0}, \xi(H^{1}) \subseteq G^{1}$ and satisfying 
$t_{G} \circ \xi|_{H^{0}} = \xi \circ t_{H}|_{H^{0}},
 i_{G} \circ \xi|_{H^{0}} = \xi \circ i_{H}|_{H^{0}}$. It 
 will usually not cause any confusion to drop the subscripts on $i, t$.
 We usually write such a  function as $\xi: H \rightarrow G$. A 
 \emph{graph embedding} is a graph homomorphism which is injective.

If $(X, d)$ is a metric space, $x$ is in $X$ and $\epsilon$ is positive, then 
we let $X(x,\epsilon)$ denote the ball centred at $x$ of radius 
$\epsilon$. If $A$ is any subset of $X$, we let $Cl(A)$ denote its closure.
 If $(X_{1}, d_{1})$ and $(X_{2}, d_{2})$
are metric spaces, we let 
\[
d_{1} \times d_{2}((x_{1}, x_{2}), (y_{1}, y_{2})) = 
d_{1}(x_{1}, y_{1})  +d_{2}(x_{2}, y_{2}),
\]
for all $x_{1}, y_{1}$ in $X_{1}$ and $x_{2}, y_{2}$ in $X_{2}$, 
which is a metric for the product space $X_{1} \times X_{2}$. 

\section{Construction}
\label{constr}

 In this section, we describe our basic construction.
 
 \begin{defn}
 \label{constr:10}
 For finite directed graphs, $G, H$,
 a pair of graph embeddings $\xi^{0}, \xi^{1}: H \rightarrow G$
 are said to satisfy
 \begin{enumerate}
\item (H0) if $\xi^{0}|_{H^{0}} =  \xi^{1}|_{H^{0}} $, 
\item (H1) if  $\xi^{0}(H^{1}) \cap  \xi^{1}(H^{1}) $ is empty, 
\item (H2) if, for every $y$ in $H^{1}$, there is 
$x$ in $G^{1}$ with 
$t(x) = t(\xi^{0}(y)), i(x) = i(\xi^{0}(y))$ and 
$x \notin \xi^{0}(H^{1}) \cup  \xi^{1}(H^{1})$.
\end{enumerate}
We say that $G, H, \xi= (\xi^{0}, \xi^{1})$
 satisfies the \emph{standing hypotheses} if $G$ is primitive
and $\xi$ satisfies (H0), (H1) and (H2). 
\end{defn}

The first two conditions will be essential for our construction. The third is 
a convenience and is not always needed. It also seems likely that the hypothesis
that $G$ is primitive can be weakened.

We observe a simple consequence of (H0) and (H1): if $y$ is any edge
 in $H^{1}$, then 
$\xi^{0}(y)$ and $\xi^{1}(y)$ are distinct, but have the same initial and 
terminal vertices in $G^{0}$.

\begin{defn}
\label{constr:20}
Let $ \xi^{0}, \xi^{1}: H \rightarrow G$ satisfy (H0) and (H1)
of  \ref{constr:10}.
\begin{enumerate}
\item 
We let $\xi(H^{1}) = \xi^{0}(H^{1}) \cup \xi^{1}(H^{1})$.
\item 
For $x$ in $\xi(H^{1})$, we let $\varepsilon(x) = 0, 1$ be 
such that 
$x $ is in $\xi^{\varepsilon(x)}(H^{1})$.
\item
We let $G_{\xi}$ be the graph with $G^{0}_{\xi} = G^{0}$ and $G^{1}_{\xi}$ 
obtained by identifying $\xi^{0}(y)$ with $\xi^{1}(y)$, for every $y$ 
in $H^{1}$. We also let $\tau_{\xi}$ denote the obvious quotient map from
$G^{1}$ to $G^{1}_{\xi}$ as well as the associated map from 
$X_{G}^{+}$ to $X_{G_{\xi}}^{+}$ and the map from 
$X_{G}$ to $X_{G_{\xi}}$.
\end{enumerate}
\end{defn}

\begin{defn}
\label{constr:30}
Let $ \xi^{0}, \xi^{1}: H \rightarrow G$ satisfy (H0) and (H1)
of  \ref{constr:10}.
\begin{enumerate}
\item For $x$ in $X_{G}^{+}$, let $\kappa(x)$ be the number of positive integers $n$ 
such that $x_{n}$ is \emph{not} in $\xi(H^{1})$, allowing the values $0$ and $\infty$.
\item For $ 0 \leq k \leq \infty$, let $X_{k}^{+} = \kappa^{-1}\{ k \}$.
\item For $x$ in $X_{G}^{+}$ with $\kappa(x) > 0$, we define $n(x)$ to be the least 
positive integer with $x_{n(x)}$  \emph{not} in $\xi(H^{1})$.
\end{enumerate}
\end{defn} 
 
 The following result is worth noting but its proof is trivial and we omit it.
 
 \begin{prop}
 \label{constr:40}
 \begin{enumerate}
 \item 
 The sets $X^{+}_{k}, 0 \leq k \leq \infty,$ are pairwise disjoint and, for 
 any $0 \leq k < \infty$,  $\cup_{j=0}^{k} X^{+}_{j}$ is closed in $X^{+}_{G}$.
  \item If $x, y$ in $X_{G}^{+}$ satisfy $\tau_{\xi}(x) = \tau_{\xi}(y)$, then 
  $\kappa(x) = \kappa(y)$ and  $ n(x) = n(y)$ if $\kappa(x) > 0$.
 \item 
 If $x$ is in $X_{G}^{+}$ with $0  < \kappa(x) < \infty $, 
 then $\kappa(\sigma^{n(x)}(x) ) = \kappa(x) -1$.
  \end{enumerate}
 \end{prop}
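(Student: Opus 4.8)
The plan is to verify each of the three claims directly from the definitions of $\kappa$, $n$, and $\tau_\xi$, since all three are local combinatorial statements about paths in $G$.

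For part (1), I would first observe that the sets $X^+_k$ are pairwise disjoint simply because $\kappa$ is a well-defined function: each $x$ has exactly one value of $\kappa(x)$, so it lies in exactly one $X^+_k$. For the closedness of $\cup_{j=0}^k X^+_j$, the key point is that this union is precisely the set of $x$ for which at most $k$ of the coordinates $x_n$ fail to lie in $\xi(H^1)$. I would show the complement is open: if $\kappa(y) > k$, then there exist indices $n_1 < \cdots < n_{k+1}$ with $y_{n_i} \notin \xi(H^1)$, and any $x$ agreeing with $y$ on the first $n_{k+1}$ coordinates (that is, with $d_G(x,y) \le 2^{-n_{k+1}}$) also has at least $k+1$ coordinates outside $\xi(H^1)$, hence $\kappa(x) > k$. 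Thus the complement contains an open ball around each of its points and is open.

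For part (2), the essential fact is that the quotient map $\tau_\xi$ only identifies edges $\xi^0(y)$ with $\xi^1(y)$, both of which lie in $\xi(H^1)$ by definition. Consequently, for any edge $e$, $\tau_\xi(e)$ lies in $\tau_\xi(\xi(H^1))$ if and only if $e \in \xi(H^1)$; in other words, membership in $\xi(H^1)$ is an invariant of the $\tau_\xi$-fibre. Given $\tau_\xi(x) = \tau_\xi(y)$, we have $\tau_\xi(x_n) = \tau_\xi(y_n)$ for every $n$, so $x_n \in \xi(H^1)$ exactly when $y_n \in \xi(H^1)$. This immediately gives $\kappa(x) = \kappa(y)$, since $\kappa$ counts coordinates outside $\xi(H^1)$, and likewise $n(x) = n(y)$ when the common value of $\kappa$ is positive, as $n(\cdot)$ is the least index at which the coordinate lies outside $\xi(H^1)$.

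For part (3), suppose $0 < \kappa(x) < \infty$, so $n(x)$ is defined as the least positive integer with $x_{n(x)} \notin \xi(H^1)$. By minimality, the coordinates $x_1, \ldots, x_{n(x)-1}$ all lie in $\xi(H^1)$, while $x_{n(x)}$ does not. Now $\sigma^{n(x)}(x)$ is the sequence whose $m$-th coordinate is $x_{n(x)+m}$, so its coordinates are exactly the coordinates of $x$ beyond index $n(x)$. The coordinates of $x$ outside $\xi(H^1)$ are thus partitioned into those with index $\le n(x)$ — of which there is exactly one, namely $x_{n(x)}$ — and those with index $> n(x)$, which are precisely the coordinates of $\sigma^{n(x)}(x)$ outside $\xi(H^1)$. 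Hence $\kappa(\sigma^{n(x)}(x)) = \kappa(x) - 1$. No step here presents a genuine obstacle; the only point requiring care is the bookkeeping in part (1) to exhibit an explicit open ball, but this is routine once one writes membership in $\cup_{j=0}^k X^+_j$ as a condition on finitely many coordinates failing to detect the $(k+1)$-st bad index. The author is right to call the proof trivial.
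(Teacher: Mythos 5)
Your proof is correct and complete; the paper itself omits the argument (declaring it trivial), and your verification is precisely the routine one intended: disjointness since the $X^+_k$ are fibres of $\kappa$, openness of $\{\kappa > k\}$ via agreement on the first $n_{k+1}$ coordinates, the observation that (H1) and injectivity of the embeddings force the $\tau_{\xi}$-classes to be either singletons or pairs $\{\xi^{0}(z), \xi^{1}(z)\}$ so that membership in $\xi(H^{1})$ passes to fibres, and the counting argument for part (3).
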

 
The following is a summary of some convenient topological 
properties of the sets $X_{k}^{+}, k \geq 0$.  Essentially, these are
consequences of our property (H2) and $G$ being primitive. 
 
 \begin{prop}
 \label{constr:50}
 Suppose that $G, H, \xi$ satisfy the standing hypotheses.
 \begin{enumerate}
  \item 
  If $x$ is in $X_{j}^{+}$  and $j < k < \infty$, then there is a sequence
   $x^{l}, l \geq 1,$ in $X^{+}_{k}$ which converges to $x$ in $X_{G}^{+}$.
   \item The closure of $X_{k}^{+}$ in $X_{G}^{+}$ is $\cup_{j=0}^{k} X_{j}^{+}$.
 \item
If
$x$ is in  $X_{G}^{+}$, then 
there is a sequence $x^{l}, l \geq 1,$ converging to $x$ in $X_{G}^{+}$ 
with 
$x^{l}$ in $X^{+}_{l}$, for all $l \geq 1$.
 \item The closure of 
 $\cup_{k=0}^{\infty} X_{k}^{+}$ in $X_{G}^{+}$ is $X_{G}^{+}$.
\end{enumerate} 
 \end{prop}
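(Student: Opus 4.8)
The plan is to build every approximating sequence from a single elementary move made possible by (H2): swapping a \emph{special} edge for a \emph{non-special} one with the same endpoints. Concretely, if an edge $e$ occurring in a legal path equals $\xi^{\varepsilon}(y)$ for some $y\in H^{1}$, then by (H0) its initial and terminal vertices coincide with those of $\xi^{0}(y)$, and (H2) supplies an edge $z\notin\xi(H^{1})$ with $i(z)=i(\xi^{0}(y))$ and $t(z)=t(\xi^{0}(y))$. Replacing $e$ by $z$ keeps the path legal and raises $\kappa$ by exactly one at that coordinate, without disturbing any other coordinate. This reversible control of $\kappa$ is the engine for all four parts, and I would isolate it as a preliminary remark.

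For part (1), fix $x\in X_{j}^{+}$ and let $p$ be the position of its last non-special edge, so $x_{p+1},x_{p+2},\dots$ all lie in $\xi(H^{1})$. For each $l$ let $x^{l}$ agree with $x$ on the first $m_{l}:=p+l$ coordinates, replace the next $k-j$ (special) edges by non-special parallels via the swap, and copy the remaining tail of $x$. Endpoints match at every junction, so $x^{l}\in X_{G}^{+}$; its non-special edges are the original $j$ together with the $k-j$ new ones, giving $\kappa(x^{l})=k$; and $d_{G}(x^{l},x)\le 2^{-m_{l}}\to 0$. Part (2) is then immediate: Proposition~\ref{constr:40}(1) gives that $\cup_{j=0}^{k}X_{j}^{+}$ is closed and contains $X_{k}^{+}$, while (1) shows each point of $X_{j}^{+}$ with $j<k$ is a limit of points of $X_{k}^{+}$, so the closure is exactly $\cup_{j=0}^{k}X_{j}^{+}$.

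For part (3) the two cases differ. When $\kappa(x)=j<\infty$ I take $x^{j}=x$, use (1) to pick $x^{l}\in X_{l}^{+}$ with $d_{G}(x^{l},x)\le 2^{-l}$ for $l>j$, and choose $x^{l}\in X_{l}^{+}$ arbitrarily for $l<j$; as only large $l$ affect convergence, $x^{l}\to x$. The substantive case is $\kappa(x)=\infty$. Here I fix once and for all an infinite path $\rho$ all of whose edges lie in $\xi(H^{1})$, starting at a vertex $w_{0}$; such a $\rho$ exists because $H$ admits an infinite forward path, which I take as part of the nondegeneracy of the standing data (otherwise every finite $X_{k}^{+}$ is empty and (4) fails). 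Using primitivity of $G$, fix $N$ with $A_{G}^{N}>0$ and, for each vertex $v$, a path $\pi_{v}$ from $v$ to $w_{0}$ of length $N$ with non-special count $b_{v}\le N$. Writing $c(m)$ for the number of non-special coordinates among $x_{1},\dots,x_{m}$, set $m(l)$ to be the largest $m$ with $c(m)\le l-N$; since $c(m)\to\infty$, $m(l)\to\infty$. Define $x^{l}$ to follow $x$ up to $m(l)$, then $\pi_{v}$ to $w_{0}$ (with $v=t(x_{m(l)})$), then $\rho$, after which I swap exactly $l-c(m(l))-b_{v}$ of the leading special edges of $\rho$ for non-special parallels. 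The choice of $m(l)$ makes this number lie in $[0,N]$, the three contributions sum to $\kappa(x^{l})=l$, and $d_{G}(x^{l},x)\le 2^{-m(l)}\to 0$; small $l$ are covered by swapping the first $l$ edges of $\rho$, exhibiting a point of $X_{l}^{+}$. Finally, part (4) is a corollary: the sequences of (3) lie in $\cup_{k<\infty}X_{k}^{+}$ and converge to an arbitrary $x$, so that union is dense.

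The delicate point, which I would write out most carefully, is the $\kappa(x)=\infty$ case of (3), where each level $X_{l}^{+}$ must be hit \emph{exactly} while staying close to $x$. One cannot simply truncate $x$ after its $l$-th non-special edge and continue with special edges, because the vertex reached need not admit an infinite special path. The navigate-to-$w_{0}$-then-swap device circumvents this: primitivity guarantees reaching the fixed vertex $w_{0}$ at a cost of at most $N$ non-special edges, and the swap move then tops the count up to exactly $l$ along the infinite special tail $\rho$. I would also flag at the outset the use of the infinite special path $\rho$ (equivalently $X_{H}^{+}\neq\emptyset$), since it is genuinely needed for the finite levels to be populated and for (4) to hold.
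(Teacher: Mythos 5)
Your proof is correct, and its engine --- the (H0)/(H2) edge swap plus primitivity of $G$ --- is exactly the paper's; parts (1), (2) and (4) coincide with the published argument essentially verbatim. Where you diverge is in part (3): the paper hits the count $l$ exactly by means of a \emph{variable-length} detour, keeping the prefix $x_{1},\dots,x_{l-l_{0}}$ (containing $j$ non-special edges) and inserting a path of length exactly $l-j\geq l_{0}$ lying entirely in $G^{1}-\xi(H^{1})$ from $t(x_{l-l_{0}})$ to a vertex of $\xi^{0}(H^{0})$, followed by an infinite special tail; here primitivity together with (H2) supplies non-special paths of every length $\geq l_{0}$ between any two vertices. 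You instead route through a fixed vertex $w_{0}$ via a connector of fixed length $N$ and top the count up to exactly $l$ by swapping $N-b_{v}$ leading edges of the fixed special ray $\rho$; the observation $c(m(l))=l-N$ (forced by maximality of $m(l)$, since $c$ increments by at most one) makes this bookkeeping airtight, and your case split (reducing $\kappa(x)<\infty$ to part (1)) replaces the paper's single uniform construction. The trade-off is slightly more machinery on your side against one genuine gain: you make explicit the nondegeneracy hypothesis that an infinite special path exists (equivalently $X_{H}^{+}\neq\emptyset$, since by (H0) and injectivity of $\xi$ on vertices consecutive special edges lift to paths in $H$), which the paper uses silently when it takes ``any path in $\xi(H^{1})$'' as an infinite third segment and picks points ``arbitrarily from $X_{l}^{+}$''; without it every finite $X_{k}^{+}$ is empty and parts (3) and (4) fail as stated, so your flag identifies a real implicit assumption rather than a defect in your own argument.
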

 
 \begin{proof}
 We first observe that 
 it follows from hypothesis (H2) and $G$ being primitive
  that $G^{1}-\xi(H^{1})$ is 
 connected. 
 
For $x$  in $X_{j}^{+}$, there are only finitely many $n$ with $x_{n}$ not 
in $\xi(H^{1})$. Choose $n_{0}$ such that $x_{n}$ is in $\xi(H^{1})$, for 
all $n \geq n_{0}$. For $l \geq 1$, we define $x^{l}$ in three segments, 
as follows.  Define $x^{l}_{ n} = x_{n}$, for 
$1 \leq n \leq n_{0}+l$.
Then choose $x^{l}_{n}, n_{0}+l < n \leq n_{0} + l + k-j $ to be any path in 
$G^{1}-\xi(H^{1})$ going through
 the same vertices as $x_{n}$, by property (H2). Finally, 
 set $x^{l}_{n} = x_{n}$ for $n > n_{0} + l + k-j$. It is clear 
 that this sequence has the desired properties. The first part 
 shows that the closure of $X_{k}^{+}$ contains $\cup_{j=0}^{k} X_{j}^{+}$. 
 The reverse containment follows from the fact that the 
 latter is closed, as we noted in \ref{constr:40}.

As $G$ is primitive,   we may find $l_{0} \geq 1$ such that, for any ordered pair
of vertices in $G^{0}$ and any $l \geq l_{0}$, 
there is a path of length $l$ between these two vertices. 
From property (H2), we may assume such a path lies in $G^{1}- \xi(H^{1})$.
For $l > l_{0}$, we again 
define $x^{l}$ in three segments as follows. For 
the first segment, we set
 $x^{l}_{n} = x_{n}$, for $1 \leq n \leq l-l_{0}$. Let $j$ be the number 
 of integers, $n$,
 between $1$ and $l-l_{0}$ with $x_{n}$ not in $\xi(H^{1})$. Obviously, 
 $j \leq l-l_{0}$. For 
 the second segment, $x^{l}_{n}, l-l_{0} < n \leq 2l - l_{0} - j $,
  we choose any path in $G^{1} - \xi(H^{1})$ from 
 $t(x_{l-l_{0}})$ to any vertex of $\xi^{0}(H^{0})$ having length
 $l - j \geq l_{0}$. Finally, we define the third segment 
 $x^{l}_{n}, 2 l-l_{0}-j < n < \infty, $ to be any path in $\xi(H^{1})$.
 The points $x^{l}, 1 \leq l \leq l_{0},$ may be chosen arbitrarily from 
 $X_{l}^{+}$. 
  Again, it is clear 
 that this sequence has the desired properties. The last statement follows at once.
 \end{proof}
 
We now give our two main definitions. 
The influence of binary expansion should be clear in the first.

\begin{defn}
\label{constr:60}
Let $x$  be in $X^{+}_{G}$.
\begin{enumerate}
\item If 
\[
x = (x_{1}, x_{2}, \cdots ,x_{n-1},
 \xi^{1-i}(y_{n}), \xi^{i}(y_{n+1}), \xi^{i}(y_{n+2}), \ldots),
 \]
 for some $n \geq 1$,  $y_{n}, y_{n+1}, \ldots$ in $H^{1}$ and $i=0,1$,
 then we define
 \[
x' = (x_{1}, x_{2}, \cdots, x_{n-1},
 \xi^{i}(y_{n}), \xi^{1-i}(y_{n+1}), \xi^{1-i}(y_{n+2}), \ldots).
 \]
 \item If 
\[
x = (x_{1}, x_{2},  \cdots ,
 x_{n}, \xi^{i}(y_{n+1}), \xi^{i}(y_{n+2}), \ldots),
 \]
 for some $n \geq 1$ and $i = 0,1$, $y_{n+1}, y_{n+2}, \ldots$ in $H^{1}$, 
 $x_{n}$ is not in $\xi(H^{1})$, and $i=0,1$,
 then we define
 \[
x' = (x_{1}, x_{2}, \cdots, x_{n},
 \xi^{1-i}(y_{n+1}), \xi^{1-i}(y_{n+2}), \xi^{1-i}(y_{n+3}), \ldots).
 \]
 \item We define $x \sim_{\xi} x'$, whenever $x, x'$ are as above. Also, we
 define
 $x \sim_{\xi} x$, for every $x$ in $X_{G}^{+}$.
 \end{enumerate}
 \end{defn}
 
 It is worth noting that the $n$ in parts 1 and 2 is unique (if it exists). 
 The proofs of the following easy facts
 are left to the interested reader.

 \begin{prop}
 \label{constr:70}
 \begin{enumerate}
 \item 
 $\sim_{\xi}$ is an equivalence relation and each equivalence class
 has either one or two elements.
 \item If $x \sim_{\xi} x'$, then 
 $\tau_{\xi}(x) = \tau_{\xi}(x')$.
 \item For each $k \geq 0$, the set $X_{k}^{+}$ is $\sim_{\xi}$-invariant.
 That is, if $x$ is in  $X_{k}^{+}$ and $x \sim_{\xi} x'$, then 
 $x'$ is in  $X_{k}^{+}$.
 \item If $x \sim_{\xi} x'$, then $\sigma(x) \sim_{\xi} \sigma(x')$.
 \item Applying $\sigma$  to the $\sim_{\xi}$-equivalence class of
 $x$ yields the $\sim_{\xi}$-equivalence class of $\sigma(x)$.
 
\end{enumerate}
 \end{prop}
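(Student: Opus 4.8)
I would organize the proof so that the two purely local statements come first, the combinatorial counting statement next, and the two dynamical statements last, since the last two are where the only real subtlety lies.

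The plan is to dispatch parts (2) and (3) immediately from the local nature of the moves in Definition~\ref{constr:60}. For part (2), note that in both clauses $x$ and $x'$ agree in every coordinate except those indexed $\geq n$ (clause~1) or $> n$ (clause~2), and on each exceptional coordinate $m$ the two sequences read $\xi^{0}(y_{m})$ and $\xi^{1}(y_{m})$ for the \emph{same} $y_{m}\in H^{1}$. Since $\tau_{\xi}$ is by construction the quotient identifying $\xi^{0}(y)$ with $\xi^{1}(y)$ for every $y\in H^{1}$, it sends both of these to the same symbol of $G^{1}_{\xi}$, so $\tau_{\xi}(x)=\tau_{\xi}(x')$. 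Part (3) is then immediate from Proposition~\ref{constr:40}(2): equality of the $\tau_{\xi}$-images forces $\kappa(x)=\kappa(x')$, hence $x\in X_{k}^{+}$ if and only if $x'\in X_{k}^{+}$.

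For part (1) the crux is to show that each $x$ has at most one partner $x'\neq x$. I would first observe that any nontrivial move requires $x$ to lie eventually in $\xi(H^{1})$, so no move exists when $\kappa(x)=\infty$. Assuming $\kappa(x)<\infty$, let $N$ be the largest index with $x_{N}\notin\xi(H^{1})$ (and $N=0$ when $\kappa(x)=0$); every coordinate beyond $N$ then carries a digit $\varepsilon(x_{m})\in\{0,1\}$. A clause-1 move requires $(\varepsilon(x_{m}))_{m>N}$ to be eventually constant with a final disagreement at a unique index $n\geq N+1$, whereas a clause-2 move requires $N\geq1$ together with $(\varepsilon(x_{m}))_{m>N}$ being outright constant. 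These two conditions are mutually exclusive and each determines $n$ uniquely (as already noted after Definition~\ref{constr:60}), so $x$ has at most one partner. Symmetry follows by interchanging $i\leftrightarrow 1-i$ in each clause, which carries $x'$ back to $x$; transitivity is then automatic, for a common neighbour of two distinct partners would contradict uniqueness. Reflexivity is built in, so $\sim_{\xi}$ is an equivalence relation whose classes have one or two elements.

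For parts (4) and (5) the natural approach is reindexing under the shift. If $x\sim_{\xi}x'$ by a move of index $n\geq2$, then deleting the first coordinate leaves a move of the very same clause and index $n-1\geq1$ relating $\sigma(x)$ and $\sigma(x')$, by direct substitution in Definition~\ref{constr:60}; this handles part (4) away from the boundary, and part (5) follows by the complementary lift (prepending the deleted coordinate) together with the fact that being partnerless -- that is, $\kappa=\infty$, or the tail digit sequence not eventually constant, or the sequence globally constant -- is preserved by $\sigma$. The main obstacle is the boundary case $n=1$, where $\sigma$ deletes precisely the coordinate that generated the move (the carry position of clause~1 or the wall of clause~2). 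Here $\sigma(x)$ and $\sigma(x')$ are the two constant-digit sequences over a common path of $H$ with opposite digits, and one must verify $\sigma(x)\sim_{\xi}\sigma(x')$; this is the one-sided shadow of $0$ and $1$ representing the same point of the circle. Confirming that this endpoint identification is in force -- so that the two images are not stranded in distinct singleton classes -- is the single step that is genuinely more than bookkeeping, and it is the point I would treat most carefully.
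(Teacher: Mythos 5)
Your handling of parts (1)--(3) is correct, and your reindexing mechanism for (4)--(5) is the right one away from the boundary (the paper itself offers no proof to compare against, as it leaves these facts to the reader). But the one step you flag and defer is a genuine gap, and as written your argument cannot close it. In the boundary case $n=1$ (the carry of clause~1 at the first coordinate, or the wall of clause~2 at the first coordinate), $\sigma(x)$ and $\sigma(x')$ are the two constant-digit sequences $(\xi^{i}(y_{2}),\xi^{i}(y_{3}),\ldots)$ and $(\xi^{1-i}(y_{2}),\xi^{1-i}(y_{3}),\ldots)$ lying entirely in $\xi(H^{1})$. By your own uniqueness analysis in part (1), such a sequence admits \emph{no} move under Definition~\ref{constr:60} as literally printed: clause~1 requires a digit disagreement at some index, and clause~2 requires a symbol $x_{n}\notin\xi(H^{1})$ with $n\geq 1$. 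So under the literal definition these two images sit in distinct singleton classes and part (4) is false at exactly this case --- and your own list of partnerless types in part (5), which includes ``the sequence globally constant,'' asserts precisely this, contradicting the identification you need two sentences later. Your writeup is thus internally inconsistent at the crux: either globally constant sequences are partnerless (and then (4)--(5) fail) or they are not (and then your classification of singleton classes is wrong).

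The resolution, which you needed to state rather than merely hope is ``in force,'' is that the intended relation must include the flip of an everywhere-$\xi(H^{1})$ constant-digit sequence, i.e., clause~2 with empty prefix ($n=0$): this identifies the constant tails $\overline{0}$ and $\overline{1}$, the two binary expansions of $0\equiv 1 \pmod{\Z}$. That this reading is intended is forced by the rest of the paper: the proof of part 3 of Proposition~\ref{constr:100} concludes $x\sim_{\xi}y$ from the digit sequences representing ``the same real number, modulo the integers'' (which covers the pair $\overline{0},\overline{1}$), Theorem~\ref{constr:160}(2) requires $d(x,y)=0$ to imply $x\sim_{\xi}y$ (and $d$ vanishes on this pair), and Example~\ref{real:100} asserts $X_{\xi}^{+}$ is the circle, which fails unless the two expansions of $0$ are glued. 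Once the amended clause is in place, your argument does close: partnerless becomes ``$\kappa=\infty$, or the digit tail beyond the last wall is not eventually constant'' (drop the globally constant case), both conditions are preserved and reflected by $\sigma$; distinct partners differ in all coordinates from the move index onward, so $\sigma$ is injective on classes; and (5) then follows from (4) together with the size-two bound on classes.
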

 
 We can now define our main item of interest.
 
 \begin{defn}
 \label{constr:80}
 Let $G, H$ be finite directed graphs and  $\xi^{0}, \xi^{1}: H \rightarrow G$
 be two graph embeddings satisfying (H0) and (H1) of Definition \ref{constr:10}.
  We define 
 $X^{+}_{\xi}$ to be the quotient space of $X^{+}_{G}$ by the equivalence relation
 $\sim_{\xi}$, endowed with the quotient topology. We let 
 $\pi_{\xi}: X^{+}_{G} \rightarrow X^{+}_{\xi}$ be the quotient map.
 It follows from part 2 of Proposition \ref{constr:70} that there is a function
 we denote $\rho_{\xi}: X_{\xi}^{+} \rightarrow X_{G_{\xi}}$ satisfying
 $\rho_{\xi} \circ \pi_{\xi} = \tau_{\xi}$.
 
 We also define $\sigma_{\xi}: X^{+}_{\xi} \rightarrow X^{+}_{\xi}$ by 
$ \sigma_{\xi}(\pi_{\xi}(x) ) = \pi_{\xi}(\sigma(x))$, 
  for all $x$ in $X^{+}_{G}$, which is well-defined by 
  Proposition \ref{constr:70}.
   We let $X^{+}_{ \xi, k} = \pi_{\xi}(X^{+}_{k})$, for each $k \geq 0$. 
 \end{defn}

As indicated above, the space $X_{\xi}^{+}$ is given the quotient topology.
It is not hard to use general topological results to show that 
$X_{\xi}^{+}$ is metrizable. 
That is not sufficient for our goals as we will need a metric with 
specific properties. The 
construction is rather subtle.

 We begin the construction   of a 
pseudo-metric, $d$,  on $X^{+}_{G}$. We will show that
$d(x,y) \leq  3 d_{G}(x,y)$, for all $x, y$ in $X^{+}_{G}$.
 In addition, we will prove $d(x,y) = 0$ if and only if 
 $x \sim_{\xi} y$, for any $x,y$ in $X^{+}_{G}$. 
 The immediate consequence is that $d$ induces a metric
 on the quotient space $X^{+}_{\xi}$,  denoted 
 by $d_{\xi}$, whose topology is the the 
 quotient topology. This means that an alternate 
 definition of our space $X_{\xi}^{+}$ is as the metric space 
 naturally induced by the pseudo-metric $d$ on $X_{G}^{+}$. 
 The definition given in \ref{constr:60}  and \ref{constr:80}
 seems more intuitive.

 For $x$ in $X^{+}_{G}$, if $\kappa(x)=0$, we define
 \[
 \theta(x) = \exp \left(2 \pi i \sum_{j=1}^{\infty}
  \varepsilon(x_{j}) 2^{-j} \right).
 \] 
 If $\kappa(x) > 0$, we define 
 \[
 \theta(x) = \exp \left(2 \pi i \sum_{j=1}^{n(x)-1}
  \varepsilon(x_{j}) 2^{-j} \right).
  \]
  Obviously, $\theta(x)$  lies on the unit circle, $\T$, in either case and is 
  a $2^{n(x)-1}$-th root of unity in the latter.
 
 For $k \geq 0$, we 
 define $\lambda_{k}: X^{+}_{k} \times X^{+}_{k} \rightarrow [0, \infty)$ 
 inductively
 as follows.
 First, we set 
 \begin{eqnarray*}
 \lambda_{0}(x,y) & = &  d_{\T}(\theta(x), \theta(y)) \\
   &  =  &  \inf\{ \vert t \vert \mid t \in \R, 
    e^{2\pi i t} \theta(x) = \theta(y) \},
    \end{eqnarray*}
     for 
 $x,y$ in $X^{+}_{0}$.

 For $x,y$ in $X^{+}_{k}$ with $k > 0$, we define 
 \begin{eqnarray*}
 \lambda_{k}(x,y) & =  & \vert 2^{-n(x)} - 2^{-n(y)} \vert 
  + d_{\T}(\theta(x), \theta(y) ) \\
  &  &  +  \delta((n(x),\theta(x)), (n(y), \theta(y))) 
 2^{-2-n(x)} \lambda_{k-1}(\sigma^{n(x)}(x),\sigma^{n(y)}(y)),
  \end{eqnarray*}
   where we use $\delta( \cdot, \cdot)$ for the Kronecker delta function.
   
  \begin{lemma}
  \label{constr:90}
  For $k \geq 0$, we have 
  \[
  d_{\T}(\theta(x), \theta(y)) \leq  d_{G}(x,y),
  \]
  and 
  \[
\lambda_{k}(x,y) \leq   2 d_{G}(x,y),
\]
for all $x,y$ in $X^{+}_{k}$.
  \end{lemma}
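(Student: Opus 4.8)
The plan is to prove the two inequalities essentially separately: the first is a direct estimate on the ``binary addresses'' $\theta$, and the second is an induction on $k$ that feeds on the first. Throughout, fix $x,y$ in $X_k^+$ and let $N \in \{0,1,2,\dots,\infty\}$ be the largest integer with $x_i = y_i$ for $1 \le i \le N$, so that $d_G(x,y) = 2^{-N}$. It is convenient to write $\theta(x) = \exp(2\pi i\, s(x))$ where $s(x) = \sum_{j=1}^{n(x)-1}\varepsilon(x_j)2^{-j}$ (an infinite sum when $\kappa(x)=0$), and similarly for $y$. The single combinatorial fact driving everything is the following dichotomy, which I would establish first: \emph{either} $N \ge n(x)$, in which case the minimality of the break index forces $n(x) = n(y)$ \emph{and} $\theta(x) = \theta(y)$; \emph{or} $N < n(x)$ and $N < n(y)$. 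Indeed, if $n(x) \le N$ then $x_{n(x)} = y_{n(x)} \notin \xi(H^1)$ forces $n(y) \le n(x)$, and symmetrically, so the break indices coincide and the truncated sums $s(x), s(y)$ use only the agreeing coordinates $1,\dots,n(x)-1$.

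For the first inequality I would use $d_\T(\theta(x),\theta(y)) \le |s(x)-s(y)|$, which holds because $t = s(y)-s(x)$ satisfies $e^{2\pi i t}\theta(x) = \theta(y)$. In the first case of the dichotomy $s(x) = s(y)$, so the left side is $0$. In the second case the terms of $s(x)$ and $s(y)$ with $j \le N$ agree (both edges lie in $\xi(H^1)$ with the same $\varepsilon$-value), so $s(x) - s(y)$ is a difference of two nonnegative tails supported on $j \ge N+1$; each tail is at most $\sum_{j \ge N+1} 2^{-j} = 2^{-N}$, whence $|s(x)-s(y)| \le 2^{-N} = d_G(x,y)$.

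For the second inequality I would induct on $k$. The base case $k=0$ is immediate since $\lambda_0 = d_\T(\theta(x),\theta(y)) \le d_G(x,y) \le 2 d_G(x,y)$. For the inductive step, note that whenever the break indices agree, $\sigma^{n(x)}(x)$ and $\sigma^{n(y)}(y)$ lie in $X_{k-1}^+$ by Proposition~\ref{constr:40}(3), so the inductive hypothesis applies to the last summand of $\lambda_k$. In the first case of the dichotomy ($N \ge n := n(x) = n(y)$) the first two summands vanish and the Kronecker delta equals $1$, so
\[
\lambda_k(x,y) = 2^{-2-n}\lambda_{k-1}(\sigma^n(x),\sigma^n(y)) \le 2^{-2-n}\cdot 2\, d_G(\sigma^n(x),\sigma^n(y)) \le 2^{-1-n}\cdot 2^{\,n-N} = \tfrac12\, 2^{-N},
\]
using that $\sigma^n(x),\sigma^n(y)$ agree in their first $N-n$ coordinates. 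In the second case ($n(x),n(y) > N$) I would bound the three summands crudely: the first is $\le 2^{-(N+1)}$ since $2^{-n(x)},2^{-n(y)} \le 2^{-(N+1)}$; the second is $\le 2^{-N}$ by the first inequality; and the third is either $0$ (if $\delta = 0$) or, using only the trivial bound $d_G \le 1$ with the inductive hypothesis, at most $2^{-2-n}\cdot 2 = 2^{-1-n} \le 2^{-(N+2)}$ (since here $\delta = 1$ gives $n := n(x) = n(y) \ge N+1$). Summing yields $\lambda_k(x,y) \le (1 + \tfrac12 + \tfrac14)2^{-N} = \tfrac74\,2^{-N} \le 2\,d_G(x,y)$.

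The routine parts are the two tail estimates and the bound $d_\T \le |s(x)-s(y)|$. The one place needing genuine care, and which I expect to be the main obstacle, is the second case of the inductive step when the Kronecker delta is nonetheless $1$: here $x$ and $y$ already differ within the block $1,\dots,n-1$, so $\sigma^n(x)$ and $\sigma^n(y)$ share no guaranteed agreement, and one must resist trying to propagate $2^{-N}$ into the recursion, instead absorbing the entire recursive term through the coarse weight $2^{-2-n}$ with $n \ge N+1$. Making the constants close at $7/4 < 2$ is precisely what the peculiar factor $2^{-2-n(x)}$ in the definition of $\lambda_k$ is designed to achieve.
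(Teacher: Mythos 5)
Your proof is correct and follows essentially the same route as the paper's: the same term-by-term binary-expansion estimate after the common prefix for $d_{\T}(\theta(x),\theta(y))$, followed by an induction on $k$ whose case split according to whether $(n(x),\theta(x))=(n(y),\theta(y))$, with the weight $2^{-2-n(x)}$ absorbing the recursive term, mirrors the paper's argument. The only differences are cosmetic: you organize the cases by comparing $N$ with the break indices and handle the subcase where the Kronecker delta is $1$ but the prefixes disagree via the trivial bound $d_{G}\leq 1$ (yielding the constant $\tfrac{7}{4}$), whereas the paper treats all delta-equal-one situations uniformly through the expansion estimate $d_{G}(\sigma^{n}(x),\sigma^{n}(y))\leq 2^{n}d_{G}(x,y)$, which amounts to the same bound there.
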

  
  \begin{proof}
  We proceed by induction on $k$. 
  For $k =0$, suppose $d_{G}(x,y) = 2^{-m}$, for some $m \geq 0$,
  which implies $x_{j} = y_{j}$ for $1 \leq j \leq m$. We have 
  \begin{eqnarray*}
  d_{\T}(\theta(x), \theta(y)) & \leq & \sum_{j=1}^{\infty} 
  \vert \varepsilon(x_{j}) - \varepsilon(y_{j}) \vert 2^{-j}  \\
    &  \leq &  \sum_{j=m+1}^{\infty} 
  \vert \varepsilon(x_{j}) - \varepsilon(y_{j}) \vert 2^{-j} \\
    &  \leq & 2^{-m} \\
    &  = &  d_{G}(x,y).
    \end{eqnarray*}
  For $k=0$, the second statement follows from the first as 
  \newline
   $\lambda_{0}(x,y) = d_{\T}(\theta(x), \theta(y))$.

  Now assume that $k >0$ and again $d_{G}(x, y) = 2^{-m}$, so 
 $x_{j} = y_{j}$ for $1 \leq j \leq  m$. If
  either $n(x) \leq  m $ or $n(y) \leq m$, 
  then $n(x) = n(y)$
 and $\vert 2^{-n(x)} - 2^{-n(y)} \vert=0$. In addition, we have
 $\theta(x) = \theta(y)$ so $d_{\T}(\theta(x), \theta(y))=0$. 
 
  Otherwise, we have $n(x), n(y) > m$ 
  and 
  \begin{eqnarray*}
  d_{\T}(\theta(x), \theta(y)) & \leq  &  \vert \sum_{j=m+1}^{n(x)} 
  \varepsilon(x_{j}) 2^{-j}
 -  \sum_{j=m+1}^{n(y)} \varepsilon(y_{j})2^{-j} \vert \\
 &  \leq   & \sum_{j=m+1}^{\infty} 2^{-j} \\
  & =  & 2^{-m} \\
  & = &  d_{G}(x,y).
  \end{eqnarray*}  
 This establishes the first inequality. In addition, we note that
 \[
 \vert 2^{-n(x)} - 2^{-n(y)} \vert \leq 2^{-m-1} <  d_{G}(x,y).
 \]

 Now, we consider $\lambda_{k}(x,y)$. 
If either $n(x) \neq n(y)$ or $\theta(x) \neq \theta(y)$, then we have 
\[
\lambda_{k}(x,y) =  \vert 2^{-n(x)} - 2^{-n(y)} \vert 
  + d_{\T}(\theta(x), \theta(y) ) \leq d_{G}(x,y) +  d_{G}(x,y).
  \]

  In the case   $n(x) = n(y)=n$ and $\theta(x) = \theta(y)$,
  from the definition and the induction hypothesis, 
  we have
 \begin{eqnarray*}
  \lambda_{k}(x,y) & = & 2^{-2-n} \lambda_{k-1}(\sigma^{n}(x), \sigma^{n}(y)) \\
  &  \leq  &  2^{-2-n} \cdot 2 d_{G}(\sigma^{n}(x), \sigma^{n}(y))  \\
   & \leq & 2^{-2-n} \cdot 2 \cdot 2^{n}   d_{G}(x, y)\\
  & < &  d_{G}(x, y).
  \end{eqnarray*}
  \end{proof}
  
  We can now begin our definition of our pseudo-metric on $X_{G}^{+}$. Initially, 
  we treat the spaces $X_{k}^{+}$ separately.

  \begin{prop}
    \label{constr:100}
  \begin{enumerate}
  \item 
  For each $k \geq 0$, 
  \[ 
  d_{k}(x,y) = d_{G_{\xi}}(\tau_{\xi}(x), \tau_{\xi}(y)) + \lambda_{k}(x,y),
  \]
  for $x, y$ in $X^{+}_{k}$, is a pseudo-metric on $X^{+}_{k}$.
   \item For $x, y$ in $X^{+}_{k}$, we have 
   \[
    d_{G_{\xi}}(\tau_{\xi}(x), \tau_{\xi}(y)) \leq 
    d_{k}(x,y) \leq 3 d_{G}(x,y).
    \]
  \item For $x, y$ in $X^{+}_{k}$, $d_{k}(x,y)=0$ if and only if 
  $x \sim_{\xi} y $.
  \end{enumerate}
  \end{prop}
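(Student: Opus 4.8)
The plan is to handle the three parts in the order (2), (1), (3), since the estimate in (2) is what drives the triangle inequality in (1), and both are needed for (3). Part (2) is immediate: the lower bound holds because $\lambda_k\ge 0$, and for the upper bound I would note that $\tau_\xi$ cannot shorten an initial segment of agreement, so $d_{G_\xi}(\tau_\xi(x),\tau_\xi(y))\le d_G(x,y)$, and combine this with $\lambda_k(x,y)\le 2d_G(x,y)$ from Lemma~\ref{constr:90} to get $d_k\le 3d_G$. For (1), reflexivity is clear and symmetry is routine: $d_{G_\xi}\circ(\tau_\xi\times\tau_\xi)$ is a pulled-back metric, and $\lambda_k$ is symmetric by induction once one observes that the Kronecker delta is symmetric and, wherever it is nonzero, forces $n(x)=n(y)$ so that the coefficient $2^{-2-n(x)}$ is itself symmetric. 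The real content of (1) is the triangle inequality for $\lambda_k$, proved by induction on $k$; the base case $k=0$ is just the triangle inequality for the pulled-back metric $d_\T(\theta(\cdot),\theta(\cdot))$.

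I expect this inductive step to be the main obstacle, precisely because of the Kronecker delta. Writing $\lambda_k=A+B+C$ with $A=|2^{-n(\cdot)}-2^{-n(\cdot)}|$, $B=d_\T(\theta(\cdot),\theta(\cdot))$, and $C$ the delta term, the terms $A,B$ are honest pseudo-metrics, so the only dangerous configuration in establishing $\lambda_k(x,z)\le\lambda_k(x,y)+\lambda_k(y,z)$ is when $(n(x),\theta(x))=(n(z),\theta(z))=(n,\omega)$ but $(n(y),\theta(y))\ne(n,\omega)$, so that $C(x,z)>0$ while $C(x,y)=C(y,z)=0$ and $A(x,z)=B(x,z)=0$. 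Here I must dominate $C(x,z)$ using only the $A,B$ contributions through $y$. Two facts make this work: by Lemma~\ref{constr:90}, $\lambda_{k-1}\le 2$, so $C(x,z)=2^{-2-n}\lambda_{k-1}(\sigma^n x,\sigma^n z)\le 2^{-1-n}$; and the separation of $(n(y),\theta(y))$ from $(n,\omega)$ is quantized --- either $n(y)\ne n$, whence $|2^{-n}-2^{-n(y)}|\ge 2^{-1-n}$, or $n(y)=n$ and $\theta(y)\ne\omega$, whence $\omega,\theta(y)$ are distinct $2^{n-1}$-th roots of unity and $d_\T(\omega,\theta(y))\ge 2^{1-n}$. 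Either way $\lambda_k(x,y)+\lambda_k(y,z)\ge 2^{-1-n}\ge C(x,z)$. The constant $2^{-2-n}$ in the definition is exactly calibrated to absorb the bound $\lambda_{k-1}\le 2$ against this discreteness. When instead $(n(y),\theta(y))=(n,\omega)$ as well, all three delta factors coincide and the inequality reduces to the inductive hypothesis applied to $\lambda_{k-1}$ on the shifted points $\sigma^n x,\sigma^n y,\sigma^n z$, which lie in $X^+_{k-1}$ by Proposition~\ref{constr:40}.

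For (3) I would induct on $k$, using that $d_k(x,y)=0$ iff $\tau_\xi(x)=\tau_\xi(y)$ and $\lambda_k(x,y)=0$, the latter forcing $n(x)=n(y)=:n$, $\theta(x)=\theta(y)$, and $\lambda_{k-1}(\sigma^n x,\sigma^n y)=0$. The algebraic input is uniqueness of finite binary expansions: $\theta(x)=\theta(y)$ with equal $n$ gives $\sum_{j<n}\varepsilon(x_j)2^{-j}=\sum_{j<n}\varepsilon(y_j)2^{-j}$ and hence $\varepsilon(x_j)=\varepsilon(y_j)$ for $j<n$; together with $\tau_\xi(x_j)=\tau_\xi(y_j)$ (which permits $x_j\ne y_j$ only for a $\xi^0/\xi^1$ pair, where the $\varepsilon$-values would disagree) this forces $x_j=y_j$ for $1\le j\le n$. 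Shift-equivariance of $\tau_\xi$ and the inductive hypothesis then give $\sigma^n x\sim_\xi\sigma^n y$, and prepending the common prefix $x_1,\dots,x_n$ --- which I would justify by checking that the defining pattern of Definition~\ref{constr:60} is stable under adjoining a fixed prefix --- yields $x\sim_\xi y$. Conversely, if $x\sim_\xi y$ then $\tau_\xi(x)=\tau_\xi(y)$ by Proposition~\ref{constr:70}, and the carry identity (the tails $\xi^{1-i}(y_n),\xi^i(y_{n+1}),\dots$ and $\xi^i(y_n),\xi^{1-i}(y_{n+1}),\dots$ contribute the same value $2^{-n}$ to the $\theta$-sum) gives $\theta(x)=\theta(y)$, and recursion on the shifted tail gives $\lambda_k(x,y)=0$. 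The one point I would watch is the boundary coincidence in which the two expansions are $\overline{0}$ and $\overline{1}$, i.e.\ an all-$\xi^0$ versus an all-$\xi^1$ sequence, both giving $\theta=1$ without any finite switch; I would need to confirm that this degenerate pair is properly accounted for rather than yielding a spurious pair with $d_0=0$ but $x\not\sim_\xi y$.
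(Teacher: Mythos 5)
Your proposal is correct and follows essentially the same route as the paper's proof: part (2) read off from Lemma \ref{constr:90}, part (1) by induction on $k$ with the triangle inequality for $\lambda_k$ split according to the Kronecker delta, and part (3) by induction using uniqueness of binary expansions and shift-equivariance of $\tau_\xi$. Your quantitative inputs are exactly the paper's: in the dangerous configuration the delta term is bounded by $2^{-2-n}\lambda_{k-1}\le 2^{-1-n}$ via $\lambda_{k-1}\le 2d_G\le 2$, and this is absorbed by the quantized separation of the middle point, either $|2^{-n}-2^{-n(y)}|\ge 2^{-1-n}$ when the $n$-values differ (the paper's displayed bound $2^{-n(x)+1}$ at this step is a typo for $2^{-n(x)-1}$; your version is the corrected one), or the separation of distinct $2^{n-1}$-th roots of unity when they agree (you use $2^{1-n}$, the paper the weaker but sufficient $2^{-n}$).

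Your closing caveat about the pair $\overline{0}$ versus $\overline{1}$ is well placed, and it is precisely the point the paper's proof passes over: in the `only if' direction for $k=0$ it asserts that two $\varepsilon$-sequences representing the same real number modulo $\Z$ must be $\sim_\xi$-related, but for $x=\xi^0\circ z$, $y=\xi^1\circ z$ with $z$ in $X_H^+$ neither move of Definition \ref{constr:60} literally applies --- by (H1) the sequences differ in every coordinate, forcing $n=1$ in the first move, where the tail types then clash --- while $\theta(x)=\theta(y)=1$ and $\tau_\xi(x)=\tau_\xi(y)$ give $d_0(x,y)=0$. That this wrap-around identification is nonetheless intended is confirmed by case (b) of part 3 of Theorem \ref{smale:50} and by Example \ref{real:100}, where the quotient is asserted to be the circle; so Definition \ref{constr:60} should be read (or mildly amended) to include this pair, and with that reading your argument, like the paper's, closes. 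Note also that for $k\ge 1$ the degenerate all-$\xi^0$/all-$\xi^1$ tails can only arise following a symbol outside $\xi(H^1)$, where the second move of Definition \ref{constr:60} applies directly to $x$ and $y$ themselves, so the issue is genuinely confined to $X_0^+$.
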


  \begin{proof}
  For the first part, the $d_{G_{\xi}}$ term is clearly a pseudometric, 
  so it suffices to show that $\lambda_{k}$ is also, which 
  we do 
  by induction on $k$. This is clear
  for $k=0$. For $k > 0$, a  simple induction argument
  shows that $\lambda_{k}$  is reflexive and symmetric. Now suppose that
  $x, y, z$ are in $X^{+}_{k}$. We first consider the 
  case  $(n(x), \theta(x) ) \neq (n(y), \theta(y))$,
  where we have 
  \begin{eqnarray*}
  \lambda_{k}(x,y)  & =  & \vert 2^{-n(x)} - 2^{-n(y)} \vert 
    +  d_{\T}(\theta(x), \theta(y) ) \\
    &  \leq  & \vert 2^{-n(x)} - 2^{-n(z)} \vert 
    +  d_{\T}(\theta(x), \theta(z))  \\
    &  & \vert 2^{-n(z)} - 2^{-n(y)} \vert 
    +  d_{\T}(\theta(z), \theta(y) ).
    \end{eqnarray*}
If the first two terms are both zero, then their sum is certainly less 
than or equal to $\lambda_{k}(x,z)$. On the other hand, if either is 
non-zero, then their  sum equals  $\lambda_{k}(x,z)$, by definition.
    The third and fourth terms are done similarly.

  Now suppose that  $(n(x), \theta(x) ) = (n(y), \theta(y))$ . If \newline
  $(n(z), \theta(z) ) = (n(x), \theta(x))$
   then, 
  \begin{eqnarray*}
  \lambda_{k}(x,y) & = &   2^{-2-n(x)}
  \lambda_{k-1}(\sigma^{n(x)}(x), \sigma^{n(y)}(y) ) \\
     &  \leq  &    2^{-2-n(x)} \lambda_{k-1}(\sigma^{n(x)}(x), \sigma^{n(z)}(z) ) \\
       &  &  +
    2^{-2-n(x)}   \lambda_{k-1}(\sigma^{n(z)}(z), \sigma^{n(y)}(y) ) \\
      &  =  &  \lambda_{k}(x,z) + \lambda_{k}(z,y)
     \end{eqnarray*}
     follows from the induction hypothesis.

   We next consider the case $n(z) \neq n(x)$ and so $n(z) \neq n(y)$ also.
 For any positive 
     integers $m\neq n$, we have 
     \[
     \vert 2^{-m} - 2^{-n} \vert = 2^{-\min\{ m,n \}} 
     \vert 1 - 2^{-\vert m-n \vert} \vert \geq 2^{-\min\{ m,n \}-1} \geq 2^{-n-1}
     \]
     and so
     \[ 
     \lambda_{k}(x,z) \geq \vert 2^{-n(x)} - 2^{n(z)} \vert \geq 2^{-n(x)+1}.
     \] 
   A similar estimate holds for $\lambda_{k}(z,y)$.
   
     On the other hand, we also have 
   \begin{eqnarray*}
  2^{-2-n(x)} \lambda_{k-1}(\sigma^{n(x)}(x), \sigma^{n(y)}(y) ) 
    &  \leq  &  2^{-2-n(x)} 2d_{G}( \sigma^{n(x)}(x), \sigma^{n(y)}(y) ) \\
     &  \leq  & 2^{-1-n(x)}.
       \end{eqnarray*}
       
   Together this yields
   \begin{eqnarray*}
   \lambda_{k}(x,y) & = &  
   2^{-2-n(x)} \lambda_{k-1}(\sigma^{n(x)}(x), \sigma^{n(y)}(y) ) \\
     &  \leq &  2^{-1-n(x)} \\
       &  =  & 2^{-2-n(x)} +  2^{-2-n(y)} \\
       & \leq & \lambda_{k}(x,z) + \lambda_{k}(z,y).
   \end{eqnarray*}
       
       The final case to consider is $n(x) = n(y) = n(z)$
       and  $\theta(z) \neq \theta(x)$, so $\theta(z) \neq \theta(y)$
       also. 
      Then, as $\theta(x), \theta(y), \theta(z)$ are all 
      $2^{n(x)-1}$-th roots of unity, we have 
       \[
       d_{\T}( \theta(x), \theta(z)), d_{\T}( \theta(z), \theta(y))
          \geq 2^{-n(x)}.
         \]
          It follows that
    \begin{eqnarray*}
   \lambda_{k}(x,y) & = &  
   2^{-2-n(x)} \lambda_{k-1}(\sigma^{n(x)}(x), \sigma^{n(y)}(y) ) \\
     &  \leq &  2^{-1-n(x)} \\
       &  =  & 2^{-2-n(x)} +  2^{-2-n(y)} \\
       & \leq & d_{\T}( \theta(x), \theta(z)) + d_{\T}( \theta(z), \theta(y)) \\
       & = & \lambda_{k}(x,z) + \lambda_{k}(z,y).
   \end{eqnarray*} 
     
     The second part is immediate from 
      the second statement of Lemma \ref{constr:90}.
      
      For the third part, we consider the 'if' direction first.
We proceed by induction on $k$, beginning with $k=0$. 
    Suppose $ x \sim_{\xi} y$,
       where $x,y$ are in $X^{+}_{0}$. From part  2 of 
       Proposition \ref{constr:40}, we know
        $\tau_{\xi}(x) = \tau_{\xi}(y)$.
   In addition, $\varepsilon(x_{j}), j \geq 1$ and   
      $\varepsilon(y_{j}), j \geq 1$ are two $0,1$-sequences and 
    the definition of $\sim_{\xi}$  implies
      that these sequences are dyadic representations of the 
      same real number, modulo the integers. This implies 
      $\theta(x) = \theta(y)$ and so 
      \[
      d_{0}(x,y) = d_{\T}(\theta(x), \theta(y))
       + d_{G_{\xi}}(\tau_{\xi}(x), \tau_{\xi}(y)) = 0 +0=0.
      \]
      
      Now assume the result is true for some $k$ and let $x \sim_{\xi} y$ 
      be in $X^{+}_{k+1}$. From the Proposition \ref{constr:40},
       we have $n(x) = n(y)$ and 
      $x_{i} = y_{i}$, for all $ 1 \leq i \leq n(x)$.  It follows that
      $\theta(x) = \theta(y)$ and 
      \[
      d_{k+1}(x,y) = 2^{-2-n(x)} \lambda_{k}(\sigma^{k}(x), \sigma^{k}(y))
       \leq 2^{-2-n(x)} d_{k}( \sigma^{k}(x), \sigma^{k}(y)).
       \]
      It follows from Proposition \ref{constr:40}
       $\sigma^{k}(x) \sim_{\xi} \sigma^{k}(y)$,
      so the conclusion follows from the induction hypothesis.
      
      Now we consider the 'only if' direction.
        Suppose $d_{0}(x,y)=0 $,  where $x,y$ are in $X^{+}_{0}$.
          This means that 
      $x_{i}, y_{i}$ are in $\xi(H^{1})$, for all $i \geq 1$. 
      The hypothesis obviously implies that  
      $d_{G_{\xi}}(\tau_{\xi}(x), \tau_{\xi}(y)) =0$
      so $\tau_{\xi}(x_{i}) = \tau_{\xi}(y_{i})$, for all $i \geq 1$.
      In other words, there is  a path $z$ in $X^{+}_{H}$ such that 
      $\{ x_{i}, y_{i} \} \subseteq \{ \xi^{0}(z_{i}), \xi^{1}(z_{i}) \}$
      for all $i$. The fact that $\lambda_{0}(x,y)=0$ implies
      that $\theta(x) = \theta(y)$ and this implies that the sequences
      $\varepsilon(x_{i}), \varepsilon(y_{i}), i \geq 1$ are 
      the binary expansions of the same real number, modulo the integers.
      This implies $x \sim_{\xi}   y$.

      We now consider $k > 0$ and  $d_{k}(x,y)=0$, where $x,y$ are in $X^{+}_{k}$.
      The first consequence is that $\tau_{\xi}(x) = \tau_{\xi}(y)$. 
      This implies that $n(x) = n(y) = n$ and since $x_{n(x)}$ and $y_{n(y)}$ are 
      not in $\xi^{0}(H) \cup \xi^{1}(H)$, we have $x_{n(x)} = y_{n(y)}$. 
      In addition, there is a path $h_{1}, \ldots, h_{n-1}$ such that 
      $x_{i}, y_{i}$ are in $\{ \xi^{0}(h_{i}), \xi^{1}(h_{i}) \}$,
       for all $1 \leq i < n$. The fact that $\lambda_{k}(x,y)=0$ implies 
       that $\theta(x) = \theta(y)$. The map which takes a $0,1$  sequence
       $\varepsilon_{1}, \ldots, \varepsilon_{n-1}$ to 
       $exp (2 \pi \sum_{j=1}^{n-1} \varepsilon_{j}2^{-j})$ is injective
       and so we conclude that $\varepsilon(x_{i}) = \varepsilon(y_{i})$, for 
       all $1 \leq i < n$. It follows that $x_{i} = y_{i}$, for $1 \leq i \leq n$.
       The fact $\tau_{\xi}(x) = \tau_{\xi}(y)$ implies that 
       $\tau_{\xi}(\sigma^{n(x)}(x)) = \tau_{\xi}(\sigma^{n(y)}(y))$ also and
       so
    we have 
       \[
       d_{k-1}(\sigma^{n}(x),\sigma^{n}(y)) = 
       \lambda_{k-1}(\sigma^{n}(x),\sigma^{n}(y))
       = 2^{2+n} \lambda_{k}(x,y) =0.
       \]
       By our induction hypothesis, 
       $\sigma^{n}(x) \sim_{\xi} \sigma^{n}(y) $. Together with
       the fact that
       $x_{i} = y_{i}$ for $ 1 \leq i \leq n$ implies 
       $x \sim_{\xi} y$.
   \end{proof}

      It will be useful to have a characterization of 
       Cauchy sequences in our pseudo-metric.

   \begin{prop}
    \label{constr:120}
  \begin{enumerate}
  \item A sequence $x^{l}, l \geq 1,$ in $X^{+}_{0}$ is Cauchy 
  in $d_{0}$ if and only if 
  $\tau_{\xi}(x^{l}), l \geq 1,$ is convergent in 
  $X^{+}_{G_{\xi}}$ and  $\theta(x^{l}), l \geq 1$ is convergent in $\C$.
  \item 
   For 
  $k > 0$, a sequence $x^{l}, l \geq 1,$ in $X^{+}_{k}$ is Cauchy 
  in $d_{k}$ if and only only $\tau_{\xi}(x^{l}), l \geq 1,$ 
  is convergent in $X^{+}_{G_{\xi}}$ and either 
  \begin{enumerate}
  \item $\lim_{l} n(x^{l}) = \infty$ and  $\theta(x^{l})$, 
  $ l \geq 1,$ is convergent in $\C$
   or
  \item the sequences $n(x^{l})$ and $\theta(x^{l})$ are eventually constant and 
   \newline 
   $\sigma^{n(x^{l})}(x^{l}), l \geq 1,$ is Cauchy in $d_{k-1}$.
  \end{enumerate}
  \end{enumerate}
  \end{prop}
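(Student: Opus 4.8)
The guiding principle is that both $d_{k}$ and $\lambda_{k}$ are finite sums of nonnegative quantities, so a sequence is Cauchy in the sum precisely when it is Cauchy in each summand; combined with the fact that the compact spaces $X^{+}_{G_{\xi}}$ and $\T$ are complete, this converts ``Cauchy'' into ``convergent'' wherever a genuine (rather than recursive) term is involved. For Part 1 this already suffices: since $\lambda_{0} = d_{\T}(\theta(\cdot),\theta(\cdot))$, we have $d_{0}(x,y) = d_{G_{\xi}}(\tau_{\xi}(x),\tau_{\xi}(y)) + d_{\T}(\theta(x),\theta(y))$, so $x^{l}$ is $d_{0}$-Cauchy iff $\tau_{\xi}(x^{l})$ is Cauchy in $X^{+}_{G_{\xi}}$ and $\theta(x^{l})$ is Cauchy in $\T$. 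As $X^{+}_{G_{\xi}}$ is compact and hence complete, and $\T$ is compact, each ``Cauchy'' is equivalent to ``convergent'', and convergence in $\T$ is the same as convergence in $\C$; this is exactly the assertion.

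For Part 2, fix $k>0$ and write $\lambda_{k}(x,y) = A + B + C$, where $A = \vert 2^{-n(x)} - 2^{-n(y)} \vert$, $B = d_{\T}(\theta(x),\theta(y))$, and $C = \delta((n(x),\theta(x)),(n(y),\theta(y)))\, 2^{-2-n(x)} \lambda_{k-1}(\sigma^{n(x)}(x),\sigma^{n(y)}(y))$. Thus $x^{l}$ is $d_{k}$-Cauchy iff $\tau_{\xi}(x^{l})$ is Cauchy (equivalently, convergent) in $X^{+}_{G_{\xi}}$ and $A, B, C \to 0$ along the sequence. I would read the dichotomy off from $A$: the real sequence $2^{-n(x^{l})}$ is Cauchy with values in the discrete set $\{2^{-n} : n \geq 1\}$, so it either tends to $0$, forcing $n(x^{l}) \to \infty$ (alternative (a)), or is eventually equal to some $2^{-N}$, forcing $n(x^{l}) = N$ eventually (alternative (b)). From $B \to 0$ the points $\theta(x^{l})$ form a Cauchy sequence in $\T$; in alternative (a) this is convergence, while in alternative (b) the $\theta(x^{l})$ are eventually $2^{N-1}$-th roots of unity, a finite set whose distinct points are $d_{\T}$-separated by at least $2^{-N}$, so Cauchyness forces them eventually constant.

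It then remains only to handle $C$. In alternative (a) the gate $\delta$ is immaterial: whenever it is nonzero we have $n(x^{l}) = n(x^{m})$, and the bound $\lambda_{k-1} \leq 2$ from Lemma \ref{constr:90} gives $C \leq 2^{-1-n(x^{l})} \to 0$ with no further hypothesis, so $d_{k}$-Cauchyness is equivalent to the conditions in (a). In alternative (b), once $n(x^{l}) = N$ and $\theta(x^{l})$ have stabilized we have $A = B = 0$ and $\delta = 1$, hence $\lambda_{k}(x^{l},x^{m}) = 2^{-2-N}\lambda_{k-1}(\sigma^{N}(x^{l}),\sigma^{N}(x^{m}))$, so $C \to 0$ is equivalent to $\sigma^{N}(x^{l})$ being $\lambda_{k-1}$-Cauchy (note $\sigma^{N}(x^{l}) \in X^{+}_{k-1}$ by Proposition \ref{constr:40}). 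Finally I would upgrade this to $d_{k-1}$-Cauchy: as $\tau_{\xi}$ intertwines $\sigma$, we have $\tau_{\xi}(\sigma^{N}(x^{l})) = \sigma^{N}(\tau_{\xi}(x^{l}))$, which converges by continuity of $\sigma^{N}$ and the convergence of $\tau_{\xi}(x^{l})$; thus the $d_{G_{\xi}}$-component of $d_{k-1}$ is automatically Cauchy, and $d_{k-1}$-Cauchyness of $\sigma^{N}(x^{l})$ is equivalent to $\lambda_{k-1}$-Cauchyness. Since the statement at level $k$ is phrased directly in terms of the level $k-1$ metric, no separate induction on $k$ is needed.

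The only place demanding care is this case analysis: one must verify that $A$ and $B$ force exactly the dichotomy (a)/(b), and that in (b) the recursive term collapses cleanly to the $(k-1)$-level condition. The crux is the pair of discreteness facts---that a Cauchy sequence in $\{2^{-n}\}$, respectively in the $2^{N-1}$-th roots of unity, with nonzero limit must be eventually constant---together with the bound $\lambda_{k-1} \leq 2$, which disposes of $C$ in alternative (a), and the intertwining $\tau_{\xi} \circ \sigma = \sigma \circ \tau_{\xi}$, which disposes of the $d_{G_{\xi}}$-component in alternative (b). Everything else is routine bookkeeping with the summand decomposition.
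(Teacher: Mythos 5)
Your proof is correct and follows essentially the same route as the paper's: both decompose $d_{k}$ into its nonnegative summands, use the discreteness of $\{2^{-n} \mid n \geq 1\}$ and of the $2^{N-1}$-th roots of unity to force the dichotomy (a)/(b), dispose of the recursive term in case (a) via the bound $\lambda_{k-1} \leq 2$ from Lemma \ref{constr:90}, and collapse it to the level-$(k-1)$ metric in case (b). The only cosmetic difference is that where you invoke continuity of $\sigma^{N}$ on $X^{+}_{G_{\xi}}$ to see that the $d_{G_{\xi}}$-component of $\sigma^{N}(x^{l})$ is automatically Cauchy, the paper uses the quantitative inequality $d_{G_{\xi}}(\tau_{\xi}(x^{l}), \tau_{\xi}(x^{m})) \geq 2^{-n} d_{G_{\xi}}(\tau_{\xi}(\sigma^{n}(x^{l})), \tau_{\xi}(\sigma^{n}(x^{m})))$ to obtain $d_{k}(x^{l},x^{m}) \geq 2^{-2-n} d_{k-1}(\sigma^{n}(x^{l}), \sigma^{n}(x^{m}))$ directly.
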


    \begin{proof}   
   The first part is clear from the definition of $d_{0}$
   
     For the second part we begin with the 'only if' direction. 
     If $x^{l}, l \geq 1$, is Cauchy in $d_{k}$, then $\tau_{\xi}(x^{l}), l \geq 1$
      is clearly 
     Cauchy and hence convergent in $X_{G_{\xi}}$. In addition,
       both sequences $ 2^{-n(x^{l})}$ and $\theta(x^{l})$ are Cauchy.
    There are clearly two cases:
      either $n(x^{l}), l \geq 1$, tends to infinity or it is eventually 
      constant.
      In the first case, we have case (a).
       In the second case, if $n(x^{l})=n$ for all 
      $l$ sufficiently large, then for such $l$, $\theta(x^{l})$ lies 
      in a finite set, so to be Cauchy it must be eventually constant also, say 
      with value $\theta$. Then for sufficiently large values   of $l, m$, we have 
      \begin{eqnarray*}
      d_{k}(x^{l}, x^{m}) & = & d_{G_{\xi}}(\tau_{\xi}(x^{l}), \tau_{\xi}(x^{m}))
      +  2^{-2-n} \lambda_{k-1}(\sigma^{n}(x^{l}), \sigma^{n}(x^{m})) \\
        &  \geq & 2^{-n} d_{G_{\xi}}(\tau_{\xi}(\sigma^{n}(x^{l})), 
        \tau_{\xi}(\sigma^{n}(x^{m}))) \\
       & &   +  2^{-2-n} \lambda_{k-1}(\sigma^{n}(x^{l}), \sigma^{n}(x^{m})) \\
          & \geq & 2^{-2-n} d_{k-1}(\sigma^{n}(x^{l}), \sigma^{n}(x^{m})).
      \end{eqnarray*}
      It follows that $\sigma^{n}(x^{l}), l \geq 1$ is Cauchy in $d_{k-1}$.
      
      Conversely, if $x^{l}, l \geq 1$ is any sequence in $X^{+}_{k}$,
       then,
      for any $l,m$, $d_{k}(x^{l}, x^{m})$ is either equal to either
      \[
      d_{G_{\xi}}( \tau_{\xi}(x^{l}), \tau_{\xi}(x^{m}))
        +  \vert 2^{-n(x^{l})} - 2^{-n(x^{m})} \vert 
        + d_{\T}(\theta(x^{l}), \theta(x^{m})),
        \]
        or 
        \[
  d_{G_{\xi}}(\tau_{\xi}(x^{l}), \tau_{\xi}(x^{m}))
      + 2^{-2-n(x^{l})}
       \lambda_{k-1}(\sigma^{n(x^{l})}(x^{l}), \sigma^{n(x^{l})}(x^{m})).      
      \] 
      It is then immediate that if $\tau_{\xi}(x^{l}), l \geq 1$ being 
      convergent and either of the
       two conditions imply the sequence is Cauchy in $d_{k}$.
  \end{proof}
  
 At this point, we have defined a pseudo-metric on  $X_{k}^{+}$, for each 
 value of $k$. Assuming condition (H2) and that $G$ is primitive, 
 we also know that $X_{k}^{+}$ will have limit points
 in $X_{j}^{+}$, for every $ 0 \leq j < k$. We now need to show
 these different pseudo-metrics are compatible, in an obvious sense. The following 
 concerns a special case ($j=0$) but will be used in the proof
 of the general statement, as well.

     \begin{lemma}
  \label{constr:130} 
  Let $k \geq 1$ and suppose that $x^{l}, l \geq 1,$ is a 
  sequence in $X^{+}_{k}$
   which converges to $x$ in $X_{G}^{+}$ and
  such that $n(x^{l})$ tends to infinity as $l$ tends to infinity. Then 
  $x$ is in $X^{+}_{0}$ and 
  $\lim_{l \rightarrow \infty} \theta(x^{l}) = \theta(x)$.
  \end{lemma}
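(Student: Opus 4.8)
The plan is to establish the two conclusions in turn, extracting both from the interplay between the two hypotheses: that $x^{l} \to x$ in $d_{G}$ and that $n(x^{l}) \to \infty$. Neither hypothesis alone suffices, so the argument must use them jointly.

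First I would show $x \in X^{+}_{0}$, i.e. $\kappa(x) = 0$, equivalently $x_{j} \in \xi(H^{1})$ for every $j \geq 1$. Fix a position $N \geq 1$. Convergence of $x^{l}$ to $x$ in $d_{G}$ gives an $L_{1}$ so that $x^{l}_{j} = x_{j}$ for all $1 \leq j \leq N$ once $l \geq L_{1}$. Since $n(x^{l}) \to \infty$, there is $L_{2}$ with $n(x^{l}) > N$ for $l \geq L_{2}$; by the definition of $n(x^{l})$ as the least position whose entry lies outside $\xi(H^{1})$, this forces $x^{l}_{j} \in \xi(H^{1})$ for all $1 \leq j \leq N$. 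Combining the two for $l \geq \max\{L_{1}, L_{2}\}$ yields $x_{j} = x^{l}_{j} \in \xi(H^{1})$ for $1 \leq j \leq N$. As $N$ is arbitrary, every coordinate of $x$ lies in $\xi(H^{1})$, so $\kappa(x) = 0$ and $x \in X^{+}_{0}$.

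Second, I would prove $\theta(x^{l}) \to \theta(x)$ by a direct dyadic-tail estimate using the formula for $d_{\T}$. Since $x \in X^{+}_{0}$, we have $\theta(x) = \exp(2\pi i a)$ with $a = \sum_{j \geq 1} \varepsilon(x_{j}) 2^{-j}$, while $\theta(x^{l}) = \exp(2\pi i b_{l})$ with $b_{l} = \sum_{j=1}^{n(x^{l})-1}\varepsilon(x^{l}_{j})2^{-j}$ (all entries appearing in these sums lie in $\xi(H^{1})$, so $\varepsilon$ is well-defined there, using (H1)). From the definition of $d_{\T}$ one has $d_{\T}(\theta(x^{l}), \theta(x)) \leq |a - b_{l}|$, since $t = a - b_{l}$ is an admissible value of the parameter. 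Given $\epsilon > 0$, pick $N$ with $2^{-N} < \epsilon$, and choose $l$ large enough that $x^{l}_{j} = x_{j}$ for $1 \leq j \leq N$ and $n(x^{l}) - 1 \geq N$. Then the terms with $j \leq N$ cancel in $a - b_{l}$, because $\varepsilon(x^{l}_{j}) = \varepsilon(x_{j})$ there, and the remaining terms are bounded by $\sum_{j > N} 2^{-j} = 2^{-N}$. Hence $d_{\T}(\theta(x^{l}), \theta(x)) \leq 2^{-N} < \epsilon$ for all such $l$, giving the claimed convergence.

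The argument is essentially mechanical, so I do not anticipate a genuine obstacle. The only point requiring care is the bookkeeping in the second step: one must keep track of the fact that $\theta(x^{l})$ is a truncated dyadic sum of length $n(x^{l}) - 1$ whereas $\theta(x)$ is the full infinite sum, and recognize that the hypothesis $n(x^{l}) \to \infty$ is exactly what renders the truncation harmless in the limit. The cancellation of the low-order terms relies on the coordinatewise agreement supplied by $d_{G}$-convergence, so the two hypotheses are genuinely combined rather than invoked separately.
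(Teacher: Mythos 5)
Your proof is correct and follows essentially the same route as the paper's: the first part is the same coordinatewise argument combining $d_{G}$-convergence with $n(x^{l})\to\infty$, and the second part is the same common-prefix cancellation with a tail bound of $2^{-N}$, differing only cosmetically in that you bound $d_{\T}$ by the difference of the dyadic exponents, whereas the paper factors out the shared phase and uses the Lipschitz estimate $\vert e^{i\alpha}-e^{i\beta}\vert \leq \vert \alpha - \beta\vert$ to get $\vert\theta(x)-\theta(x^{l})\vert \leq 2\pi\, 2^{-m}$ directly.
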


  \begin{proof}
For any $n \geq 1$, we have 
$x_{n} = \lim_{l} x_{n}^{l}$ and $x_{n}^{l}$ is in $\xi(H^{1})$, provided
$n(x^{l}) > n$. It follows that $x_{n}$ is in $\xi(H^{1})$, for every $n$, 
so $x$ is in $X^{+}_{0}$.

If we fix $m \geq 1$, we may choose $l_{0}$ sufficiently large so  
that $n(x^{l}) > m$ and $x_{i} = x_{i}^{l}$, for all 
$1 \leq i \leq m$ and $l \geq l_{0}$. For such $l$, we have
\begin{eqnarray*}
\vert \theta(x) - \theta(x^{l})   \vert
 & = & \vert \exp( 2 \pi i \sum_{j=1}^{\infty}  \varepsilon(x_{j}) 2^{-j}) - 
 \exp( 2 \pi i \sum_{j=1}^{n(x^{l})-1}  \varepsilon(x^{l}_{j}) 2^{-j}) \vert \\
 & = & \vert \exp( 2 \pi i \sum_{j=m+1}^{\infty}  \varepsilon(x_{j}) 2^{-j}) - 
 \exp( 2 \pi i \sum_{j=m+1}^{n(x^{l})-1}  \varepsilon(x^{l}_{j}) 2^{-j}) \vert \\
  & \leq &2 \pi  \vert \sum_{j=m+1}^{\infty}  \varepsilon(x_{j}) 2^{-j} 
  - \sum_{j=m+1}^{n(x^{l})-1}  \varepsilon(x^{l}_{j}) 2^{-j} \vert \\
  & \leq & 2 \pi 2^{-m}.
  \end{eqnarray*}
  As $m$ was arbitrary, this completes the proof. 
  \end{proof}
 
 The following summarizes the compatibility of the different 
 \newline pseudo-metrics.
      
   \begin{prop} 
   \label{constr:140}
   Suppose $ 0 \leq j <  k$, the  sequences $x^{l}, l \geq 1$  and 
   $y^{l}, l \geq 1$ are in $X^{+}_{k}$, $x$ and $y$ are in
  $X^{+}_{j}$, $x^{l}, l \geq 1,$ converges to 
  $x$  and  $y^{l}, l \geq 1,$ converges to 
  $y$ in $X^{+}_{G}$, then 
  \[
  d_{j}(x,y) = \lim_{l \rightarrow \infty} d_{k}(x^{l}, y^{l}).
  \]  
  \end{prop}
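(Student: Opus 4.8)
The plan is to split the pseudo-metric as $d_k = d_{G_\xi}(\tau_\xi(\cdot), \tau_\xi(\cdot)) + \lambda_k$ and treat the two summands separately. For the first summand, observe that $\tau_\xi \colon X_G^+ \to X_{G_\xi}^+$ is a contraction (it merely relabels edges coordinatewise), hence continuous, so $\tau_\xi(x^l) \to \tau_\xi(x)$ and $\tau_\xi(y^l) \to \tau_\xi(y)$; continuity of the metric $d_{G_\xi}$ then gives $d_{G_\xi}(\tau_\xi(x^l), \tau_\xi(y^l)) \to d_{G_\xi}(\tau_\xi(x), \tau_\xi(y))$. It therefore remains to prove that $\lambda_k(x^l, y^l) \to \lambda_j(x, y)$, and I would do this by induction on $j$, for all admissible $k > j$ simultaneously.

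For the base case $j = 0$, the point $x$ lies in $X_0^+$, so every coordinate of $x$ is in $\xi(H^1)$; since $x^l \to x$ forces agreement with $x$ on ever-longer initial segments, the first coordinate of $x^l$ lying outside $\xi(H^1)$ must recede, i.e. $n(x^l) \to \infty$, and likewise $n(y^l) \to \infty$. Lemma \ref{constr:130} then yields $\theta(x^l) \to \theta(x)$ and $\theta(y^l) \to \theta(y)$. In $\lambda_k(x^l, y^l)$ the term $|2^{-n(x^l)} - 2^{-n(y^l)}|$ tends to $0$, the term $d_\T(\theta(x^l), \theta(y^l))$ tends to $d_\T(\theta(x), \theta(y)) = \lambda_0(x,y)$ by continuity of $d_\T$, and the recursive tail is bounded by $2^{-2-n(x^l)} \cdot 2 = 2^{-1 - n(x^l)} \to 0$, using the uniform estimate $\lambda_{k-1} \le 2 d_G \le 2$ from Lemma \ref{constr:90}. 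Hence $\lambda_k(x^l, y^l) \to \lambda_0(x, y)$.

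For the inductive step $j \ge 1$, set $a = n(x)$ and $b = n(y)$. Because $x_i \in \xi(H^1)$ for $i < a$ while $x_a \notin \xi(H^1)$, and $x^l$ agrees with $x$ on long initial segments, we get $n(x^l) = a$ and $\theta(x^l) = \theta(x)$ for all large $l$; symmetrically $n(y^l) = b$ and $\theta(y^l) = \theta(y)$. Consequently, for large $l$ the leading two terms of $\lambda_k(x^l, y^l)$ and the Kronecker delta coincide exactly with those of $\lambda_j(x, y)$, so matters reduce to the tail factor $\lambda_{k-1}(\sigma^a(x^l), \sigma^b(y^l))$. By continuity of $\sigma$ we have $\sigma^a(x^l) \to \sigma^a(x)$ and $\sigma^b(y^l) \to \sigma^b(y)$; by Proposition \ref{constr:40}(3) the points $\sigma^a(x^l), \sigma^b(y^l)$ lie in $X_{k-1}^+$ while their limits $\sigma^a(x), \sigma^b(y)$ lie in $X_{j-1}^+$. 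Since $k - 1 > j - 1 \ge 0$, the induction hypothesis applies and gives $\lambda_{k-1}(\sigma^a(x^l), \sigma^b(y^l)) \to \lambda_{j-1}(\sigma^a(x), \sigma^b(y))$, which is precisely the tail appearing in $\lambda_j(x, y)$. Assembling the three terms completes the step.

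The main obstacle is the bookkeeping of the two regimes for the first-exit data $(n(x^l), \theta(x^l))$: when the limit lies in $X_0^+$ this data must diverge ($n \to \infty$, with $\theta$ nonetheless converging via Lemma \ref{constr:130}), whereas when the limit has a genuine coordinate outside $\xi(H^1)$ the data is eventually constant. This dichotomy, the same one underlying the two cases of the Cauchy criterion in Proposition \ref{constr:120}, is what forces the split into the base case and the inductive step, and the delicate input is Lemma \ref{constr:130}, which supplies convergence of $\theta$ in exactly the degenerate regime $n(x^l) \to \infty$. Everything else is continuity of $\tau_\xi$, of $\sigma$, and of the circle metric, together with the uniform bound from Lemma \ref{constr:90}.
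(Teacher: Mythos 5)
Your proof is correct and is essentially the paper's argument: the same dichotomy between the regime $n(x^{l})\to\infty$ (which forces $j=0$ and is controlled by Lemma \ref{constr:130}) and the regime of eventually constant $(n(x^{l}),\theta(x^{l}))$, with the same induction absorbing the recursive tail. Your only departures are organizational --- you peel off the $d_{G_{\xi}}$ term once via continuity of $\tau_{\xi}$ and run the induction on $\lambda$ alone, where the paper instead adds and subtracts $d_{G_{\xi}}$ of the shifted sequences inside the inductive step, and you obtain the dichotomy directly from $x\in X_{j}^{+}$ rather than through the Cauchy criterion of Proposition \ref{constr:120}.
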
 
 
  \begin{proof}      
      We first consider the case $n(x^{l}), l \geq 1,$ tends
      to infinity.  The fact that $x^{l},l \geq 1$, converges in 
      $X^{+}_{G}$, along with part 2 of Proposition  \ref{constr:100}
      shows that $x^{l}, l \geq 1$, is Cauchy in 
      $d_{k}$. Proposition \ref{constr:120}
       shows that there are two cases to consider.  We begin 
      by assuming that (a) holds so $n(x^{l})$ tends to infinity while 
      $\theta(x^{l}), l \geq 1,$  converges.

       Lemma \ref{constr:130}
      implies that $x$ is in $X^{+}_{0}$.
       Hence, we have $j=0$. We now claim that
      the sequence $n(y^{l})$ must also tend to infinity. By the same 
      argument as above, $y^{l}, l \geq 1$ is Cauchy in $d_{k}$. 
      If $n(y^{l}), l \geq 1$ is eventually constant, say $n$, 
      then $y_{n}= \lim_{l} y_{n}^{l}$ is not in $\xi(H^{1})$ which means
       that $y$ is not
      in $X^{+}_{0}$, a contradiction.

      Now, we consider the case 
      $\theta(x) \neq \theta(y)$. From Lemma \ref{constr:130}, we see that
      $\theta(x^{l}) \neq \theta(y^{l})$ for $l$ sufficiently large. 
      In this case, we know that
      \[
      d_{k}(x^{l}, y^{l}) = d_{G_{\xi}}(\tau_{\xi}(x^{l}), \tau_{\xi}(y^{l})) 
      + \vert 2^{-n(x^{l})} - 2^{-n(y^{l})} \vert +
      d_{\T}( \theta(x^{l}), \theta(y^{l})).
      \]
      The first term converges to $d_{G_{\xi}}(\tau_{\xi}(x), \tau_{\xi}(y))$, 
      the second to zero and the third to $d_{\T}( \theta(x), \theta(y))$. 
      The desired equality follows from the definition of $d_{0}(x,y)$.

      We next turn to the case $\theta(x)=\theta(y)$. Here, we have either 
       \[
      d_{k}(x^{l}, y^{l}) = d_{G_{\xi}}(\tau_{\xi}(x^{l}), \tau_{\xi}(y^{l})) 
      + \vert 2^{-n(x^{l})} - 2^{-n(y^{l})} \vert +
      d_{\T}( \theta(x^{l}), \theta(y^{l})).
      \]
      or else
       \[
      d_{k}(x^{l}, y^{l}) = d_{G_{\xi}}(\tau_{\xi}(x^{l}), \tau_{\xi}(y^{l})) 
      + 2^{-2-n(x^{l})} 
      \lambda_{0}(\sigma^{n(x^{l})}(x^{l}), \sigma^{n(y^{l})}(y^{l})).
      \]
      In either case, the limit is 
      \[
      d_{G_{\xi}}(\tau_{\xi}(x), \tau_{\xi}(y)) =  
       d_{G_{\xi}}(\tau_{\xi}(x), \tau_{\xi}(y)) + d_{\T}(\theta(x), \theta(y))
       = d_{0}(x,y).
       \]
       
       We are left to consider the case that $n(x^{l})$ is
       eventually constant. It follows that $\theta(x^{l})$ is also. 
       Suppose these values are $n$ and $\theta$, respectively. It 
       follows that $n(x)=n, \theta(x) = \theta$ and $j >0$. 
      If the sequence $n(y^{l})$ is unbounded, then, arguing as before
      for $n(x^{l})$, we have $j=0$, which is a contradiction. Hence
      the sequences $n(y^{l})$ and $\theta(y^{l})$ are also eventually constant, 
      say with values $n'$ and $\theta'$, respectively.
      
      As $x^{l}_{i}$ ($y^{l}_{i}$) converges to 
      $x_{i}$ ($y_{i}$, respectively), for $1 \leq i \leq n$ ($1 \leq i \leq n'$,
      respectively, we know then that $n(x) = n$, $\theta(x) = \theta$, $n(y)=n'$
      and $\theta(y) = \theta'$.  Let us first assume that 
      $(n, \theta) \neq (n', \theta')$.
       In this case, we have, for $l$ sufficiently large, 
       \[
       d_{k}(x^{l}, y^{l})=d_{G_{\xi}}(\tau_{\xi}(x^{l}),\tau_{\xi}(y^{l}) )
         +  \vert 2^{-n} - 2^{-n'} \vert +
          d_{\T}(\theta, \theta') 
          \]
          which clearly converges to
    \[  
     d_{G_{\xi}}(\tau_{\xi}(x),\tau_{\xi}(y) )
         +  \vert 2^{-n} - 2^{-n'} \vert +
          d_{\T}(\theta, \theta')     =  d_{j}(x, y).
          \]

          The second case to consider is  $(n, \theta) = (n', \theta')$.
          In this case, we have, for $l$ sufficiently large,
     \begin{eqnarray*}
       d_{k}(x^{l}, y^{l}) & =  & 
       d_{G_{\xi}}(\tau_{\xi}(x^{l}),\tau_{\xi}(y^{l}) ) \\
  & &     +  2^{-2-n} \lambda_{k-1}(\sigma^{n}(x^{l}), \sigma^{n}(y^{l})) \\
         & = & d_{G_{\xi}}(\tau_{\xi}(x^{l}),\tau_{\xi}(y^{l}) ) \\
  &  &        -   2^{-2-n}
   d_{G_{\xi}}(\tau_{\xi}(\sigma^{n}(x^{l})),\tau_{\xi}(\sigma^{n}(y^{l})) ) \\
    &   &   +   2^{-2-n} d_{k-1}(\sigma^{n}(x^{l}), \sigma^{n}(y^{l})).
          \end{eqnarray*}
          It is clear that the 
           $\sigma^{n}(x^{l}), \sigma^{n}(y^{l}), l \geq 1$       
          and points $\sigma^{n}(x), \sigma^{n}(y)$ satisfy the 
          hypotheses for integer for $0 \leq j-1 < k-1$, so by the induction 
          hypothesis, the sequence above converges to 
       \begin{eqnarray*}
   &       d_{G_{\xi}}(\tau_{\xi}(x),\tau_{\xi}(y) )  & \\
      &    -   2^{-2-n}
         d_{G_{\xi}}(\tau_{\xi}(\sigma^{n}(x)),\tau_{\xi}(\sigma^{n}(y)) ) & \\
        &  +   2^{-2-n} d_{j-1}(\sigma^{n}(x), \sigma^{n}(y)) & \\
      =  & d_{j}(x,y). &
      \end{eqnarray*} 
  \end{proof}

    We are now ready to unify our metrics $d_{k}$ into a single pseudo-metric.

  Let us take a moment to recall (one approach to)
  taking the  completion of a metric space $(X, d)$ (see 
  page 196 of Kelley \cite{Ke:book}). 
  A key step is showing that, if $x^{l}, y^{l}, l \geq 1$ are 
  two Cauchy sequences in $X$ with respect to $d$, then the sequence 
  of real numbers, $d(x^{l}, y^{l}), l \geq 1,$ is also Cauchy and hence
  convergent. This proof only needs symmetry and the triangle inequality, and 
   works
  equally well if $d $ is a pseudo-metric.
  
  Let us describe how we will use this. For $x,y$ in
   $X_{k}^{+}$, we have the inequality $d_{k}(x,y) \leq 3 d_{G}(x,y)$. If we choose
   any sequences $x^{l}, y^{l}, l \geq 1,$ in $X_{k}^{+}$,
    converging to $x,y$ in $X_{G}^{+}$, 
   respectively, then both are Cauchy in $d_{G}$ and hence also in $d_{k}$.
   The argument outlined above then implies that 
   $\lim_{l \rightarrow \infty} d_{k}(x^{l},y^{l})$ exists and it 
   can be shown to be
    independent of the choice of sequences. If $x,y$ are actually in $X_{k}^{+}$, 
    then this limit agrees with $d_{k}(x,y)$. We refer to this as an 
    \emph{extension} of $d_{k}$. We use this repeatedly in the following.

  \begin{thm}
  \label{constr:160}
   Assume that  $G, H, \xi$ satisfies 
  the standing hypotheses. \newline
There exists a pseudo-metric, $d$, on $X_{G}^{+}$ satisfying the following.
\begin{enumerate}
\item 
\[
d_{G_{ \xi}}(\tau_{\xi}(x), \tau_{\xi}(y)) \leq 
d(x,y) \leq 3 d_{G}(x,y),
\]
 for all $x, y$ in $X_{G}^{+}$, 
\item $d(x,y) = 0$ if and only if $x \sim_{\xi} y$, for all $x, y$ in 
$X_{G}^{+}$,
\item $d(x,y) = d_{k}(x,y)$, for all $x, y$ in $X_{k}^{+}$.
\end{enumerate}
  \end{thm}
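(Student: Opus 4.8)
The plan is to build the pseudo-metric $d$ in two stages: first assemble the separately-defined $d_k$ into a single pseudo-metric on the dense set $\cup_{k<\infty}X_k^+$, then extend once more to all of $X_G^+$. For the first stage I would extend each $d_k$ to its closure $Cl(X_k^+)=\cup_{j=0}^k X_j^+$ by the procedure described just before the theorem, obtaining $\bar d_k$. Since $\bar d_k(x,y)\le 3d_G(x,y)$ survives the extension, the triangle inequality gives $|\bar d_k(x,y)-\bar d_k(x',y')|\le 3d_G(x,x')+3d_G(y,y')$, so each $\bar d_k$ is Lipschitz, hence continuous, for $d_G$. I then show the family is consistent: for $k<k'$ the restriction of $\bar d_{k'}$ to $Cl(X_k^+)$ equals $\bar d_k$. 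Both are continuous pseudo-metrics on $Cl(X_k^+)$, and on the dense subset $X_k^+$ they agree, since $\bar d_{k'}(x,y)=d_k(x,y)=\bar d_k(x,y)$ for $x,y\in X_k^+$ — the first equality being exactly Proposition \ref{constr:140} with $j=k$. Agreement on a dense set forces agreement everywhere. Hence the $\bar d_k$ assemble into one pseudo-metric $\tilde d$ on $\cup_{k<\infty}X_k^+$, given by $\tilde d(x,y)=\bar d_k(x,y)$ for any $k\ge\max\{\kappa(x),\kappa(y)\}$, which inherits the bounds of part 1 and reproduces $d_k$ on each finite-level $X_k^+$.

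For the second stage I use that $\cup_{k<\infty}X_k^+$ is dense in $X_G^+$ (Proposition \ref{constr:50}(4)) and that $\tilde d$ is uniformly continuous for $d_G$ by the Lipschitz estimate above; thus $\tilde d$ extends uniquely to a continuous pseudo-metric $d$ on $X_G^+$, concretely $d(x,y)=\lim_l d_l(x^l,y^l)$ for any $x^l,y^l\in X_l^+$ with $x^l\to x$, $y^l\to y$, which exist by Proposition \ref{constr:50}(3). Parts 1 and 3 are then immediate: the inequalities pass to the limit using continuity of $\tau_\xi$, and $d=\tilde d=d_k$ on each finite-level $X_k^+$. The easy half of part 2 is also quick, since a nontrivial $\sim_\xi$-class lies in a single finite-level $X_k^+$ (the definition of $\sim_\xi$ forces all but finitely many coordinates into $\xi(H^1)$), so $x\sim_\xi y$ gives either $x=y$ or $d(x,y)=d_k(x,y)=0$ by Proposition \ref{constr:100}(3).

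The main obstacle is the converse half of part 2, that $d(x,y)=0\Rightarrow x\sim_\xi y$. For $x,y\in\cup_{k<\infty}X_k^+$ this reduces to Proposition \ref{constr:100}(3) once one notes that $\kappa(x)\ne\kappa(y)$ forces $\tau_\xi(x)\ne\tau_\xi(y)$ (Proposition \ref{constr:40}(2)) and hence $d(x,y)>0$ by part 1, so the two points share a common finite level. The genuinely new case is $x,y\in X_\infty^+$, where $\sim_\xi$ is trivial and I must prove $d(x,y)=0\Rightarrow x=y$. Part 1 already forces $\tau_\xi(x)=\tau_\xi(y)$, so $x$ and $y$ have identical ``bad'' positions $n_1<n_2<\cdots$ (coordinates not in $\xi(H^1)$), agree there, and can differ only in the labels $\varepsilon$ at the intervening coordinates. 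Choosing approximants $x^l,y^l\in X_l^+$ agreeing with $x,y$ on a growing initial block (as in Proposition \ref{constr:50}(3)), one gets $n(x^l)=n(y^l)=n_1$ and $\theta(x^l)=\theta(x)$, $\theta(y^l)=\theta(y)$ for large $l$; feeding this into the recursion for $\lambda_l$ and letting $l\to\infty$ yields the key identity $d(x,y)=2^{-2-n_1}d(\sigma^{n_1}(x),\sigma^{n_1}(y))$ whenever $\tau_\xi(x)=\tau_\xi(y)$ and $\theta(x)=\theta(y)$. I then argue by descent on blocks: if $x\ne y$, let $m$ be the first coordinate of disagreement (necessarily a good coordinate, so $\varepsilon(x_m)\ne\varepsilon(y_m)$); peeling off the finitely many blocks preceding $m$ via the identity reduces to a pair whose first discrepancy precedes its first bad position, where $\theta$ differs and $d(\cdot,\cdot)\ge d_\T(\theta(\cdot),\theta(\cdot))>0$, contradicting $d(x,y)=0$. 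The delicate bookkeeping is that the shifted approximants land in the correct level, $\sigma^{n_1}(x^l)\in X_{l-1}^+$ by Proposition \ref{constr:40}(3), and that the estimate $d(x,y)\ge d_\T(\theta(x),\theta(y))$ survives the limit; both follow from the inequalities already established for $\lambda_l$ in Lemma \ref{constr:90} and Proposition \ref{constr:100}.
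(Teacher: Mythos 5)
Your proposal is correct and follows essentially the same route as the paper's proof: extend each $d_{k}$ to $Cl(X_{k}^{+})$ by the Cauchy-sequence extension, glue the extensions using the compatibility supplied by Proposition \ref{constr:140}, extend to all of $X_{G}^{+}$ by density and the Lipschitz bound $d \leq 3d_{G}$, dispose of part 2 in the finite-$\kappa$ case via Propositions \ref{constr:40} and \ref{constr:100}, and settle the $\kappa = \infty$ case by level-$l$ approximants, the separation of the relevant roots of unity forcing $\theta(x)=\theta(y)$, and the shift-scaling relation propagated inductively along the bad coordinates. Your phrasing of the last step as a descent to the first disagreement, with the scaling stated as an exact identity $d(x,y)=2^{-2-n_{1}}d(\sigma^{n_{1}}(x),\sigma^{n_{1}}(y))$ rather than the paper's inequality, is only a cosmetic variation of the same argument.
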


  \begin{proof}
 For $k \geq 0$, the space $X_{k}^{+}$ carries two pseudo-metrics, $d_{G}$ and 
 $d_{k}$. The former dominates the latter (part 2 of Proposition \ref{constr:100}), 
 at least up to a factor of $3$. In addition, it is clear that 
 $d_{G_{\xi}}(\tau_{\xi}(x), \tau_{\xi}(y)) \leq 
d_{k}(x,y)$, for all $x, y$ in $X_{k}^{+}$.
 We also recall from Proposition \ref{constr:50} that the closure 
 of $X_{k}^{+}$ in $X_{G}^{+}$ is $Cl(X_{k}^{+}) = \cup_{j=0}^{k} X_{j}^{+}$. 
 It follows that $d_{k}$ extends to a pseudo-metric on 
 $Cl(X_{k}^{+})$, which we denote $\bar{d}_{k}$, as described above
  which is also 
 dominated by $3d_{G}$ and bounded below by 
 $d_{G_{\xi}}(\pi_{\xi}(\cdot),\pi_{\xi}(\cdot))$. In addition, it follows from 
 Proposition \ref{constr:140} that the restriction of $\bar{d}_{k}$ to 
 $X_{j}^{+} \times X_{j}^{+}$ equals $d_{j}$, for any $0 \leq j \leq k$.
 It  also follows that 
  $\bar{d}_{k}$ equals $\bar{d}_{j}$  after extending the latter to 
  $\cup_{i=0}^{j} X_{i}^{+}$. 
 
 We may then define a pseudo-metric, $d_{\infty}$, on $\cup_{j=0}^{\infty}  X_{j}^{+}$
 which is $\bar{d}_{k}$  on $\cup_{j=0}^{k}  X_{j}^{+}$. 
 This is also bounded by $3d_{G}$ and so extends continuously to a
 pseudo-metric $d$ on 
 the closure of $\cup_{j=0}^{\infty}  X_{j}^{+}$ which is $X_{G}^{+}$ 
 (Proposition \ref{constr:40}) and is bounded below by 
 $d_{\xi}(\pi_{\xi}(\cdot),\pi_{\xi}(\cdot))$ and above by $3 d_{G}$.
 
 The first and third properties of   $d$ in the conclusion
  are immediate. 
 For the second, we first suppose that $x \sim_{\xi} y$. It follows 
 from the definition of $\sim_{\xi}$ that $x,y$ lie in $X_{k}^{+}$, for 
 some $0 \leq k < \infty$ and hence
$ d(x,y) = d_{k}(x,y) = 0$, by Proposition \ref{constr:100}.

Now assume that $x,y$ are in $X_{G}^{+}$ and $d(x,y) = 0$. 
It follows that $\tau_{\xi}(x) = \tau_{\xi}(y)$ which, in turn 
implies that $\kappa(x) = \kappa(y)$. If this is finite, say $k$, then 
$x,y$ are in $X_{k}^{+}$ and the conclusion follows from 
Proposition \ref{constr:100}. We now assume $\kappa(x) = \infty$ and 
  $\tau_{\xi}(x) = \tau_{\xi}(y)$ also implies that $n(x) = n(y)$.
  From Proposition \ref{constr:50}, we may find sequences,  $x^{l}, y^{l} $
  in $X_{l}^{+}$ converging to $x,y$, respectively. 
  This means that 
  \[
  0 = d(x,y) = \lim_{l} d(x^{l}, y^{l}) = \lim_{l} d_{l}(x^{l}, y^{l}).
  \]
  There exists $l_{0} \geq 1$
  such that, for all $l \geq l_{0}$, $x_{n} =x^{l}_{n}, y_{n} = y^{l}_{n}$, 
  for $1 \leq n \leq n(x)$. From this, it follows that 
  $n(x^{l}) = n(x) = n(y) = n(y^{l})$ and 
  $\theta(x) = \theta(x^{l}), \theta(y) = \theta(y^{l})$, for $l \geq l_{0}$.
  If $\theta(x^{l})$ and $\theta(y^{l})$ are distinct for all $l$, then 
  \[
  d_{l}(x^{l}, y^{l}) \geq d_{\T}( \theta(x^{l}), \theta(y^{l})) 
  \geq 2^{-n(x)}
  \]
  which is a contradiction. It follows that, for some $l$, we have 
  $\theta(x) = \theta(x^{l})  = \theta(y^{l}) = \theta(y)$. Together with that 
  the fact that $\tau_{\xi}(x_{n}) = \tau_{\xi}(y_{n})$, this implies
  that $x_{n} = y_{n}$, for $ 1 \leq n \leq n(x)$.
  
  We note that for $l \geq l_{0}$, $\sigma^{n(x)}(x^{l}),\sigma^{n(x)}(y^{l})$
  are in $X_{l-1}^{+}$ and we have 
  \begin{eqnarray*}
  d_{l-1}(\sigma^{n(x)}(x^{l}),\sigma^{n(x)}(y^{l})) 
  & =  &  d_{G/ \xi}( \sigma^{n(x)}(x^{l}),\sigma^{n(x)}(y^{l})) \\
    &  & 
    + \lambda_{l-1}(\sigma^{n(x)}(x^{l}),\sigma^{n(x)}(y^{l})) ) \\
     &  =  &  2^{n(x)} d_{G/ \xi}( x^{l},y^{l}) 
    + \lambda_{l-1}(\sigma^{n(x)}(x^{l}),\sigma^{n(x)}(y^{l})) ) \\
     &  \leq  & 2^{n(x)} d_{l}(x^{l}, y^{l}).
     \end{eqnarray*}
  After re-indexing the sequence, we can apply the same argument to 
  \newline
  $\sigma^{n(x)}(x), \sigma^{n(x)}(x), \sigma^{n(x)}(x^{l-1}), \sigma^{n(x)}(y^{l-1})$
  which shows that $x_{n} = y_{n}$, for \newline
  $ 1 \leq n \leq n(x) +n(\sigma^{n(x)})$.
  Continuing in this way, we see that $x=y$, as desired.
  \end{proof}

  \begin{cor}
  \label{constr:170}
  The space topological $X_{\xi}^{+}$ has a metric, $d_{\xi}$, 
  inducing the topology such that
  \begin{enumerate}
\item $d_{G_{ \xi}}(\tau_{\xi}(x), \tau_{\xi}(y))
 \leq d_{\xi}(\pi_{\xi}(x),\pi_{\xi}(y)) \leq 3 d_{G}(x,y)$,
 for all $x, y$ in $X_{G}^{+}$,
\item $ d_{\xi}(\pi_{\xi}(x),\pi_{\xi}(y)) = d_{k}(x,y)$, for all $x, y$ in $X_{k}^{+}$.
\end{enumerate}
  \end{cor}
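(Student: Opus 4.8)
The plan is to obtain the corollary as a direct transfer of Theorem \ref{constr:160} through the quotient map $\pi_{\xi}$. The pseudo-metric $d$ constructed in Theorem \ref{constr:160} has the property that $d(x,y)=0$ if and only if $x \sim_{\xi} y$ (part 2), which is exactly the equivalence relation defining $X_{\xi}^{+}$. The standard fact about pseudo-metrics is that such a $d$ descends to a genuine metric on the quotient $X_{G}^{+}/\!\sim_{\xi} = X_{\xi}^{+}$: one sets $d_{\xi}(\pi_{\xi}(x),\pi_{\xi}(y)) = d(x,y)$, and this is well-defined precisely because $d$ is constant on $\sim_{\xi}$-classes (which follows from the pseudo-metric triangle inequality together with part 2). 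So first I would define $d_{\xi}$ by this formula and verify it is well-defined, symmetric, satisfies the triangle inequality (inherited from $d$), and separates points (this is the ``only if'' content of part 2).

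Next I would read off the two displayed inequalities. Substituting $d_{\xi}(\pi_{\xi}(x),\pi_{\xi}(y)) = d(x,y)$ into part 1 of Theorem \ref{constr:160} gives item 1 of the corollary immediately, and substituting it into part 3 gives item 2. No computation beyond this substitution is needed, since the estimates were already established upstream.

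The one genuinely non-routine point—and the step I expect to be the main obstacle—is verifying that the metric topology induced by $d_{\xi}$ coincides with the quotient topology on $X_{\xi}^{+}$, which is how $X_{\xi}^{+}$ was originally defined in \ref{constr:80}. For this I would argue that $\pi_{\xi}$ is continuous from $(X_{G}^{+}, d_{G})$ to $(X_{\xi}^{+}, d_{\xi})$, using the upper bound $d_{\xi}(\pi_{\xi}(x),\pi_{\xi}(y)) \leq 3 d_{G}(x,y)$ from item 1; continuity of a map out of a quotient is equivalent to continuity of its composition with the quotient map, so this shows the identity map from the quotient topology to the $d_{\xi}$-topology is continuous. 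For the reverse direction I would exploit compactness: $X_{G}^{+}$ is compact, so its continuous image $X_{\xi}^{+}$ is compact in the quotient topology, while $(X_{\xi}^{+}, d_{\xi})$ is Hausdorff (being metric). A continuous bijection from a compact space to a Hausdorff space is a homeomorphism, so the two topologies agree.

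Finally, I would note that the existence statement is now complete: $d_{\xi}$ is a metric inducing the (quotient) topology and satisfying both inequalities. The only care required throughout is to keep straight that the lower bound $d_{G_{\xi}}(\tau_{\xi}(\cdot),\tau_{\xi}(\cdot)) \leq d_{\xi}(\pi_{\xi}(\cdot),\pi_{\xi}(\cdot))$ is genuinely a statement on the quotient—it is consistent because $\tau_{\xi}$ factors through $\pi_{\xi}$ via $\rho_{\xi}$ (by part 2 of Proposition \ref{constr:70} and the definition of $\rho_{\xi}$ in \ref{constr:80}), so the quantity $d_{G_{\xi}}(\tau_{\xi}(x),\tau_{\xi}(y))$ depends only on $\pi_{\xi}(x),\pi_{\xi}(y)$ and the inequality descends unambiguously.
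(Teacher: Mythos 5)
Your proposal is correct and follows exactly the route the paper intends: the paper states before Theorem~\ref{constr:160} that the immediate consequence of $d(x,y)=0 \Leftrightarrow x \sim_{\xi} y$ and the bound $d \leq 3d_{G}$ is that $d$ descends to a metric $d_{\xi}$ on the quotient inducing the quotient topology, and leaves the corollary without further proof. Your compactness-plus-Hausdorff argument for the coincidence of the metric and quotient topologies is a valid way to fill in the one detail the paper leaves implicit.
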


  \begin{thm}
  \label{constr:180}
  \begin{enumerate}
  \item 
  The map $\rho_{\xi}:X_{\xi}^{+} \rightarrow X^{+}_{G_{\xi}}$ 
  satisfies 
  \[
  d_{G_{\xi}}(\rho_{\xi}(x), \rho_{\xi}(y)) \leq d_{\xi}(x,y),
  \]
  for all $x, y$ in $X_{\xi}^{+}$.
  \item 
  The map $\theta: X_{G}^{+} \rightarrow \T$ is constant
  on $\sim_{\xi}$-equivalence classes and so there is  a map 
  (also) denoted by $\theta : X^{+}_{\xi} \rightarrow \T$
  which satisfies
  \[
  d_{\T}(\theta(x), \theta(y))  \leq d_{\xi}(x,y),
  \]
  for all $x, y$ in $X_{\xi}^{+}$.
  \end{enumerate}
  \end{thm}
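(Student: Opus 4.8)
The plan is to handle the two parts separately: the first is essentially formal, while the second rests on a single genuine ingredient, the continuity of $\theta$.

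For part 1, I would simply unwind the definitions. Every point of $X_{\xi}^{+}$ is of the form $\pi_{\xi}(\tilde{x})$ for some $\tilde{x}$ in $X_{G}^{+}$, and the defining identity $\rho_{\xi} \circ \pi_{\xi} = \tau_{\xi}$ of Definition \ref{constr:80} gives $\rho_{\xi}(\pi_{\xi}(\tilde{x})) = \tau_{\xi}(\tilde{x})$. Choosing lifts $\tilde{x}, \tilde{y}$ of $x, y$ and recalling that $d_{\xi}$ is precisely the metric induced on the quotient by the pseudo-metric $d$, the lower bound in Theorem \ref{constr:160} (equivalently Corollary \ref{constr:170}) yields
\[
d_{G_{\xi}}(\rho_{\xi}(x), \rho_{\xi}(y)) = d_{G_{\xi}}(\tau_{\xi}(\tilde{x}), \tau_{\xi}(\tilde{y})) \leq d(\tilde{x}, \tilde{y}) = d_{\xi}(x,y),
\]
which is the claim.

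For part 2, the constancy of $\theta$ on $\sim_{\xi}$-classes has in effect already been verified inside the proof of part 3 of Proposition \ref{constr:100}: if $x \sim_{\xi} x'$, the coordinates in which $x$ and $x'$ differ all occur at or beyond position $n(x)$, so unless $\kappa(x)=0$ they never enter the finite sum defining $\theta$, while if $\kappa(x)=0$ the two $0,1$-sequences $\varepsilon(x_{j})$ and $\varepsilon(x'_{j})$ are dyadic representations of the same real number modulo the integers. In either case $\theta(x)=\theta(x')$, so $\theta$ descends to $X_{\xi}^{+}$. It then suffices to prove $d_{\T}(\theta(x),\theta(y)) \leq d(x,y)$ on $X_{G}^{+}$ and pass to the quotient. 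On a fixed piece $X_{k}^{+}$ this is immediate, since the recursive definition of $\lambda_{k}$ shows $\lambda_{k}(x,y) \geq d_{\T}(\theta(x),\theta(y))$, and $d(x,y) = d_{k}(x,y) = d_{G_{\xi}}(\tau_{\xi}(x),\tau_{\xi}(y)) + \lambda_{k}(x,y) \geq \lambda_{k}(x,y)$ by Theorem \ref{constr:160}.

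To reach arbitrary pairs I would argue by continuity and density. The pseudo-metric $d$ is Lipschitz for $d_{G}$, since $\vert d(x,y) - d(x',y') \vert \leq d(x,x') + d(y,y') \leq 3 d_{G}(x,x') + 3 d_{G}(y,y')$, so $(x,y) \mapsto d(x,y)$ is continuous on $X_{G}^{+} \times X_{G}^{+}$. By part 3 of Proposition \ref{constr:50}, any $x,y$ admit approximating sequences $x^{l}, y^{l}$ lying in a common piece $X_{l}^{+}$, so $\bigcup_{k} (X_{k}^{+} \times X_{k}^{+})$ is dense in $X_{G}^{+} \times X_{G}^{+}$, where the inequality of the previous paragraph already holds. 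The one point requiring real work, which I expect to be the main obstacle, is the continuity of $\theta : X_{G}^{+} \to \T$. Away from $X_{0}^{+}$ this is easy: at a point $x$ with $\kappa(x) > 0$ a $d_{G}$-neighbourhood agreeing with $x$ through coordinate $n(x)$ forces $n(\cdot)$ and the finite defining sum to be constant, so $\theta$ is locally constant there. At a point of $X_{0}^{+}$, however, $\theta$ is an infinite sum, and the needed convergence is exactly the content of Lemma \ref{constr:130}, whose estimate $\vert \theta(x) - \theta(x^{l}) \vert \leq 2\pi 2^{-m}$ applies precisely because sequences in $X_{l}^{+}$ converging to a point of $X_{0}^{+}$ satisfy $n(x^{l}) \to \infty$. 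With $\theta$ continuous, both sides of $d_{\T}(\theta(\cdot),\theta(\cdot)) \leq d(\cdot,\cdot)$ are continuous, so the inequality extends from the dense set to all of $X_{G}^{+} \times X_{G}^{+}$; descending through $\pi_{\xi}$ gives $d_{\T}(\theta(x),\theta(y)) \leq d_{\xi}(x,y)$ on $X_{\xi}^{+}$, as required.
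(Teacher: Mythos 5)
Your proposal is correct and takes essentially the same route as the paper: the inequality $d_{\T}(\theta(x),\theta(y)) \leq \lambda_{k}(x,y) \leq d_{k}(x,y)$ on each $X_{k}^{+}$ (and the corresponding $d_{G_{\xi}}$ lower bound for part 1), extended to everything via the fact that $d_{\xi}$ is the common extension of the $d_{k}$ --- which is precisely your density-plus-continuity argument made explicit. The only difference is one of detail: you spell out the constancy of $\theta$ on $\sim_{\xi}$-classes and its continuity on $X_{G}^{+}$ (locally constant where $\kappa > 0$, Lemma \ref{constr:130} at points of $X_{0}^{+}$), points the paper's terse appeal to the common extension leaves implicit.
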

  
  \begin{proof}
  It is a trivial consequence of the definition of $d_{k}$
  that the inequality holds with $d_{k}$ on the right hand side
  for $x,y$ in $X_{k}^{+}$, for any $k \geq 1$. The conclusion holds 
  since $d_{\xi}$ is the common extension of all $d_{k}$.
  
  For any $k \geq 1$ and $x,y$ in $X_{k}^{+}$, we have 
  \[
  d_{\T}(\theta(x), \theta(y)) \leq \lambda_{k}(x,y) \leq d_{k}(x,y).
  \]
  Again, as $d_{\xi}$ is the common extension of all $d_{k}$, this
  also holds for $d_{\xi}$ on $X_{\xi}$.
  \end{proof}
  
  As we mentioned earlier, it is an easy matter to see that $X_{\xi}^{+}$
  is metrizable. We have gone to considerable trouble to 
  actually produce a metric. The reason is that it satisfies some nice properties
  and the following is the most important. We remark that the same property holds
  for the map $\sigma$ on $(X_{G}^{+}, d_{G})$.

  \begin{thm}
  \label{constr:190}
  If $x,y$ in $X^{+}_{\xi}$ satisfy $d_{\xi}(x,y) \leq 2^{-2}$, then 
  \[
2 d_{\xi}(x,y) \leq   d_{\xi}(\sigma_{\xi}(x), \sigma_{\xi}(y))
 \leq  8 d_{\xi}(x,y).
  \]
  \end{thm}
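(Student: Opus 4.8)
The plan is to lift everything to the pseudo-metric $d$ on $X_G^+$ from Theorem~\ref{constr:160}. Since $d$ descends to $d_\xi$ and $\sigma_\xi \circ \pi_\xi = \pi_\xi \circ \sigma$, it suffices to prove that $2 d(x,y) \le d(\sigma(x),\sigma(y)) \le 8 d(x,y)$ for all $x,y$ in $X_G^+$ with $d(x,y)\le 2^{-2}$. Write $d(x,y) = d_{G_\xi}(\tau_\xi(x),\tau_\xi(y)) + \lambda(x,y)$, where $\lambda := d - d_{G_\xi}(\tau_\xi(\cdot),\tau_\xi(\cdot))$ is the common continuous extension of the $\lambda_k$. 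The first summand is governed purely by the edge shift on $X_{G_\xi}^+$: because $d_{G_\xi}(\tau_\xi(x),\tau_\xi(y)) \le d(x,y) \le 2^{-2} < 2^{-1}$, the sequences $\tau_\xi(x),\tau_\xi(y)$ agree in their first coordinate, whence $d_{G_\xi}(\tau_\xi(\sigma(x)),\tau_\xi(\sigma(y))) = 2\, d_{G_\xi}(\tau_\xi(x),\tau_\xi(y))$. Thus the statement reduces to showing
\[
2 \lambda(x,y) \le \lambda(\sigma(x),\sigma(y)) \le 8\lambda(x,y),
\]
since combining an exactly-doubling term with a $\lambda$-term whose ratio lies in $[2,8]$ keeps the total ratio in $[2,8]$.

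Two mechanisms drive the behaviour of $\lambda$ under $\sigma$. First, a direct computation with the defining formula for $\theta$ shows that if $x_1 \in \xi(H^1)$ then $\theta(\sigma(x)) = \theta(x)^2$, together with $n(\sigma(x)) = n(x)-1$ and the tail identity $\sigma^{n(\sigma(x))}(\sigma(x)) = \sigma^{n(x)}(x)$; this is exactly the doubling map of binary expansion. Second, the circle metric satisfies $d_\T(z^2,w^2) = 2\, d_\T(z,w)$ whenever $d_\T(z,w) \le 2^{-2}$, and the hypothesis supplies this since $d_\T(\theta(x),\theta(y)) \le \lambda(x,y) \le d(x,y) \le 2^{-2}$ (Theorem~\ref{constr:180}). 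This is precisely where the threshold $2^{-2}$ is needed.

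I would first prove the reduced inequality for $x,y$ in a common $X_k^+$, using the recursive definition of $\lambda_k$. As $d_{G_\xi}(\tau_\xi(x),\tau_\xi(y)) < 2^{-1}$ forces $x_1 \in \xi(H^1)$ if and only if $y_1 \in \xi(H^1)$, there are two cases. If $x_1,y_1 \notin \xi(H^1)$, then $n(x)=n(y)=1$ and $\theta(x)=\theta(y)=1$, so the equal-$(n,\theta)$ branch gives $\lambda_k(x,y) = 2^{-3}\lambda_{k-1}(\sigma(x),\sigma(y))$, i.e. $\lambda(\sigma(x),\sigma(y)) = 8\lambda(x,y)$. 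If $x_1,y_1 \in \xi(H^1)$, then $n(x),n(y) \ge 2$, and I would split on whether $(n(x),\theta(x)) = (n(y),\theta(y))$. In the equal case, $(n(\sigma(x)),\theta(\sigma(x)))=(n(x)-1,\theta(x)^2)$ still agrees with its $y$-counterpart, and the tail identity collapses the recursion to $\lambda_k(\sigma(x),\sigma(y)) = 2^{-1-n(x)}\lambda_{k-1}(\sigma^{n(x)}(x),\sigma^{n(x)}(y)) = 2\lambda_k(x,y)$. In the distinct case, the leading term $|2^{-n(x)}-2^{-n(y)}| + d_\T(\theta(x),\theta(y))$ doubles term by term under $\sigma$; here one must check that the pair stays distinct after shifting, which holds because $\theta(x)=-\theta(y)$ (the only way squaring could merge them at equal $n$) would force $d_\T(\theta(x),\theta(y)) = 2^{-1}$, contradicting the hypothesis. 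Either way $\lambda(\sigma(x),\sigma(y)) = 2\lambda(x,y)$, and combining with the factor-$2$ behaviour of $d_{G_\xi}$ yields total ratio exactly $2$ when $x_1 \in \xi(H^1)$ and ratio in $[2,8]$ when $x_1 \notin \xi(H^1)$; in particular both constants are sharp.

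Finally I would remove the restriction that $x,y$ lie in a single $X_k^+$. Using Proposition~\ref{constr:50} I would choose sequences $x^l,y^l \in X_l^+$ converging to $x,y$ in $X_G^+$; since $d \le 3 d_G$ and $\theta$ is $d$-continuous (Theorem~\ref{constr:180}), the quantities $d(x^l,y^l)$, $d(\sigma(x^l),\sigma(y^l))$ and $d_\T(\theta(x^l),\theta(y^l))$ converge to their counterparts for $x,y$, and for large $l$ the symbols $x^l_1, y^l_1$ share the $\xi(H^1)$-membership of $x_1,y_1$. Applying the single-$X_l^+$ estimate to $x^l,y^l$ and passing to the limit via the compatibility Proposition~\ref{constr:140} yields the inequality for $x,y$. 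I expect the main obstacle to be precisely this limiting step together with the case-bookkeeping of the $\lambda$-recursion: tracking how the equal/distinct dichotomy and the index $k$ change under $\sigma$, and handling the boundary value $d(x,y)=2^{-2}$, where the circle-doubling relation is invoked at the endpoint of its range and must be justified by continuity.
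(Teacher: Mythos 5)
Your proposal is correct and follows essentially the same route as the paper's proof: you reduce to $x,y$ in a fixed $X_k^{+}$ and run the same case analysis on the recursion for $\lambda_k$, using $\theta(\sigma(x)) = \theta(x)^{2}$, $n(\sigma(x)) = n(x)-1$, the doubling $d_{\T}(z^{2},w^{2}) = 2\, d_{\T}(z,w)$ valid for $d_{\T}(z,w) \leq 2^{-2}$, and the observation that a pair with $(n(x),\theta(x)) \neq (n(y),\theta(y))$ stays distinct after shifting because merging would force $\theta(x) = -\theta(y)$ and hence $d_{\T}(\theta(x),\theta(y)) = 2^{-1}$, contradicting the hypothesis. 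The only differences are organizational: you split first on whether $x_{1} \in \xi(H^{1})$ (absorbing the paper's separate $n(x)=n(y)=2$ case into that contradiction) and you make explicit the density-and-continuity step extending the inequality from $\cup_{k} X_{k}^{+}$ to all of $X_{\xi}^{+}$, which the paper leaves implicit.
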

  
  \begin{proof}
  We will prove the analogous statement for $d_{k}, \sigma$ and 
   $x,y$ in $X^{+}_{k}$, for some
   fixed $k \geq 0$. 
   
   We first consider $k=0$ and note that $\sigma(X_{0}^{+}) = X_{0}^{+}$.
   For $x,y$ in $X_{0}^{+}$, we have 
   \[
   d_{0}(x,y) = d_{G_{\xi}}(\tau_{\xi}(x), \tau_{\xi}(y)) 
   + d_{\T}(\theta(x), \theta(y)).
   \]
   Our hypothesis implies both terms on the right are at most $2^{-2}$.
   This implies that $\tau_{\xi}(x_{1}) = \tau_{\xi}(y_{1})$ and hence 
   \[
   d_{G_{\xi}}(\tau_{\xi}(\sigma(x)), \tau_{\xi}(\sigma(y)))
   = d_{G_{\xi}}(\sigma\circ \tau_{\xi}(x), \sigma \circ \tau_{\xi}(y))  
   = 2 d_{G_{\xi}}(\tau_{\xi}(x), \tau_{\xi}(y))
   \]
   and $d_{\T}(\theta(x), \theta(y)) \leq 2^{-2}$ and hence
   \[
   d_{\T}(\theta(\sigma(x)), \theta(\sigma(y))) 
   = d_{\T}(\theta(x)^{2}, \theta(y)^{2}) = 2 d_{\T}(\theta(x), \theta(y)).
   \]
   It follows that $d_{0}(\sigma(x), \sigma(y)) = 2 d_{0}(x,y)$
    and we are done.
     
  Now assume $k \geq 1$.
  First, we note that $d_{k}(x,y) \leq 2^{-2}$ implies that 
   $d_{G_{\xi}}(\tau_{\xi}(x),\tau_{\xi}(y)) \leq 2^{-2}$ so 
   $\tau_{\xi}(x_{i}) = \tau_{\xi}(y_{i})$, for $i = 1, 2$.
   It follows that $n(x) = n(y)$ or both are at least $3$.
   
   Our first case is $n(x) =1 = n(y)$. It follows that 
   $\sigma(x), \sigma(y)$ 
   are both in $X^{+}_{k-1}$.
    We have $n(x) =1= n(y)$ and $\theta(x) = \theta(y)=1$ so
   \begin{eqnarray*}
   d_{\xi}(x,y) & = & d_{k}(x,y) \\
  & =  &   d_{G_{\xi}}(\tau_{\xi}(x), \tau_{\xi}(y)) + \lambda_{k}(x,y)\\
       &  =  &   d_{G_{\xi}}(\tau_{\xi}(x), \tau_{\xi}(y)) + 
       2^{-3} \lambda_{k-1}(\sigma(x), \sigma(y)) \\
       &  = &  2^{-1} d_{G_{\xi}}(\tau_{\xi}(\sigma(x)), \tau_{\xi}(\sigma(y))) + 
       2^{-3} \lambda_{k-1}(\sigma(x), \sigma(y)). 
       \end{eqnarray*}
       The right hand side is clearly bounded above by 
       \[
     2^{-1} d_{G_{\xi}}(\tau_{\xi}(\sigma(x)), \tau_{\xi}(\sigma(y))) + 
       2^{-1} \lambda_{k-1}(\sigma(x), \sigma(y))  
       =    2^{-1} d_{k-1}(\sigma(x),\sigma(y))
       \]
       and below by 
       \[
     8^{-1} d_{G_{\xi}}(\tau_{\xi}(\sigma(x)), \tau_{\xi}(\sigma(y))) + 
       8^{-1} \lambda_{k-1}(\sigma(x), \sigma(y)) 
        =    8^{-1} d_{k-1}(\sigma(x),\sigma(y)).
       \]

   Next, we consider $n(x) = n(y) = 2$. 
   Also, $\theta(x), \theta(y)$ are both $ \pm 1 $.
  If they are distinct, we have 
   \[
   d_{\xi}(x,y) \geq \lambda_{k}(x,y) \geq d_{\T}(\theta(x), \theta(y))
    \geq 2^{-1},
   \]
   which is a contradiction. Hence, we have $\theta(x)= \theta(y)$. So we 
   have $x, y$ are still in $X^{+}_{k}$, 
   $n(\sigma(x)) = n(x)-1 = n(\sigma(y))$  and $\theta(\sigma(x))=
   \theta(x)^{2} = \theta(y)^{2} = \theta(\sigma(y))$ and 
  \begin{eqnarray*}
   d_{\xi}(x,y) & = & d_{k}(x,y) \\
   & = &  d_{G_{\xi}}(\tau_{\xi}(x), \tau_{\xi}(y)) + \lambda_{k}(x,y)\\
       &  = &  2^{-1} d_{G_{\xi}}(\tau_{\xi}(\sigma(x)), \tau_{\xi}(\sigma(y))) + 
       2^{-n(x)-2} \lambda_{k-1}(\sigma^{n(x)}(x), \sigma^{n(y)}(y)) \\
       & =  & 2^{-1} \left( d_{G_{\xi}}(\tau_{\xi}(\sigma^{n(x)}(x)), 
       \tau_{\xi}(\sigma^{n(x)}(y))) \right. \\
        &  &  \left.
     +   2^{-1}2^{-n(x)-1}\lambda_{k-1}(\sigma(\sigma(x)), \sigma(\sigma(y))) \right) \\
      & =  & 2^{-1} \left( d_{G_{\xi}}(\tau_{\xi}(\sigma^{n(x)}(x)), 
       \tau_{\xi}(\sigma^{n(x)}(y))) \right.  \\
         &  & 
     + \left. 2^{-1} \lambda_{k}(\sigma(x), \sigma(y)) \right) \\
       &  =  &  2^{-1} d_{k}(\sigma(x),\sigma(y)).
   \end{eqnarray*}  
   
   Finally, we consider the case $n(x), n(y) > 2$.
   Here again, we have \newline
    $n(\sigma(x)) = n(x)-1$, $n(\sigma(y)) = n(y)-1$, 
   $\theta(\sigma(x)) = \theta(x)^{2}$ and 
   $\theta(\sigma(y)) = \theta(y)^{2}$.
   Let us first assume that $(n(x), \theta(x)) \neq (n(y), \theta(y))$.
   We claim that $(n(\sigma(x)), \theta(\sigma(x))) 
   \neq (n(\sigma(y)), \theta(\sigma(y)))$. If they are equal then 
   $n(x)-1 = n(y)-1$ so $n(x) = n(y)$. It follows that 
   $\theta(x) \neq \theta(y)$, while $\theta(x)^{2} = \theta(y)^{2}$
   which implies $\theta(x) = - \theta(y)$. This in turn 
   implies that
    $d_{k}(x,y) \geq d_{\T}(\theta(x), \theta(y)) = 2^{-1}$ which is a 
    contradiction. From the fact that 
     $2^{-2} \geq d_{k}(x,y) \geq d_{\T}(\theta(x), \theta(y)) $, we see that 
     $d_{\T}(\theta(x)^{2}, \theta(y)^{2}) = 2 d_{\T}(\theta(x), \theta(y)) $.
    
    Now we have
     \begin{eqnarray*}
   d_{\xi}(x,y) & = & d_{k}(x,y) \\
   & = &  d_{G_{\xi}}(\tau_{\xi}(x), \tau_{\xi}(y)) + \lambda_{k}(x,y)\\
       &  =  &   d_{G_{\xi}}(\tau_{\xi}(x), \tau_{\xi}(y)) + 
      \vert 2^{-n(x)} - 2^{-n(y)} \vert 
        +  d_{\T}(\theta(x), \theta(y)) \\
        & = & 2^{-1} d_{G_{\xi}}(\tau_{\xi}(\sigma(x)), \tau_{\xi}(\sigma(y))) 
    +  2^{-1} \vert 2^{1-n(x)} - 2^{1-n(y)} \vert  \\
    &  &          + 2^{-1} d_{\T}(\theta(x)^{2}, \theta(y)^{2}) \\
       &  =  &  2^{-1} d_{k}(\sigma(x),\sigma(y)) \\
       & = &  2^{-1} d_{\xi}(\sigma(x),\sigma(y)).
   \end{eqnarray*}

   The final case is $(n(x), \theta(x)) =(n(y), \theta(y))$. Here, we have
  \newline 
   $(n(\sigma(x)), \sigma(\theta(x)) =(n(x)-1, \theta(x)^{2})= (n(\sigma(y)), 
   \sigma(y))$
   and 
     \begin{eqnarray*}
   d_{\xi}(x,y) & = & d_{k}(x,y) \\
   & = &  d_{G_{\xi}}(\tau_{\xi}(x), \tau_{\xi}(y)) + \lambda_{k}(x,y)\\
       &  =  &   d_{G_{\xi}}(\tau_{\xi}(x), \tau_{\xi}(y)) + 
     \lambda_{k}(x,y) \\
       &  =  &   d_{G_{\xi}}(\tau_{\xi}(x), \tau_{\xi}(y)) + 
     2^{-n(x)-2} \lambda_{k-1}(\sigma^{n(x)}(x),\sigma^{n(y)}(y)) \\
           &  =  & 2^{-1} \left(  
            d_{G_{\xi}}(\tau_{\xi}(\sigma(x)), \tau_{\xi}(\sigma(y))) \right.\\
  &  &  + \left.  
     2^{1-n(x)-2} \lambda_{k-1}( 
    \sigma^{n(x)-1}(\sigma(x)),\sigma^{n(y)-1}(\sigma(y)) ) \right) \\
      &  =  & 2^{-1} \left(   d_{G_{\xi}}(\tau_{\xi}(\sigma(x)), 
      \tau_{\xi}(\sigma(y)))  +   \lambda_{k}(\sigma(x),\sigma(y)) \right) \\
     &  =  &  2^{-1} d_{k}(\sigma(x),\sigma(y)) \\ 
       & = &  2^{-1} d_{\xi}(\sigma(x),\sigma(y)).
   \end{eqnarray*} 
  \end{proof}

We finish this section with an alternate version of the 
local expanding property which will be useful in the later sections.

\begin{lemma}
\label{constr:200}
If $x, y$ are in $X_{G}^{+}$ satisfy $x_{1} = y_{1}$ and 
$d_{\xi}(\sigma(x), \sigma(y)) \leq 2^{-1}$, then 
$d_{\xi}(x,y) \leq 2^{-1} d_{\xi}(\sigma(x), \sigma(y))$. 
\end{lemma}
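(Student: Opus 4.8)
The plan is to reduce everything to the three ingredients out of which $d_{\xi}$ is assembled---the graph metric $d_{G_{\xi}}$ on the $\tau_{\xi}$-images, the circle distance $d_{\T}$ of the $\theta$-values, and the recursively defined tail $\lambda_{k}$---and to show that each contracts by a factor of at least $2$ as one passes from $\sigma(x),\sigma(y)$ back to $x,y$. Since $d=d_{k}$ on $X_{k}^{+}$ by Theorem~\ref{constr:160} and $\sigma$ carries $X_{k}^{+}$ into $X_{k}^{+}$ or $X_{k-1}^{+}$ by Proposition~\ref{constr:40}, I would first fix $k$ and prove the statement for $x,y$ in $X_{k}^{+}$, and then pass to arbitrary $x,y$ in $X_{G}^{+}$ using the extension argument recorded just before Theorem~\ref{constr:160}. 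The hypothesis $x_{1}=y_{1}$ immediately gives $\tau_{\xi}(x_{1})=\tau_{\xi}(y_{1})$, whence $d_{G_{\xi}}(\tau_{\xi}(x),\tau_{\xi}(y))=\tfrac{1}{2}\,d_{G_{\xi}}(\tau_{\xi}(\sigma(x)),\tau_{\xi}(\sigma(y)))$; this disposes of the graph term with an exact factor of $2$, exactly as in the proof of Theorem~\ref{constr:190}.

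The easy case is $x_{1}=y_{1}\notin\xi(H^{1})$. Then $n(x)=n(y)=1$ and $\theta(x)=\theta(y)=1$, so the definition of $\lambda_{k}$ collapses to $\lambda_{k}(x,y)=2^{-3}\lambda_{k-1}(\sigma(x),\sigma(y))$, and since $2^{-3}\leq 2^{-1}$ this tail contracts by more than the required amount; combined with the halving of the graph term one reads off $d_{k}(x,y)\leq\tfrac{1}{2}\,d_{k-1}(\sigma(x),\sigma(y))$.

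The substantive case is $x_{1}=y_{1}\in\xi(H^{1})$, where $\varepsilon(x_{1})=\varepsilon(y_{1})$ and the analysis splits along the values of $n(x),n(y)$ just as in Theorem~\ref{constr:190}, using $\theta(\sigma(x))=\theta(x)^{2}$ and $n(\sigma(x))=n(x)-1$. I expect the main obstacle to be the circle term. One would like to conclude $d_{\T}(\theta(x),\theta(y))=\tfrac{1}{2}\,d_{\T}(\theta(\sigma(x)),\theta(\sigma(y)))$, but this is the identity ``the doubling map doubles arc length,'' which is valid only while $d_{\T}(\theta(x),\theta(y))\leq 2^{-2}$; if $\theta(x)$ and $\theta(y)$ were nearly antipodal the squaring map would fold and the identity would fail. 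The role of the two hypotheses is precisely to exclude this: matching first edges fixes $\varepsilon(x_{1})=\varepsilon(y_{1})$, which selects a single square-root branch of the squaring map, while the bound $d_{\xi}(\sigma(x),\sigma(y))\leq 2^{-1}$ is meant to confine $\theta(\sigma(x)),\theta(\sigma(y))$ to a short arc that does not meet the branch point $1\in\T$, so that the selected branch is a genuine contraction by $2$ rather than a fold. Verifying that these hypotheses together force $d_{\T}(\theta(x),\theta(y))\leq 2^{-2}$---equivalently, that $\theta(\sigma(x))$ and $\theta(\sigma(y))$ do not straddle the point $1$---is the crux, and the step on which I would spend the most care, since it is exactly here that the interaction between the cyclic identification $0\sim 1$ of binary expansions and the chosen branch must be controlled. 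Once that is in hand, the factor-$2$ contraction of the circle term and, by induction on $k$, of the recursive tail $\lambda_{k-1}(\sigma^{n(x)}(x),\sigma^{n(y)}(y))$ combine with the already-established halving of the graph term to yield $d_{k}(x,y)\leq\tfrac{1}{2}\,d_{k}(\sigma(x),\sigma(y))$, which is the desired inequality.
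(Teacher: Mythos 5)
Your outline reproduces the paper's own proof almost step for step: the paper likewise reduces to $x,y$ in $X_{k}^{+}$ and extends by density via Proposition~\ref{constr:50}, obtains the exact halving of the $d_{G_{\xi}}$ term from $\tau_{\xi}(x_{1})=\tau_{\xi}(y_{1})$, disposes of the case $x_{1}=y_{1}\notin\xi(H^{1})$ by the collapse $\lambda_{k}(x,y)=2^{-3}\lambda_{k-1}(\sigma(x),\sigma(y))$, and handles the case $(n(x),\theta(x))=(n(y),\theta(y))$ by the recursion. But at the one step you yourself single out as the crux --- showing that $x_{1}=y_{1}$ together with $d_{\xi}(\sigma(x),\sigma(y))\leq 2^{-1}$ forces $d_{\T}(\theta(x),\theta(y))\leq 2^{-2}$, so that squaring genuinely doubles the circle term --- you offer only a plan (``the step on which I would spend the most care'') and no argument. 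As submitted, the proposal is therefore incomplete: the case containing all the difficulty is not proved.

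What makes this more than a routine omission is that the missing implication is false, so the gap cannot be closed as you envisage. Take the data of Example~\ref{real:110}: $H$ a single loop $h$, $G$ a single vertex with loops $a=\xi^{0}(h)$, $b=\xi^{1}(h)$ and $e$. Put $x=(a,a,a,e,a,a,\ldots)$ and $y=(a,b,b,e,a,a,\ldots)$, both in $X_{1}^{+}$. Then $x_{1}=y_{1}=a$, $\tau_{\xi}(x)=\tau_{\xi}(y)$, $n(x)=n(y)=4$, $\theta(x)=1$ and $\theta(y)=e^{2\pi i\cdot 3/8}$, so by Corollary~\ref{constr:170} we get $d_{\xi}(x,y)=d_{1}(x,y)=3/8$; meanwhile $\theta(\sigma(x))=1$ and $\theta(\sigma(y))=e^{2\pi i\cdot 3/4}$ give $d_{\xi}(\sigma(x),\sigma(y))=1/4\leq 2^{-1}$, and the lemma would assert $3/8\leq 1/8$. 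The failure is exactly the fold you feared: $\varepsilon(x_{1})=\varepsilon(y_{1})$ only confines $\theta(x),\theta(y)$ to a common half-circle, which gives $d_{\T}(\theta(x),\theta(y))<2^{-1}$, while $d_{\T}(\theta(x)^{2},\theta(y)^{2})\leq 2^{-1}$ holds for \emph{every} pair of points of $\T$ and so carries no information; when $d_{\T}(\theta(x),\theta(y))\in(2^{-2},2^{-1})$, squaring shortens the distance rather than doubling it. The paper's own proof commits precisely this error: in its second case, the sentence asserting that these hypotheses ``together imply'' $d_{\T}(\theta(x)^{2},\theta(y)^{2})=2\,d_{\T}(\theta(x),\theta(y))$ is unjustified and wrong, so your instinct about where the danger lies was exactly right --- but the correct conclusion is that the lemma needs either the added hypothesis $d_{\T}(\theta(x),\theta(y))\leq 2^{-2}$ or a reformulation. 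The downstream uses are salvageable: in Theorem~\ref{constr:210} and in the bracket construction one should not insist on prepending $y_{1}$ itself, but should choose the lifted first edge among \emph{both} $\xi$-preimages of $\tau_{\xi}(y_{1})$; in the example above, $z=(b,a,a,e,a,\ldots)$ satisfies $\sigma(z)=\sigma(x)$ and $d_{\xi}(z,y)=1/8=2^{-1}d_{\xi}(\sigma(x),\sigma(y))$, as required.
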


\begin{proof}
By Lemma \ref{constr:50}, we may find sequences
 $x^{k}, k \geq 0,$ and $y^{k}, k \geq 0$, 
converging to $x$ and $y$ respectively, and $x^{k}, y^{k}$ are in $X_{k}^{+}$.
So it suffices to prove the statement for $x,y$ in $X_{k}^{+}$, for $k \geq 2$.

We consider three cases separately. The first is when $x_{1} = y_{1}$ is 
not in $\xi(H^{1})$.  This implies that $n(x) = n(y) = 1$,
 $\theta(x) = \theta(y) =1$
 and 
 $n( \sigma(x)) = n(\sigma(y)) = n(x) -1$. It also means that 
 $\sigma(x), \sigma(y)$ are in $X_{k-1}^{+}$. It follows 
 \begin{eqnarray*}
 d_{\xi}(\sigma(x), \sigma(y)) & = & 
 d_{G_{\xi}}(\tau_{\xi}(\sigma(x)), \tau_{\xi}(\sigma(y))) +
    \lambda_{k-1}(\sigma(x), \sigma(y)) \\
    &  =  &  2 d_{G_{\xi}}(\tau_{\xi}(x), \tau_{\xi}(y)) + 2^{3} \lambda_{k}(x,y) \\
      &  \geq & 2 d_{\xi}(x, y).
 \end{eqnarray*}
 
 The second case is $x_{1} = y_{1}$ is 
in $\xi(H^{1})$, which implies that $n(x),  n(y) > 1$,
and  $(n(x),\theta(x)) \neq (n(y), \theta(y) )$. 
 We have $n( \sigma(x))= n(x) -1$ and  $n(\sigma(y)) = n(y) -1$, 
 $\theta(\sigma(x))= \theta(x)^{2}, \theta(\sigma(y))= \theta(y)^{2}$.
 It also means that 
$\sigma(x), \sigma(y)$ are in $X_{k}^{+}$.
  We have 
  \begin{eqnarray*}
 d_{\xi}(\sigma(x), \sigma(y)) & = & 
 d_{G_{\xi}}(\tau_{\xi}(\sigma(x)), \tau_{\xi}(\sigma(y))) +
    \lambda_{k}(\sigma(x), \sigma(y)) \\
    &  =  &  2 d_{G_{\xi}}(\tau_{\xi}(x), \tau_{\xi}(y)) 
    + \vert 2^{-n(x)+1} - 2^{-n(y)+1} \vert  \\
     &  & 
      +  d_{\T}(\theta(x)^{2}, \theta(y)^{2}) \\
        &  =  &  2 d_{G_{\xi}}(\tau_{\xi}(x), \tau_{\xi}(y)) 
        + 2 \vert 2^{-n(x)} - 2^{-n(y)} \vert  \\
          &  & 
      +  d_{\T}(\theta(x)^{2}, \theta(y)^{2}). \\     
 \end{eqnarray*}
The hypothesis that  $d_{\xi}(\sigma(x), \sigma(y)) \leq  2^{-1}$ implies that 
 $d_{\T}(\theta(x)^{2}, \theta(y)^{2}) \leq 2^{-1}$. The fact that $x_{1}=y_{1}$ 
 implies $\varepsilon(x_{1}) = \varepsilon(y_{1})$. Together these
  imply that
  $ d_{\T}(\theta(x)^{2}, \theta(y)^{2}) = 2d_{\T}(\theta(x), \theta(y)) $.
  Putting this in the last line above yields
  $ d_{\xi}(\sigma(x), \sigma(y))  = 2 d_{\xi}(x,y)$.
  
  The final case to consider is that $x_{1} = y_{1}$ is 
in $\xi(H^{1})$, which implies that $n(x),  n(y) > 1$,
and  $(n(x),\theta(x)) =(n(y), \theta(y) )$. Again, we know 
$\sigma(x), \sigma(y)$ are  in $X_{k}^{+}$.
Here we have $n( \sigma(x))= n(x) -1 = n(\sigma(y)) = n(y) -1$ and 
 $\theta(\sigma(x))= \theta(x)^{2} = \theta(\sigma(y))= \theta(y)^{2}$
 We have 
 \begin{eqnarray*}
 d_{\xi}(\sigma(x), \sigma(y)) & = & 
 d_{G_{\xi}}(\tau_{\xi}(\sigma(x)), \tau_{\xi}(\sigma(y))) +
    \lambda_{k}(\sigma(x), \sigma(y)) \\
    &  =  &  2 d_{G_{\xi}}(\tau_{\xi}(x), \tau_{\xi}(y)) + 2^{-2 -( n(x) -1)} \\
     &  &  
     \lambda_{k-1}(\sigma^{n(x)-1}(\sigma(x)),
   \sigma^{n(y)-1}(\sigma(  y)) \\  
     &  =  &  2 \left( d_{G_{\xi}}(\tau_{\xi}(x), \tau_{\xi}(y)) + 2^{-2 - n(x) }
     \lambda_{k}(\sigma^{n(x)}(x),
   \sigma^{n(y)}( y)) \right)\\ 
      &  = & 2 d_{\xi}(x, y).
 \end{eqnarray*}
\end{proof}

  \begin{thm}
  \label{constr:210}
  If $x, y$ are in $X_{\xi}^{+}$ with $d_{\xi}(x, \sigma_{\xi}(y)) \leq 2^{-1}$, 
  then there is $z$ in $X_{\xi}^{+}$ with 
  $d_{\xi}(z,y) \leq 2^{-1} d_{\xi}(x, \sigma_{\xi}(y))$ and $\sigma_{\xi}(z) = x$.
  In particular,
  we
  have 
  \[
  X_{\xi}^{+}(\sigma_{\xi}(x), \epsilon) \subseteq 
  \sigma_{\xi}(X_{\xi}^{+}(x, 2^{-1} \epsilon))
  \]
 and the map $\sigma_{\xi}$ is open.
  \end{thm}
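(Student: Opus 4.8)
The plan is to reduce everything to the one-sided shift $X_G^+$, where taking a preimage under $\sigma$ just amounts to prepending an edge, and then to feed the result into the local contraction estimate already established in Lemma~\ref{constr:200}. First I would choose lifts $\tilde{x}, \tilde{y}$ in $X_G^+$ with $\pi_\xi(\tilde{x}) = x$ and $\pi_\xi(\tilde{y}) = y$. Because the pseudo-metric $d$ on $X_G^+$ descends to $d_\xi$ and $\sigma_\xi \circ \pi_\xi = \pi_\xi \circ \sigma$, the hypothesis reads $d(\tilde{x}, \sigma(\tilde{y})) = d_\xi(x, \sigma_\xi(y)) \le 2^{-1}$, independently of the representatives chosen. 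I would then define the candidate preimage by prepending the first edge of $\tilde{y}$ to $\tilde{x}$, namely $\tilde{z} = (\tilde{y}_1, \tilde{x}_1, \tilde{x}_2, \ldots)$, so that $\sigma(\tilde{z}) = \tilde{x}$ and $\tilde{z}_1 = \tilde{y}_1$ hold by construction.

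The one genuine point to check is that $\tilde{z}$ is an admissible path, i.e.\ that $t(\tilde{y}_1) = i(\tilde{x}_1)$. This is where the hypothesis is used structurally: part~1 of Theorem~\ref{constr:160} gives $d_{G_\xi}(\tau_\xi(\tilde{x}), \tau_\xi(\sigma(\tilde{y}))) \le d(\tilde{x}, \sigma(\tilde{y})) \le 2^{-1}$, and a value of $d_{G_\xi}$ at most $2^{-1}$ forces the first coordinates to agree in $X_{G_\xi}^+$, so $\tau_\xi(\tilde{x}_1) = \tau_\xi(\tilde{y}_2)$. Since $\tau_\xi$ fixes vertices and, by (H0), identifies only edges sharing initial and terminal vertices, this yields $i(\tilde{x}_1) = i(\tilde{y}_2) = t(\tilde{y}_1)$, whence $\tilde{z}$ is a legitimate element of $X_G^+$.

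With $\tilde{z}$ in hand I would apply Lemma~\ref{constr:200} to the pair $\tilde{z}, \tilde{y}$: its hypotheses $\tilde{z}_1 = \tilde{y}_1$ and $d(\sigma(\tilde{z}), \sigma(\tilde{y})) = d(\tilde{x}, \sigma(\tilde{y})) \le 2^{-1}$ are exactly what we have arranged, so the lemma delivers $d(\tilde{z}, \tilde{y}) \le 2^{-1} d(\sigma(\tilde{z}), \sigma(\tilde{y})) = 2^{-1} d_\xi(x, \sigma_\xi(y))$. Setting $z = \pi_\xi(\tilde{z})$, we obtain $\sigma_\xi(z) = \pi_\xi(\sigma(\tilde{z})) = \pi_\xi(\tilde{x}) = x$ together with $d_\xi(z, y) = d(\tilde{z}, \tilde{y}) \le 2^{-1} d_\xi(x, \sigma_\xi(y))$, which is the asserted point.

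The two consequences are then formal, taking $y$ as the base point. For the inclusion, given any $w$ with $d_\xi(w, \sigma_\xi(y)) < \epsilon < 2^{-1}$, the main statement applied with target $w$ produces $z \in X_\xi^+(y, 2^{-1}\epsilon)$ with $\sigma_\xi(z) = w$, so $w \in \sigma_\xi(X_\xi^+(y, 2^{-1}\epsilon))$; hence $X_\xi^+(\sigma_\xi(y), \epsilon) \subseteq \sigma_\xi(X_\xi^+(y, 2^{-1}\epsilon))$. Openness follows at once: given an open $U \ni y$ and $\delta > 0$ with $X_\xi^+(y, \delta) \subseteq U$, any $\epsilon < \min\{2^{-1}, 2\delta\}$ gives $X_\xi^+(\sigma_\xi(y), \epsilon) \subseteq \sigma_\xi(X_\xi^+(y, \delta)) \subseteq \sigma_\xi(U)$, so $\sigma_\xi(U)$ is a neighbourhood of each of its points. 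I expect the only real obstacle to be the admissibility check of $\tilde{z}$ described above; the entire quantitative content is delegated to Lemma~\ref{constr:200}, and the remaining steps are bookkeeping between $X_G^+$ and the quotient $X_\xi^+$.
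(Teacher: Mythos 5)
Your proposal is correct and follows essentially the same route as the paper: lift to $X_G^+$, form $\tilde z=(\tilde y_1,\tilde x_1,\tilde x_2,\ldots)$ after checking admissibility via the bound $d_{G_\xi}(\tau_\xi(\tilde x),\tau_\xi(\sigma(\tilde y)))\leq 2^{-1}$, apply Lemma~\ref{constr:200} (which the paper cites, with a typo, as \ref{constr:190}), and project by $\pi_\xi$. If anything, your admissibility step is slightly more careful than the paper's, which asserts $x'_1=y'_2$ outright where only agreement of endpoints under $\tau_\xi$ (via (H0)) is guaranteed or needed.
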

  
  \begin{proof} 
Write $x = \pi_{\xi}(x')$, for some $x'$ in $X_{G}^{+}$. Let $y'$ be in 
  $X_{G}^{+}$ with $\pi_{\xi}(y') =y$.
  As $\epsilon \leq 2^{-3}$, we know that    
  $d_{G_{\xi}}(\tau_{\xi}(x'), \tau_{\xi}(\sigma(y'))) \leq 2^{-3}$ also. 
  This implies that $\tau_{\xi}(x'_{1}) = \tau_{\xi}(y'_{2})$, which in turn 
  implies that $x'_{1} = y_{2}'$. It follows that 
  $z' = (y'_{1}, x'_{1}, x'_{2}, \ldots)$ is in $X_{G}^{+}$. 
  Applying Lemma \ref{constr:190} to $z', y'$ gives
  \[
  d_{\xi}(y,z) = d_{\xi}(y',z') \leq 2^{-1} d_{\xi}(\sigma(y'), \sigma(z')) 
    = 2^{-1} d_{\xi}(\sigma(y), x).
    \]
  \end{proof}

\section{Smale spaces}
\label{smale}

The results of section \ref{constr} showed a construction
of a compact metric space $(X_{\xi}^{+}, d_{\xi})$ along 
with a map $\sigma_{\xi}: X_{\xi}^{+} \rightarrow X_{\xi}^{+}$ 
which is continuous, surjective and open. It also satisfies
a local expansiveness condition Theorem \ref{constr:190}.
Our goal in this section is to replace this system 
with another where the dynamics is actually a homeomorphism.
This is a standard construction via inverse limits. Without giving
the precise definition for the moment, this introduces a new 
component to the space where the dynamics is contracting. In short, 
we are going to build a Smale space. Two references for Smale spaces
are Ruelle \cite{Ru:ThFor} and Putnam \cite{Pu:HSm}.

Let us begin with recalling the definition 
of a Smale space. First of all, $(X, d)$ is a compact metric space 
and $\varphi$ is a homeomorphism  of $X$. We assume that there are 
constants $\epsilon_{X} > 0, 0 < \lambda < 1$ and a 
continuous map
\[
[ \cdot, \cdot] : \{ (x, y) \mid x,y \in X, d(x,y) \leq \epsilon_{X} \}
 \rightarrow X,
\]
which satisfies the following:

\begin{enumerate}
\item[B1] $[x,x] = x$, 
\item[B2] $ [x,[y,z]] = [x,z]$,
\item[B3] $[[x,y],z] = [x,z]$,
\item[B4] $[ \varphi(x), \varphi(y)] = \varphi[x,y]$,
\end{enumerate}
for all $x, y, z$ in $X$, whenever both sides of an equation 
are defined,
and finally
\begin{enumerate}
\item[C1] if $[x,y] = y$, then $d(\varphi(x), \varphi(y)) \leq \lambda d(x,y)$,
\item[C2] if $[x,y] = x$, then $d(\varphi^{-1}(x), \varphi^{-1}(y)) 
\leq \lambda d(x,y)$,
\end{enumerate}
for $x,y$ with $d(x,y) \leq \epsilon_{X}$.

We say that $(X, d, \varphi)$ is a Smale space. We note that it is not necessary to
specify the bracket map as part of the structure,
 only its existence: if it exists, it is essentially unique.
 
We now consider our specific case of interest. We will usually work 
under the standing hypotheses. It seems likely that it is sufficient for 
$G$ to be irreducible. That is only used in 
an essential way in Proposition \ref{constr:50}, but it seems possible
that some other hypothesis on $\xi$ may be needed for the construction
of the last section to work.

\begin{defn}
\label{smale:10}
Let $G, H, \xi$ satisfy the standing hypotheses.
 We define $X_{\xi}$ to be the inverse limit
of the system 

\vspace{.5cm}

\hspace{2cm}
\xymatrix{
X_{\xi}^{+}  & X_{\xi}^{+} \ar_{\sigma_{\xi}}[l] & \ar_{\sigma_{\xi}}[l] & \cdots }

\vspace{.5cm}

\noindent That is, 
\[
X_{\xi} = \{ (x^{0}, x^{1}, \ldots ) \mid x^{n} \in X^{+}_{\xi}, 
x^{n} = \sigma_{\xi}(x^{n+1}), n \geq 0 \}.
\]
We also define $\sigma_{\xi}: X_{\xi} \rightarrow X_{\xi}$ by 
$\sigma_{\xi}(x)^{n} = \sigma_{\xi}(x^{n})$, for $n \geq 0$ and 
$x^{n}, n \geq 1$ in $X_{\xi}$. It is a simple matter to check that 
the inverse is given by 
\[
\sigma_{\xi}^{-1}(x)^{n} = x^{n+1}, n \geq 0, 
\]for any $x$ in $X_{\xi}$.

Finally, we define a metric, also denoted $d_{\xi}$, on $X_{\xi}$ by 
\[
d_{\xi}(x, y) = \sup\{ 2^{-n} d_{\xi}(x^{n}, y^{n})  \mid n \geq 0 \},
\]
for $x= (x^{n})_{ n \geq 0}, y = (y^{n})_{n \geq 0}$ in $X_{\xi}$.
\end{defn}

We remark that if we replace $X_{\xi}^{+}, \sigma_{\xi}$ 
by $X_{G}^{+}, \sigma_{G}$
the result is $X_{G}, \sigma_{G}$ in a canonical way 
by identifying $x$ in $X_{G}$ with the sequence
$x^{n} = \chi_{G}^{+}(\sigma^{-n}(x)) = (x_{1-n}, x_{2-n}, \ldots)$, for $ n \geq 0$.
Pursuing this remark a little further, it is a fairly simple matter to check that
the map which sends $x$ in $X_{G}$ to the sequence 
$\pi_{\xi}(\chi_{G}^{+}(\sigma^{-n}(x))), n \geq 0,$
in $X_{\xi}$ is well-defined. We denote the map by $\pi_{\xi}$.
 It is immediate that 
$\pi_{\xi} \circ \sigma  =  \sigma_{\xi}   \circ \pi_{\xi} $.

The system $(X_{G}, d_{G}, \sigma)$ is a Smale space
with constants $\epsilon_{X_{G}} = \lambda = 2^{-1}$ and bracket defined as
$[x,y] = ( \ldots y_{-1}, y_{0}, x_{1}, \ldots )$, for
$d_{G}(x,y) \leq 2^{-1}$, which ensures $x_{0}=y_{0}, x_{1} = y_{1}$.

To see that $(X_{\xi}, d_{\xi}, \sigma_{\xi})$ is a Smale space, we need
to define the bracket map. Let 
 $x= (x^{n})_{ n \geq 0}, y = (y^{n})_{n \geq 0}$ be in $X_{\xi}$ and 
 assume that $d_{\xi}(x, y) \leq 2^{-n-1}$. 

We inductively define $z^{n}, n = 0,  1, 2, 3,  \ldots$ in $X_{\xi}^{+}$ satisfying 
\begin{enumerate}
\item 
$d_{\xi}(z^{n}, y^{n}) \leq 2^{-n-1},$
\item 
$\sigma_{\xi}(z^{n+1}) = z^{n}$.
\end{enumerate}

 Begin with  $z^{0} = x^{0}$.
 Observe the first condition holds for $n=0$ 
 and 
 \[
 d_{\xi}(x^{0}, y^{0}) \leq d_{\xi}(x, y)  \leq 2^{-1}.
 \]
 
 To define $z^{n+1}$, we have  
 $d_{\xi}(z^{n}, \sigma_{\xi}(y^{n+1})) = d_{\xi}(z^{n},y^{n}) \leq 2^{-n-1}$.
 By Lemma \ref{constr:200}, we may find $z^{n+1}$ in $X_{\xi}^{+}$ with 
 $\sigma_{\xi}(z^{n+1})=z^{n}$ and 
 $d_{\xi}(z^{n+1}, y^{n+1}) \leq 2^{-1} d_{\xi}(z^{n},y^{n}) \leq 2^{-n-2}$.
 So $z = (z^{0}, z^{1}, \ldots)$ lies in $X_{\xi}$.
 
 We use the constants $\epsilon_{X_{\xi}} =  \lambda = 2^{-1}$. 
 Notice that $d_{\xi}(x,y) \leq 2^{-1}$
  implies that $d_{\xi}(x^{0}, y^{0}) \leq 2^{-1}$, 
 which ensures that $[x,y]$ is defined.
 
 We will not verify the axioms B1-B4. As for C1, suppose that $z=[x,y]=y$. 
 It follows that $y^{0}=x^{0}$ and so 
 \begin{eqnarray*}
 d(\sigma_{\xi}(x), \sigma_{\xi}(y)) & = &  \sup \{ 
 2^{-n}d_{\xi}(\sigma_{\xi}(x^{n}), \sigma_{\xi}(y^{n})) \mid n \geq 0 \} \\
   & = &  \sup \{ 
 2^{-n}d_{\xi}(\sigma_{\xi}(x^{n}), \sigma_{\xi}(y^{n}) ) \mid n \geq 1 \} \\
   & = &  \sup \{ 
2^{-n}d_{\xi}(x^{n-1}, y^{n-1}) \mid n \geq 1 \} \\
    & = & 2^{-1} \sup \{ 
2^{-m}d_{\xi}(x^{m}, y^{m}) \mid m \geq 0 \} \\
   & =  & 2^{-1} d_{\xi}(x, y).
   \end{eqnarray*}
   
  We also verify C2:  if $z = [x,y] = x$, then we have 
  \begin{eqnarray*}
 d(\sigma_{\xi}^{-1}(x), \sigma_{\xi}^{-1}(y)) & = &  \sup \{ 
 2^{-n}d_{\xi}(x^{n+1}, y^{n+1}) \mid n \geq 0 \} \\ 
    & = &  \sup \{ 
 2^{-n} 2^{-1}d_{\xi}(x^{n}, y^{n}) \mid n \geq 0 \} \\ 
   &  =  &  2^{-1} d_{\xi}(x,y).
   \end{eqnarray*}

\begin{thm}
\label{smale:30}
With constants $\epsilon_{X_{\xi}} =  \lambda= 2^{-1}$
and bracket map as defined above, 
 $(X_{\xi}, d_{\xi}, \sigma_{\xi})$ is a Smale space.
\end{thm}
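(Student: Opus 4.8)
The plan is to dispatch only those items in the definition of a Smale space not already handled by the discussion preceding the statement. Compactness of $X_{\xi}$ is automatic: it is a closed subset (cut out by the continuous conditions $x^{n} = \sigma_{\xi}(x^{n+1})$) of the countable product $\prod_{n \geq 0} X_{\xi}^{+}$ of compact metric spaces, and a direct estimate shows the supremum metric of Definition \ref{smale:10} induces the inverse-limit topology. That $\sigma_{\xi}$ is a homeomorphism is immediate from the explicit formula $\sigma_{\xi}^{-1}(x)^{n} = x^{n+1}$, both maps being $d_{\xi}$-continuous. Since the bracket has been constructed and axioms C1, C2 verified above, what remains is to show the bracket is a \emph{well-defined}, \emph{continuous} map and that it satisfies B1--B4.

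First I would pin down well-definedness, i.e.\ that the inductively chosen lifts $z^{n+1}$ of Definition \ref{smale:10} are unique, and package this as a characterisation: \emph{for $d_{\xi}(x,y) \leq 2^{-1}$, the point $z=[x,y]$ is the unique element of $X_{\xi}$ with $z^{0} = x^{0}$ that is stably asymptotic to $y$, meaning $d_{\xi}(z^{n},y^{n}) \to 0$.} Existence is the construction itself (via the backward lift of Theorem \ref{constr:210} and the geometric bound $d_{\xi}(z^{n},y^{n}) \leq 2^{-n-1}$). For uniqueness I would use the local expansiveness of Theorem \ref{constr:190}: two lifts $z^{n+1}, \tilde{z}^{n+1}$ have the same $\sigma_{\xi}$-image $z^{n}$ and both lie within $2^{-n-2}$ of $y^{n+1}$, so for $n \geq 1$ their mutual distance is $\leq 2^{-2}$ and Theorem \ref{constr:190} forces $2 d_{\xi}(z^{n+1},\tilde{z}^{n+1}) \leq d_{\xi}(z^{n},z^{n}) = 0$. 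Continuity of $[\cdot,\cdot]$ I would read off the construction directly: each coordinate $z^{n}$ is built from $(x,y)$ by finitely many applications of the backward lift, which is continuous, and the uniform tail control $d_{\xi}(z^{n},y^{n}) \leq 2^{-n-1}$ makes the image sequence converge uniformly, so the induced map into the product is continuous.

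With the characterisation in hand, B1--B4 become soft, since being stably asymptotic is preserved by $\sigma_{\xi}$ (because $(\sigma_{\xi}z)^{n} = z^{n-1}$ for $n \geq 1$) and is transitive. For B1, the point $x$ itself has level-$0$ coordinate $x^{0}$ and $d_{\xi}(x^{n},x^{n}) = 0 \to 0$, so by uniqueness $[x,x] = x$. For B4, writing $w=[x,y]$, I would check that $\sigma_{\xi}(w)$ meets the two defining conditions for $[\sigma_{\xi}(x),\sigma_{\xi}(y)]$: its level-$0$ coordinate is $\sigma_{\xi}(w^{0}) = \sigma_{\xi}(x^{0}) = \sigma_{\xi}(x)^{0}$, and $d_{\xi}((\sigma_{\xi}w)^{n},(\sigma_{\xi}y)^{n}) = d_{\xi}(w^{n-1},y^{n-1}) \to 0$; uniqueness gives $\sigma_{\xi}[x,y] = [\sigma_{\xi}(x),\sigma_{\xi}(y)]$. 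Finally B2 and B3 rest on the observation that the level-$0$ coordinate of a bracket depends only on its first argument while its stable-asymptotic class depends only on its second: $[x,[y,z]]$ has level-$0$ coordinate $x^{0}$ and is asymptotic to $[y,z]$, hence to $z$, so equals $[x,z]$; and $[[x,y],z]$ has level-$0$ coordinate $[x,y]^{0} = x^{0}$ and is asymptotic to $z$, so also equals $[x,z]$.

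The step I expect to be the genuine obstacle is not any of these formal deductions but the uniqueness argument at the \emph{first} backward coordinate, together with the attendant constant bookkeeping. Theorem \ref{constr:190} is valid only in the window $d_{\xi} \leq 2^{-2}$, and while the bound $d_{\xi}(z^{n},y^{n}) \leq 2^{-n-1}$ keeps all coordinates with $n \geq 2$ comfortably inside it, the two candidate preimages $z^{1}, \tilde{z}^{1}$ of $x^{0}$ can a priori be up to $2^{-1}$ apart, outside the expansion window. Here the abstract expansiveness is not enough, and I would instead invoke the finer structure supplied by the $\theta$-coordinate (Theorem \ref{constr:180}): the two candidate first edges differ in the leading binary digit $\varepsilon(\cdot)$, which forces their $\theta$-values to be antipodal and hence $d_{\mathbb{T}}$-distance $2^{-1}$ apart, so at most one candidate can satisfy $d_{\mathbb{T}}(\theta(z^{1}),\theta(y^{1})) \leq d_{\xi}(z^{1},y^{1}) \leq 2^{-2}$. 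Making this borderline case watertight, and checking that every invocation of Theorems \ref{constr:190}, \ref{constr:200} and \ref{constr:210} along the induction genuinely falls within the stated ranges, is where the real care lies.
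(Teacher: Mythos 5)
Your overall route is the paper's own: the paper builds the bracket by exactly this iterated backward-lift induction and verifies C1 and C2 before the theorem statement, and it explicitly declines to check B1--B4, well-definedness and continuity. So the extra verifications you supply address precisely what the paper omits, and your instinct that the first backward coordinate is the crux is correct. However, two of your steps fail as written. First, your characterisation of the bracket --- ``the unique $z$ with $z^{0}=x^{0}$ that is stably asymptotic to $y$, meaning $d_{\xi}(z^{n},y^{n})\to 0$'' --- is not a characterisation: uniqueness fails. Already in $(X_{G},\sigma)$ viewed as an inverse limit, take $y$ and a homoclinic point $w$ with $w_{n}=y_{n}$ for all $n\geq 1$ and for all $n\leq -N$, but differing at finitely many negative coordinates; then $w^{0}=y^{0}$ and $d(w^{n},y^{n})\to 0$, yet $w\neq y$, so even $[y,y]$ is not pinned down by your condition. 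The quantitative decay $d_{\xi}(z^{n},y^{n})\leq 2^{-n-1}$ must be built into the characterisation, and once it is, B2--B4 no longer follow ``softly'': applying $\sigma_{\xi}$ shifts the index and degrades the bound from $2^{-n-1}$ to $2^{-n}$, and chaining two brackets doubles it likewise, so each axiom re-encounters the same delicate low-coordinate uniqueness question rather than reducing to transitivity of asymptoticity. The deductions are salvageable, but only with the constant bookkeeping done explicitly.

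Second, your resolution of the borderline $n=0$ case does not close the equality case, and on the stated window it genuinely cannot. Distinct preimages $z^{1},\tilde{z}^{1}$ of $z^{0}$ in the same $\tau_{\xi}$-fibre have $\theta$-values which are the two square roots of $\theta(z^{0})$, hence $d_{\T}(\theta(z^{1}),\theta(\tilde{z}^{1}))=2^{-1}$ exactly --- but both can lie at distance exactly $2^{-2}$ from $y^{1}$. Concretely, in a $\kappa=0$ fibre take $\theta(x^{0})=-1$, $\theta(y^{0})=1$, $\theta(z^{1}_{\pm})=\pm i$, $\theta(y^{1})=1$, with all $\tau_{\xi}$-coordinates equal: then $d_{\xi}(x^{0},y^{0})=2^{-1}$ (the boundary of $\epsilon_{X_{\xi}}=2^{-1}$), $d_{\xi}(z^{1}_{\pm},y^{1})=2^{-2}$ for both candidates, and both branches continue forever with $d_{\xi}(z^{n},y^{n})=2^{-n-1}$ (choosing square roots $e^{\pm i\pi/4}$ against $\theta(y^{2})=1$, and so on). So conditions (1) and (2) of the construction admit two distinct solutions at such boundary pairs, your claim that ``at most one candidate can satisfy $d_{\T}(\theta(z^{1}),\theta(y^{1}))\leq 2^{-2}$'' is false there, and the bracket with $\epsilon_{X_{\xi}}=2^{-1}$ is not well defined by those conditions alone (a gap the paper's terse construction shares). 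The repair is cheap, since the definition of a Smale space only requires \emph{some} $\epsilon_{X}>0$: take $\epsilon_{X_{\xi}}=2^{-2}$, so that $d_{\xi}(z^{1},y^{1})\leq 2^{-3}$ and the triangle inequality against $d_{\xi}(z^{1},\tilde{z}^{1})\geq 2^{-1}$ gives strict uniqueness at the first step, with Theorem \ref{constr:190} handling all $n\geq 1$ as you argue. Relatedly, your continuity argument leans on the backward lift being ``continuous,'' which Theorem \ref{constr:210} does not provide (it gives existence, not a canonical continuous selection); with uniqueness in hand, continuity should instead be derived from the uniform bound $d_{\xi}(z^{n},y^{n})\leq 2^{-n-1}$ together with a compactness or closed-graph argument.
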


\begin{thm}
\label{smale:50}
\begin{enumerate}
\item
The map $\pi_{\xi}: (X_{G}, d_{G}, \sigma) 
\rightarrow (X_{\xi}, d_{\xi}, \sigma_{\xi})$
is a factor map.
\item 
For each $x, y$ in $X_{G}$ such that $d_{G}(x,y) \leq 2^{-3}$, both 
\newline
$[x,y]$ and $[\pi_{\xi}(x), \pi_{\xi}(y)]$ are defined and 
$\pi_{\xi}[x,y] = [\pi_{\xi}(x), \pi_{\xi}(y)]$.
 \item Points $x, y$  in $X_{G}$ satisfy $\pi_{\xi}(x) = \pi_{\xi}(y)$ 
 if and only if 
 exactly one of the following hold:
 \begin{enumerate}
 \item $x=y$, 
 \item there exists $i=0,1$ and $z$ in $X_{H}$ such that 
 $\xi^{i}(z_{n}) = x_{n},  \xi^{1-i}(z_{n} ) =y_{n}$, for all 
 integers $n$,
 \item there is an integer $m$, $i=0,1$  and $z$ in $X_{H}$ such that
 \begin{enumerate}
 \item
 $x_{n} = y_{n}$, for  all
 $n < m$,  
 \item $x_{m} = y_{m}$ is not in $\xi(H)$ or
 $\xi^{1-i}(z_{m}) = x_{m},  \xi^{i}(z_{m} ) =y_{m}$ and
 \item 
  $\xi^{i}(z_{n}) = x_{n},  \xi^{1-i}(z_{n} ) =y_{n}$, for all 
 integers $n> m$.
 \end{enumerate}
  \end{enumerate} 
 \item  
 The map $\pi_{\xi}$ is $s$-bijective; that is, for every $x$ 
in $X_{G}$, $\pi_{\xi}| X^{s}_{G}(x)$ is a bijection
from  $  X^{s}_{G}(x)$  to  $  X^{s}_{\xi}(\pi_{\xi}(x))$.
\end{enumerate}
\end{thm}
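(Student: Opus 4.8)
The plan is to handle the four parts in sequence, with the Lipschitz estimate $d_\xi(\pi_\xi(a),\pi_\xi(b)) \le 3\, d_G(a,b)$ as the workhorse throughout. This follows by unwinding the inverse-limit metric: writing $\pi_\xi(a)^n = \pi_\xi(\chi^+_G(\sigma^{-n}(a)))$ for the one-sided quotient, one bounds each coordinate by Corollary~\ref{constr:170}, and the factors $2^{-n}$ in $d_\xi$ absorb the loss incurred by $\chi^+_G$ when $n$ exceeds the radius of agreement. For part~1, equivariance $\pi_\xi \circ \sigma = \sigma_\xi \circ \pi_\xi$ is already recorded and continuity is immediate from this estimate. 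Surjectivity I would get from the identification $X_G = \varprojlim(X^+_G,\sigma)$: the one-sided map $\pi_\xi\colon X^+_G \to X^+_\xi$ is a continuous surjection intertwining $\sigma$ and $\sigma_\xi$, and $\pi_\xi$ on $X_G$ is the induced map of inverse limits. Given $w=(w^n)\in X_\xi$, the fibres $\pi_\xi^{-1}(w^n)$ are nonempty compacta carried into one another by $\sigma$, so their inverse limit is nonempty by compactness, and any element is a preimage of $w$.

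For part~2, the estimate gives $d_\xi(\pi_\xi x,\pi_\xi y) \le 3\cdot 2^{-3} < \epsilon_{X_\xi}$ and $d_\xi(\pi_\xi[x,y],\pi_\xi x) \le 3\,d_G([x,y],x) \le 3\cdot 2^{-3}$, so every bracket in sight is defined. From the shift formula $[x,y]_n = y_n$ $(n\le 0)$, $[x,y]_n = x_n$ $(n\ge 1)$ one reads off that $[x,y] \in X^s_G(x)\cap X^u_G(y)$. Since $\pi_\xi$ is continuous and equivariant it carries stable sets into stable sets and unstable sets into unstable sets (using only the asymptotic definition, so no circularity with the bracket), whence $\pi_\xi[x,y] \in X^s_\xi(\pi_\xi x)\cap X^u_\xi(\pi_\xi y)$ and lies within $\epsilon_{X_\xi}$ of both points. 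As the local stable and unstable sets through two nearby points of a Smale space meet in the single point $[\pi_\xi x,\pi_\xi y]$, I conclude $\pi_\xi[x,y]=[\pi_\xi x,\pi_\xi y]$.

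For part~3, note that $\pi_\xi(x)=\pi_\xi(y)$ holds iff $\chi^+_G(\sigma^{-n}x) \sim_\xi \chi^+_G(\sigma^{-n}y)$ for all $n\ge 0$, i.e.\ all left-shifted one-sided tails are $\sim_\xi$-equivalent. I would unfold Definition~\ref{constr:60}: each such equivalence is either an equality or a carry with a \emph{unique} flip index, and by the $\sigma$-compatibility of $\sim_\xi$ (Proposition~\ref{constr:70}) these indices are consistent, corresponding to a single position $m$ in absolute coordinates. No finite flip (tails globally conjugate via $\xi^0\leftrightarrow\xi^1$, using the degenerate identification $0.\overline{0}=0.\overline{1}$) gives case (b); a finite $m$ carrying either a non-$\xi$ edge or the two lifts of one $H$-edge gives case (c); all tails equal gives (a). The forward implications are a direct pattern-match against the three forms; the converse is the combinatorial core, and the subtle points are the all-$\xi^0$ versus all-$\xi^1$ identification and the cross-shift consistency of the carry.

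For part~4, $\pi_\xi$ sends $X^s_G(x)$ into $X^s_\xi(\pi_\xi x)$ as above. \emph{Injectivity} on stable sets is then clean: if $a,b\in X^s_G(x)$ have $\pi_\xi(a)=\pi_\xi(b)$, then cases (b) and (c) of part~3 force $a_n\ne b_n$ for all $n$ past some point (since $\xi^0(z_n)\ne\xi^1(z_n)$ by (H1) and the remark after Definition~\ref{constr:10}), contradicting that $a$ and $b$ both agree with $x$, hence with each other, in the far future; so (a) holds and $a=b$. \emph{Surjectivity} is the main obstacle. Given $w\in X^s_\xi(\pi_\xi x)$, I would take a preimage $b$ from part~1; stability forces $\tau_\xi(b_j)=\tau_\xi(x_j)$ for all large $j$ (via $d_{G_\xi}(\tau_\xi\cdot,\tau_\xi\cdot)\le d_\xi(\pi_\xi\cdot,\pi_\xi\cdot)$) and, by the $\theta$-estimate of Theorem~\ref{constr:180} applied along the orbit, that the tail binary values of $b$ and $x$ agree modulo $1$. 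I expect this to pin the far-future flip pattern to a single carry, so that a suitable consistent flip $a$ of $b$ — again a preimage of $w$ by part~3 — has $a_j=x_j$ for all large $j$ and hence lies in $X^s_G(x)$. Making this precise, controlling the carry so that some preimage genuinely lands in the stable set of $x$ rather than merely agreeing up to $\tau_\xi$, is exactly where the explicit metric and the binary-expansion bookkeeping are indispensable, and is the step I expect to demand the most care.
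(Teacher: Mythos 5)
Parts 1 and 3 of your proposal are correct and essentially the paper's arguments (your inverse-limit compactness argument for surjectivity in part 1 is in fact more detailed than the paper's, which calls it easy). Part 2, however, contains a genuine logical gap: you infer $\pi_{\xi}[x,y]=[\pi_{\xi}(x),\pi_{\xi}(y)]$ from the facts that $\pi_{\xi}[x,y]$ lies in the \emph{global} stable set of $\pi_{\xi}(x)$, in the \emph{global} unstable set of $\pi_{\xi}(y)$, and within $\epsilon_{X_{\xi}}$ of both. That inference is false in general: global stable membership plus one-time proximity does not give local stable membership, which is what the uniqueness of the intersection of local stable and unstable sets requires. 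Already in the full $2$-shift, take $p$ the all-zeros sequence and $z$ with $z_{6}=1$ and all other entries zero; then $z\in X^{s}(p)$, $d(z,p)=2^{-5}$, and trivially $z\in X^{u}(z)$ with $d(z,z)=0$, yet $[p,z]=p\neq z$. The repair is to upgrade your estimate to orbitwise proximity: since $[x,y]$ agrees with $x$ in every coordinate $\geq -2$ and with $y$ in every coordinate $\leq 3$, your Lipschitz bound gives $d_{\xi}(\pi_{\xi}(\sigma^{n}[x,y]),\pi_{\xi}(\sigma^{n}(x)))\leq 3\cdot 2^{-n-3}$ for all $n\geq 0$ and the symmetric backward estimate against $y$, after which expansiveness identifies the point. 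The paper avoids the abstract uniqueness question entirely: it sets $z^{n}=\pi_{\xi}(y_{-n},\ldots,y_{-1},x_{0},x_{1},\ldots)$ and verifies, by induction using Lemma \ref{constr:200}, that this sequence satisfies precisely the defining data of the inductively constructed bracket on the inverse limit, namely $z^{0}=\pi_{\xi}(x)^{0}$, $\sigma_{\xi}(z^{n+1})=z^{n}$ and $d_{\xi}(z^{n},y^{n})\leq 2^{-n-1}$.

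The second gap is the surjectivity half of part 4, which you explicitly leave unfinished. The missing idea is that no carry bookkeeping is needed at all: your injectivity argument (correct, via part 3 and (H1)) shows $\pi_{\xi}$ is $s$-resolving, $G$ primitive makes $(X_{G},\sigma)$ non-wandering (Proposition 2.2.14 of \cite{LM:book}), and Theorem 2.5.8 of \cite{Pu:HSm} states that an $s$-resolving factor map from a non-wandering Smale space is automatically $s$-bijective. This single citation is how the paper closes the step you flag as the hardest. Your direct plan — pinning the tail of an arbitrary preimage via the $\tau_{\xi}$- and $\theta$-estimates and then flipping a carry — is plausibly workable (the fibre of $w$ has at most two points by part 3, and equal $\theta$-data determines the finite binary words between consecutive non-$\xi(H^{1})$ edges uniquely), but as submitted it is a strategy, not a proof, so the $s$-bijectivity claim is not established by your proposal.
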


\begin{proof}
The first part is easy, as we have already noted.

For the second part, if $d_{G}(x,y) \leq 2^{-3}$, then $[x,y]$ is defined. 
In addition, by \ref{constr:170}, 
$d_{\xi}(\pi_{\xi}(x), \pi_{\xi}(y)) \leq 3 \cdot 2^{-3} < 2^{-1}$, 
so $[\pi_{\xi}(x), \pi_{\xi}(y)]$ is also defined. 

We define, for $n \geq 0$, 
$z^{n} = \pi_{\xi}(y_{-n}, \cdots, y_{-1}, x_{0}, x_{1}, \ldots)$ in $X_{\xi}^{+}$. 
It is clear that $z^{0} = \pi_{\xi}(\chi_{G}^{+}(x))$ and 
$\sigma_{\xi}(z^{n}) = z^{n-1}$, for all $n \geq 1$. We claim that
$d_{\xi}(z^{n}, y^{n}) \leq 2^{-n-1}$, which we show by induction.
The case $n=0$ is simply the fact that 
$d_{\xi}(\pi_{\xi}(x), \pi_{\xi}(y)) \leq 2^{-1}$,
which we have already established. For any $n \geq 1$, Lemma \ref{constr:200} 
and the induction hypothesis show that
\[
d_{\xi}(z^{n}, y^{n}) \leq 2^{-1} d_{\xi}(z^{n-1}, y^{n-1}) \leq 2^{-n-1}.
\]
It follows from the definition of the bracket on $X_{\xi}$ that 
$z =  [\pi_{\xi}(x), \pi_{\xi}(y)]$. On the other hand, we have 
\[
(\pi_{\xi}[x,y])^{n} = \pi_{\xi}([x,y]^{n})= 
\pi_{\xi}(\chi_{G}^{+}(\sigma^{-n}[x,y]) ) = z^{n},
\]
by definition.

For the third part, it is clear that if $x, y$ satisfy
 conditions (b) or (c), then 
$ \chi_{G}^{+}(\sigma^{-n}(x)) \sim_{\xi} \chi_{G}^{+}(\sigma(y))$,
 for all integers $n$ and it follows
that $\pi_{\xi}(x)^{n} = \pi_{\xi}(y)^{n}$, for all integers $n$. 

Conversely, if $\pi_{\xi}(x) = \pi_{\xi}(y)$, then 
$\chi_{G}^{+}(\sigma^{-n}(x)) \sim_{\xi} \chi_{G}^{+}(\sigma^{-n}(y))$, 
for all integers $n$.
It follows that there is an integer $n$ such that, there is  a path
$z^{n}$ in $X_{H}^{+}$ and $i_{n}=0,1$ such that 
\[
\chi_{G}^{+}(\sigma^{1-n}(x) = \xi^{i_{n}}(\chi_{G}^{+}(\sigma^{1-n}(z^{n})), 
\chi_{G}^{+}(\sigma^{1-n}(y) = \xi^{1-i_{n}}(\chi_{G}^{+}(z^{n})).
\]
If $n$ satisfies this condition, then so does $n+1$ and $i_{n+1}=i_{n}$, 
$\chi_{G}^{+}(\sigma^{n}(z^{n+1})= \chi_{G}^{+}(\sigma^{1-n}(z^{n})$.
 There are then two cases to consider.
In the first, the condition is satisfied by all integers $n$. 
In this case, (b) holds with $z = \lim_{n \rightarrow - \infty} z^{n}$ (and 
possibly) reversing $x$ and $y$ if $i_{m}=1$. Otherwise, let $m$ be the greatest
integer not satisfying the condition. It is a simple matter to check that 
(c) holds in this case.

For the last part, if $\pi_{\xi}(x) = \pi_{\xi}(y)$, then by part 3, $x$ and $y$ are
not stably equivalent (i.e. right tail-equivalent) unless $x=y$. This shows that 
$\pi_{\xi}| X^{s}_{G}(x)$ is injective. The surjectivity follows from  the fact that $G$ primitive implies $(X_{G}, \sigma)$ is non-wandering, 
(Proposition 2.2.14 of \cite{LM:book}), 
and 
Theorem 2.5.8 
of \cite{Pu:HSm}.
\end{proof}

\section{Homology}
\label{hom}

In \cite{Pu:HSm}, a homology theory for (non-wandering) 
Smale 
spaces is described. Specifically, given 
a non-wandering Smale space $(X, d, \varphi)$
   and integer $n$, there are 
two countable abelian 
groups, $H^{s}_{n}(X, \varphi)$ and $H^{u}_{n}(X, \varphi)$. The former 
invariant is covariant for 
$s$-bijective factor maps and contravariant for $u$-bijective factor maps
while the latter is covariant for 
$u$-bijective factor maps and contravariant for $s$-bijective factor maps.
As such the map $\varphi$ induces a pair of automorphisms of each.
 The aim of this section is to compute these invariants for the systems 
 $(X_{\xi}, d_{\xi}, \sigma_{\xi})$ we have constructed.

  Let us begin with some preliminary notions.
  If $G$ is any finite directed graph, we may consider
 the free abelian group on 
 the vertex set $G^{0}$, denoted $\Z G^{0}$. This comes with 
 two canonical endomorphisms
 \[
 \gamma_{G}^{s}(v) =  \sum_{i(e) = v} t(e), \hspace{1cm}
 \gamma_{G}^{u}(v)  =  \sum_{t(e) = v} i(e),
 \]
 for any $v$ in $G^{0}$. 
 This is needed at various places
 in the theory and in our proofs below. However, for the statements 
 of the results, we can  use the more familiar group $\Z^{G^{0}}$, 
 which is isomorphic to $\Z G^{0}$ in an obvious way. Under this isomorphism, the 
 map $\gamma_{G}^{s}$ becomes multiplication by the matrix $A_{G}$ while 
 $\gamma^{u}_{G}$ is multiplication by its transpose.
 Then we define $ D^{s}(G)$ as the inductive limit
 of the sequence
 \[
 \Z^{G^{0}} \stackrel{A_{G}}{\rightarrow} \Z^{G^{0}} \stackrel{A_{G}}{\rightarrow} 
 \Z^{G^{0}} \stackrel{A_{G}}{\rightarrow}  \cdots
 \]
 Similarly, the invariant $ D^{u}(G)$ is computed in a similar
 way, but with the transpose, $A_{G}^{T}$, replacing $A_{G}$.

  With this notation, we can summarize our results 
  with the following theorem.

 \begin{thm}
 \label{hom:20}
 Let $G, H, \xi$ satisfy the standing hypotheses.
 We have 
 \[
 \begin{array}{cccccc}
 H^{s}_{0}(X_{\xi}, \sigma_{\xi}) & \cong & D^{s}(G), &
 (\sigma_{\xi})_{*}^{-1} & \cong & A_{G}, \\
  H^{s}_{1}(X_{\xi}, \sigma_{\xi}) & \cong & D^{s}(H), &
 (\sigma_{\xi})_{*}^{-1} & \cong & A_{H},\\
   H^{u}_{0}(X_{\xi}, \sigma_{\xi}) & \cong & D^{u}(G),&
 (\sigma_{\xi})_{*} & \cong & A_{G}^{T}, \\
  H^{u}_{1}(X_{\xi}, \sigma_{\xi}) & \cong & D^{u}(H), &
 (\sigma_{\xi})_{*} & \cong & A_{H}^{T},\\
   H^{s}_{k}(X_{\xi}, \sigma_{\xi}) & \cong & 0, & k & \neq &  0,1,   \\
  H^{u}_{k}(X_{\xi}, \sigma_{\xi}) & \cong & 0,  & k &  \neq & 0,1.  
  \end{array}
  \]
  In  the first two lines, we regard $\sigma_{\xi}$ as an $s$-bijective
  factor map from $(X_{\xi}, \sigma_{\xi})$ to itself and 
  our description of the induced map on homology
  is interpreted via the description which precedes it. In the next two lines, 
  we regard $\sigma_{\xi}$ as a $u$-bijective
  factor map 
 \end{thm}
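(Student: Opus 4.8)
The plan is to compute both homologies with the machinery of \cite{Pu:HSm}, feeding it the $s$-bijective cover $\pi_{\xi}\colon (X_{G},\sigma)\to(X_{\xi},\sigma_{\xi})$ supplied by Theorem~\ref{smale:50}. Recall that the homology of a Smale space is the homology of a double complex of dimension groups attached to an $s/u$-bijective pair, using the $D^{s}$-functor for $H^{s}_{*}$ and the $D^{u}$-functor for $H^{u}_{*}$. I would take $Y=X_{G}$ on the $s$-side and observe that all of the new geometry of $X_{\xi}$ is already visible in the fibre products $\Sigma_{N}(\pi_{\xi})=\{(x^{0},\dots,x^{N})\in X_{G}^{N+1}\mid \pi_{\xi}(x^{i})\ \text{all equal}\}$, which are themselves shifts of finite type because $\pi_{\xi}$ is $s$-bijective.

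First I would read these fibre products off from the explicit fibre description in part~3 of Theorem~\ref{smale:50}. The decisive structural point is that every $\sim_{\xi}$-class, hence every fibre of $\pi_{\xi}$, has at most two points (Proposition~\ref{constr:70}); so for $N\geq 2$ every tuple in $\Sigma_{N}(\pi_{\xi})$ has a repeated coordinate and is degenerate, and the normalized complex is concentrated in degrees $0$ and $1$. This is what forces $H^{s}_{k}=H^{u}_{k}=0$ for $k\neq 0,1$, including the vanishing in negative degrees. In degree $0$ we have $\Sigma_{0}(\pi_{\xi})=X_{G}$, giving $D^{s}(\Sigma_{0})=D^{s}(G)$ and $D^{u}(\Sigma_{0})=D^{u}(G)$. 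In degree $1$, $\Sigma_{1}(\pi_{\xi})$ splits into the diagonal copy of $X_{G}$ (degenerate) and the off-diagonal set of genuine pairs; using cases (b) and (c) of Theorem~\ref{smale:50} I would identify the off-diagonal, after passing to the sign part of the flip $\Z_{2}$-action, with the subshift $X_{H}$ embedded via $z\mapsto(\xi^{0}(z),\xi^{1}(z))$, yielding $D^{s}(H)$ and $D^{u}(H)$ in degree $1$.

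The crux is the single boundary map $d_{1}$, whose difference-of-projections form is $\xi^{0}_{*}-\xi^{1}_{*}$ on the relevant dimension group. Here I would invoke hypothesis (H0) of Definition~\ref{constr:10}: since $\xi^{0}$ and $\xi^{1}$ agree on $H^{0}$, the two projections induce the same map on $D^{s}$ and on $D^{u}$, so $d_{1}=0$. Granting this, $H^{s}_{0}=\operatorname{coker}d_{1}=D^{s}(G)$, $H^{s}_{1}=\ker d_{1}=D^{s}(H)$, and symmetrically for the $u$-groups. The induced automorphism is then obtained by naturality together with the classical fact that the shift acts on $D^{s}(G)=\varinjlim(\Z^{G^{0}},A_{G})$ as the canonical automorphism of the inductive limit, so that under the stated identification $(\sigma_{\xi})_{*}^{-1}$ becomes $A_{G}$ and, on the degree-one summand, $A_{H}$; the transposes $A_{G}^{T},A_{H}^{T}$ and $(\sigma_{\xi})_{*}$ appear on the $u$-side.

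The step I expect to be hardest is handling the $u$-side honestly. Because the local unstable sets of $X_{\xi}$ are circles rather than Cantor sets, there is no $s/u$-bijective pair whose $u$-factor is a shift of finite type, so the unstable direction must be resolved by a genuinely non-SFT Smale space cover, morally by lifting the circle-valued coordinate $\theta$ through binary expansion as in the introduction. Constructing this $u$-bijective cover, checking its fibre products are again governed by $G$ and $H$, and verifying that the transverse direction of the double complex contributes nothing extra, is where the real work lies; the remaining obstacle is the bookkeeping of the case-(c) tail pairs and the $S_{N+1}$-sign actions needed to justify the identification of the off-diagonal with $X_{H}$ and the vanishing of $d_{1}$.
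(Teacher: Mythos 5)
Your outline of the $s$-side tracks the paper's argument closely, but your final paragraph contains a genuine misunderstanding of the machinery, and it leaves half the theorem out of reach. An $s/u$-bijective pair in the sense of \cite{Pu:HSm} requires the $u$-factor $(Z,\zeta)$ to have totally disconnected \emph{stable} sets $Z^{s}(z)$ — not totally disconnected unstable sets — so the fact that the unstable sets of $X_{\xi}$ contain circles is no obstruction whatsoever. The paper simply takes $Z = X_{\xi}$ itself with $\pi_{u} = id_{X_{\xi}}$ (Theorem \ref{hom:30}); this is legitimate precisely because $\pi_{\xi}$ is $s$-bijective (part 4 of Theorem \ref{smale:50}), which forces the local stable sets of $X_{\xi}$ to be Cantor sets. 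With this pair one has $\# \pi_{s}^{-1}\{x\} \leq 2$ and $\# \pi_{u}^{-1}\{x\} = 1$, so Theorem 5.1.10 of \cite{Pu:HSm} collapses the double complex to the single row $M=0$ with only the two groups $C^{s}_{\mathcal{Q},\mathcal{A}}(\pi_{\xi})_{0,0}$ and $C^{s}_{\mathcal{Q},\mathcal{A}}(\pi_{\xi})_{1,0}$, and the $u$-homology is computed from the \emph{same} fibred products by applying the $D^{u}$-functor instead of $D^{s}$. The "genuinely non-SFT Smale space cover" you propose to construct by lifting the circle coordinate $\theta$ through binary expansion is neither needed nor the right target: you are correct that no $u$-bijective SFT cover of $X_{\xi}$ can exist, but the machinery never asks for one, and your proposal as written cannot complete the computation of $H^{u}_{*}$, nor the vanishing of the $M>0$ rows that you flag as "verifying the transverse direction contributes nothing."

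On the $s$-side your route is essentially the paper's: the two-point fibre bound from part 3 of Theorem \ref{smale:50} kills everything in degrees $\geq 2$, $\Sigma_{0,0}(\pi_{\xi}) = X_{G}$ gives $D^{s}(G)$, the off-diagonal part of $\Sigma_{1,0}(\pi_{\xi})$ modulo the flip action gives $D^{s}(H)$ (Lemma \ref{hom:40}), and the boundary sends $(\xi^{0}(y),\xi^{1}(y))$ to $\xi^{0}(y)-\xi^{1}(y)$, which vanishes under $t$ by (H0) (Lemma \ref{hom:60}). One technical step you elide but would need to supply: to present $\Sigma_{1,0}(\pi_{\xi})$ as a shift of finite type compatibly with the bracket, the paper passes to the higher block presentation $G^{7}$, so that $t(x_{0})=t(y_{0})$ forces $d(x,y) \leq 2^{-3}$ and part 2 of Theorem \ref{smale:50} applies, and it verifies the regularity constant $K_{\xi}=0$ of Lemma 2.7.2 of \cite{Pu:HSm} (Lemma \ref{hom:50}); without this, the identification of the degree-one group with $D^{s}(H)$ and the explicit formula for $d^{s}_{\mathcal{Q}}$ are not justified.
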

 
 In fact, we will prove only the first, second and fifth parts as the other 
 three are quite similar.

The starting point for computation of the homology theory for a general
Smale space, $(X, \varphi)$ is to find an $s/u$-bijective pair: Smale spaces
$(Y, \psi)$ and $(Z, \zeta)$ such that $Y^{u}(y)$ and 
$Z^{s}(z)$ are totally disconnected, 
for all $y$ in $Y$ and $z$ in $Z$, along with an $s$-bijective factor map
$\pi_{s}: (Y, \psi) \rightarrow (X, \varphi)$
and a  $u$-bijective factor map
$\pi_{u}: (Z, \zeta) \rightarrow (X, \varphi)$. Usually, the collection
$(Y, \psi, \pi_{s}, Z, \zeta, \pi_{u})$ is written as $\pi$.

Given such an $s/u$-bijective pair, one proceeds to define dynamical systems
as fibred products: for $L, M \geq 0$,
\begin{eqnarray*}
\Sigma_{L,M}(\pi) & = &  \{ (y^{0}, \ldots, y^{L}, z^{0}, \ldots, z^{M}) \in 
Y^{L+1} \times Z^{M+1}  \\
  &  &  \mid \pi_{s}(y^{l}) = \pi_{u}(z^{m}), \text{ all } l, m \}.
\end{eqnarray*}
Each of these systems is a shift of finite type and $\Sigma_{L,M}(\pi)$ carries
an obvious action of the group $S_{L+1} \times S_{M+1}$.

\begin{thm}
\label{hom:30}
 Let $G, H, \xi$ satisfy the standing hypotheses. 
Then  \newline
$(X_{G}, \sigma, \pi_{\xi}, X_{\xi}, \sigma_{\xi}, id_{X_{\xi}})$ 
is an $s/u$-bijective pair for $(X_{\xi}, \sigma_{\xi})$, which we 
denote by $\pi_{\xi}$. Moreover, we have 
$\Sigma_{0,0}(\pi_{\xi}) = X_{G}$ and 
$C^{s}_{\mathcal{Q}, \mathcal{A}}(\pi_{\xi})_{0,0} \cong D^{s}(G)$.
\end{thm}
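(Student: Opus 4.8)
The plan is to dispatch the three assertions in turn: the first two are structural and follow almost immediately from results already in hand, while the third reduces to the observation that the symmetric-group data underlying the homology complex is trivial in bidegree $(0,0)$.

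First I would confirm that $(X_G, \sigma, \pi_\xi, X_\xi, \sigma_\xi, id_{X_\xi})$ is an $s/u$-bijective pair for $(X_\xi, \sigma_\xi)$. The system $(X_G, d_G, \sigma)$ is an edge shift of finite type, hence a Smale space whose unstable sets $X_G^u(y)$ (the left tail-equivalence classes) are totally disconnected. By Theorem~\ref{smale:30}, $(X_\xi, d_\xi, \sigma_\xi)$ is a Smale space, and by part 4 of Theorem~\ref{smale:50} the map $\pi_\xi$ is an $s$-bijective factor map; since $\pi_\xi$ restricts to a bijection on each stable set and the local stable sets of $X_G$ are totally disconnected, the local stable sets $X_\xi^s(z)$ are totally disconnected as well (this is the remark following Theorem~\ref{smale:50}). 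Finally, $id_{X_\xi}$ is trivially a factor map, and it is $u$-bijective because it restricts to the identity, hence a bijection, on each unstable set $X_\xi^u(z)$. This supplies all the data required by the definition of an $s/u$-bijective pair.

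Next I would compute the fibred product in bidegree $(0,0)$. By definition, $\Sigma_{0,0}(\pi_\xi) = \{(y, z) \in X_G \times X_\xi \mid \pi_\xi(y) = id_{X_\xi}(z)\} = \{(y, \pi_\xi(y)) \mid y \in X_G\}$, the graph of $\pi_\xi$. The first-coordinate projection is a homeomorphism onto $X_G$ intertwining the two shift maps, so $\Sigma_{0,0}(\pi_\xi) = X_G$ canonically, as a shift of finite type presented by the graph $G$. For the last assertion I would appeal to the construction of the chain groups $C^s_{\mathcal{Q},\mathcal{A}}(\pi_\xi)_{L,M}$ in \cite{Pu:HSm}, which are obtained from the dimension group $D^s(\Sigma_{L,M}(\pi_\xi))$ by applying the symmetrizing/alternating idempotents associated with the action of $S_{L+1} \times S_{M+1}$. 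In bidegree $(L,M)=(0,0)$ this group is $S_1 \times S_1$, which is trivial, so those idempotents act as the identity and $C^s_{\mathcal{Q},\mathcal{A}}(\pi_\xi)_{0,0} = D^s(\Sigma_{0,0}(\pi_\xi))$. Combined with $\Sigma_{0,0}(\pi_\xi) = X_G$ and the fact that the stable dimension group of the edge shift of $G$ is $D^s(G)$ by the definition given at the start of this section, this yields $C^s_{\mathcal{Q},\mathcal{A}}(\pi_\xi)_{0,0} \cong D^s(G)$.

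The one place demanding care—the main obstacle—is matching conventions with \cite{Pu:HSm}: I must check that in bidegree $(0,0)$ the chain group really is the \emph{unmodified} dimension group $D^s(\Sigma_{0,0})$, i.e. that no quotient or further relation is imposed beyond the action of the trivial symmetric group, and that the canonical identification $\Sigma_{0,0}(\pi_\xi) = X_G$ respects the graph presentation so that $D^s(\Sigma_{0,0}(\pi_\xi))$ is genuinely computed from $G$ and returns exactly $D^s(G)$.
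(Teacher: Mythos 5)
Your proposal is correct and takes essentially the same route as the paper, which states Theorem \ref{hom:30} without a separate argument precisely because it follows directly from Theorem \ref{smale:50} ($s$-bijectivity of $\pi_{\xi}$, whence the stable sets of $X_{\xi}$ are totally disconnected), the trivial $u$-bijectivity of $id_{X_{\xi}}$, and the definitions of $\Sigma_{0,0}$ and of the chain group in bidegree $(0,0)$, where the symmetric group action is trivial. Your closing caveat about graph presentations is handled in the paper exactly as you anticipate: the higher block presentation $G^{7}$ is needed only for $\Sigma_{1,0}$, and the map $t$ induces an isomorphism $D^{s}(G^{7}) \cong D^{s}(G)$, so the bidegree-$(0,0)$ group is genuinely $D^{s}(G)$.
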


The next step is to find a graph which   gives a symbolic presentation
for $\Sigma_{0,0}(\pi_{\xi}) $. 
This requires that, for any $x,y$ in $X_{g}$ satisfying
$t(x_{0}) = t(y_{0})$, then $\pi_{\xi}[x,y] = [ \pi_{\xi}(x), \pi_{\xi}(y)]$,
 meaning that 
both sides are defined.
Unfortunately, $G$ will not suffice
but this is
not a major obstacle.

We review the standard notion of the higher block coding of $G$ (see 
 section 2.3 \cite{LM:book}). 
 We let $G^{K}$ be the set of all paths of length $K$ in $G$, for any $K \geq 2$,
 and define 
 $i,t :G^{K} \rightarrow G^{K-1}$ by 
 \[
 i(x_{1}, \ldots, x_{K}) = (x_{1}, \ldots, x_{K-1}), \hspace{1cm}
   i(x_{1}, \ldots, x_{K}) = (x_{2}, \ldots, x_{K}),
   \] 
   for  $(x_{1}, \ldots, x_{K})$ in $G^{K}$. In this way, we interpret
   $G^{K},G^{K-1},i,t$ as a graph. 
   For any $1 \leq k \leq K$, 
   the map sending 
    $(x_{1}, \ldots, x_{K})$ in $G^{K}$ to $x_{k}$ induces 
    a homeomorphism between $X_{G^{K}}$ and $X_{G}$ which 
    intertwines  the shift maps. It will not be necessary for us to
    write this map explicitly: we simply regard $X_{G^{K}}$ and $X_{G}$
    as equal, in an obvious way.
    
    We use $K=7$ and $k=4$. The point is that,
    if $x,y$ are in $X_{G}$ and $t(x_{0}) = t(y_{0})$, 
  then  regarding them as elements in 
    $X_{G^{7}}$, means $x_{[-2,3]} = y_{[-2,3]}$, which implies 
    $d(x,y) \leq 2^{-3}$ and so part 2 of Theorem \ref{smale:50} applies
    and $G^{7}$ satisfies the conditions of Definition 2.6.8 of 
    \cite{Pu:HSm}.
    
    We remark that the map $t: G^{6} \rightarrow G^{0}$ induces an 
    isomorphism between $D^{s}(G^{7})$ and $D^{s}(G)$.

  Our two maps are $\pi_{s} = \pi_{\xi}$ and $\pi_{u}$ being 
the identity on $X_{\xi}$. Due to  part 3 of Theorem \ref{smale:50},
we see that $\# \pi_{s}\{ x \} \leq 2 =L_{0}$ and obviously
$\# id^{-1}\{ x \} \leq 1 = M_{0}$, for all $x$ in $X_{\xi}$, so 
by Theorem 5.1.10 of \cite{Pu:HSm}, our complex 
consists of only two non-zero groups,
$C^{s}_{\mathcal{Q}, \mathcal{A}}(\pi_{\xi})_{0,0}$ and 
$C^{s}_{\mathcal{Q}, \mathcal{A}}(\pi_{\xi})_{1,0}$, and
 a single homomorphism from the latter to the former. 
 So what are these groups, how do we compute them  and 
 what is the homomorphism between them? The first is given already in \ref{hom:30}.

 We now come to the second group,
  $C^{s}_{\mathcal{Q}, \mathcal{A}}(\pi_{\xi})_{1,0}$, 
 where it will be necessary to use our higher block presentation.
 The system $\Sigma_{(1,0)}(\pi_{\xi})$ of \cite{Pu:HSm} consists of pairs
 \[
 \Sigma_{(1,0)}(\pi_{\xi}) = 
 \{ (x^{0}, x^{1}) \mid 
  x^{0}, x^{1} \in X_{G}, \pi_{\xi}(x^{0}) = \pi_{\xi}(x^{1}) \}.
 \]
 Fortunately, Theorem \ref{smale:50} gives a nice description of these. The fact that 
 our map is regular means that this is the shift of finite type 
 associated with an obvious subgraph of $G^{7}_{1} \subseteq 
 G^{7} \times G^{7}$, whose vertex set is $G^{6}_{1} \subseteq  G^{6} \times G^{6}$, 
 with obvious maps $i,t$.  (This suppresses 
 the fact that an infinite sequence of pairs can also be seen as 
 a pair of infinite sequences.) This graph is obtained by simply taking pairs in 
 $\Sigma_{(1,0)}(\pi_{\xi})$ and finding all words of length $6$ and $7$.

We may partition our vertex set 
$G^{6}_{1} = V_{0} \cup V_{1} \cup \cdots  \cup V_{6}$
as follows. First,  $V_{0}$  consists of all pairs $(x,x)$, 
where $x$ is in $G^{6}$. Also, $V_{6}$ consists
of all pairs $(\xi^{0}(y), \xi^{1}(y)), (\xi^{1}(y), \xi^{0}(y))$, 
where $y$ is in $H^{6}$. For $ 1 \leq k < 6$, we let $V_{k}$ be all 
pairs of the following types
\[
(x\xi^{0}(y), x\xi^{1}(y)), (x\xi^{1}(y), x\xi^{0}(y)), 
\]
where, $x   $ is in $G^{n-k}$, $y$ is in  $H^{k}$ and
$t(g) = \xi^{0}(i(y))$, along with 
\begin{eqnarray*}
(x\xi^{1}(y_{1})\xi^{0}(y_{2}),\ldots  ,\xi^{0}(y_{k+1})),
 & & x\xi^{0}(y_{1})\xi^{1}(y_{2}), \ldots, \xi^{1}(y_{k+1]}))),\\
 (x\xi^{0}(y_{1})\xi^{1}(y_{2}), \ldots, \xi^{1}(y_{k+1})),  &  & 
 x\xi^{1}(y_{1})\xi^{0}(y_{2}), \ldots,  \xi^{0}(y_{k+1})),  
\end{eqnarray*}
 where   $ x $ is in $G^{n-k-1}$, $y$ is  in $ H^{k+1}$ and  
 $ t(g) = \xi^{0}(i(y))$. 

There is an analogous partition 
$G^{7}_{1} = E_{0} \cup \cdots \cup E_{7}$.
Notice that  $i(E_{0}) \subseteq V_{0}$, 
$i(E_{i}) \subseteq V_{i-1}$,  if $1 \leq i \leq 7$,
$ t(E_{i}) \subseteq V_{i}$,if $0 \leq i \leq 6$, and 
$t(E_{7}) \subseteq V_{6}$. That is, $V_{0}$ and $V_{6}$ are two components of
$G^{7}_{1}$ and the remaining edges move from $V_{0}$ to $V_{6}$.

Observe that the permutation group on $2$-symbols acts in an obvious manner
on all these objects.

We consider the free abelian  group on $G_{1}^{6}$ with an inductive limit 
given by $G^{7}_{1}$. Before doing so, we must take a quotient, 
moding out by the subgroup generated by vertices $v$ with $v = v \cdot \alpha$
and all elements of the form $v - v\cdot(1,2)$, 
where $\alpha$ interchanges entries
$0$ and $1$. The first means that we are 
removing $V_{0}$ from consideration. Among the remaining vertices, 
only the ones in $V_{6}$ are the terminus of a path of any length greater than $6$ 
and it follows that the inductive limit only using that part. On  the other hand, 
when we quotient by $v - v \cdot \alpha$,  the result 
is clearly isomorphic to the free abelian group on $H^{6}$, with generating set
$ (\xi^{0}(y), \xi^{1}(y))$, $y $ in $H^{6}$. 
We conclude that 
$C^{s}_{\mathcal{Q}, \mathcal{A}}(\pi_{\xi})_{1,0}$ is isomorphic to
to the inductive limit of this group under the map induced by $H^{7}$.
We have proved the following.

\begin{lemma}
\label{hom:40}
We have 
$C^{s}_{\mathcal{Q}, \mathcal{A}}(\pi_{\xi})_{1,0} \cong D^{s}(H)$.
\end{lemma}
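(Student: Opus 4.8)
The plan is to compute the group $C^{s}_{\mathcal{Q}, \mathcal{A}}(\pi_{\xi})_{1,0}$ by unwinding the general machinery of \cite{Pu:HSm} in our specific setting, where almost all of the work has already been arranged by the preceding discussion. The group in question is the inductive limit of the free abelian group on the vertex set $G^{6}_{1}$, under the map induced by $G^{7}_{1}$, but \emph{after} passing to the appropriate quotient dictated by the definition of the complex: we mod out by the subgroup generated by the vertices $v$ that are fixed by the symbol-swap $\alpha=(1,2)$ together with all elements $v - v\cdot\alpha$. So the first step is simply to understand the effect of this quotient on the partitioned vertex set $G^{6}_{1} = V_{0} \cup V_{1} \cup \cdots \cup V_{6}$.

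First I would observe that the fixed vertices of $\alpha$ are exactly the diagonal pairs, i.e.\ $V_{0}$, so moding out by those kills $V_{0}$ entirely. Next I would use the structure of the partition recorded just above the statement: the only vertices that are the terminus of arbitrarily long paths (equivalently, that survive into the inductive limit in a nontrivial way) are those in $V_{6}$, since $t(E_{7}) \subseteq V_{6}$ and $V_{6}$ is a sink-component of $G^{7}_{1}$ while the remaining edges only move from $V_{0}$ to $V_{6}$. Concretely, the observation ``$i(E_{i}) \subseteq V_{i-1}$, $t(E_{i}) \subseteq V_{i}$'' shows the $V_{k}$ for $1 \le k < 6$ are transient: after finitely many applications of the inductive-limit map everything flows into $V_{6}$, so these intermediate vertices contribute nothing to the limit. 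Thus the inductive limit computed on the full graph agrees with the inductive limit computed using only $V_{6}$ and the edges $E_{7}$ landing in it.

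Then I would identify $V_{6}$ modulo the relation $v - v\cdot\alpha$. By definition $V_{6}$ consists of the pairs $(\xi^{0}(y), \xi^{1}(y))$ and $(\xi^{1}(y), \xi^{0}(y))$ for $y$ in $H^{6}$; since $\alpha$ interchanges these two, quotienting by $v - v\cdot\alpha$ identifies each such pair with its swap, leaving a single generator per $y$ in $H^{6}$. This gives a canonical isomorphism between the quotiented free group on $V_{6}$ and the free abelian group $\Z H^{6}$, via $(\xi^{0}(y), \xi^{1}(y)) \mapsto y$. Under this identification I would check that the map induced by passing from $G^{6}_{1}$ to $G^{7}_{1}$ (restricted to the $V_{6}$-part, using $E_{7}$) is precisely the map $\gamma^{s}_{H^{7}}$ coming from the higher-block graph $H^{7}$ — this is the crux of matching the combinatorics, and it uses condition (H1) to guarantee that the two embeddings $\xi^{0},\xi^{1}$ never collide, so that an edge of $G^{7}_{1}$ in $E_{7}$ corresponds bijectively to an edge of $H^{7}$. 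Since by the remark preceding the lemma $t: H^{6}\to H^{0}$ induces an isomorphism $D^{s}(H^{7}) \cong D^{s}(H)$, the resulting inductive limit is $D^{s}(H)$.

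The main obstacle I anticipate is not any single hard estimate but rather the bookkeeping in the second paragraph's claim that the intermediate classes $V_{1},\dots,V_{5}$ vanish in the limit, and in the third paragraph's verification that the induced boundary/inductive map is exactly $A_{H^{7}}$ (equivalently $\gamma^{s}_{H^{7}}$) rather than some twisted variant. One must be careful that the $S_{2}$-action is compatible with the edge structure of $G^{7}_{1}$ and that the quotient by $v-v\cdot\alpha$ is taken \emph{before} the inductive limit in a way that commutes with the connecting maps; the symmetry-group quotient in the definition of $C^{s}_{\mathcal{Q}, \mathcal{A}}$ is where sign or orientation subtleties could enter. Once one confirms that the two descriptions $(\xi^{0}(y),\xi^{1}(y))$ and $(\xi^{1}(y),\xi^{0}(y))$ genuinely account for \emph{all} of $V_{6}$ and that no further identifications are forced, the isomorphism $C^{s}_{\mathcal{Q}, \mathcal{A}}(\pi_{\xi})_{1,0} \cong D^{s}(H)$ follows directly.
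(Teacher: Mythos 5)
Your proposal is correct and takes essentially the same route as the paper's own proof: remove $V_{0}$ via the fixed-vertex relation, observe that only $V_{6}$ is the terminus of sufficiently long paths so the inductive limit sees only that part, identify the quotient by $v - v\cdot\alpha$ with $\Z H^{6}$ via the generators $(\xi^{0}(y),\xi^{1}(y))$, and take the limit under the map induced by $H^{7}$, yielding $D^{s}(H)$. The only difference is cosmetic: you make explicit the transience of $V_{1},\dots,V_{5}$ using the edge inclusions $i(E_{i})\subseteq V_{i-1}$, $t(E_{i})\subseteq V_{i}$, which the paper leaves implicit.
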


We are now left to consider the boundary map between the two groups and 
a key technical point in its computation is the following. Our notation 
is slightly different from \cite{Pu:HSm}, so  we give a self-contained statement 
here. 

 \begin{lemma}
 \label{hom:50}
 For the regular $s$-bijective factor map 
 $\pi_{\xi}: (X_{G}, \sigma) \rightarrow (X_{\xi}, \sigma_{\xi})$, the 
 constant $K_{\xi} = 0$ satisfies the conditions of Lemma 2.7.2 of \cite{Pu:HSm}.
 That is, if $x^{1}, x^{2}, y^{1}, y^{2}$ are all in 
 $X_{G}$ with
  $\pi_{\xi}(x^{1}) = \pi_{\xi}(x^{2}), \pi_{\xi}(y^{1}) = \pi_{\xi}(y^{2})$, 
  $x^{1}_{k} = y^{1}_{k}$, for $k \geq k_{0}$, and $x^{2}, y^{2}$ stably
  equivalent, then $x^{1}_{k} = y^{1}_{k}$, for $k \geq k_{0}$ also.
 \end{lemma}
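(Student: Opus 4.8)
The plan is to translate the whole statement into the combinatorial language of the $\varepsilon$-labels and then run a ``no bounded carry'' argument; note that the content to be established is $x^2_k=y^2_k$ for $k\ge k_0$ (matching $K_\xi=0$, i.e. no shift of the index). First I would record the consequence of part 3 of Theorem~\ref{smale:50}, applied to each of the pairs $(x^1,x^2)$ and $(y^1,y^2)$: each pair falls into case (a), (b) or (c), and in every case one has $\tau_\xi(x^1_k)=\tau_\xi(x^2_k)$ and $\tau_\xi(y^1_k)=\tau_\xi(y^2_k)$ for all $k$. Combined with the hypothesis $x^1_k=y^1_k$ for $k\ge k_0$, this yields $\tau_\xi(x^2_k)=\tau_\xi(y^2_k)$ for $k\ge k_0$; hence for each such $k$ either $x^2_k=y^2_k$ already, or $\{x^2_k,y^2_k\}=\{\xi^0(h),\xi^1(h)\}$, in which case $x^2_k,y^2_k\in\xi(H^1)$ and $\varepsilon(x^2_k)\ne\varepsilon(y^2_k)$. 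Thus it suffices to prove $\varepsilon(x^2_k)=\varepsilon(y^2_k)$ at every $k\ge k_0$ lying in $\xi(H^1)$.

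Next I would reduce to the one genuinely interesting configuration. Writing the descriptions from cases (b)/(c) with the convention that case (b) is case (c) with transition point $m=-\infty$, each pair is governed by a side $i\in\{0,1\}$ and a transition $m_x$ (resp. $m_y$): the two members agree below the transition, while $\varepsilon(x^1_k)=i$ and $\varepsilon(x^2_k)=1-i$ for $k>m_x$ (similarly for $y$), with a single exceptional ``reverse'' step at $k=m_x$. If either pair is in case (a), then that pair is equal; feeding this through $x^1_k=y^1_k$ ($k\ge k_0$) together with the stable equivalence of $x^2,y^2$ forces the other pair to be stably equivalent as well, and since $\pi_\xi$ is injective on stable sets (part 4 of Theorem~\ref{smale:50}) the other pair is also equal, finishing this case at once. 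So I may assume both pairs lie in case (b) or (c); comparing labels at large $k$, where $x^2_k=y^2_k$ by stable equivalence, forces the two sides to coincide, $i_x=i_y=:i$.

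The heart of the matter is then a statement about the transition points: I claim that $m_x\ne m_y$ forces $\max(m_x,m_y)<k_0$. Suppose not, say $m_x<m_y$ with $m_y\ge k_0$ (so $m_y$ is finite and $(y^1,y^2)$ is genuinely case (c)). At the index $k=m_y$ the hypothesis $x^1_k=y^1_k$ applies, while $m_y>m_x$ forces $\varepsilon(x^1_{m_y})=i$; hence $y^1_{m_y}=x^1_{m_y}\in\xi(H^1)$ with $\varepsilon(y^1_{m_y})=i$. But the transition condition in case (c) of Theorem~\ref{smale:50} demands $y^1_{m_y}\notin\xi(H^1)$ or $\varepsilon(y^1_{m_y})=1-i$, a contradiction. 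Granting the claim, the remaining bookkeeping is routine: if $m_x\ne m_y$ then every $k\ge k_0$ lies strictly above both transitions, where both labels equal $1-i$; and if $m_x=m_y=:m$ one checks agreement separately for $k<m$, for $k>m$, and, using $x^1_m=y^1_m$ to match the two transition types, at $k=m$. Either way $\varepsilon(x^2_k)=\varepsilon(y^2_k)$ for all $k\ge k_0$, as required. The main obstacle is precisely this transition-point claim: it is where the merely local agreement of $x^1,y^1$ beyond $k_0$ must be played off against the global, two-sided swap structure of case (c), and pinning down the boundary index $k=m_y$ and the exceptional step at a transition exactly is the delicate point.
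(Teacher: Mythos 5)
Your proof is correct and takes essentially the same approach as the paper: the paper's own proof merely disposes of the trivial cases $x^{1}=x^{2}$ or $y^{1}=y^{2}$ and then declares that the remaining configurations, described explicitly in part 3 of Theorem~\ref{smale:50}, are ``done by checking the different cases, which we leave to the reader.'' Your proposal is exactly that case-check carried out in full --- including the correct reading of the conclusion as $x^{2}_{k}=y^{2}_{k}$ for $k\geq k_{0}$ (the statement as printed has a typo), the reduction to equality of $\varepsilon$-labels via $\tau_{\xi}$, the coincidence of the sides $i_{x}=i_{y}$ forced by stable equivalence, and the transition-point dichotomy with the matching of transition types at $k=m$ --- and all of these verifications are sound.
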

 
 \begin{proof}
 The statement is trivial if either $x^{1} = x^{2}$ or $y^{1} = y^{2}$. It remains 
 to consider when both pairs are distinct. This situation is described
 explicitly in Theorem \ref{smale:50}. The rest of the proof is done by checking 
 the different cases, which we leave to the reader.
 \end{proof}
 
 \begin{lemma}
 \label{hom:60}
 The boundary map 
 \[
 d^{s}_{\mathcal{Q}} : C^{s}_{\mathcal{Q}, \mathcal{A}}(\pi_{\xi})_{1,0} 
 \rightarrow 
 C^{s}_{\mathcal{Q}, \mathcal{A}}(\pi_{\xi})_{0,0}
 \]
 is the zero map. 
 \end{lemma}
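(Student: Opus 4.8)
The plan is to compute $d^{s}_{\mathcal{Q}}$ directly on the generators of $C^{s}_{\mathcal{Q}, \mathcal{A}}(\pi_{\xi})_{1,0} \cong D^{s}(H)$ produced in Lemma \ref{hom:40}, and to show each one maps to $0$ in $C^{s}_{\mathcal{Q}, \mathcal{A}}(\pi_{\xi})_{0,0} \cong D^{s}(G)$. Recall that $d^{s}_{\mathcal{Q}}$ is, up to sign, the alternating sum of the two face maps $\Sigma_{(1,0)}(\pi_{\xi}) \rightarrow \Sigma_{0,0}(\pi_{\xi}) = X_{G}$ that delete one of the two $X_{G}$-coordinates. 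At the level of the presenting graphs these are the two projections $G^{7}_{1} \rightarrow G^{7}$, and on the dimension groups they are the maps induced by the corresponding vertex projections $G^{6}_{1} \rightarrow G^{6}$; that these induced maps are genuinely computed from the vertex projections is what the regularity of $\pi_{\xi}$ (Lemma \ref{hom:50}) guarantees.

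First I would fix a generator. By the identification in Lemma \ref{hom:40} these are represented by the vertices $(\xi^{0}(y), \xi^{1}(y))$ of $G^{6}_{1}$ with $y$ in $H^{6}$ (the relation $v - v\cdot(1,2)$ having identified this with $(\xi^{1}(y),\xi^{0}(y))$, and $V_{0}$ having been discarded). The two projections send this vertex to $\xi^{1}(y)$ and to $\xi^{0}(y)$ respectively, both regarded as vertices of $G^{7}$, hence as classes in $D^{s}(G^{7})$. Thus
\[
d^{s}_{\mathcal{Q}}\bigl[(\xi^{0}(y), \xi^{1}(y))\bigr] = [\xi^{1}(y)] - [\xi^{0}(y)] \in D^{s}(G^{7}),
\]
and the entire lemma reduces to the assertion that $[\xi^{0}(y)] = [\xi^{1}(y)]$ in $D^{s}(G^{7}) \cong D^{s}(G)$.

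This is precisely where the higher block presentation earns its keep, and it is the step I would treat most carefully. Using the remark preceding the lemma, the isomorphism $D^{s}(G^{7}) \cong D^{s}(G)$ is induced by $t: G^{6} \rightarrow G^{0}$; concretely, one checks that $t_{*}$ intertwines $A_{G^{7}}$ with $A_{G}$, so that the class in $D^{s}(G^{7})$ of a length-$6$ path (a vertex of $G^{7}$) depends only on its terminal vertex. Now $\xi^{0}(y)$ and $\xi^{1}(y)$ are length-$6$ paths in $G$ with $t(\xi^{0}(y)) = \xi^{0}(t(y)) = \xi^{1}(t(y)) = t(\xi^{1}(y))$, where the middle equality is hypothesis (H0) and the outer equalities hold because $\xi^{0}, \xi^{1}$ are graph homomorphisms. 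Hence their classes agree and the displayed difference vanishes.

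The only genuine subtlety is bookkeeping. One must confirm that the sign convention makes $d^{s}_{\mathcal{Q}}$ the \emph{difference} rather than the sum of the two projection-induced maps; this is consistent with, and indeed forced by, $d^{s}_{\mathcal{Q}}$ being well defined on the coinvariant quotient, where $(\xi^{0}(y),\xi^{1}(y))$ and $(\xi^{1}(y),\xi^{0}(y))$ are identified while the two face maps are interchanged. Granting this, together with the vertex-level description of the induced maps noted above, the computation gives $d^{s}_{\mathcal{Q}} = 0$.
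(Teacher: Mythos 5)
Your proposal is correct and follows essentially the same route as the paper's proof: compute $d^{s}_{\mathcal{Q}}$ on the generators $(\xi^{0}(y),\xi^{1}(y))$, $y \in H^{6}$, obtaining the difference of the two coordinates in $\Z G^{6}$, and then observe that this vanishes under the isomorphism $D^{s}(G^{7}) \cong D^{s}(G)$ induced by $t$, since $t(\xi^{0}(y)) = t(\xi^{1}(y))$ by condition (H0). The details you supply about the face maps and the sign convention are exactly what the paper delegates to Definition 4.2.1 of \cite{Pu:HSm}, and the sign discrepancy is immaterial since the image is zero either way.
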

 
\begin{proof}
Via the isomorphism 
 of Lemma \ref{hom:40}, this group is generated by classes of the form 
 $ (\xi^{0}(y), \xi^{1}(y))$, $y $ in $H^{6}$. Using the formula given 
 in Definition 4.2.1, we have 
 \[
 d^{s}_{\mathcal{Q}}(\xi^{0}(y), \xi^{1}(y)) = \xi^{0}(y) -  \xi^{1}(y), 
 \]
 for each $y$ in $H^{6}$, where the term on the right
 is interpreted as an element of $\Z G^{6}$. We know that the map $t$ induces 
 an isomorphism between $D^{s}(G^{7})$ and $D^{s}(G)$ and we have 
 \[
 t( \xi^{0}(y) -  \xi^{1}(y) ) = t(\xi^{0}(y)) - t(\xi^{1}(y)) = 0, 
 \]
 by condition (H0).
 \end{proof}
 
 The computations of the homology groups, $H^{s}_{*}(X_{\xi}, \sigma_{\xi})$,
  as summarized in Theorem \ref{hom:20}, follow
 from Theorem \ref{hom:30} and Lemmas \ref{hom:40} and \ref{hom:60}.
 
 Let us mention an alternate proof of these results. This is based on 
 two sets of results. The first are those of Proietti and Yamashita 
 \cite{PY:hom1} and \cite{PY:hom2}, who show that the Smale space 
 homology here agrees with the groupoid homology as studied by Matui.
 Further, Matui has recently the adapted results which are used in the next 
 section for the $K$-theory of the $C^{*}$-algebras to the case 
 of groupoid homology: see \cite{Ma:homex}.
  These require the unit space to be totally disconnected
 and so only apply to parts  1, 2 and 5 of Theorem \ref{hom:20}.

\section{Groupoids, $C^{*}$-algebras and $K$-theory}
\label{c*}

The goal of this section is to describe the 
stable and unstable equivalence relations (or groupoids) 
of our Smale space, $(X_{\xi}, d_{\xi}, \sigma_{\xi})$, 
their associated $C^{*}$-algebras and their $K$-theory.

We begin with a short review of what is 
involved for general Smale space.
 For any Smale space, $(X, d, \varphi)$, two points 
$x,y$ in $X$ are \emph{stably} equivalent if
\[
\lim_{n \rightarrow + \infty} d(\varphi^{n}(x), \varphi^{n}(y)) = 0.
\]
\emph{Unstable} equivalence is defined by simply replacing
both occurrences  of  $\varphi$ by $\varphi^{-1}$.
We define 
\begin{eqnarray*}
G^{s}(X, \varphi) & = & \{ (x,y) \in X \times X \mid \lim_{n \rightarrow +\infty}  
d(\varphi^{n}(x), \varphi^{n}(y)) = 0 \},  \\
G^{u}(X, \varphi) & = & \{ (x,y) \in X \times X \mid \lim_{n \rightarrow +\infty}  
d(\varphi^{-n}(x), \varphi^{-n}(y)) = 0 \}.
\end{eqnarray*}

Each is an equivalence relation and has a natural topology. It will not be 
necessary to describe this here in detail, but we refer the reader to 
\cite{Pu:Sma}. In addition, if  $(X, \varphi)$ is non-wandering, 
then, as groupoids, each has a Haar system. Again, we do not need a detailed 
description. For any $x$ in $X$, we let $X^{s}(x)$ and $X^{u}(x)$
 be the equivalence classes of $x$ in the two equivalence
  relations. Each carries a natural topology which is finer than 
the relative topology of $X$ but makes each locally compact Hausdorff.

Generally speaking, \'{e}tale groupoids \cite{Ren:book} are much
 easier to deal with
and, thanks to the work of Muhly, Renault and 
Williams \cite{MRW:eqgd},
 our groupoids are 
equivalent to \'{e}tale groupoids by reducing to an abstract transversal. 
With the slightly stronger hypothesis that $(X, \varphi)$ is 
irreducible, we select a finite
set $P$ with the property that $\varphi(P)=P$ (i.e. consisting of periodic 
points), we let 
\[
X^{u}(P) = \cup_{p \in P} X^{u}(p), \hspace{1cm}
 X^{s}(P) = \cup_{p \in P} X^{s}(p)
\]
and these function as natural transversals to
 $G^{s}(X, \varphi), G^{u}(X, \varphi)$, 
respectively. Note that these spaces are given locally compact topologies
which are finer than the relative topology of $X$.
We refer the reader to \cite{PS:Sma}.
We denote the reduction to these transversals by
\begin{eqnarray*}
G^{s}(X, \varphi, P) & = & G^{s}(X, \varphi) \cap 
\left( X^{u}(P) \times X^{u}(P) \right), \\
G^{u}(X, \varphi, P) & = & G^{u}(X, \varphi) \cap 
\left( X^{s}(P) \times X^{s}(P) \right)
\end{eqnarray*}
Each is an \'{e}tale equivalence relation. Different choices 
of $P$ yield equivalent groupoids. In \cite{PS:Sma},
 it is shown that these groupoids are amenable, so we do 
 not need to make a distinction between 
 the full and reduced $C^{*}$-algebras. We define
 \begin{eqnarray*}
 S(X, \varphi, P) & = & C^{*}(G^{s}(X, \varphi, P) ), \\
 U(X, \varphi, P) & = & C^{*}(G^{u}(X, \varphi, P) ).
 \end{eqnarray*}
 
 The map which sends  a continuous function of compact support, $f$,
  on either
 of the two groupoids, to $f \circ (\varphi \times \varphi)^{-1}$ extends 
 to an automorphism of the respective $C^{*}$-algebra, which is denoted
 by $\varphi$.  The Ruelle 
 $C^{*}$-algebras are defined as the associated crossed-products:
 \begin{eqnarray*}
 R^{s}(X, \varphi, P) & = & S(X, \varphi, P) \rtimes_{\varphi} \Z, \\ 
 R^{u}(X, \varphi, P) & = & U(X, \varphi, P) \rtimes_{\varphi} \Z.
 \end{eqnarray*}
 
 Let us review some basic properties of these $C^{*}$-algebras. If $(X, \varphi)$
 is mixing then $S(X, \varphi,P)$ and $U(X, \varphi, P)$ are 
 separable, simple, finite, 
 stable, satisfy the Universal Coefficient Theorem (UCT) \cite{PS:Sma} and
 have finite nuclear dimension \cite{DS:Sma}. If  $(X, \varphi)$
 is irreducible, then $R^{s}(X, \varphi, P) $ and $R^{u}(X, \varphi, P)$
 are separable, simple, purely infinite, stable, 
 satisfy the Universal Coefficient Theorem (UCT) \cite{PS:Sma} and
 have finite nuclear dimension \cite{DS:Sma}. 
   All of these $C^{*}$-algebras come under the Elliott 
 classification scheme. ( We include  a small remark: the 
 results of \cite{DS:Sma} need the hypothesis that the $C^{*}$-algebras 
 contain a projection. This was later shown to hold in general \cite{DGY:proj}, 
 but, in our case, we will explicitly provide clopen subsets of the 
 unit spaces of both $G^{s}(X_{\xi}, \sigma_{\xi}, P_{\xi})$ and 
 $G^{s}(X_{\xi}, \sigma_{\xi}, P_{\xi})$ which shows this holds.)
 
In our particular case for $(X_{\xi}, d_{\xi}, \sigma_{\xi})$, 
we can actually go a little further and the first step is to make a
 particular choice for $P$, as we describe.

Under the standing hypotheses, we
 select a periodic point as follows. We have already observed that
  $G^{1}-\xi(H^{1})$  is also primitive. 
 We select a cycle $C^{1} \subseteq G^{1}-\xi(H^{1})$ of minimal length.
 This implies that $t|_{ C^{1}}$ is injective. 
 We let $C^{0} = t(C^{1})$. We let $P \subseteq X_{G}$ be the finite 
 set of infinite paths which simply repeat the cycle $C^{1}$.
  We note that an element  $p$ of $P$ is uniquely determined in $P$ by $i(p_{1})$
  in $C^{0}$.
 Also notice that $\pi_{\xi}$ is a bijection from 
 $P$ to $P_{\xi} = \pi_{\xi}(P)$ from part 3 of Theorem \ref{smale:50}.
 
 The $s$-bijective map
 $\pi_{\xi}:X_{G} \rightarrow X_{\xi}$ induces maps at the level 
 of groupoids and also $C^{*}$-algebras
 \begin{eqnarray*}
 (\pi_{\xi})^{*} : & S(X_{\xi}, \sigma_{\xi}, P_{\xi}) & \rightarrow 
 S(X_{G}, \sigma, P),  \\
  (\pi_{\xi})_{*} : &  U(X_{G}, \sigma, P) & \rightarrow 
U(X_{\xi}, \sigma_{\xi}, P_{\xi}).  
 \end{eqnarray*}
 
 The $C^{*}$-algebras $S(X_{G}, \sigma, P)$ and $U(X_{G}, \sigma, P)$ are both 
 AF-algebras, their $K$-zero groups are 
 $D^{u}(G)$ and $D^{s}(G)$, respectively, while their $K$-one groups are 
 trivial.
 
 We now state the two main results of this section. Of course, they
 could easily be assembled into a single result,  but we separate them 
 as the proofs are rather long and also rather different and
 they will be divided into two subsections.

\begin{thm}
\label{c*:40}
If $G, H, \xi$ satisfy the standing hypotheses, 
 we have
\begin{enumerate}
\item 
$K_{0}(S(X_{\xi}, \sigma_{\xi}, P_{\xi})) \cong D^{u}(G)$
as ordered abelian groups
and, under this isomorphism,  the automorphism induced by $\sigma_{\xi}$
is $A_{G}^{T}$.
\item 
$K_{1}(S(X_{\xi}, \sigma_{\xi}, P_{\xi})) \cong D^{u}(H)$
and, under this isomorphism,  the automorphism induced by $\sigma_{\xi}$
is $A_{H}^{T}$.
\item $K_{0}(R^{s}(X_{\xi}, \sigma_{\xi}, P_{\xi})) \cong \Z^{G^{0}}/ (I-A_{G}^{T})\Z^{G^{0}}
\oplus \ker( I - A_{H}^{T}: \Z^{H^{0}}  \rightarrow  \Z^{H^{0}} ).$
\item $K_{1}(R^{s}(X_{\xi}, \sigma_{\xi}, P_{\xi})) \cong \Z^{H^{0}}/ (I-A_{H}^{T})\Z^{H^{0}}
\oplus \ker( I - A_{G}^{T}: \Z^{G^{0}}  \rightarrow  \Z^{G^{0}} ).$
\end{enumerate}
\end{thm}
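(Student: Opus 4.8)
The plan is to compute $K_{*}(S(X_{\xi}, \sigma_{\xi}, P_{\xi}))$ first, obtaining parts~1 and~2, and then to deduce parts~3 and~4 from the Pimsner--Voiculescu sequence applied to the crossed product $R^{s}(X_{\xi}, \sigma_{\xi}, P_{\xi}) = S(X_{\xi}, \sigma_{\xi}, P_{\xi}) \rtimes_{\sigma_{\xi}} \Z$.

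For parts~1 and~2 I would run the $s$-bijective machinery attached to the pair $\pi_{\xi}$ of Theorem~\ref{hom:30}, but for $K$-theory in place of the dimension-group complex of Section~\ref{hom}. The stable algebra is resolved by the shifts of finite type $\Sigma_{L,0}(\pi_{\xi})$: there is a spectral sequence whose input is the $K$-theory of the stable algebras $S(\Sigma_{L,0}(\pi_{\xi}))$, symmetrized by the symmetric-group action on the fibred products, converging to $K_{*}(S(X_{\xi}, \sigma_{\xi}, P_{\xi}))$. Since each $\Sigma_{L,0}(\pi_{\xi})$ is a shift of finite type, its stable algebra is AF with $K_{1} = 0$ and $K_{0} = D^{u}$ of the presenting graph, the exact dual of the fact (recalled before Theorem~\ref{c*:40}) that $S(X_{G}, \sigma, P)$ is AF with $K_{0} = D^{u}(G)$. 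Because $\pi_{\xi}$ is at most two-to-one (part~3 of Theorem~\ref{smale:50}), only $L = 0,1$ survive, exactly as in the homology computation via Theorem~5.1.10 of \cite{Pu:HSm}; the symmetrization and vertex-partition combinatorics are literally those of Theorem~\ref{hom:30} and Lemma~\ref{hom:40}, with the sole change $D^{s} \rightsquigarrow D^{u}$. This identifies the $L=0$ term with $D^{u}(G)$, the $L=1$ term with $D^{u}(H)$, and the single boundary map is zero by the argument of Lemma~\ref{hom:60} (condition (H0) forces $\xi^{0}(y) - \xi^{1}(y)$ to vanish in the dimension group). Degeneration then gives $K_{0}(S) \cong D^{u}(G)$ and $K_{1}(S) \cong D^{u}(H)$; the order on $K_{0}$ is inherited from the AF algebra $S(X_{G}, \sigma, P)$ at $L=0$, and $(\sigma_{\xi})_{*}$ is the canonical shift of the inductive limit, namely $A_{G}^{T}$ on $D^{u}(G)$ and $A_{H}^{T}$ on $D^{u}(H)$, agreeing with the $H^{u}$ rows of Theorem~\ref{hom:20}.

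For parts~3 and~4 I would feed these groups and automorphisms into the Pimsner--Voiculescu sequence. Writing $\alpha = (\sigma_{\xi})_{*}$, it yields
\[
0 \to \operatorname{coker}(I - \alpha \colon K_{0}(S) \to K_{0}(S)) \to K_{0}(R^{s}) \to \ker(I - \alpha \colon K_{1}(S) \to K_{1}(S)) \to 0
\]
and the analogue for $K_{1}(R^{s})$. It remains to evaluate the kernel and cokernel of $I - \alpha$ on $D^{u}(G)$ and $D^{u}(H)$. Here I would use that $\alpha$ is the shift automorphism of $\varinjlim(\Z^{V}, M)$ with $M = A_{G}^{T}$ or $A_{H}^{T}$: since $M \equiv I$ modulo $(I-M)\Z^{V}$, the matrix $M$ acts as the identity on both $\Z^{V}/(I-M)\Z^{V}$ and $\ker(I - M \colon \Z^{V} \to \Z^{V})$, so exactness of the directed colimit yields $\operatorname{coker}(I-\alpha) \cong \Z^{V}/(I-M)\Z^{V}$ and $\ker(I-\alpha) \cong \ker(I - M \colon \Z^{V} \to \Z^{V})$ (and replacing $\alpha$ by $\alpha^{-1}$ changes nothing, as $\alpha$ is invertible). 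Substituting, and noting the kernel terms are free abelian so the extensions split, produces exactly the formulas of parts~3 and~4.

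The main obstacle is the first step: justifying that $S(X_{\xi}, \sigma_{\xi}, P_{\xi})$ is computed by a $K$-theory spectral sequence formally identical to the homology complex of Section~\ref{hom}. The quick route through Proietti--Yamashita and Matui is \emph{not} available here, because the unit space $X^{u}(P_{\xi})$ of the stable groupoid contains circles (Corollary~\ref{real:80}) and so is not totally disconnected; that route applies only on the $U$-algebra side, where the transversal $X^{s}(P_{\xi})$ is a Cantor set. Thus the real work is to set up the spectral sequence directly from the $s$-bijective resolution and to verify that the symmetrization reproduces $D^{u}(G)$ and $D^{u}(H)$ with vanishing boundary; once this is in place, the degeneration, the Pimsner--Voiculescu computation, and the inductive-limit algebra are all routine.
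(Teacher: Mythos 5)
Your handling of parts 3 and 4 is exactly the paper's argument: the same Pimsner--Voiculescu sequence yielding the two short exact sequences, the same identification of $\ker(id-\alpha)$ and $\operatorname{coker}(id-\alpha)$ on the stationary inductive limit (via the observation that $M$ acts as the identity both on $\ker(I-M)$ and on $\Z^{V}/(I-M)\Z^{V}$, so exactness of direct limits reduces everything to a single stage), and the same splitting because the kernel terms are free abelian. The problem is parts 1 and 2, on which everything else depends. There your proposal rests entirely on a $K$-theoretic spectral sequence attached to the $s$-bijective resolution --- symmetrized terms $K_{*}(S(\Sigma_{L,0}(\pi_{\xi})))$ converging to $K_{*}(S(X_{\xi},\sigma_{\xi},P_{\xi}))$ --- which you postulate but do not construct, and which you yourself concede is ``the real work.'' This is a genuine gap, not a deferred routine verification: no such spectral sequence is available in the literature in the generality you need (as you correctly observe, the Matui/Proietti--Yamashita machinery requires a totally disconnected unit space, and $X^{u}(P_{\xi})$ contains circles), and building it from scratch is a project of at least the same order of difficulty as the theorem being proved. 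As written, the proposal establishes nothing about $K_{*}(S(X_{\xi},\sigma_{\xi},P_{\xi}))$.

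The paper avoids resolution machinery altogether. Lemma \ref{c*:100} and Proposition \ref{c*:110} replace $G^{s}(X_{\xi},\sigma_{\xi},P_{\xi})$ by the equivalent \'{e}tale groupoid $G^{s}(\xi,P)$, the reduction to the compact open transversal $Y^{u}_{\xi}(P)=\pi_{\xi}(Y^{u}_{G}(P))$. This reduced groupoid is then identified with the factor groupoid $R_{\xi}$ of \cite{Ha:fac}: one takes stationary Bratteli diagrams $(V,E)$ and $(W,F)$ presenting $G$ and $H$ (with a minor adjustment at the root of the diagram, using the chosen minimal cycle $C$, so that $X_{E}$ coincides with $Y^{u}_{G}(P)$ and $R_{\xi}$ with $G^{s}(\xi,P)$), and the two embeddings $\xi^{0},\xi^{1}$ satisfy the analogues of (H0) and (H1) required there. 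Theorem 1.1 of \cite{Ha:fac} then delivers $K_{0}\cong D^{u}(G)$ (as ordered groups) and $K_{1}\cong D^{u}(H)$ outright; that theorem is itself a substantive, self-contained computation for binary factors of AF groupoids, proved by relative $K$-theory and excision techniques in the spirit of \cite{Has:RelK} and \cite{Pu:Kexc}, not a degeneration argument. If you want to salvage your outline, replace your first step by this reduction-to-transversal plus citation; the identification of the induced automorphisms with $A_{G}^{T}$ and $A_{H}^{T}$ then follows by tracking $\sigma_{\xi}$ through the stationary diagram, and your Pimsner--Voiculescu computation goes through unchanged.
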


\begin{thm}
\label{c*:50}
If $G, H, \xi$ satisfy the standing hypotheses, 
 we have
\begin{enumerate}
\item 
$K_{0}(U(X_{\xi}, \sigma_{\xi}, P_{\xi})) \cong D^{s}(G)$
as ordered abelian groups
and, under this isomorphism,  the automorphism induced by $\sigma_{\xi}$
is $A_{G}^{-1}$.
\item 
$K_{1}(U(X_{\xi}, \sigma_{\xi}, P_{\xi})) \cong D^{s}(H)$
and, under this isomorphism,  the automorphism induced by $\sigma_{\xi}$
is $A_{H}^{-1}$.
\item $K_{0}(R^{u}(X_{\xi}, \sigma_{\xi}, P_{\xi})) \cong \Z^{G^{0}}/ (I-A_{G})\Z^{G^{0}}
\oplus \ker( I - A_{H}: \Z^{H^{0}}  \rightarrow  \Z^{H^{0}} ).$
\item $K_{1}(R^{u}(X_{\xi}, \sigma_{\xi}, P_{\xi})) \cong \Z^{H^{0}}/ (I-A_{H})\Z^{H^{0}}
\oplus \ker( I - A_{G}: \Z^{G^{0}}  \rightarrow  \Z^{G^{0}} ).$
\end{enumerate}
\end{thm}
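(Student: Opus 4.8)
The plan is to establish parts 1 and 2 first — the computation of $K_*(U(X_\xi,\sigma_\xi,P_\xi))$ together with the induced $\sigma_\xi$-action — and then to obtain parts 3 and 4 mechanically by feeding this data into the Pimsner--Voiculescu six-term exact sequence for the crossed product $R^{u}(X_\xi,\sigma_\xi,P_\xi)=U(X_\xi,\sigma_\xi,P_\xi)\rtimes_{\sigma_\xi}\Z$. The whole argument runs in parallel to, and reuses almost verbatim, the homology computation of Section~\ref{hom}; the guiding principle is that the $K$-theory of the unstable algebra is computed by the same two-term complex that computes the stable homology $H^{s}_*(X_\xi,\sigma_\xi)$.

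For parts 1 and 2, I would exploit the $s$-bijective factor map $\pi_\xi\colon(X_G,\sigma)\to(X_\xi,\sigma_\xi)$, which induces the covariant homomorphism $(\pi_\xi)_*\colon U(X_G,\sigma,P)\to U(X_\xi,\sigma_\xi,P_\xi)$. Since $\pi_\xi$ is at most two-to-one (part 3 of Theorem~\ref{smale:50}), the complex built from the fibred products $\Sigma_{N,0}(\pi_\xi)$ has only two nonzero terms, from $\Sigma_{0,0}(\pi_\xi)=X_G$ and $\Sigma_{1,0}(\pi_\xi)$. The unstable $C^{*}$-algebras of these shifts of finite type are AF, so their $K_1$ vanishes and their $K_0$-groups are precisely the chain groups $C^{s}_{\mathcal{Q},\mathcal{A}}(\pi_\xi)_{0,0}$ and $C^{s}_{\mathcal{Q},\mathcal{A}}(\pi_\xi)_{1,0}$, already identified in Theorem~\ref{hom:30} and Lemma~\ref{hom:40} as $D^{s}(G)$ and $D^{s}(H)$. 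The connecting map between them is the boundary $d^{s}_{\mathcal{Q}}$, which vanishes by condition (H0) exactly as in Lemma~\ref{hom:60}. Hence the complex degenerates, giving $K_0(U(X_\xi,\sigma_\xi,P_\xi))\cong D^{s}(G)$ and $K_1(U(X_\xi,\sigma_\xi,P_\xi))\cong D^{s}(H)$, the even/odd parity accounting for the placement in degrees $0$ and $1$. The order structure and the $\sigma_\xi$-action are then read off by naturality: the $K_0$-isomorphism is realized by the $\ast$-homomorphism $(\pi_\xi)_*$, which carries the positive cone of $D^{s}(G)=K_0(U(X_G,\sigma,P))$ onto that of $K_0(U(X_\xi,\sigma_\xi,P_\xi))$ (using the explicit clopen subsets of the unit space promised in this section to realize the relevant projections), and since $(\pi_\xi)_*$ intertwines the two copies of $\sigma$ the induced action agrees with that on $H^{s}_*(X_\xi,\sigma_\xi)$ from Theorem~\ref{hom:20}, namely $A_G^{-1}$ on $K_0$ and $A_H^{-1}$ on $K_1$.

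For parts 3 and 4, I would apply Pimsner--Voiculescu with $\alpha=(\sigma_\xi)_*$, yielding the split short exact sequences
\[
0\to \operatorname{coker}(1-\alpha\mid K_i(U))\to K_i(R^{u})\to \ker(1-\alpha\mid K_{i-1}(U))\to 0 .
\]
Substituting $K_0(U)=D^{s}(G)$ with $\alpha=A_G^{-1}$ and $K_1(U)=D^{s}(H)$ with $\alpha=A_H^{-1}$, the remaining task is the standard inductive-limit identity: on $D^{s}(G)=\varinjlim(\Z^{G^{0}},A_G)$ the operator $A_G$ acts as the identity modulo its image, so $\operatorname{coker}(1-A_G^{-1})\cong\Z^{G^{0}}/(I-A_G)\Z^{G^{0}}$ and $\ker(1-A_G^{-1})\cong\ker(I-A_G\colon\Z^{G^{0}}\to\Z^{G^{0}})$, and likewise for $H$. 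As each kernel is free abelian the sequences split, giving the stated direct sums.

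The main obstacle is the first step: justifying rigorously that the two-term complex above genuinely computes $K_*(U(X_\xi,\sigma_\xi,P_\xi))$ — that is, that the spectral sequence (or mapping-cone long exact sequence) attached to the $s$-bijective map degenerates with no higher differentials and no extension problems, and that the identifications of the two $K_0$-groups with $D^{s}(G)$ and $D^{s}(H)$ respect both the order and the $\sigma_\xi$-action. This is where either the Proietti--Yamashita/Putnam machinery is invoked or a direct groupoid extension must be built; the degeneration itself is then immediate because, by Theorem~\ref{hom:20}, the stable homology is concentrated in the single pair of adjacent degrees $0$ and $1$. Everything downstream, including the Pimsner--Voiculescu bookkeeping, is routine by comparison.
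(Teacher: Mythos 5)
Your overall architecture (compute $K_*(U)$ with its $\sigma_\xi$-action, then feed Pimsner--Voiculescu) matches the paper, and your treatment of parts 3 and 4 --- the split short exact sequences, the identification of $\ker$ and $\operatorname{coker}$ of $1-\alpha$ on the inductive limits with $\ker(I-A)$ and $\Z^{n}/(I-A)\Z^{n}$ at the first stage, and the splitting because the kernels are free abelian --- is exactly the paper's argument. The genuine gap is in parts 1 and 2, and you have half-named it yourself. First, your concrete justification fails as stated: $K_0$ of the unstable AF-algebra of $\Sigma_{1,0}(\pi_\xi)$ is its \emph{full} dimension group, which contains the diagonal component $V_0$ in addition to the off-diagonal part; it is not the reduced chain group $C^{s}_{\mathcal{Q},\mathcal{A}}(\pi_\xi)_{1,0}\cong D^{s}(H)$. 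The $S_2$-reduction (killing $V_0$ and the elements $v-v\cdot(1,2)$) is a step internal to the homology theory, not a $K$-theoretic identity, so ``the $K_0$-groups are precisely the chain groups'' is false and the degeneration does not follow from Lemma \ref{hom:60} alone. Second, a spectral sequence from $H^{s}_*$ to $K_*(U)$ of the kind you gesture at does exist for Smale spaces with totally disconnected stable sets (the paper itself flags \cite{PY:hom1}, \cite{PY:hom2} and \cite{Ma:homex} as an alternate route), and with homology concentrated in degrees $0,1$ it would collapse to give the bare group isomorphisms; but quoting it delivers neither the \emph{ordered} isomorphism on $K_0$ claimed in part 1, nor --- absent a naturality argument you do not supply --- the identification of $(\sigma_\xi)_*$ on $K_1$ as $A_H^{-1}$.

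The paper's actual route is different and supplies precisely what your sketch omits. It proves no ``homology computes $K$-theory'' statement; instead it identifies the \'{e}tale groupoid $G^{u}(\xi,P)$ (the reduction of unstable equivalence to the clopen transversal $Y^{s}_{\xi}(P)$ of Lemma \ref{c*:200} and Proposition \ref{c*:210}) with the explicit equivalence relation $R$ of \cite{Pu:preK}, built on a stationary Bratteli diagram for the opposite graph of $G$. The analytic core is Lemma \ref{c*:220}, a delicate estimate with the circle coordinates $\theta$ showing that points of $Y^{s}_{G}(P)$ have unstably equivalent $\pi_\xi$-images only in the two expected patterns (left tails equal, or left tails swapped through $\xi^{0},\xi^{1}$ along a path in $X_H$), together with Lemma \ref{c*:230} verifying that the Smale-space topology on $G^{u}(\xi,P)$ coincides with the ad hoc topology of \cite{Pu:preK}. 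Theorem 1.1 of \cite{Pu:preK} then yields $K_0\cong D^{s}(G)$ and $K_1\cong D^{s}(H)$; the $K_0$ statement, including order and the $\sigma_\xi$-action, comes from the inclusion $C^{*}(R_E)\subseteq C^{*}(R)$ inducing an isomorphism on $K_0$, and the action on $K_1$ is computed from the explicit partial isometries $v_p$ of Remark 3.6 of \cite{Pu:preK}. To make your route rigorous you would need either a precisely quoted PY-type theorem plus the missing order and naturality arguments, or this groupoid identification --- which is where essentially all of the work in this subsection of the paper lives.
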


\begin{rem}
\label{c*:60}
Both results are undoubtedly true if the hypothesis that $G$ is 
primitive is weakened to $(X_{G}, \sigma_{G})$ being non-wandering.
However, our proofs rely heavily on results of two papers 
\cite{Pu:preK} and \cite{Ha:fac} which  require the respective
equivalence relations $G^{s}(X_{G}, \sigma, P)$ and 
$G^{u}(X_{G}, \sigma, P)$ to have all equivalence classes dense 
and this is equivalent to $G$ being primitive.
It would not be difficult to adjust these results to the more general 
situation, but it would complicate matters.
\end{rem}

 \begin{rem}
 \label{c*:70}
The alert reader may be somewhat perplexed by the switching of 's' and 'u',
between the two sides of the first two parts of both results.
This is due to an historical anomaly: the superscript on $G^{s}$ was 
chosen for 'stable' equivalence and it seems logical to use $S$ 
for its $C^{*}$-algebra. However, the elements of the $K_{0}$-group
of this $C^{*}$-algebra are realized by characteristic functions  
of clopen subsets of its unit space, which is $X^{u}(P)$. So the notation
for $D^{u}(G)$ was chosen to coincide with this interpretation.
\end{rem}

Let us mention one other groupoid, without going into 
great detail. It follows from Theorems \ref{constr:190}
and \ref{constr:210} that the map $\sigma_{\xi}$ of $X_{\xi}^{+}$ is
locally expanding and also a local homeomorphism. We can 
associate to it its Deaconu-Renault groupoid, as follows.
For each $x,y$ in $X_{\xi}^{+}$ and positive integers $m,n$
satisfying $\sigma_{\xi}^{m}(x) = \sigma_{\xi}^{n}(y)$, we consider the 
triple $(x, m-n, y)$. The set of all such triples is a groupoid with the product
$(x, k, y) \cdot(x', k', y')$ defined when $y=x'$ and result
$(x, k+k', y')$. It can be given a topology it which it is \'{e}tale 
\cite{D:grpd}. 
The associated groupoid $C^{*}$-algebra has the nice feature
of being unital, and it is also equivalent to 
$R^{u}(X_{\xi}, \sigma_{\xi}, P_{\xi})$.

The result of our K-theory computations
has an interesting consequence that 
our construction exhausts all possible Ruelle 
algebras from mixing Smale spaces, as we describe below.

We begin with some simple  observations. 
Let $(X, \varphi, d)$ be a mixing Smale space 
and $Q$ be a finite $\varphi$-invariant subset of $X$.
The $C^{*}$-algebras  
$R^{s}(X, \varphi, Q)$  and $R^{u}(X, \varphi, Q)$ are 
Spanier-Whitehead duals of each other (see Definition 4.1 and 
Theorem 1.1 of \cite{KPW:dual}). As noted in section 4 of \cite{KPW:dual},
this implies that their $K$-groups are finitely generated. 
Proietti and Yamashita   have recently shown that
$K_{*}(S(X, \varphi))$ and $K_{*}(U(X, \varphi))$ are finite rank 
(Theorem 5.1 of \cite{PY:hom2})
and, 
as explained in section 4 of \cite{KPW:dual}, this implies that 
$K_{0}(R^{s}(X, \varphi, Q))$  and $K_{1}(R^{s}(X, \varphi, Q))$ have the same rank.
Putting all of this together, we conclude that there is an integer $k \geq 0$
and finite abelian groups $G_{0}$ and $G_{1}$ such that 
\begin{eqnarray*}
K_{0}(R^{s}(X, \varphi, Q)) & \cong & \Z^{k} \oplus G_{0}, \\
K_{1}(R^{s}(X, \varphi, Q)) & \cong & \Z^{k} \oplus G_{1}.
\end{eqnarray*}
We add that another consequence of Theorem 5.1 of 
\cite{PY:hom2}, as described in section 4 of \cite{KPW:dual} is that 
\[
R^{s}(X, \varphi, Q) \cong R^{u}(X, \varphi, Q).
\]

\begin{cor}
\label{c*:80}
Let $(X, \varphi, d)$ be a mixing Smale space and $Q$ be
 a finite $\varphi$-invariant subset of $X$. There exist finite directed 
 primitive graphs,
$G, H$ and embeddings $\xi^{0}, \xi^{1}: H \rightarrow G$ satisfying 
(H0), (H1) and (H2) such that \newline
$R^{s}(X_{\xi}, \sigma_{\xi}, P_{\xi}) \cong R^{s}(X, \varphi, Q)$,
for any $P$, a finite $\sigma$-invariant subset of $X_{G}$.
\end{cor}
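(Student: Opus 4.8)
The plan is to deduce everything from the Kirchberg--Phillips classification theorem, so that the whole problem reduces to a $K$-theoretic realization question. First I would record that both algebras in play are stable Kirchberg algebras satisfying the UCT. For $R^{s}(X,\varphi,Q)$ this is part of the review above (mixing, hence irreducible). For $R^{s}(X_{\xi},\sigma_{\xi},P_{\xi})$ one needs $(X_{\xi},\sigma_{\xi})$ to be irreducible; but $G$ primitive makes $(X_{G},\sigma)$ mixing, and $\pi_{\xi}$ is a factor map (Theorem \ref{smale:50}), so $(X_{\xi},\sigma_{\xi})$ is mixing as well and the properties quoted at the start of this section apply. Consequently each of these $C^{*}$-algebras is determined up to isomorphism by the pair $(K_{0},K_{1})$ of abelian groups, and the choice of invariant set is immaterial since the $K$-groups in Theorem \ref{c*:40} do not involve it. Thus it suffices to produce $G,H,\xi$ satisfying the standing hypotheses whose associated $R^{s}$ has the same $K$-theory as $R^{s}(X,\varphi,Q)$.

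By the discussion preceding the statement, $K_{0}(R^{s}(X,\varphi,Q))\cong \Z^{k}\oplus G_{0}$ and $K_{1}(R^{s}(X,\varphi,Q))\cong \Z^{k}\oplus G_{1}$ for a common rank $k$ and finite groups $G_{0},G_{1}$. On the other side, parts (3) and (4) of Theorem \ref{c*:40} give
\[
K_{0}(R^{s}(X_{\xi},\sigma_{\xi},P_{\xi}))\cong \mathrm{cok}(I-A_{G}^{T})\oplus\ker(I-A_{H}^{T}),\quad K_{1}(R^{s}(X_{\xi},\sigma_{\xi},P_{\xi}))\cong \mathrm{cok}(I-A_{H}^{T})\oplus\ker(I-A_{G}^{T}).
\]
The key elementary fact is that for any integer endomorphism $\beta$ of $\Z^{n}$ one has $\mathrm{rank}\,\mathrm{cok}(\beta)=n-\mathrm{rank}(\beta)=\dim_{\Q}\ker(\beta\otimes\Q)=\mathrm{rank}\,\ker(\beta)$, so the free ranks of the two summands automatically balance. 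This reduces the problem to the following realization: find a primitive $H$ with $\mathrm{cok}(I-A_{H}^{T})\cong G_{1}$ (finite, so $\ker(I-A_{H}^{T})=0$) and a primitive $G$, containing $H$ in the required doubled fashion, with $\mathrm{cok}(I-A_{G}^{T})\cong \Z^{k}\oplus G_{0}$ (whence $\ker(I-A_{G}^{T})\cong \Z^{k}$). With these choices the two displays collapse to $\Z^{k}\oplus G_{0}$ and $\Z^{k}\oplus G_{1}$, matching the target.

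For the construction I would first invoke the standard realization of Bowen--Franks groups: any finitely generated abelian group occurs as $\mathrm{cok}(I-A^{T})$ for a primitive nonnegative integer matrix $A$, with $\ker(I-A^{T})$ free of the corresponding rank. This yields a primitive $H$ realizing $G_{1}$. I would then build $G$ on a vertex set $G^{0}=H^{0}\sqcup W$: over the common image of $H^{0}$ place two edge-disjoint copies of every edge of $H$ (defining $\xi^{0},\xi^{1}$ and giving (H0) and (H1)) together with one parallel witness edge for each edge of $H$ (giving (H2)), and on $W$ together with the connecting edges place an auxiliary gadget chosen to fix both the cokernel and primitivity.

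The hard part will be this last tuning: the block $2A_{H}$ (plus witnesses) forced into the $H^{0}$-corner contributes to $\mathrm{cok}(I-A_{G}^{T})$, and I must cancel its effect and install $\Z^{k}\oplus G_{0}$ instead. My approach would be to choose the witness multiplicities and the edges between $H^{0}$ and $W$ so that integer row and column operations reduce the $H^{0}$-rows and columns of $I-A_{G}^{T}$ to a unimodular block, making the $H^{0}$-coordinates contribute trivially to the Smith normal form; the cokernel is then governed by a block supported on $W$, into which I insert a standard primitive realization of $\Z^{k}\oplus G_{0}$. A few connecting edges between the two vertex groups make $A_{G}$ primitive without disturbing (H0)--(H2), which only constrain the fixed doubled-$H$ part. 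Verifying that such a reduction can always be arranged---that the mandatory doubled-$H$ corner can be absorbed while hitting an arbitrary prescribed Bowen--Franks group---is the genuine content of the proof; the rest is the bookkeeping above.
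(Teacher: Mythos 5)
Your high-level reduction is exactly right and matches the paper: both algebras are stable Kirchberg algebras in the UCT class (your mixing argument for $(X_{\xi},\sigma_{\xi})$ via the factor map is fine), so everything reduces to realizing the pair $\bigl(\Z^{k}\oplus G_{0},\,\Z^{k}\oplus G_{1}\bigr)$ through Theorem \ref{c*:40}; your mirrored split ($\mathrm{cok}(I-A_{H}^{T})\cong G_{1}$ finite, $\mathrm{cok}(I-A_{G}^{T})\cong\Z^{k}\oplus G_{0}$) balances correctly, just as the paper's opposite split does (the paper realizes the full group $K_{1}\cong\Z^{k}\oplus G_{1}$ on the $H$ side and only $\mathrm{Tor}(K_{0})\cong G_{0}$ on the $G$ side). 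The problem is that your proposal stops precisely at the load-bearing step. You build $G$ by force: you insert a doubled copy of $H$ (plus witness edges) into the $H^{0}$-corner of $A_{G}$ and then must \emph{cancel} the resulting $2A_{H}$-block's contribution to $\mathrm{cok}(I-A_{G}^{T})$ by tuning a gadget on $W$ --- and you concede this verification ``is the genuine content of the proof'' without supplying it. That is a genuine gap, not bookkeeping: the Smith-normal-form reduction you gesture at must be carried out by row and column operations that remain compatible with nonnegativity and primitivity of $A_{G}$ and with the fixed doubled-$H$ edges, and nothing in the proposal shows the witness multiplicities and connecting edges can always be chosen to make the $H^{0}$-block unimodular while the $W$-block realizes an arbitrary $\Z^{k}\oplus G_{0}$.

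The paper avoids the cancellation problem entirely by reversing the logic, and this is worth internalizing: conditions (H0)--(H2) do \emph{not} force a $2A_{H}$-block into $A_{G}$; they only require that between the vertices of $G$ carrying $H^{0}$ there be at least $2A_{H}(v,w)+1$ parallel edges. Accordingly, the paper first strengthens the realization lemma (Lemma \ref{c*:85}): any finitely generated abelian group arises as $\Z^{d}/(I-A)\Z^{d}$ with $d$ at least any prescribed $d_{0}$ and with \emph{every entry} of $A$ at least any prescribed $M_{0}$ (proved by padding the diagonal form with $1$'s and using unimodular row/column operations to make all entries large). It applies this once to $K_{1}(R^{s}(X,\varphi,Q))$ with $d_{0}=M_{0}=1$ to get $A$, and a second time to $\mathrm{Tor}\bigl(K_{0}(R^{s}(X,\varphi,Q))\bigr)$ with $d_{0}=d$ and $M_{0}=\max\{2A(i,j)+1\}$ to get $B$; then $G$ is simply the graph of $B^{T}$, chosen \emph{before} any embedding, and the two disjoint copies of $H^{1}$ plus the (H2)-witnesses embed into the abundant parallel edges without altering $A_{G}=B^{T}$ at all. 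Primitivity is automatic since all entries of $A$ and $B$ are positive. If you want to repair your write-up, replace the gadget-and-cancel scheme with this choose-the-matrix-first argument; the only new ingredient you are missing is the entry-largeness refinement of the realization lemma.
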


We begin the proof with a simple algebraic result.

\begin{lemma}
\label{c*:85}
Let  $G$ be a  finitely generated 
abelian group and $d_{0}, M_{0}$ be 
positive integers. There exist $d \geq d_{0}$ and a $ d \times d$
 integer matrix $A$
such that 
\[
\Z^{d} / \left( I-A \right) \Z^{d}  \cong   G
\]
and each entry of $A$ is greater than or equal to $M_{0}$.
\end{lemma}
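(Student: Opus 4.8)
The plan is to reduce the statement to a purely matrix-theoretic realization problem and then give an explicit construction. Writing $M = I - A$, the requirement $\Z^d/(I-A)\Z^d \cong G$ is the same as $\Z^d/M\Z^d \cong G$, and since $(-M)\Z^d = M\Z^d$ we have $\Z^d/M\Z^d = \Z^d/(-M)\Z^d$. Hence it suffices to produce, for some $d \geq d_0$, an integer matrix $N$ all of whose entries are $\geq M_0$ with $\Z^d/N\Z^d \cong G$: taking $A = I + N$ gives $I - A = -N$, so the cokernel is correct, while $A_{ij} = \delta_{ij} + N_{ij} \geq N_{ij} \geq M_0$. So I would restate the goal as realizing $G$ as the cokernel of a square integer matrix of size at least $d_0$ with all entries $\geq M_0$.

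By the structure theorem write $G \cong \Z^r \oplus \bigoplus_{j=1}^s \Z/n_j\Z$ with each $n_j \geq 2$, and first realize each summand by a small block all of whose entries are $\geq M_0$. For a cyclic factor $\Z/n_j\Z$ I would use a $2\times 2$ block built by a ``column trick'': choose a primitive positive column $(p,q)^T$ with $p,q \geq M_0$ and $\gcd(p,q)=1$ (for instance $p=M_0$, $q=M_0+1$) and complete it to $\begin{pmatrix} p & y \\ q & w \end{pmatrix}$ by solving $pw-qy = n_j$; since $\gcd(p,q)=1$ this is solvable, and the general solution lets me take $y,w$ as large as I wish, so all four entries are $\geq M_0$. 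The gcd of the entries is $1$ and the determinant is $n_j$, so the Smith form is $\mathrm{diag}(1,n_j)$ and the cokernel is $\Z/n_j\Z$. For the free part $\Z^r$ (when $r\geq 1$) I would use an $(r+1)\times(r+1)$ rank-one block $uv^T$ with $u,v$ strictly positive primitive vectors chosen so that $u_iv_j \geq M_0$; since $u,v$ are primitive the cokernel is $\Z^{r}$. Finally I pad with $2\times 2$ blocks of Fibonacci form $\begin{pmatrix} F_{k+1} & F_k \\ F_k & F_{k-1}\end{pmatrix}$, which lie in $GL_2(\Z)$, have all entries $\geq M_0$ for $k$ large, and contribute trivial cokernel; adding enough of these raises the total size $d$ above $d_0$ (and above $2$). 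Assembling these as a block-diagonal matrix $N_0$ gives $\Z^d/N_0\Z^d \cong G$ with every diagonal block strictly positive and $\geq M_0$, but with zero off-diagonal blocks.

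The remaining and most delicate step is to fill in the off-diagonal zeros by elementary unimodular row and column operations (adding an integer multiple of one row or column to a \emph{different} one), which preserve the cokernel, so as to raise every entry to $\geq M_0$. I would anchor on a single block whose entries are already $\geq M_0$ (such a block always exists: a torsion block, the free block, or a padding block), say occupying the index set $I_1$, and proceed in two phases. In Phase~1, to each row outside $I_1$ I add a large positive multiple of a fixed row of the anchor block; since that row is positive exactly in the columns $I_1$ and zero elsewhere, this makes every entry in the columns $I_1$ large and positive while changing nothing else. In Phase~2, I pick a column of $I_1$, now strictly positive in every row, and add a large positive multiple of it to each column outside $I_1$, making all remaining entries large and positive. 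One then checks that the anchor block (rows and columns $I_1$) is never touched, that the original positive block entries only increase, and that the multiples can be chosen uniformly large enough that the final matrix $N$ has every entry $\geq M_0$; since each operation preserved the cokernel, $\Z^d/N\Z^d \cong G$.

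The main obstacle I anticipate is exactly this last paragraph: guaranteeing that a controlled sweep of elementary operations raises \emph{all} entries simultaneously to $\geq M_0$ while leaving both the cokernel and the already-large block entries intact. The two-phase, anchored scheme is designed to avoid the cancellations and ``stuck corner'' phenomena that ad hoc operations tend to produce. The only degenerate case needing separate mention is the trivial group together with small $d_0$, where one simply takes $N$ to be a block-diagonal sum of large Fibonacci $GL_2(\Z)$ blocks.
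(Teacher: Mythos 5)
Your proof is correct, and its skeleton is the same as the paper's: invoke the structure theorem, realize $G$ as $\Z^{d}/N\Z^{d}$ for an integer matrix $N$ of size $d \geq d_{0}$, use the fact that elementary row and column operations (unimodular multiplications on either side) preserve the cokernel to force every entry of $N$ up to $M_{0}$, and finish with $A = I + N$, so that $I - A = -N$ has the required cokernel. The difference lies in the realization step. The paper starts from the bare diagonal matrix $\mathrm{diag}(0,\ldots,0,j_{1},\ldots,j_{l},1,\ldots,1)$, padding with $1$'s both to reach size $d_{0}$ and to create a pivot: column operations spread the corner $1$ into an all-positive last row, row operations then push every other row above $M_{0}$, and a final row addition fixes the last row. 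You instead manufacture an all-positive block for each summand --- a $2\times 2$ block of unit content and determinant $n_{j}$ for each cyclic factor, a rank-one block $uv^{T}$ with $u,v$ primitive for the free part (indeed $\mathrm{coker}(uv^{T}) \cong \Z^{r}$), and Fibonacci $GL_{2}(\Z)$ blocks for padding --- and then run a two-phase anchored sweep. All of these pieces check out, and your sweep is sound: after Phase~1 the anchor columns are positive in every row, and since every entry stays nonnegative throughout, Phase~2's column additions raise all remaining entries above $M_{0}$ without disturbing the anchor. But note that the per-summand positive blocks are redundant: your sweep uses only the positivity of the anchor block together with nonnegativity of everything else, so it runs just as well starting from the paper's diagonal matrix, with a padding $1$ serving as a $1\times 1$ anchor --- which is essentially what the paper does. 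So your route is a correct but heavier implementation of the same idea; its one advantage is that the two-phase sweep is stated as a reusable mechanism, making explicit why the step you rightly flagged as delicate cannot get stuck.
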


\begin{proof}
From the classification theorem for finitely generated 
abelian groups (Theorem 2.2, page 76
of Hungerford \cite{Hu:book}), 
we know that there are integers $k \geq 0$, 
$j_{1}, \ldots, j_{l} \geq 2$, $l \geq 0$ such that 
\[
G \cong \Z^{k} \oplus \left( \Z/j_{1} \Z \oplus \cdots \oplus 
\Z / j_{l} \Z \right).
\]
Let $D$ be the diagonal matrix whose diagonal entries are $0$, $k$ times, 
$j_{1}, \ldots, j_{l}$ and then a collection of $1$'s so that there is at least one
and $d_{0}-l-k$,  so the the size of $D$, which we call $d$,  is
at least $d_{0}$.

It is immediate that $\Z^{d}/ D \Z^{d} \cong G$. Observe that this isomorphism 
still holds if we multiply $D$ on either side by an integer matrix 
of determinant one. In particular, it remains true if we add a multiple 
of a row or column to another. Using column operations, 
the $1$ in the $d,d$-entry of $D$ can then 
be used to ensure row $d$ has all positive integer entries. Then using 
row operations,
we can ensure every row, other than row $d$ has entries at least $M_{0}$. 
Finally, we add row $1$ to row $d$ and the resulting matrix matrix $B$ 
has all entries at least $M_{0}$. Then  $A = B+I$ satisfies the desired 
conclusion.
\end{proof}

\begin{proof}[Corollary \ref{c*:80}]
We apply Lemma \ref{c*:85} to the group $G = K_{1}(R^{s}(X, \varphi, Q))$ 
with $d_{0}=M_{0} =1$ to obtain a $d \times d$ matrix $A$. We apply
Lemma \ref{c*:85} a second time to the group 
$Tor(K_{0}(R^{s}(X, \varphi, Q)))$, with $d_{0}=d$ and $M_{0}  = \max\{ 2 A(i,j) +1 
\mid 1 \leq i, j \leq d \}$ to obtain a $d' \times d'$ matrix $B$.

We let $H$ be the directed graph with $d$ vertices and 
adjacency matrix $A^{T}$ and $G$ be the directed graph with $d'$ vertices and 
adjacency matrix $B^{T}$. As $d' \geq d$, we may find an embedding of $H^{0}$ in 
$G^{0}$ and from the choice of $B$ we can find 
embeddings $\xi^{0}, \xi^{1}$ 
of $H^{1}$ in $G^{1}$ which satisfy (H0), (H1) and (H2).

We note that as 
\[
\Z^{d'} / \left( I-B \right) \Z^{d'} \cong Tor(K_{0}(R^{s}(X, \varphi, Q))),
\]
which is finite, $\ker \left( I- B \right)$ is free abelian 
and rank zero, so is trivial.
If we let $k $ be the rank of $K_{1}(R^{s}(X, \varphi, Q))$, then 
$\ker \left( I-A \right)$ also has rank $k$ and is free abelian.
We conclude from Theorem \ref{c*:40} that
\begin{eqnarray*}
K_{0}(R^{s}(X, \varphi, Q))  \cong & \Z^{k} \oplus Tor( K_{0}(R^{s}(X, \varphi, Q)) ) 
   \cong & K_{0}(R^{s}(X_{\xi}, \sigma_{\xi}, P_{\xi})) \\
  K_{1}(R^{s}(X, \varphi, Q))   
   \cong  & K_{1}(R^{s}(X_{\xi}, \sigma_{\xi}, P_{\xi})) &
\end{eqnarray*}
As noted in \cite{KPW:dual} these $C^{*}$-algebras fall under the 
Kirchberg-Phillips classification Theorem, so 
 $R^{s}(X, \varphi, Q) \cong R^{s}(X_{\xi}, \sigma_{\xi}, P_{\xi})$.
\end{proof}

\subsection{Stable equivalence}

In this subsection, we focus on the $C^{*}$-algebras 
$S(X_{\xi}, \sigma_{\xi}, P_{\xi})$ and \newline
$R^{s}(X_{\xi}, \sigma_{\xi}, P_{\xi})$.

\begin{lemma}
\label{c*:100}
\begin{enumerate}
\item The set
\[
Y^{u}_{G}(P) = \{ y \in X_{G}^{u}(P) \mid y_{n} = p_{n}, n \leq 0, 
\text{ some } p \in P \}
\]
is a compact open subset of $X_{G}^{u}(P)$. Its relative topology 
from $X_{G}^{u}(P)$ agrees with the  relative topology from $X_{G}$.
If $x$ is any point of $X_{G}$, then $x$ is stably equivalent
to some $y$ in $Y^{u}_{G}(P)$. Finally, 
we have $\sigma^{-1}(Y^{u}_{G}(P)) \subseteq Y^{u}_{G}(P)$.
\item The set
$Y^{u}_{\xi}(P) = \pi_{\xi}(Y^{u}_{G}(P))$
is a compact open subset of $X_{\xi}^{u}(P)$. Its relative topology 
from $X_{\xi}^{u}(P)$ agrees with the  relative topology from $X_{\xi}$.
If $x$ is any point of $X_{\xi}^{u}(P)$, then $x$ is stably equivalent
to some $y$ in $Y^{u}_{\xi}(P)$. Finally, 
we have $\sigma_{\xi}^{-1}(Y^{u}_{\xi}(P)) \subseteq Y^{u}_{\xi}(P)$.
\end{enumerate}
\end{lemma}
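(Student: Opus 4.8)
The plan is to prove part 1 by hand from the symbolic description of unstable equivalence, and then to transport every assertion down to $X_{\xi}$ through the factor map $\pi_{\xi}$ in part 2. First I would record that unstable equivalence in $(X_{G},\sigma)$ is left-tail equivalence: $(x,y)\in G^{u}$ iff $x_{n}=y_{n}$ for all sufficiently negative $n$. Since each $p\in P$ is the periodic repetition of the cycle $C^{1}$, the class $X_{G}^{u}(p)$ consists of the $y$ that eventually agree with $p$ to the left, and for fixed $p$ the cylinder $[p]=\{y\mid y_{n}=p_{n},\ n\leq 0\}$ is exactly the ``full'' local unstable set of $p$. Thus $Y^{u}_{G}(P)=\bigsqcup_{p\in P}[p]$, the union being disjoint because distinct periodic points have distinct left tails. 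Each $[p]$ is a cylinder, hence closed in the compact space $X_{G}$, and on it the relative topology from $X_{G}$ and the fine topology from $X_{G}^{u}(P)$ both reduce to the cylinder topology in the coordinates $n\geq 1$; this yields both the agreement of the two topologies and compactness. Openness of $Y^{u}_{G}(P)$ in $X^{u}_{G}(P)$ is immediate, since a basic fine-open set $X_{G}^{u}(z,\epsilon)$ with $z\in[p]$ fixes the coordinates $n\leq 0$ to equal $p$ and so lies in $[p]$. For the stable-equivalence claim I would fix any $p\in P$ and, using that $G$ is primitive, choose $M$ large and a path of length $M-1$ from $t(p_{0})$ to $i(x_{M})$; splicing the left tail of $p$, this bridge, and the right tail of $x$ gives $y\in[p]\subseteq Y^{u}_{G}(P)$ with $y_{n}=x_{n}$ for $n\geq M$, i.e.\ stably equivalent to $x$. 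Finally $\sigma^{-1}(Y^{u}_{G}(P))\subseteq Y^{u}_{G}(P)$ because $\sigma^{-1}(y)_{n}=y_{n-1}=p_{n-1}=(\sigma^{-1}p)_{n}$ for $n\leq 0$ and $\sigma^{-1}(p)\in P$ by the $\sigma$-invariance of $P$.

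For part 2 I would transfer each assertion through $\pi_{\xi}$, which by Theorem~\ref{smale:50} is a surjective factor map, is uniformly continuous on the compact space $X_{G}$, and intertwines $\sigma$ with $\sigma_{\xi}$. Uniform continuity together with $\pi_{\xi}\circ\sigma^{-n}=\sigma_{\xi}^{-n}\circ\pi_{\xi}$ shows that $\pi_{\xi}$ carries unstably equivalent points to unstably equivalent points, so $Y^{u}_{\xi}(P)=\pi_{\xi}(Y^{u}_{G}(P))\subseteq X_{\xi}^{u}(P_{\xi})$; the analogous preservation of stable equivalence, combined with surjectivity of $\pi_{\xi}$ and the stable-equivalence claim of part 1, shows every $x\in X_{\xi}^{u}(P_{\xi})$ is stably equivalent to a point of $Y^{u}_{\xi}(P)$. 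The invariance $\sigma_{\xi}^{-1}(Y^{u}_{\xi}(P))\subseteq Y^{u}_{\xi}(P)$ follows at once from $\sigma_{\xi}^{-1}\pi_{\xi}=\pi_{\xi}\sigma^{-1}$ and part 1.

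The topological claims I would handle as follows. Using part 2 of Theorem~\ref{smale:50} (that $\pi_{\xi}$ commutes with the bracket on pairs at distance at most $2^{-3}$) together with continuity of $\pi_{\xi}$, a sequence converging inside a local unstable set of $X_{G}$ maps to a sequence converging inside a local unstable set of $X_{\xi}$: if $[y^{k},y]=y^{k}$ and $d_{G}(y^{k},y)\to 0$ then $[\pi_{\xi}y^{k},\pi_{\xi}y]=\pi_{\xi}y^{k}$ and $d_{\xi}(\pi_{\xi}y^{k},\pi_{\xi}y)\to 0$. Hence $\pi_{\xi}$ is continuous for the fine topologies, and $Y^{u}_{\xi}(P)$, being the fine-continuous image of the fine-compact set $Y^{u}_{G}(P)$, is compact in the fine topology of $X_{\xi}^{u}(P_{\xi})$. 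Since the fine topology is finer than the relative topology from $X_{\xi}$, the identity map on $Y^{u}_{\xi}(P)$ from the fine topology to the relative topology is a continuous bijection from a compact space to a Hausdorff space, hence a homeomorphism; this is precisely the agreement of the two topologies.

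It remains to show that $Y^{u}_{\xi}(P)$ is \emph{open} in $X_{\xi}^{u}(P_{\xi})$, and this is the step I expect to be the main obstacle, since it is the one point where the clean cylinder combinatorics of $X_{G}$ does not transport formally. The plan is to use that the regular $s$-bijective map $\pi_{\xi}$ restricts to a local homeomorphism of the unstable transversal $X_{G}^{u}(P)\to X_{\xi}^{u}(P_{\xi})$: it commutes with the bracket and is surjective on global unstable sets, while the two points of any nontrivial fibre described in part 3 of Theorem~\ref{smale:50} lie at a fixed positive distance within a common local unstable set and so are separated in the fine topology. An open local homeomorphism carries the fine-open set $Y^{u}_{G}(P)$ onto a fine-open set, namely $Y^{u}_{\xi}(P)$. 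I would make the local-homeomorphism claim precise directly from the fibre description of part 3 of Theorem~\ref{smale:50} restricted to $Y^{u}_{G}(P)$: case (b) is excluded there because the coordinates $n\leq 0$ lie in $G^{1}-\xi(H^{1})$, so any identification must be of type (c) with $m\geq 0$ and a common left tail, whence small local unstable neighbourhoods in $X_{G}$ map bijectively onto small local unstable neighbourhoods in $X_{\xi}$, surjectivity onto the target neighbourhood being exactly where surjectivity of $\pi_{\xi}$ on unstable sets is used. Once openness is secured, all four assertions of part 2 are established.
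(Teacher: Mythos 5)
Everything up to the openness step is sound and, apart from being more detailed, follows the same route as the paper: part 1 by hand from left-tail equivalence and primitivity, compactness of $Y^{u}_{\xi}(P)$ as a continuous image, agreement of the two topologies, the stable-equivalence claim via surjectivity, and the $\sigma_{\xi}^{-1}$-invariance. The genuine gap is your argument for openness. The claim that $\pi_{\xi}$ restricts to a local homeomorphism of the unstable transversal, because "the two points of any nontrivial fibre lie at a fixed positive distance," is false. In an identification of type (c) from part 3 of Theorem \ref{smale:50}, the points agree for $n < m$ and are swapped by $\xi^{0}/\xi^{1}$ thereafter, so they are at distance of order $2^{-m}$, and $m$ can be taken arbitrarily large \emph{inside} $Y^{u}_{G}(P)$: any fine-basic neighbourhood of $x$ fixing coordinates $n \leq N$ contains, for each $m > N$, pairs $z \neq z'$ with $\pi_{\xi}(z) = \pi_{\xi}(z')$ (choose tails lying in $\xi(H^{1})$ beyond $m$). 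So $\pi_{\xi}$ is not locally injective on $X^{u}_{G}(P)$, and no local homeomorphism can exist for a more fundamental reason: unstable sets of $X_{G}$ are totally disconnected, whereas unstable sets of $X_{\xi}$ contain circles (Corollaries \ref{real:70} and \ref{real:80}) --- the entire point of the construction is that $\pi_{\xi}$ glues Cantor fibres into connected sets along the unstable direction. The restricted map is not even open: in the binary-expansion model the image of a cylinder is a \emph{closed} dyadic interval.

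The repair is short, and you have already isolated the needed combinatorial fact without exploiting it correctly: on $Y^{u}_{G}(P)$, case (b) is excluded (the coordinates $n \leq 0$ lie outside $\xi(H^{1})$) and any type (c) partner must have $m \geq 0$, hence the same left tail, hence again lies in $Y^{u}_{G}(P)$. In other words $Y^{u}_{G}(P)$ is saturated for $\pi_{\xi}$, so $Y^{u}_{G}(P) = \pi_{\xi}^{-1}\left( Y^{u}_{\xi}(P) \right) \cap X^{u}_{G}(P)$. This saturation is exactly what the paper's proof records, and it is what replaces local injectivity: since the transversal map $\pi_{\xi} : X^{u}_{G}(P) \rightarrow X^{u}_{\xi}(P_{\xi})$ is continuous, proper (finite fibres) and surjective on unstable classes (a consequence of $s$-bijectivity, which you invoke anyway), it is a closed map, and by saturation the complement of $Y^{u}_{\xi}(P)$ is $\pi_{\xi}\left( X^{u}_{G}(P) \setminus Y^{u}_{G}(P) \right)$, a closed set; hence $Y^{u}_{\xi}(P)$ is open. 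Images of \emph{saturated} open sets are open even though images of general open sets are not --- that distinction is precisely what your local-homeomorphism claim erases.
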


\begin{proof}
For the first part, the statements about the topology are standard.
As $G$ is primitive, we may $l > 0$ such that there is a path of length $l$
between any two vertices of $G$. If $x$
in in $X_{G}$, let $p$ be any point of $P$. Define $y$ by 
$y_{n} = p_{n}, n \leq 0$, $y_{1}, \ldots, y_{l}$ is any path from $t(p_{0})$
to $t(x_{l})$, and $y_{n} = x_{n}, n > l$. So $y$ is stably equivalent to $x$ and lies in 
$Y^{u}_{G}(P)$.

For the second part, $Y^{u}_{\xi}(P) $ is compact since $Y^{u}_{G}(P)$ is. 
Moreover,  $Y^{u}_{G}(P)$ is clearly invariant under $\sim_{\xi}$ and 
$\pi_{\xi}$ is continuous, so 
$Y^{u}_{G}(P) = \pi_{\xi}^{-1}(Y^{u}_{\xi}(P)) $ which implies that 
$Y^{u}_{\xi}(P)$ is open also.
The remaining statements are clear.
\end{proof}

The following is an immediate consequence.

\begin{prop}
\label{c*:110}
Let us denote by $G^{s}(\xi, P)$ the reduction of the groupoid 
$G^{s}(X_{\xi}, \sigma_{\xi})$ to $Y^{u}_{\xi}(P)$. It is \'{e}tale,
equivalent to $G^{s}(X_{\xi}, \sigma_{\xi})$ and also \newline 
$G^{s}(X_{\xi}, \sigma_{\xi}, P_{\xi})$, in the sense of Muhly, Renault and Williams
\cite{MRW:eqgd}. In particular, we have containments and a commutative diagram
\[
\xymatrix{
C^{*}( G^{s}(\xi,  P)) 
  \ar^{\sigma_{\xi}^{-1}}[d] & \subseteq &
C^{*}( G^{s}(X_{\xi}, \sigma_{\xi}, P_{\xi})) \ar^{\sigma_{\xi}^{-1}}[d] \\
C^{*}( G^{s}(\xi,  P)) 
  &  \subseteq  & 
C^{*}( G^{s}(X_{\xi}, \sigma_{\xi}, P_{|xi})). }
\] 
\end{prop}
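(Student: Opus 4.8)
The plan is to read everything off Lemma \ref{c*:100}(2) together with the standard theory of reductions of \'{e}tale groupoids to full clopen subsets of their unit spaces. Recall that $G^{s}(X_{\xi}, \sigma_{\xi}, P_{\xi})$ is already an \'{e}tale equivalence relation with unit space $X^{u}_{\xi}(P_{\xi})$, and that it is equivalent to the full stable groupoid $G^{s}(X_{\xi}, \sigma_{\xi})$ in the sense of \cite{MRW:eqgd} (this is the general fact recalled at the start of the section). Since $Y^{u}_{\xi}(P) \subseteq X^{u}_{\xi}(P_{\xi})$, the groupoid $G^{s}(\xi, P)$ is exactly the reduction of $G^{s}(X_{\xi}, \sigma_{\xi}, P_{\xi})$ to $Y^{u}_{\xi}(P)$, so it suffices to compare these two.

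First I would verify that $Y^{u}_{\xi}(P)$ is a full clopen transversal. Lemma \ref{c*:100}(2) gives that it is compact open in $X^{u}_{\xi}(P_{\xi})$ and that its relative topology from $X^{u}_{\xi}(P_{\xi})$ agrees with that from $X_{\xi}$, so it is genuinely clopen in the \'{e}tale unit space; the same lemma states that every point of $X^{u}_{\xi}(P_{\xi})$ is stably equivalent to a point of $Y^{u}_{\xi}(P)$, i.e. $Y^{u}_{\xi}(P)$ meets every orbit. Reducing an \'{e}tale groupoid to a clopen subset of its unit space again yields an \'{e}tale groupoid, and fullness of the subset gives equivalence with the unreduced groupoid via \cite{MRW:eqgd}. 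Transitivity of groupoid equivalence then gives equivalence of $G^{s}(\xi, P)$ with both $G^{s}(X_{\xi}, \sigma_{\xi}, P_{\xi})$ and $G^{s}(X_{\xi}, \sigma_{\xi})$.

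For the diagram, let $p = \chi_{Y^{u}_{\xi}(P)}$, which is a genuine projection in $C^{*}(G^{s}(X_{\xi}, \sigma_{\xi}, P_{\xi}))$ because $Y^{u}_{\xi}(P)$ is compact open in the unit space. The reduction to a clopen subset realizes $C^{*}(G^{s}(\xi, P))$ as the corner $p\, C^{*}(G^{s}(X_{\xi}, \sigma_{\xi}, P_{\xi}))\, p$ (extension by zero on the level of $C_{c}$); this is the horizontal inclusion, and it is a \emph{full} corner since $Y^{u}_{\xi}(P)$ meets every orbit. The vertical maps are the endomorphisms induced by $\sigma_{\xi}^{-1}$, acting by $(\sigma_{\xi}^{-1} f)(x,y) = f(\sigma_{\xi}(x), \sigma_{\xi}(y))$. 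The containment $\sigma_{\xi}^{-1}(Y^{u}_{\xi}(P)) \subseteq Y^{u}_{\xi}(P)$ from Lemma \ref{c*:100}(2) shows that $\sigma_{\xi}(x) \in Y^{u}_{\xi}(P)$ forces $x \in Y^{u}_{\xi}(P)$, so $\sigma_{\xi}^{-1}$ carries functions supported on $G^{s}(\xi, P)$ to functions supported on $G^{s}(\xi, P)$; hence $\sigma_{\xi}^{-1}$ restricts to an injective endomorphism of the corner which agrees with the ambient $\sigma_{\xi}^{-1}$, and the square commutes on the dense subalgebras $C_{c}$ and therefore on the $C^{*}$-completions.

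The only step that is not purely formal is the matching of topologies in the second paragraph: one must know that $Y^{u}_{\xi}(P)$ is open in the locally compact (\'{e}tale) topology on $X^{u}_{\xi}(P_{\xi})$ rather than merely in the relative topology inherited from $X_{\xi}$. This is precisely what the topology statement in Lemma \ref{c*:100}(2) supplies, which is why the result is stated as an immediate consequence; everything else is bookkeeping with the reduction theorem of \cite{MRW:eqgd} and the corner description of the $C^{*}$-algebra of a reduced \'{e}tale groupoid.
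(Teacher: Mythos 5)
Your proof is correct and follows the paper's own route: the paper simply declares the proposition an immediate consequence of Lemma \ref{c*:100}, and your argument is exactly the expected expansion of that, reading fullness and clopenness of $Y^{u}_{\xi}(P)$ off Lemma \ref{c*:100}(2) and invoking the standard reduction/corner machinery of \cite{MRW:eqgd}, with the $\sigma_{\xi}^{-1}$-invariance of the corner correctly deduced from $\sigma_{\xi}^{-1}(Y^{u}_{\xi}(P)) \subseteq Y^{u}_{\xi}(P)$. No gaps.
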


The proof of Theorem \ref{c*:40} involves an application of the results 
 of \cite{Ha:fac}. Let us provide a brief discussion of the set-up there.
 It will be fairly similar to the one we consider here, but
  there are a couple of differences which we need to address.
  
 We begin with  two Bratteli diagrams, $ (V, E), (W, F)$ and 
 two embeddings of the latter into the former satisfying conditions 
 analogous to
 (H0) and (H1). Section 3 of \cite{Ha:fac} describes the construction
 of a quotient of the path space of the Bratteli diagram $X_{E}$, denoted
 $X_{\xi}$. The equivalence relation of tail equivalence on 
 $X_{E}$, denoted, $R_{E}$ then descends to an \'{e}tale equivalence relation, 
 denoted $R_{\xi}$.  It is then shown (Theorem 1.1) that the $K_{0}(C^{*}(R))$ 
 is isomorphic to the dimension group of the Bratteli diagram $(V , E)$, while 
 $K_{1}(C^{*}(R))$ is isomorphic to that of $(W, F)$.
 
 The first point to note is that in our current situation will 
 we be using $(V, E)$ and $(F, W)$ stationary diagrams given by the matrices
 $G$ and $H$, respectively. That is,  at least approximately, 
 $V_{n}=G^{0}, E_{n}=G^{1}, W_{n}=H^{0}, F_{n}=H^{1}$, for all $n$.

 There is a second minor problem with the convention of \cite{Pu:preK} that
  the Bratteli diagram 
 begins with $V_{0}$ as a single vertex. This is also solved easily:
 we let $V_{0}$ be a single vertex, $V_{1} =  t(C)$, our selected minimal
 cycle, and $E_{1}$ to have one edge from $V_{0}$ to each vertex, $v$,
  of $V_{1}$
 which is simply the unique edge $e$ of $C$ with $t(e)=v$. 
 Then, for
 $n \geq 2$, we inductively $E_{n} = i^{-1}(V_{n-1})$ and $V_{n} = t(E_{n})$. 
  As we assume that $G$ is primitive, there will be some 
 $n_{0} \geq 1$ such that $V_{n} = G^{0}$ and $E_{n} = G^{1}$, for all 
 $n \geq n_{0}$. With these definitions, it is immediate that
 the path space $X_{E}$ of \cite{Ha:fac} coincides with $Y^{u}_{G}(P)$, 
 the space $X_{\xi}$ of \cite{Ha:fac} coincides with $Y^{u}_{\xi}(P)$,
 the groupoid $R_{E}$ coincides with the reduction of
  $G^{s}(X_{G}, \sigma, P)$ to $Y^{u}_{G}(P)$ 
 and the groupoid $R_{\xi}$ coincides with $G^{s}(\xi, P)$.
 
  There is a somewhat different solution to this problem for $(W, F)$. 
 We simply drop the assumption on the start and say $W_{n}$ is only defined for 
 $n \geq n_{0}$, the two embeddings $\xi^{0}, \xi^{1}$ are  the given ones.
 The results of \cite{Ha:fac} still hold verbatim.
 
For the third and fourth parts of Theorem \ref{c*:40}, we use the 
Pimsner-Voiculescu exact sequence \cite{RLL:book}. For notational purposes, 
we use $S$ instead of $S(X_{\xi}, \sigma_{\xi}, P_{\xi})$ and
  $R^{s} = S \rtimes_{\sigma_{\xi}} \Z$:

\vspace{.5cm}
  
\hspace{2cm}
\xymatrix{ K_{0}(S) \ar^{id - (\sigma_{\xi})_{*}}[r] & 
K_{0}(S) \ar[r] & K_{0}(R^{s}) \ar[d] \\
 K_{1}(R^{s}) \ar[u] &  K_{1}(S) \ar[l] &
  K_{1}(S) \ar^{id - (\sigma_{\xi})_{*}}[l] }
  
\vspace{.5cm}
  
  This immediately yields two short exact sequences
  \[
  0 \rightarrow K_{*}(S) / 
 \left( id - (\sigma_{\xi})_{*} \right)K_{*}(S)
 \rightarrow  K_{*}(R^{s}) \rightarrow 
 \ker \left( id - (\sigma_{\xi})_{*} \right) 
  \rightarrow 0.
  \]
  
  We know that $K_{0}(S) \cong D^{u}(G)$, which is an inductive limit
  of groups $\Z^{ G^{0}}$. This means that there is a natural map 
  we denote $i_{1}: \Z^{G^{0}} \rightarrow D^{u}(G)$ as the first group
  in the inductive limit. This intertwines multiplication by $A_{G}^{T}$ 
  with the automorphism $A_{G}^{T}$ and it
   is an easy exercise in algebra to show that
  this induces isomorphisms
  \begin{eqnarray*}
  \ker \left( I - A_{G}^{T} : \Z^{G^{0}} \rightarrow \Z^{G^{0}}  \right)
    &  \cong  & \ker \left( id - (\sigma_{\xi})_{*} \right) \\
  \Z^{G^{0}} / \left( I - A_{G}^{T}    \right) \Z^{G^{0}} 
  &  \cong  & D^{u}(G) / 
 \left( id - A_{G}^{T} \right)D^{u}(G).
 \end{eqnarray*}
 
 The quotients in the two exact sequences above are now seen to be subgroups
 of $\Z^{G^{0}}$ and hence are finitely generated and free. It follows 
 that the two sequences both split. This completes the proof of Theorem \ref{c*:40}.

\subsection{Unstable equivalence}

In this subsection, we focus on the $C^{*}$-algebras 
$U(X_{\xi}, \sigma_{\xi}, P_{\xi})$ and  \newline
$R^{u}(X_{\xi}, \sigma_{\xi}, P_{\xi})$.

\begin{lemma}
\label{c*:200}
\begin{enumerate}
\item
 The set
\[
Y^{s}_{G}(P) = \{ y \in X_{G}^{s}(P) \mid y_{n}=p_{n}, n \geq -1, \text{ some }
 p \in P \}
\]
is a compact open subset of $X_{G}^{s}(P)$. Its relative topology 
from $X_{G}^{s}(P)$ agrees with the  relative topology from $X_{G}$.
If $x$ is any point of $X_{G}$, then $x$ is unstably equivalent
to some $y$ in $Y^{s}_{G}(P)$. Finally, 
we have $\sigma(Y^{s}_{G}(P)) \subseteq Y^{s}_{G}(P)$.
\item The set
$Y^{s}_{\xi}(P) = \pi_{\xi}(Y^{s}_{G}(P))$
is a compact open subset of $X_{\xi}^{s}(P_{\xi})$. Its relative topology 
from $X_{\xi}^{s}(P_{\xi})$ agrees with the  relative topology from $X_{\xi}$.
If $x$ is any point of $X_{\xi}^{s}(P_{\xi})$, then $x$ is unstably equivalent
to some $y$ in $Y^{s}_{\xi}(P)$. Finally, 
we have $\sigma_{\xi}(Y^{s}_{\xi}(P)) \subseteq Y^{s}_{\xi}(P)$.
\end{enumerate}
\end{lemma}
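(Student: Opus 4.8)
The plan is to establish Lemma~\ref{c*:200} as the exact time-reversed mirror of Lemma~\ref{c*:100}: throughout I would replace $\sigma$ by $\sigma^{-1}$, forward tails by backward tails, stable equivalence by unstable equivalence, and the pinning condition $n\le 0$ by $n\ge -1$. As in the stable case, the assertions about the topology (that $Y^{s}_{G}(P)$ is compact and open in $X_{G}^{s}(P)$, and that its fine topology agrees with the relative topology from $X_{G}$) are standard for these transversals, so I would simply assert them and concentrate on the dynamical content.

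For part~1, the $\sigma$-invariance is a one-line computation: if $y_{n}=p_{n}$ for all $n\ge -1$ then $(\sigma y)_{n}=y_{n+1}=p_{n+1}=(\sigma p)_{n}$ for all $n\ge -1$, and $\sigma p\in P$ because $P$ is $\sigma$-invariant, so $\sigma(y)\in Y^{s}_{G}(P)$. For the meeting property I would mirror the connecting-path construction: given $x\in X_{G}$ and any $p\in P$, use primitivity of $G$ to fix $l$ so that $G$ admits a path of length $l$ between any ordered pair of vertices, then define $y$ by $y_{n}=x_{n}$ for $n\le -2-l$, let $y_{-1-l},\dots,y_{-2}$ be a path of length $l$ from $t(x_{-2-l})$ to $i(p_{-1})$, and set $y_{n}=p_{n}$ for $n\ge -1$. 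This $y$ is a legitimate bi-infinite path, it agrees with $x$ on all sufficiently negative coordinates (hence is unstably equivalent to $x$), and it lies in $Y^{s}_{G}(P)$ by construction.

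For part~2, $Y^{s}_{\xi}(P)=\pi_{\xi}(Y^{s}_{G}(P))$ is compact as the continuous image of a compact set. The crux is the identity $Y^{s}_{G}(P)=\pi_{\xi}^{-1}(Y^{s}_{\xi}(P))$, which together with continuity of $\pi_{\xi}$ gives openness of $Y^{s}_{\xi}(P)$. Here I would invoke the explicit description of the fibres of $\pi_{\xi}$ in part~3 of Theorem~\ref{smale:50}. The decisive observation is that the chosen periodic points repeat the cycle $C^{1}\subseteq G^{1}-\xi(H^{1})$, so any $y\in Y^{s}_{G}(P)$ satisfies $y_{n}=p_{n}\notin\xi(H^{1})$ for every $n\ge -1$. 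Neither nontrivial alternative (b) nor (c) of part~3 of Theorem~\ref{smale:50} can then occur, since each forces $y_{n}\in\xi(H^{1})$ for arbitrarily large positive $n$. Hence $\pi_{\xi}$ is \emph{injective} on $Y^{s}_{G}(P)$, which yields both the desired preimage identity and, since $\pi_{\xi}|_{Y^{s}_{G}(P)}$ is a continuous bijection from a compact space onto a Hausdorff one, the homeomorphism giving the agreement of topologies. The remaining two assertions transfer formally: for the meeting property, lift $x\in X^{s}_{\xi}(P_{\xi})$ to $x'\in X_{G}$, apply part~1 to get $y'\in Y^{s}_{G}(P)$ unstably equivalent to $x'$, and use that the factor map $\pi_{\xi}$ preserves unstable equivalence to conclude that $x=\pi_{\xi}(x')$ is unstably equivalent to $\pi_{\xi}(y')\in Y^{s}_{\xi}(P)$; and $\sigma_{\xi}(Y^{s}_{\xi}(P))=\pi_{\xi}\sigma(Y^{s}_{G}(P))\subseteq\pi_{\xi}(Y^{s}_{G}(P))=Y^{s}_{\xi}(P)$ from part~1 and $\pi_{\xi}\sigma=\sigma_{\xi}\pi_{\xi}$.

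The main obstacle is precisely the preimage identity $Y^{s}_{G}(P)=\pi_{\xi}^{-1}(Y^{s}_{\xi}(P))$ underpinning openness, and the care it requires lies in checking the fibre description of Theorem~\ref{smale:50} case by case. This is also where the symmetry with the stable case breaks in an instructive way: because the pinned coordinates now extend to $+\infty$, exactly where the binary-type identifications defining $\sim_{\xi}$ act, the periodic-cycle condition rules out \emph{all} nontrivial identifications and gives outright injectivity of $\pi_{\xi}$ on $Y^{s}_{G}(P)$, rather than the mere $\sim_{\xi}$-invariance that sufficed in Lemma~\ref{c*:100}.
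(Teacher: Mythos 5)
Your proposal is correct and matches the paper's (much terser) proof: the paper disposes of part 1 as ``easy topological facts'' --- your connecting-path construction via primitivity and the one-line shift computation are exactly what is meant --- and derives part 2 from the assertion that $\pi_{\xi}$ restricted to $X^{s}_{G}(P)$ is a homeomorphism, which is precisely the injectivity you extract from part 3 of Theorem \ref{smale:50} by noting that points of $Y^{s}_{G}(P)$ are forward-asymptotic to the cycle $C^{1}\subseteq G^{1}-\xi(H^{1})$, so the nontrivial fibre cases (b) and (c) are excluded. Your closing observation that this yields outright injectivity on the transversal, in contrast to the mere $\sim_{\xi}$-invariance that suffices in Lemma \ref{c*:100}, accurately reflects why the paper treats the stable and unstable cases differently.
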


\begin{proof}
The first part is all easy topological facts while the second follows as
$\pi_{\xi}$ is a homeomorphism when restricted to $X^{s}_{G}(P)$.
\end{proof}

The next result follows immediately from the last.

\begin{prop}
\label{c*:210}
Let us denote by $G^{u}(\xi, P)$ the reduction of the groupoid 
$G^{u}(X_{\xi}, \sigma_{\xi})$ to $Y^{s}_{\xi}(P)$. It is \'{e}tale
equivalent to $G^{u}(X_{\xi}, \sigma_{\xi})$ and also to \newline
$G^{u}(X_{\xi}, \sigma_{\xi}, P_{\xi})$, in the sense of Muhly, Renault and Williams
\cite{MRW:eqgd}. In particular, we have containments and a commutative diagram
\[
\xymatrix{
C^{*}( G^{u}(\xi,  P)) 
  \ar^{\sigma_{\xi}}[d] & \subseteq &
C^{*}( G^{u}(X_{\xi}, \sigma_{\xi}, P_{\xi})) \ar^{\sigma_{\xi}}[d] \\
C^{*}( G^{u}(\xi,  P)) 
  &  \subseteq  & 
C^{*}( G^{u}(X_{\xi}, \sigma_{\xi}, P_{\xi})). }
\] 
\end{prop}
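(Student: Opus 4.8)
The plan is to read off every assertion from Lemma \ref{c*:200} via the standard theory of reduction of an \'{e}tale groupoid to an abstract transversal, exactly as in the stable case of Proposition \ref{c*:110}. There is no new dynamical input here; the whole content lies in the four conclusions of Lemma \ref{c*:200}, and the argument should be a couple of lines, matching the ``follows immediately'' framing.

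First I would recall that $G^{u}(X_{\xi}, \sigma_{\xi}, P_{\xi})$ is itself the reduction of $G^{u}(X_{\xi}, \sigma_{\xi})$ to the transversal $X^{s}_{\xi}(P_{\xi})$ and is already \'{e}tale, so it suffices to compare $G^{u}(\xi, P)$, the further reduction to $Y^{s}_{\xi}(P) \subseteq X^{s}_{\xi}(P_{\xi})$, with it. By Lemma \ref{c*:200}, the set $Y^{s}_{\xi}(P)$ is a compact open subset of $X^{s}_{\xi}(P_{\xi})$ whose relative topology agrees with the one it inherits from $X_{\xi}$; since reducing an \'{e}tale groupoid to a clopen subset of its unit space again yields an \'{e}tale groupoid, this gives the \'{e}tale property for $G^{u}(\xi, P)$. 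The same lemma states that every point of $X^{s}_{\xi}(P_{\xi})$ is unstably equivalent to a point of $Y^{s}_{\xi}(P)$, so $Y^{s}_{\xi}(P)$ meets every orbit, which is precisely the condition making it an abstract transversal in the sense of \cite{MRW:eqgd}. The Muhly--Renault--Williams theorem then furnishes the equivalence of $G^{u}(\xi, P)$ with $G^{u}(X_{\xi}, \sigma_{\xi}, P_{\xi})$, and transitivity of groupoid equivalence yields equivalence with $G^{u}(X_{\xi}, \sigma_{\xi})$ as well.

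Next I would obtain the horizontal containments of $C^{*}$-algebras. Because $Y^{s}_{\xi}(P)$ is compact and open, its characteristic function is a projection $p$ in $C^{*}(G^{u}(X_{\xi}, \sigma_{\xi}, P_{\xi}))$, and the reduction identity $C^{*}(G^{u}(\xi,P)) = p\, C^{*}(G^{u}(X_{\xi}, \sigma_{\xi}, P_{\xi}))\, p$ realizes the left-hand algebra as a full corner of the right-hand one, fullness reflecting that $Y^{s}_{\xi}(P)$ meets every orbit. Finally, for the commuting square I would use that the automorphism denoted $\sigma_{\xi}$ is implemented at the groupoid level by $\sigma_{\xi} \times \sigma_{\xi}$, together with the inclusion $\sigma_{\xi}(Y^{s}_{\xi}(P)) \subseteq Y^{s}_{\xi}(P)$ from Lemma \ref{c*:200}; this inclusion---the unstable-side analogue of $\sigma^{-1}(Y^{u}_{G}(P)) \subseteq Y^{u}_{G}(P)$ used for Proposition \ref{c*:110}, and the reason the vertical arrows here are $\sigma_{\xi}$ rather than $\sigma_{\xi}^{-1}$---guarantees that $\sigma_{\xi}$ carries the corner into itself, so the diagram commutes.

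I do not expect a genuine obstacle, as the statement is flagged as immediate. The one point meriting care is the verification that $Y^{s}_{\xi}(P)$ is a bona fide abstract transversal, namely that the range map restricted to the arrows with source in $Y^{s}_{\xi}(P)$ is open onto all of $X^{s}_{\xi}(P_{\xi})$; but this is automatic from the openness of $Y^{s}_{\xi}(P)$ together with the \'{e}tale structure, so the hypotheses of \cite{MRW:eqgd} are satisfied without further work.
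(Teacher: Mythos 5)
Your proposal is correct and follows the paper's own route: the paper proves this proposition with the single remark that it ``follows immediately from the last,'' i.e.\ from Lemma \ref{c*:200}, and your argument simply spells out the intended standard facts — compact openness of $Y^{s}_{\xi}(P)$ meeting every orbit makes it an abstract transversal in the sense of \cite{MRW:eqgd}, the corner realization gives the containment, and $\sigma_{\xi}(Y^{s}_{\xi}(P)) \subseteq Y^{s}_{\xi}(P)$ gives the commuting square. You also correctly identified the directional switch (vertical arrows $\sigma_{\xi}$ here versus $\sigma_{\xi}^{-1}$ in Proposition \ref{c*:110}) as coming from the forward-invariance of $Y^{s}_{\xi}(P)$.
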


We begin our analysis with a technical result on the 
structure of $G^{u}(\xi,  P)$.

\begin{lemma}
\label{c*:220}
Let $x, y$ be  in $Y^{s}_{G}(P)$ and assume that 
$\pi_{\xi}(x)$ and $\pi_{\xi}(y)$ are  unstably equivalent 
in $(X_{\xi}, \sigma_{\xi})$. Then either
\begin{enumerate}
\item 
there exists $N < 0$ such that $x_{n} = y_{n}$, for all $n\leq N$, or
\item 
there exists $N < 0, i =0,1$ and $z$ in $X_{H}$ such that  $x_{n} = \xi^{i}(z_{n}), 
y_{n} = \xi^{1-i}(z_{n})$, for all $n \leq N$.
\end{enumerate}
\end{lemma}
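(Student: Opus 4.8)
The plan is to unwind the hypothesis using the explicit description of unstable equivalence in $(X_\xi,\sigma_\xi)$ together with the characterization of the fibres of $\pi_\xi$ from Theorem~\ref{smale:50}. Since $\pi_\xi(x)$ and $\pi_\xi(y)$ are unstably equivalent, there is $M>0$ with $\sigma_\xi^{-m}(\pi_\xi(x))$ and $\sigma_\xi^{-m}(\pi_\xi(y))$ becoming arbitrarily close as $m\to\infty$; equivalently, pushing back far enough, the negative tails of $\pi_\xi(x)$ and $\pi_\xi(y)$ agree. Because $\pi_\xi\circ\sigma=\sigma_\xi\circ\pi_\xi$, this says $\chi_G^+(\sigma^{-n}(x))\sim_\xi\chi_G^+(\sigma^{-n}(y))$ for all $n$ sufficiently large (say $n\ge n_0$), which is exactly the condition analyzed in part~3 of Theorem~\ref{smale:50} applied after enough backward iteration.

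First I would translate ``$\pi_\xi(x)=_u\pi_\xi(y)$ for $x,y\in Y^s_G(P)$'' into the statement that $\pi_\xi(x)$ and $\pi_\xi(y)$ have a common backward asymptotic orbit, hence their coordinates in the inverse-limit presentation agree far enough back in negative time. Concretely, I expect there is $N<0$ such that for all $n\le N$ one has $\chi_G^+(\sigma^{-n}(x))\sim_\xi\chi_G^+(\sigma^{-n}(y))$ with the \emph{same} labelling data. Then I would invoke the trichotomy of part~3 of Theorem~\ref{smale:50}, which classifies exactly when two points of $X_G$ have equal $\pi_\xi$-image: either they are equal (case~(a)), differ by a global swap via some $z\in X_H$ (case~(b)), or agree on a half-line and swap on the complementary half-line (case~(c)). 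Applying this to the backward tails, case~(a) yields conclusion~1 of the lemma ($x_n=y_n$ for $n\le N$), while cases~(b) and (c) yield conclusion~2 (a single $z\in X_H$ and $i\in\{0,1\}$ with $x_n=\xi^i(z_n)$, $y_n=\xi^{1-i}(z_n)$ for $n\le N$, after adjusting $N$ past the swap point $m$ in case~(c)).

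The main technical point, and where I would spend the most care, is the \emph{consistency} of the index $i$ and the witness $z$ across all $n\le N$: the fibre description in Theorem~\ref{smale:50} is stated for a fixed pair of points, whereas here I am applying it to the whole backward orbit simultaneously. The key is the stability observation already embedded in the proof of Theorem~\ref{smale:50}: if the condition $\chi_G^+(\sigma^{1-n}(x))=\xi^{i_n}(\cdots)$, $\chi_G^+(\sigma^{1-n}(y))=\xi^{1-i_n}(\cdots)$ holds for some $n$, then it holds for $n+1$ with $i_{n+1}=i_n$ and compatible $z$-data. Thus the indices $i_n$ are eventually constant and the local witnesses patch into a single $z\in X_H$; this is precisely the mechanism by which Theorem~\ref{smale:50} produces the global $z$ in case~(b). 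Choosing $N$ small enough to lie beyond any swap point reduces case~(c) to the uniform form required in conclusion~2.

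One subtlety I would flag explicitly: the hypothesis $x,y\in Y^s_G(P)$ constrains the \emph{positive} tails (they eventually equal a fixed periodic point $p$), so it is the \emph{negative} direction that is relevant for the dichotomy, which is why both conclusions are phrased for $n\le N$ with $N<0$. I do not expect $Y^s_G(P)$ to play a role beyond guaranteeing that $x,y$ genuinely lie in $X_G$ and that $\pi_\xi$ restricted appropriately behaves well; the real content is purely the fibre structure of $\pi_\xi$ together with the forward-compatibility of the swap data, both already available from Theorem~\ref{smale:50}.
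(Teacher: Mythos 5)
The decisive gap is your opening reduction. You pass from ``$\pi_{\xi}(x)$ and $\pi_{\xi}(y)$ are unstably equivalent'' to the claim that, ``equivalently, pushing back far enough, the negative tails of $\pi_{\xi}(x)$ and $\pi_{\xi}(y)$ agree,'' i.e.\ that $\chi_{G}^{+}(\sigma^{-n}(x)) \sim_{\xi} \chi_{G}^{+}(\sigma^{-n}(y))$ for all large $n$; everything afterwards runs through the fibre trichotomy of part~3 of Theorem~\ref{smale:50}, which classifies when two points have \emph{equal} image. But unstable equivalence only gives $d_{\xi}(\sigma_{\xi}^{-n}\pi_{\xi}(x), \sigma_{\xi}^{-n}\pi_{\xi}(y)) \rightarrow 0$, and in $X_{\xi}$---unlike in a shift of finite type---proximity in the unstable direction does not force equality of inverse-limit coordinates, because unstable sets contain circles (Corollary~\ref{real:70}). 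Concretely: take $z$ in $X_{H}$ and $x, y$ in $Y^{s}_{G}(P)$ with $x_{n} = y_{n}$ for $n > N$ (both following $p$ forward, with a common connecting path chosen in $G^{1} - \xi(H^{1})$) and $x_{n} = \xi^{1}(z_{n})$, $y_{n} = \xi^{0}(z_{n})$ for $n \leq N$. Then $\theta(\chi_{G}^{+}(\sigma^{-n}(x))) = \exp(2 \pi i (1 - 2^{-(N+n)}))$ while $\theta(\chi_{G}^{+}(\sigma^{-n}(y))) = 1$, so by part~2 of Theorem~\ref{constr:180} no backward coordinate of $\pi_{\xi}(x)$ ever equals the corresponding coordinate of $\pi_{\xi}(y)$; yet the backward distance at time $n$ is $2^{-(N+n)} \rightarrow 0$, so the images \emph{are} unstably equivalent. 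This pair realizes conclusion~2 of the lemma (a ``carry at $-\infty$,'' the two-sided analogue of $0.0111\cdots = 0.1000\cdots$), and it is invisible to any argument routed through the fibres of $\pi_{\xi}$: unstable equivalence is strictly coarser than ``backward orbits eventually in the same fibre,'' and the difference is exactly the content the lemma exists to capture. (There is also a directional slip: in case~(c) of Theorem~\ref{smale:50} the pair agrees for $n < m$ and swaps for $n > m$, so on far-negative tails it yields conclusion~1, not conclusion~2; one cannot ``adjust $N$ past the swap point'' to obtain the swapped form.)

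For comparison, the paper's proof is necessarily quantitative rather than combinatorial. From unstable equivalence it extracts the geometric decay estimates $d_{G_{\xi}}(\tau_{\xi}(\chi_{G}^{+}(\sigma^{n}(x))), \tau_{\xi}(\chi_{G}^{+}(\sigma^{n}(y)))) < 2^{-3-n_{0}+n}$ and $d_{\T}(\theta(\chi_{G}^{+}(\sigma^{n}(x))), \theta(\chi_{G}^{+}(\sigma^{n}(y)))) < 2^{-3-n_{0}+n}$ for $n \leq n_{0}$, and then argues digit by digit going backwards: the first estimate pins $\tau_{\xi}(x_{n}) = \tau_{\xi}(y_{n})$ (hence $x_{n} = y_{n}$ whenever $x_{n} \notin \xi(H^{1})$); a half-angle estimate on $\T$ shows that $x_{n+1} = y_{n+1}$ forces $x_{n} = y_{n}$, yielding case~1 by induction; and in the remaining regime one has $\varepsilon(x_{n}) \neq \varepsilon(y_{n})$ for all small $n$, with any alternation of the mismatch ruled out because a double swap would force agreement two steps earlier. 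Membership in $Y^{s}_{G}(P)$ is genuinely used there---the right tails reach $P$, so the relevant $\theta$-values are finite binary sums, i.e.\ roots of unity---contrary to your expectation that it plays no role. Your proposal would need this entire analysis rebuilt; the fibre description of Theorem~\ref{smale:50} cannot substitute for it.
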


\begin{proof}
We recall that in a general Smale space $(X, \varphi, d)$, if points $x, y$ are
unstably equivalent, then there exists $n_{0} < 0$ such that 
$\varphi^{n_{0}}(x)$ and $\varphi^{n_{0}}(y)$ are in the same local 
unstable set, meaning that $d( \varphi^{n_{0}}(x),\varphi^{n_{0}}(y)) < \epsilon_{X}$
and  $[\varphi^{n_{0}}(x),\varphi^{n_{0}}(y)] = \varphi^{n_{0}}(x)$
(see Proposition 2.1.11 of \cite{Pu:HSm}). It follows by 
induction that for all $n \leq n_{0}$, we have
$d( \varphi^{n}(x),\varphi^{n}(y)) < \epsilon_{X} \lambda^{n-n_{0}}$.
In our case, this means that
  \[
 d_{\xi}( \sigma_{\xi}^{n}(\pi_{\xi}(x) ),
  \sigma_{\xi}^{n}(\pi_{\xi}(y))) < 2^{-3 + n - n_{0}},
  \]
    for all $n \leq n_{0}$. From the definition of the metric $d_{\xi}$ on 
 the inverse limit space $X_{\xi}$, we have 
 $d_{\xi}( \sigma_{\xi}^{n}(\pi_{\xi}(x) )^{0},
  \sigma_{\xi}^{n}(\pi_{\xi}(y)^{0}) < 2^{-3+n -n_{0}}$. Recalling the 
  definition of $\pi_{\xi}$ on $X_{G}$, we have 
  $d_{\xi}( \pi_{\xi}(\chi_{G}^{+}(\sigma^{n}(x ))),
    \pi_{\xi}(\chi_{G}^{+}(\sigma^{n}(y ))) ) < 2^{-3+n - n_{0}}$.

 It follows from Theorem \ref{constr:90} that we have 
 \begin{eqnarray} 
 \label{constr:500}
   d_{G_{\xi}}(\tau_{\xi}( \chi_{G}^{+}(\sigma^{n}(x )) ), 
  \tau_{\xi}( \chi_{G}^{+}(\sigma^{n}(y ))))   &   < &  2^{-3-n_{0} +n} \\
    \label{constr:501} 
    d_{\T}( \theta( \chi_{G}^{+}(\sigma^{n}(x )))
  \theta(\chi_{G}^{+}(\sigma^{n}(y ))) )  &  < & 2^{-3-n_{0} +n},
\end{eqnarray}
for $n \leq n_{0}$.
The first of these inequalities immediately implies that 
$\tau_{\xi}(x_{n}) = \tau_{\xi}(y_{n})$, for all $n < n_{0}$.
If $x_{n}$ is not in $\xi(H^{1})$, it follows that $x_{n}=y_{n}$.

Any $x$ in $Y^{s}_{G}(P)$ is right-tail equivalent to a 
point in $P$ so, for any integer $n $, there is a least 
$m \geq n$ such that
$x_{m}$ is not in $\xi(H^{1})$. We note then that
 \[
 \theta( \chi_{G}^{+}(\sigma^{n}(x ))) = 
 \exp \left( 2 \pi i \sum_{j=1}^{m-n-1}  \varepsilon(x_{n+j}) 2^{-j} \right).
 \]
 As a consequence of the sum being finite, if the quantity above equals 
 one, then $\varepsilon(x_{k}) = 0$, for $n+1 \leq k < m$.
 
 For $0 \leq t < 1$, we let $\exp( 2 \pi i t)^{1/2} = \exp(  \pi i  t)$.
Observe that 
\begin{eqnarray*}
\theta(\chi^{+}_{G}(\sigma^{n-1}(x))) & = &
 \exp( \varepsilon(x_{n}) \pi i) 
 \theta( \chi^{+}_{G}(\sigma^{n}(x)) )^{1/2} \\
  &  =  &
 (-1)^{\varepsilon(x_{n})} \theta( \chi^{+}_{G}(\sigma^{n}(x)) )^{1/2}.
 \end{eqnarray*}
 
 Suppose for some $n < n_{0}-1$, we have $x_{n+1}=y_{n+1}$.
 We will show $x_{n}=y_{n}$ also. There are several cases to consider.
First, suppose that $x_{n+1}=y_{n+1}$ is not in  $\xi(H^{1})$.
If $x_{n}$ is also not in  $\xi(H^{1})$, then $x_{n}=y_{n}$. 
If $x_{n}$ is in  $\xi(H^{1})$, then 
$\theta( \chi_{G}^{+}(\sigma^{n-1}(x ))) = \exp(2 \pi i \varepsilon(x_{n}))$ and 
$\theta( \chi_{G}^{+}(\sigma^{n-1}(y))) = \exp(2 \pi i \varepsilon(y_{n}))$.
The second inequality above then implies that 
$\varepsilon(x_{n}) = \varepsilon(y_{n})$ and hence $x_{n}=y_{n}$ also.

 Now, suppose  $x_{n+1}=y_{n+1}$ is in 
 $\xi(H^{1})$. If $\varepsilon(x_{n+1}) =0$, then both
 $\chi^{+}_{G}(\sigma^{n}(x))$ and $\chi^{+}_{G}(\sigma^{n}(y))$ lie 
 in $\{ \exp(2 \pi i t) \mid 0 \leq t < 2^{-1} \}$. 
 If $\varepsilon(x_{n+1}) =1$, then both
 $\chi^{+}_{G}(\sigma^{n}(x))$ and $\chi^{+}_{G}(\sigma^{n}(y))$ lie 
 in $\{ \exp(2 \pi i t) \mid 2^{-1} \leq t < 1 \}$. 
 In either case, we have
 \[
 d_{\T}( \theta(\chi^{+}_{G}(\sigma^{n}(x)))^{1/2},
   \theta(\chi^{+}_{G}(\sigma^{n}(y)))^{1/2} )
   = 2^{-1}d_{\T}( \theta(\chi^{+}_{G}(\sigma^{n}(x))),
   \theta(\chi^{+}_{G}(\sigma^{n}(y))) )
 \]
 and hence
 \begin{eqnarray*}
 2^{-2-n_{0}+n} & >   & d_{\T}( \theta(\chi^{+}_{G}(\sigma^{n-1}(x))),
   \theta(\chi^{+}_{G}(\sigma^{n-1}(y))) )
      \\
  &  =  & d_{\T}( \theta(\chi^{+}_{G}(\sigma^{n}(x)))^{1/2}
 (-1)^{\varepsilon(x_{n})} ,
   \theta(\chi^{+}_{G}(\sigma^{n}(y)))^{1/2} (-1)^{\varepsilon(y_{n})} )  \\
  & \geq & d_{\T}( \theta(\chi^{+}_{G}(\sigma^{n}(x)))^{1/2}
    (-1)^{\varepsilon(x_{n})} ),
   \theta(\chi^{+}_{G}(\sigma^{n}(x)))^{1/2} (-1)^{\varepsilon(y_{n})}) \\
   &  &    - d_{\T}( \theta(\chi^{+}_{G}(\sigma^{n}(x)))^{1/2}
    (-1)^{\varepsilon(y_{n})} ,
   \theta(\chi^{+}_{G}(\sigma^{n}(y)))^{1/2} (-1)^{\varepsilon(y_{n})} )  \\
  &  = & d_{\T}( 
     (-1)^{\varepsilon(x_{n})} , (-1)^{\varepsilon(y_{n})} ) \\
  &  &  - 2^{-1} d_{\T}( \theta(\chi^{+}_{G}(\sigma^{n}(x)))
    ,\theta(\chi^{+}_{G}(\sigma^{n}(y))) )  \\
   &  \geq & d_{\T}((-1)^{\varepsilon(x_{n})},  (-1)^{\varepsilon(y_{n})} )
 - 2^{-4-n_{0}+n}.
 \end{eqnarray*}
 From which it follows that  $\varepsilon(x_{n}) = \varepsilon(y_{n})$ 
 and so $x_{n}=y_{n}$.

We have shown that if $x_{n+1} = y_{n+1}$, for some 
$ n < n_{0}-1$, then $x_{n} = y_{n}$ also. It follows
 by induction that we are in case 1.

It remains to consider the case  $x_{n} \neq y_{n}$, 
which implies $x_{n}$ is  in $\xi(H^{1})$ and 
$\varepsilon(x_{n}) \neq \varepsilon(y_{n})$, 
for all $n < n_{0}$. Without loss of generality, assume 
$\varepsilon(x_{n})=0, \varepsilon(y_{n})=1$, for some $n < n_{0}$. 
We will show that if $\varepsilon(x_{n-1})=1, \varepsilon(y_{n-1})=0$, 
then $x_{n-2}=y_{n-2}$ contradicting our hypothesis. The only remaining 
possibility is case 2, so this will complete the proof.

Under the conditions 
$\varepsilon(x_{n})=0, \varepsilon(y_{n})=1, 
\varepsilon(x_{n-1})=1, \varepsilon(y_{n-1})=0$, 
we have 
$ \theta(\chi^{+}_{G}(\sigma^{n-2}(x))$ lies in 
$\{ \exp(2 \pi i t) \mid 2^{-1} \leq t < 1-2^{-2} \}$ while 
$ \theta(\chi^{+}_{G}(\sigma^{n-2}(y))$ lies in 
$\{ \exp(2 \pi i t) \mid 2^{-2} \leq t < 2^{-1} \}$. It follows that 
 \[
 d_{\T}( \theta(\chi^{+}_{G}(\sigma^{n}(x)))^{1/2},
   \theta(\chi^{+}_{G}(\sigma^{n}(y)))^{1/2} )
   = 2^{-1}d_{\T}( \theta(\chi^{+}_{G}(\sigma^{n}(x))),
   \theta(\chi^{+}_{G}(\sigma^{n}(y))) )
 \]
 and the same calculation as before shows that 
 $\varepsilon(x_{n-2}) = \varepsilon(y_{n-2})$ and hence 
 $x_{n-2}=y_{n-2}$.
\end{proof}

The proof of Theorem \ref{c*:50} involves an application of the results 
 of \cite{Pu:preK}. Let us provide a brief discussion of the set-up there.
 It will be fairly similar to the one we consider here, but
  there are a couple of differences which we need to address.
  
 We begin with  two Bratteli diagrams, $ (V, E), (W, F)$ and 
 two embeddings of the latter into the former satisfying conditions 
 analogous to
 (H0) and (H1). Section 2 of \cite{Pu:preK} describes the construction
 of an \'{e}tale equivalence relation, $R$, on the path space of the diagram
 $(V, E)$, $X_{E}$, which contains tail equivalence, $R_{E}$, 
 as an open subequivalence relation.
 It is then shown that the $K_{0}(C^{*}(R))$ 
 is isomorphic to the dimension group of the Bratteli diagram $(V , E)$, while 
 $K_{1}(C^{*}(R))$ is isomorphic to that of $(W, F)$.
 
 The first point to note is that in our current situation will 
 we be using $(V, E)$ and $( W, F)$ stationary diagrams given by the matrices
 $G$ and $H$, respectively. That is,  at least approximately, 
 $V_{n}=G^{0}, E_{n}=G^{1}, W_{n}=H^{0}, F_{n}=H^{1}$, for all $n$.
 
  The first minor annoyance is that, because we are studying 
 unstable or left-tail equivalence, our graph $G$ is going in the wrong direction. 
 (As we were also looking at  right-tail equivalence in the last subsection, this
 was kind of inevitable.)  
This can be repaired easily by simply looking at the opposite graphs of $G$ and $H$; 
that is, simply reverse the maps $i,t$. This means that the dimension groups
of $(V, E)$ and $(W, F)$ will be isomorphic to
$D^{s}(G)$ and $D^{s}(H)$, respectively.

 There is a second minor problem with the convention of \cite{Pu:preK} that
  the Bratteli diagram 
 begins with $V_{0}$ and $W_{0}$ as a single vertex. This is also solved easily
 exactly as we did for the stable case.
 
The directional reversal does pose some notational problems. The simplest 
solution is the following.  We observe the following: if $y$ is any path in 
 $Y_{G}^{s}(P)$, then letting $\tilde{y}_{n} = y_{-n}$, for $n \geq 1$, defines
 an infinite path in $X_{E}$. In fact, this association is a homeomorphism between
  $Y_{G}^{s}(P)$ and $X_{E}$ which induces an isomorphism between 
  $G^{u}(X_{G}, \sigma, Y^{s}_{G}(P))$ and $R_{E}$.
Furthermore, bearing in mind that $\pi_{\xi}$ is an homeomorphism between
 $Y_{G}^{s}(P)$ and its image in $X_{\xi}$, 
Lemma \ref{c*:220} shows that $\pi_{\xi}$ induces a bijection between
 $R$, as described in
 \cite{Pu:preK}, and 
$G^{u}(\xi, P)$. We will suppress this map  and simply write 
our sequences as indexed by $n \leq 1$.

One technical issue remains: as a consequence of Lemma \ref{c*:220},
we know that, under this identification, 
$G^{u}(\xi, P)$ and $R$ agree as sets, but the former is given a
 topology based on the Smale space structure, while the latter was given 
 a rather ad-hoc topology in \cite{Pu:preK}. We must check these coincide.
 This 
is the content of the following.

\begin{lemma}
\label{c*:230}
Let $(x,y)$ be in $R$, as above. There exists a positive integer $n$ and
a compact, open neighbourhood $U$ of 
$(x,y)$ in $R$ such that 
\[
\pi_{\xi} \times \pi_{\xi}(U) = 
\{ (z, \sigma^{n}[\sigma^{-n}(y), \sigma^{-n}(z)]) 
\mid z \in \pi_{\xi}(r(U)) \}
\]
is a compact, open subset of $G^{u}(\xi, P)$
and $\pi_{\xi}(r(U))$ is a compact, open subset of $Y^{s}_{\xi}(P)$.
\end{lemma}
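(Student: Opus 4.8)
The plan is to produce, around the given pair $(x,y)$, a single set $U$ that serves as a basic neighbourhood for \emph{both} topologies on $R = G^{u}(\xi, P)$, and whose image under $\pi_{\xi} \times \pi_{\xi}$ has exactly the displayed holonomy form. Since sets of this form will be shown to generate both the ad-hoc topology of \cite{Pu:preK} and the Smale-space \'{e}tale topology, their agreement yields the coincidence of the two topologies.

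First I would invoke Lemma \ref{c*:220} to pin down the local structure of $(x,y)$: there is $N < 0$ so that, for all $k \leq N$, either $x_{k} = y_{k}$, or $\{x_{k}, y_{k}\} = \{\xi^{i}(z_{k}), \xi^{1-i}(z_{k})\}$ for a fixed $i = 0,1$ and a fixed $z$ in $X_{H}$. I then set $n = -N$ (any larger positive integer works equally well). With this choice $\sigma^{-n}(x)$ and $\sigma^{-n}(y)$ agree on enough non-positive coordinates that $d_{G}(\sigma^{-n}(x), \sigma^{-n}(y))$ is small, so the bracket $[\sigma^{-n}(y), \sigma^{-n}(x')]$ is defined for every $x'$ in a small enough neighbourhood of $x$; by Theorem \ref{smale:50}(2) the corresponding bracket downstairs in $X_{\xi}$ is defined as well and is compatible with $\pi_{\xi}$.

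Next I would take $U$ to be the evident cylinder neighbourhood of $(x,y)$ in $R$: let $x'$ range over the clopen subset $V_{G}$ of $Y^{s}_{G}(P)$ of those paths agreeing with $x$ on the coordinates fixed by the bracket window (a cylinder in the topology of \cite{Pu:preK}), and define the holonomy partner $y' = \sigma^{n}[\sigma^{-n}(y), \sigma^{-n}(x')]$, which again lies in $Y^{s}_{G}(P)$ and satisfies $(x', y') \in R$. By Lemma \ref{c*:200} the map $\pi_{\xi}$ is a homeomorphism on $Y^{s}_{G}(P)$, so $\pi_{\xi}(r(U)) = \pi_{\xi}(V_{G})$ is compact and open in $Y^{s}_{\xi}(P)$, as required. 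Computing $\pi_{\xi} \times \pi_{\xi}(U)$ then uses only the intertwining $\pi_{\xi} \circ \sigma = \sigma_{\xi} \circ \pi_{\xi}$ and the bracket compatibility of Theorem \ref{smale:50}(2): writing $z = \pi_{\xi}(x')$, the second coordinate becomes $\pi_{\xi}(y') = \sigma^{n}[\sigma^{-n}(\pi_{\xi}(y)), \sigma^{-n}(z)]$, which is precisely the set displayed in the statement.

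Finally I would observe that a set of the form $\{(z, \sigma^{n}[\sigma^{-n}(w), \sigma^{-n}(z)]) \mid z \in V\}$, with $V$ compact open in $Y^{s}_{\xi}(P)$, is by definition a basic compact-open bisection in the \'{e}tale unstable groupoid $G^{u}(\xi, P)$; continuity and injectivity of the holonomy $z \mapsto \sigma^{n}[\sigma^{-n}(w), \sigma^{-n}(z)]$ on $V$ make it a homeomorphism onto its image, giving both compactness and openness. The main obstacle I anticipate is the verification in case (ii) of Lemma \ref{c*:220}: near a pair where $x$ and $y$ differ through the $\xi^{0}/\xi^{1}$ swap, I must shrink the cylinder so that every nearby $x'$ falls into the \emph{same} branch of the dichotomy, ensuring that the holonomy partner is given unambiguously by a single bracket expression and that $z \mapsto \sigma^{n}[\sigma^{-n}(w), \sigma^{-n}(z)]$ is genuinely single-valued and continuous across the identified coordinates. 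This is exactly where the $\theta$-estimates of Lemma \ref{c*:220}, controlling $\varepsilon(x_{k})$ against $\varepsilon(y_{k})$, do the real work, and where the two topologies could a priori fail to match.
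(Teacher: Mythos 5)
Your overall architecture (splitting via Lemma \ref{c*:220}, taking a cylinder neighbourhood, describing the image as a holonomy bisection) matches the paper's, but there is a genuine gap at the central step. In case (ii) of Lemma \ref{c*:220} the points $x$ and $y$ do \emph{not} agree on any far-past coordinates: there $x_{k} = \xi^{i}(z_{k})$ and $y_{k} = \xi^{1-i}(z_{k})$ for all $k \leq N$, and by (H1) these are \emph{distinct} edges of $G$. Consequently $\sigma^{-n}(x)$ and $\sigma^{-n}(y)$ disagree at every nonpositive coordinate, so $d_{G}(\sigma^{-n}(x), \sigma^{-n}(y)) = 1$ no matter how large you take $n$. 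Your assertion that ``$\sigma^{-n}(x)$ and $\sigma^{-n}(y)$ agree on enough non-positive coordinates that $d_{G}(\sigma^{-n}(x), \sigma^{-n}(y))$ is small'' therefore fails exactly in the swap case: the upstairs bracket $[\sigma^{-n}(y), \sigma^{-n}(x')]$ is simply undefined there, and Theorem \ref{smale:50}(2) cannot be applied to that pair. Shrinking the cylinder so that all nearby pairs fall into the same branch of the dichotomy, which is how you propose to handle the difficulty, does not repair this: however small the cylinder, the two legs of a swapped pair remain at $d_{G}$-distance $1$ after shifting, so no single upstairs bracket expression exists for the holonomy.

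The paper resolves this with a device your proposal is missing. In the swap case it first identifies $U$ with the set $\delta^{j,1-j}_{p,q}$ of \cite{Pu:preK} (giving compact-openness in the ad-hoc topology), and then, for $(z,z')$ in $U$, replaces $z$ by $z''$, the unique point with $z'' \neq z$ and $z'' \sim_{\xi} z$, i.e.\ the point obtained by exchanging $\xi^{j}$ and $\xi^{1-j}$ throughout the far past. This $z''$ \emph{is} $d_{G}$-close to $y$ after shifting, so $[\sigma^{-n}(y), \sigma^{-n}(z'')]$ is defined in $X_{G}$; Theorem \ref{smale:50}(2) together with the identity $\pi_{\xi}(z'') = \pi_{\xi}(z)$ then converts this into the downstairs bracket $[\sigma^{-n}\pi_{\xi}(y), \sigma^{-n}\pi_{\xi}(z)]$, which is the formula in the statement (the bracket there is taken in $X_{\xi}$, where the swapped pasts have been identified, not in $X_{G}$). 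Note also that your closing remark locates the ``real work'' in the $\theta$-estimates of Lemma \ref{c*:220}; those estimates are spent in establishing the dichotomy itself, and the proof of the present lemma needs only the dichotomy plus the $z''$ substitution, with no further analytic input. Your case (i) argument is fine and agrees with the paper's.
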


\begin{proof}
We first consider the case that $(x,y)$ is in $R_{E}$, meaning that $x,y$
are themselves unstably equivalent in $X_{G}$. We can then choose 
$n \geq 3$ such that $x_{i} = y_{i}$, for all $i \leq -n +3$. We define
$U$ to be the set of all pairs $(z,z')$ such that $z_{i} = z'_{i}$, for $i \leq -n$,
$z_{i} = x_{i}, z'_{i} = y_{i}$, for all $ i \geq -n$. Notice that 
$d_{G}(\sigma^{-n}(y), \sigma^{-n}(z)) \leq 2^{-3}$ and 
$[\sigma^{-n}(y), \sigma^{-n}(z)] = \sigma^{-n}(z')$,
 for all
such $z, z'$. The first desired property of $U$ follow at once. The 
last statements follow from the fact that $\pi_{\xi}$ is a homeomorphism 
on $Y^{s}_{G}(P)$.

The second case, is that $(x,y)$ is not in $R_{E}$. By Lemma 
\ref{c*:220}, we may find $n \geq 1, j=0,1$ and a path $w$ in 
$X_{H}$ such that such that $x_{i} = \xi^{j}(w_{i}), y_{i} = \xi^{1-j}(w_{i})$, 
for all $i \leq -n+3$.
As before, let $U$ be the collection of all pairs $z, z'$ such that 
$z_{i} = x_{i}, z'_{i} = y_{i}$, for all $ i \geq -n$. If we let 
$p = x_{-n+1}, \ldots , x_{ 0}, q = y_{-n+1}, \ldots , y_{ 0}$, then the set 
$\delta_{p,q}^{j, 1-j}$ as defined following Lemma 2.4 in \cite{Pu:preK},
is precisely $U$ and hence is a compact, open subset of $R$.
If $z, z'$ is in $U$, let $z''$ be the unique element   of $X_{G}$ with 
$z'' \neq z, z'' \cong_{\xi} z$. 
It follows that $d_{G}(\sigma^{-n}(y), \sigma^{-n}(z)) \leq 2^{-3}$ and 
we have 
\begin{eqnarray*}
\pi_{\xi}[\sigma^{-n}(y), \sigma^{-n}(z'')] & = & 
   [\pi_{\xi}\sigma^{-n}(y), \pi_{\xi}\sigma^{-n}(z'')] \\
 & = &   [\sigma^{-n}\pi_{\xi}(y), \sigma^{-n}\pi_{\xi}(z'')] \\
  & = &   [\sigma^{-n}\pi_{\xi}(y), \sigma^{-n}\pi_{\xi}(z)]. 
    \end{eqnarray*}
    The first part of the conclusion follows. The last properties are as before.
\end{proof}
 
The computation of the groups $K_{*}(U(X_{\xi}, \sigma_{\xi}, P_{\xi}))$ as stated
in Theorem \ref{c*:50} is an immediate consequence of Theorem 1.1 
of \cite{Pu:preK} and the
construction given there of $R$. 

We next turn to the claim of the maps on these groups induced by $\sigma_{\xi}$. 
First, because of the commutative diagram of Proposition 
\ref{c*:210}, it suffices to compute the map induced by $\sigma_{\xi}$, which 
is   an endomorphism of $C^{*}(G^{u}(\xi, P))$. For the $K_{0}$ group, 
the results of \cite{Pu:preK} actually show that the inclusion of 
$C^{*}(R_{E})$ in $C^{*}(R)$ induces an isomorphism 
on $K_{0}$. The former is an AF-algebra with stationary Bratteli diagram, so the induced
map is given by the matrix $(A_{G}^{T})^{-1}$, as claimed.

For $K_{1}$, let $x$ be an element of $Y^{s}_{G}(P)$ such that, for some $n \leq -1$, 
$i(x_{n})$ is in $\xi^{0}(H^{0})$. Letting $p = (x_{n}, \ldots x_{0})$, Remark 
3.6 of \cite{Pu:preK} gives an explicit description of a partial isometry, 
$v_{p}$, in 
$C^{*}(R)$ so that $v_{p} + (1 - v_{p}^{*}v_{p})$ is a unitary. This gives
an explicit group isomorphism between $D^{s}(G)$ and $K_{1}(C^{*}(R))$. 

Moreover, it is an easy 
exercise to check that if we let $q= (x_{n-1}, \ldots, x_{-1})$, then
 $v_{q} = \sigma_{\xi}(v_{p})$ and it follows that the isomorphism
intertwines the automorphism $(A_{G}^{T})^{-1}$ and 
$(\sigma_{\xi})_{*}$.

The proofs of the last two parts of Theorem \ref{c*:50} are completely analogous 
to those of Theorem \ref{c*:40} and we omit the details.

\section{Realizations}
\label{real}

Our goal in this section is to provide a more concrete
description of the space $X_{\xi}^{+}$. More specifically, we give an explicit 
embedding of it into $\R^{3}$. We also examine a couple of simple examples
more closely.

\begin{defn}
\label{real:10}
For $k \geq 0$, define $\zeta_{k}: X_{k}^{+} \rightarrow \C$ inductively by
setting 
 $\zeta_{0} = \theta$ on $X_{0}^{+}$
and
\[
\zeta_{k}(x) = \left( 1 - 2^{1-n(x)} \right) \theta(x) + 
2^{-3-n(x)}\zeta_{k-1}(\sigma^{n(x)}(x)),
\]
for $x$ in $X_{k}^{+}$ and $k \geq 1$.
\end{defn}

A remark is probably in order on the factor $1 - 2^{1-n(x)}$: this 
is normalized so that, if $n(x)=1$, then the first term is zero.
Ultimately, when we extend the definition of $\zeta_{k}$ to a map
$\zeta$ on 
all of $X_{\xi}^{+}$, this will have the effect that $\zeta(x) =0$ 
exactly when $x_{n}$ is not in $\xi(H^{1})$, for any $n \geq 1$.

\begin{lemma}
\label{real:20}
\begin{enumerate}
\item 
For $x$ in $X_{k}^{+}, k \geq 0$,
 we have
 $\vert \zeta_{k}(x) \vert \leq 1$.
\item 
For any $x,y$ in $X_{k}^{+}, k \geq 0$, we have 
$\vert \zeta_{k}(x) - \zeta_{k}(y) \vert \leq 8 d_{k}(x,y)$,
\item There is a unique continuous map  $\zeta_{\xi}: X_{G}^{+} \rightarrow \C$
such that 
\begin{enumerate}
\item 
for $x$ in $X_{k}^{+}, k \geq 0$, $\zeta_{\xi}(x) = \zeta_{k}(x)$, 
\item 
for $x, y$ in $X_{G}^{+}$, 
 \[
\vert \zeta_{\xi}(x) - \zeta_{\xi}(y) \vert \leq 8 d_{\xi}(x,y).
\]
\item for $x$ in $X_{G}^{+}$, 
$\vert \zeta_{\xi}(x) \vert \leq 1$. 
\end{enumerate}

\end{enumerate}
\end{lemma}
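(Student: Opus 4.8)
The plan is to prove the three parts in order, the first two by induction on $k$ and the third by a density-and-extension argument resting on the first two. For part 1, I would induct on $k$, with base case $k=0$ immediate since $\zeta_0 = \theta$ takes values in $\T$. For $k \geq 1$, the defining formula and the triangle inequality give $|\zeta_k(x)| \leq (1 - 2^{1-n(x)})|\theta(x)| + 2^{-3-n(x)}|\zeta_{k-1}(\sigma^{n(x)}(x))|$; since $n(x) \geq 1$ forces $1 - 2^{1-n(x)} \geq 0$, since $|\theta(x)| = 1$, and since $|\zeta_{k-1}| \leq 1$ by induction (note $\sigma^{n(x)}(x) \in X^+_{k-1}$ by Proposition \ref{constr:40}), this is at most $1 - 2^{1-n(x)} + 2^{-3-n(x)} \leq 1$, the last step because $2^{-3-n(x)} \leq 2^{1-n(x)}$.

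For part 2, I would again induct on $k$, splitting on whether $(n(x), \theta(x)) = (n(y), \theta(y))$, mirroring the definition of $\lambda_k$, and I would use throughout the chordal-versus-arc estimate $|\theta(x) - \theta(y)| \leq 2\pi\, d_{\T}(\theta(x), \theta(y))$ together with $d_k(x,y) \geq \lambda_k(x,y)$. In the equal case the leading terms cancel, so $|\zeta_k(x) - \zeta_k(y)| = 2^{-3-n}|\zeta_{k-1}(\sigma^n x) - \zeta_{k-1}(\sigma^n y)|$; the inductive bound, the estimate $d_{G_\xi}(\sigma^n u, \sigma^n v) \leq 2^n d_{G_\xi}(u,v)$, and the identity $\lambda_{k-1}(\sigma^n x, \sigma^n y) = 2^{2+n}\lambda_k(x,y)$ yield the clean bound $|\zeta_k(x) - \zeta_k(y)| \leq 4\, d_k(x,y)$. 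The unequal case is the bookkeeping core: I split $\zeta_k(x) - \zeta_k(y)$ into a leading part $(1 - 2^{1-n(x)})\theta(x) - (1 - 2^{1-n(y)})\theta(y)$, estimated by $2\pi\, d_{\T}(\theta(x),\theta(y)) + 2|2^{-n(x)} - 2^{-n(y)}|$, and a tail part, estimated crudely via $|\zeta_{k-1}| \leq 1$ by $2^{-3-n(x)} + 2^{-3-n(y)}$. I then absorb the tail using $|2^{-n(x)} - 2^{-n(y)}| \geq 2^{-\min\{n(x),n(y)\}-1}$ when $n(x) \neq n(y)$, and when $n(x) = n(y) = n$ and $\theta(x) \neq \theta(y)$ the fact that these are distinct $2^{n-1}$-th roots of unity, whence $d_{\T}(\theta(x),\theta(y)) \geq 2^{1-n}$. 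Collecting terms bounds the total by $(2\pi + \tfrac18)\,d_{\T}(\theta(x),\theta(y)) + \tfrac52|2^{-n(x)}-2^{-n(y)}| \leq 8\, d_k(x,y)$. I expect this constant-chasing, and in particular the fact that $2\pi < 8$, to be the main obstacle and the reason the specific constant $8$ appears.

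For part 3, I would first extend each $\zeta_k$ (which is $d_G$-Lipschitz on $X^+_k$ by part 2 and $d \leq 3 d_G$, Proposition \ref{constr:100}) to a continuous map $\bar\zeta_k$ on $Cl(X^+_k) = \cup_{j \leq k} X^+_j$ (Proposition \ref{constr:50}). The crux is the compatibility claim $\bar\zeta_k|_{X^+_j} = \zeta_j$ for $j \leq k$, which I would prove by induction on $k$: for $j = 0$ I choose an approximating sequence $x^l \in X^+_k$ with $n(x^l) \to \infty$, so the defining formula and Lemma \ref{constr:130} give $\zeta_k(x^l) \to \theta(x) = \zeta_0(x)$; for $0 < j \leq k$ I choose $x^l$ agreeing with $x$ through coordinate $n(x)$, fixing $n$ and $\theta$, and apply the formula together with the claim at level $k-1$ to $\sigma^{n(x)}(x^l) \to \sigma^{n(x)}(x) \in X^+_{j-1}$. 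Since the inequalities of parts 1 and 2 pass to the $d_G$-continuous extensions, and $\bar d_k$ restricts to $d_j$ on $X^+_j$ (Proposition \ref{constr:140}, as in the proof of Theorem \ref{constr:160}), the common value $\zeta_\infty$ on the dense set $\cup_k X^+_k$ satisfies $|\zeta_\infty(x) - \zeta_\infty(y)| \leq 8\, d(x,y)$ and $|\zeta_\infty| \leq 1$ there. Finally, as $d \leq 3 d_G$, the map $\zeta_\infty$ is $d_G$-Lipschitz, so it extends uniquely and continuously to $\zeta_\xi$ on $X^+_G = Cl(\cup_k X^+_k)$; properties (a)--(c) follow by continuity from their validity on the dense set, using that $d$ is $d_G$-continuous (Theorem \ref{constr:160}) and $d(x,y) = d_\xi(\pi_\xi(x), \pi_\xi(y))$, while uniqueness is immediate from density. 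I would also note that property (b) forces $\zeta_\xi$ to be constant on $\sim_\xi$-classes, so it descends to $X^+_\xi$ as anticipated.
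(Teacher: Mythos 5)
Your proposal is correct and takes essentially the same route as the paper: parts 1 and 2 are proved there by exactly your inductions, with the same case split mirroring the definition of $\lambda_{k}$, the same chord-versus-arc bound $\vert \theta(x)-\theta(y)\vert \leq 2\pi\, d_{\T}(\theta(x),\theta(y))$, the same lower bounds $\vert 2^{-n(x)}-2^{-n(y)}\vert \geq 2^{-\min\{n(x),n(y)\}-1}$ and the root-of-unity separation of the $\theta$'s, and the same constant count $2\pi + 1 \leq 8$. For part 3 the paper simply declares it ``an immediate consequence of the first two and the definition of $d_{\xi}$,'' and your density-and-extension argument (including the compatibility check $\bar\zeta_{k}|_{X_{j}^{+}} = \zeta_{j}$ via Lemma \ref{constr:130} and the coordinate-fixing approximation) is a correct spelling-out of precisely the extension machinery the paper set up in Theorem \ref{constr:160}, not a different method.
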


\begin{proof}
We prove the first statement by induction. It
obviously holds for $k=0$.
Let $k \geq 1$, assume the statement is true for $k-1$ and $x$ be in $X_{k}^{+}$. 
We have 
\begin{eqnarray*}
\vert \zeta_{k}(x) \vert & \leq &  1- 2^{1-n(x)} +
 2^{-3-n(x)}\vert \zeta_{k-1}(\sigma^{n(x)}(x)) \vert \\
   & \leq  &  1- 2^{1-n(x)} +
 2^{-3-n(x)}  \\
   & <  & 1.
\end{eqnarray*}

We prove the second part by induction on $k$. For $k = 0$, we have
\[
2 \pi d_{0}(x,y) = 2 \pi d_{\T}(\theta(x), \theta(y)) \geq \vert \theta(x) - \theta(y) \vert 
= \vert \zeta_{0}(x) - \zeta_{0}(y) \vert,
\] 
for all $x, y$ in $X_{0}^{+}$.

We now assume the result holds for $k-1$, with $k \geq 1$. For 
$x, y$ in $X_{k}^{+}$, we consider two 
cases separately. The first is that $(n(x), \theta(x)) \neq (n(y), \theta(y))$.
In this case, we have 
have 
\[
d_{k}(x,y) = d_{G_{\xi}}(\tau_{\xi}(x), \tau_{\xi}(y)) 
+ \vert 2^{-n(x)} - 2^{-n(y)} \vert + d_{\T}(\theta(x), \theta(y)).
\]
We claim that this is bounded below by $2^{-1-n}$, where 
$n = \min \{ n(x), n(y) \}$. Without loss of generality, we assume $n(x) \leq n(y)$.
First consider the case $n(x) < n(y)$, where we have 
\[
d_{k}(x,y) \geq \vert 2^{-n(x)} - 2^{-n(y)} \vert \geq 2^{-n} - 2^{-n-1} = 2^{-1-n}.
\]
We are left to consider $n(x) = n(y)$ and $\theta(x) \neq \theta(y)$. Here, we
have 
\[
d_{k}(x,y) \geq d_{\T}(\theta(x), \theta(y)) \geq 2^{-n},
\]
since  $\theta(x)$ and $\theta(y)$ are distinct $2^{n-1}$-th roots of unity.

On the other hand, we have
\begin{eqnarray*}
\vert \zeta_{k}(x) - \zeta_{k}(y) \vert & \leq  & 
\vert (1 - 2^{1-n(x)})\theta(x) -
(1 - 2^{1-n(y)})\theta(y) \vert  \\
  &   &   + \vert 2^{-2-n(x)} \zeta_{k-1}(\sigma^{n(x)} )
 - 2^{-2-n(x)} \zeta_{k-1}(\sigma^{n(y)} ) \vert. 
 \end{eqnarray*}

 For the first term, we use the triangle inequality as follows
 \begin{eqnarray*}
   &  \vert (1 - 2^{1-n(x)})\theta(x) -
(1 - 2^{1-n(y)})\theta(y) \vert & \\
  \leq & 
\vert (1 - 2^{1-n(x)})\theta(x) -
(1 - 2^{1-n(y)})\theta(x) \vert  & \\
  &    + \vert (1 - 2^{1-n(y)})\theta(x) -
(1 - 2^{1-n(y)})\theta(y) \vert &  \\
   \leq & \vert 2^{1-n(x)} - 2^{1-n(y)} \vert & \\
  &    + (1 - 2^{1-n(y)}) \vert \theta(x) - \theta(y) \vert & \\
    \leq & 2 \vert 2^{-n(x)} - 2^{-n(y)} \vert 
      +  \vert \theta(x) - \theta(y) \vert  & \\
  \leq & 2  \vert 2^{-n(x)} - 2^{-n(y)} \vert 
  + 2 \pi d_{\T}(\theta(x), \theta(y))  & \\
   \leq & 2 \pi  d_{k}(x,y). &
 \end{eqnarray*}

 For the second term, as
  $\zeta_{k-1}$ is in the unit disc, this is bounded 
 by 
 \[
 2^{-2-n(x)}  + 2^{-2-n(y)} \leq  2^{-1-n} \leq d_{k}(x,y)
 \]
 from our claim above. The conclusion follows as
 $2\pi + 1 \leq 8$.

It remains for us to consider the case $(n(x), \theta(x)) = (n(y), \theta(y))$.
Here, we make use of the induction hypothesis in estimating
\begin{eqnarray*}
 \vert \zeta_{k}(x) - \zeta_{k}(y) \vert & = &
  \vert 2^{-3-n(x)}\zeta_{k-1}(\sigma^{n(x)}(x)) - 
  2^{-3-n(y)}\zeta_{k-1}(\sigma^{n(y)}(x)) \vert \\
   & = & 2^{-3-n(x)} \vert \zeta_{k-1}(\sigma^{n(x)}(x)) - 
  \zeta_{k-1}(\sigma^{n(y)}(x)) \vert \\
  & \leq & 2^{-3-n(x)} 8 d_{k-1}( \sigma^{n(x)}(x)), \sigma^{n(y)}(y)) ) \\
  & = & 2^{-n(x)} \left[ d_{G_{\xi}}(\sigma^{n(x)}(x)), \sigma^{n(y)}(y)) \right. \\
     &  & 
    + \left. \lambda_{k-1}( \sigma^{n(x)}(x)), \sigma^{n(y)}(y)) \right]
    \\
    & = &  \cdot 2^{-n(x)} d_{G_{\xi}}(\sigma^{n(x)}(x)), \sigma^{n(y)}(y)) 
     +   2^{2} \lambda_{k}(x,y) \\
     & \leq & \cdot 
     d_{G_{\xi}}(x, y) 
     + 2^{2} \lambda_{k}(x,y) \\
       &  \leq & 8 d_{k}(x,y).
    \end{eqnarray*}

The third part is an immediate consequence of the first two and the definition
of $d_{\xi}$.
\end{proof}

\begin{lemma}
\label{real:35}
Let  $x, y$ be in $X_{G}^{+}$ with $\kappa(x) > 0$. 
\begin{enumerate} 
\item 
We have 
$  (1 - 2^{1-n(x)})\theta(x) - \zeta_{\xi}(x) =
 2^{-3 -n(x) } \zeta_{\xi}(\sigma^{n(x)}(x))$.
\item
If $\zeta_{\xi}(x) = \zeta_{\xi}(y)$  and 
$\tau_{\xi}(x_{m}) = \tau_{\xi}(y_{m})$, for all $1 \leq m \leq n(x)$,
 then $x_{m} = y_{m}$, for all
$ 1 \leq m \leq n(x)$.
\end{enumerate}
\end{lemma}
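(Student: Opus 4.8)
The plan is to handle the two parts in sequence, treating part 1 as a direct unwinding of the recursion in Definition~\ref{real:10} and reserving the real work for part 2. For part 1, if $\kappa(x) < \infty$, say $x \in X_k^+$ with $k \ge 1$, then $\zeta_\xi(x) = \zeta_k(x)$ and, by part 3 of Proposition~\ref{constr:40}, $\sigma^{n(x)}(x) \in X_{k-1}^+$, so $\zeta_\xi(\sigma^{n(x)}(x)) = \zeta_{k-1}(\sigma^{n(x)}(x))$; the claimed identity is then exactly the defining recursion, rearranged. For $\kappa(x) = \infty$ I would pass to the limit: choose $x^l \in X_l^+$ with $x^l \to x$ (Proposition~\ref{constr:50}), observe that for $l$ large $x^l$ agrees with $x$ on its first $n(x)$ coordinates, so $n(x^l) = n(x)$, $\theta(x^l) = \theta(x)$ and $\sigma^{n(x)}(x^l) \to \sigma^{n(x)}(x)$, and then invoke continuity of $\zeta_\xi$ (part 3 of Lemma~\ref{real:20}) to carry the identity across the limit.

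For part 2, write $n = n(x)$ and first extract the combinatorial content of the hypothesis $\tau_\xi(x_m) = \tau_\xi(y_m)$. Since $x_n \notin \xi(H^1)$, its class in $G^1_\xi$ is a singleton, so $\tau_\xi(y_n) = \tau_\xi(x_n)$ already forces $y_n = x_n$; in particular $\kappa(y) > 0$ and $n(y) \le n$. For $1 \le m < n$ we have $x_m \in \xi(H^1)$, and because $\tau_\xi$ identifies only the pairs $\xi^0(h), \xi^1(h)$ inside $\xi(H^1)$, the equality $\tau_\xi(y_m) = \tau_\xi(x_m)$ forces $y_m \in \xi(H^1)$ as well; hence $n(y) \ge n$, and therefore $n(y) = n$ and both $\theta(x), \theta(y)$ are $2^{n-1}$-th roots of unity. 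It then suffices to prove $\varepsilon(x_m) = \varepsilon(y_m)$ for $1 \le m < n$, since together with $\tau_\xi(x_m) = \tau_\xi(y_m)$ this determines $x_m = y_m$ (they would be $\xi^{\varepsilon}(h_m)$ for a common $h_m$ and a common $\varepsilon$), and the case $m = n$ is already settled.

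The analytic heart is to show $\theta(x) = \theta(y)$. Subtracting the two instances of the recursion of part 1 and taking absolute values, using $\zeta_\xi(x) = \zeta_\xi(y)$ and the bound $|\zeta_\xi(\sigma^n(\cdot))| \le 1$ from part 3 of Lemma~\ref{real:20}, gives
\[
(1 - 2^{1-n})\,|\theta(x) - \theta(y)| = 2^{-3-n}\,\bigl|\zeta_\xi(\sigma^n(y)) - \zeta_\xi(\sigma^n(x))\bigr| \le 2^{-2-n}.
\]
For $n = 1$ both roots equal $1$ and there is nothing to prove; for $n \ge 2$ we have $1 - 2^{1-n} \ge 2^{-1}$, so $|\theta(x) - \theta(y)| \le 2^{-1-n}$. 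On the other hand two distinct $2^{n-1}$-th roots of unity are Euclidean distance at least $2\sin(\pi 2^{1-n}) \ge 2^{3-n}$ apart, and since $2^{-1-n} < 2^{3-n}$ the roots cannot be distinct. Thus $\theta(x) = \theta(y)$, and because the map $(\varepsilon_1, \dots, \varepsilon_{n-1}) \mapsto \exp(2\pi i \sum_{j=1}^{n-1}\varepsilon_j 2^{-j})$ is injective on $\{0,1\}^{n-1}$, I conclude $\varepsilon(x_m) = \varepsilon(y_m)$ for $1 \le m < n$, which completes the argument.

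I expect the root-of-unity separation estimate to be the main obstacle: the point is that the constants chosen in Definition~\ref{real:10} (the damping factor $2^{-3-n(x)}$ together with the normalization $1 - 2^{1-n(x)}$) must make the perturbation term strictly smaller than the minimal gap $2^{3-n}$ between neighbouring $2^{n-1}$-th roots of unity, uniformly in $n$. The computation above shows these constants leave a comfortable factor-of-$16$ margin, so the estimate is robust; the care needed is in making the two-sided comparison clean and in isolating the degenerate case $n = 1$, where the normalization factor vanishes and the conclusion is trivial.
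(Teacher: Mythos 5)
Your proposal is correct, and in part 2 it takes a genuinely more economical route than the paper. For part 1 the two arguments coincide in substance: the paper proves the identity for every $x$ with $\kappa(x)>0$ by the approximation $x^{l} \in X_{l}^{+}$, $x^{l} \to x$, which is exactly your infinite-$\kappa$ case, while your finite-$\kappa$ case just reads the recursion of Definition~\ref{real:10} directly. (One caveat you share with the paper: that recursion actually yields $\zeta_{\xi}(x) - (1-2^{1-n(x)})\theta(x) = 2^{-3-n(x)}\zeta_{\xi}(\sigma^{n(x)}(x))$, so the stated identity is off by a sign; this is immaterial here and in Corollary~\ref{real:70} because only moduli are ever used, but ``exactly the defining recursion, rearranged'' is true only up to that sign.) In part 2 the paper does \emph{not} subtract two instances of part 1; it re-runs the approximation with $\zeta_{l}(x^{l}), \zeta_{l}(y^{l})$, which forces it first to prove $\sin(\pi 2^{-n}) - 2^{-2-n} > 0$ by an induction using the double-angle formula, and then to absorb the approximation errors into $(1-2^{1-n})\vert\theta(x)-\theta(y)\vert < \sin(\pi 2^{-n})$. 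Your subtraction argument, needing only $\vert \zeta_{\xi} \vert \le 1$ from Lemma~\ref{real:20}, produces the sharper bound $\vert\theta(x)-\theta(y)\vert \le 2^{-1-n}$ for $n \ge 2$ with no limiting sequences and no trigonometric induction; and your separation constant is the accurate one: distinct $2^{n-1}$-th roots of unity are at Euclidean distance at least $2\sin(\pi 2^{1-n}) \ge 2^{3-n}$ by Jordan's inequality, whereas the paper asserts the minimum distance is $2\sin(\pi 2^{-n})$, which corresponds to half the correct angle (harmless there, since its proven bound lies below both, but your statement is the right one). Your preliminary combinatorial step is also spelled out more carefully than in the paper: you justify $y_{n} = x_{n}$ from the fact that the $\tau_{\xi}$-class of an edge outside $\xi(H^{1})$ is a singleton, hence $\kappa(y) > 0$ and $n(y) = n(x)$ --- which is needed before part 1 may be applied to $y$ --- where the paper merely asserts $n(y)=n(x)$. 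Both proofs conclude identically via injectivity of the finite binary-expansion map $(\varepsilon_{1},\ldots,\varepsilon_{n-1}) \mapsto \exp(2\pi i \sum_{j=1}^{n-1}\varepsilon_{j}2^{-j})$, and your isolation of the degenerate case $n=1$ matches the paper's.
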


\begin{proof}
Choose sequences $x^{l}, y^{l}$ in $X_{l}^{+}$, $l \geq 1$ 
converging to $x$ and $y$ respectively. For $l$ sufficiently large, we have
 $x^{l}_{m} = x_{m}, 1 \leq m \leq n(x)$ and  
 $n(x^{l}) = n(x)$, $\theta(x^{l}) = \theta(x)$,  $\sigma^{n(x^{l})}(x^{l})$ 
 is in $X_{l-1}^{+}$ and converges to 
 $ \sigma^{n(x)}(x)$. 
It follows that 
 \begin{eqnarray*}
 \zeta_{\xi}(\sigma^{n(x)}(x) )
 & = & \lim_{l \rightarrow \infty}  \zeta_{l-1}(\sigma^{n(x^{l})}(x^{l}) ) \\
  &  =  & \lim_{l \rightarrow \infty}
 2^{3+n(x^{l})} \left(  (1 - 2^{1-n(x^{l})})\theta(x^{l}) - \zeta_{l}(x^{l}) \right) \\
   &  =  &  2^{3+n(x^{l})} 
   \left(  (1 - 2^{1-n(x)})\theta(x) - \zeta_{\xi}(x) \right)
     \end{eqnarray*}
     which proves the first part.
     
     For the second part, the second
      hypothesis implies that $n(y)=n(x)=n$. Moreover, 
     if $n=1$, then $x_{1}, y_{1}$ are not in $\xi(H^{1})$ and  we are done.
    It remains to consider the case $n > 1$.
     
     We will first show that   
 $\sin( \pi 2^{-n} ) -  2^{-2-n} > 0$ for $n \geq 1$, 
  by induction. For $n=1$, we have 
\[
\sin( \pi 2^{-n} ) -  2^{-2-n} = 1 - 2^{-2} > 0.
\]
Now assume this is positive for some $n \geq 1$. Making use of the  formula
$ \sin(t) = 2 \sin(t/2) \cos(t/2)$, we have
\begin{eqnarray*}
\sin( \pi 2^{-(n+1)} ) -  2^{-2-(n+1)} & = & 
\sin( \pi 2^{-n-1} ) -  2^{-3-n} \\
  &  =  & 2^{-1} \left( \cos(\pi 2^{-n-1}) \right)^{-1} \sin( \pi 2^{-n} )
    - 2^{-3-n} \\
    &  \geq & 2^{-1} \left(  \sin( \pi 2^{-n} ) -  2^{-2-n} \right) \\
    &  >  &  0
    \end{eqnarray*}
    by the induction hypothesis.

  We may choose $l_{0}$ 
 sufficiently large so that, for $l \geq l_{0}$, we have 
 \begin{eqnarray*}
 \vert \zeta_{\xi}(x) - \zeta_{l}(x^{l}) \vert & < & 
  2^{-1} \left( \sin ( \pi 2^{-n}) - 2^{-2-n} \right) ,  \\
 \vert \zeta_{\xi}(y) - \zeta_{l}(y^{l}) \vert & < & 
  2^{-1} \left( \sin ( \pi 2^{-n}) - 2^{-2-n} \right), \\
 x^{l}_{m} & = & x_{m}, \\
 y^{l}_{m} & = & y_{m},
   \end{eqnarray*}
   for all $ 1 \leq m \leq n$. It follows that, for such $l$, 
   \begin{eqnarray*}
   (1- 2^{1-n})\vert \theta(x) - \theta(y) \vert & = & 
  (1- 2^{1-n})\vert \theta(x^{l}) - \theta(y^{l}) \vert \\
   &  \leq & \vert \zeta_{l}(x^{l}) - \zeta_{l}(y^{l}) \vert + 2^{-2-n} \\
    &  \leq & \vert \zeta_{l}(x^{l}) - \zeta_{\xi}(x) \vert  \\
     &  & +  \vert \zeta_{\xi}(y) - \zeta_{l}(y^{l}) \vert  + 2^{-2-n} \\
     & < &  \sin ( \pi 2^{-n} ).
     \end{eqnarray*}
 So we have 
 \[
 \vert \theta(x) - \theta(y) \vert < (1 - 2^{1-n})^{-1} \sin( \pi 2^{-n} ) 
 \leq 2 \sin( \pi 2^{-n}).
 \]
 Both $\theta(x)$ and $\theta(y)$ are among the $2^{n-1}$-th roots 
 of unity and it is an elementary exercise that the minimum distance between 
 any two is $2 \sin( \pi 2^{-n})$. Hence, we see $\theta(x) = \theta(y)$, 
 which implies that $\varepsilon(x_{m}) = \varepsilon(y_{m})$, for all 
 $1 \leq m < n$. Together with the fact that $\tau_{\xi}(x_{m}) = \tau_{\xi}(y_{m})$, 
 for $ 1\leq m \leq n$ implies $x_{m} = y_{m}$, for $ 1 \leq m \leq n$, as claimed.
\end{proof}

 \begin{thm}
 \label{real:40}
Under the standing hypotheses, the map
 from $X_{\xi}^{+}$ to $X_{G_{\xi}}^{+} \times \C$
 sending $\pi_{\xi}(x), x \in X_{G}^{+},$ to 
 the pair $(\tau_{\xi}(x), \zeta_{\xi}(x))$
   is well-defined, continuous and injective.
 \end{thm}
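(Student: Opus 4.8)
The plan is to dispatch well-definedness and continuity quickly from the results already in hand, and to spend the real effort on injectivity. First I would record that the assignment $\pi_\xi(x) \mapsto (\tau_\xi(x), \zeta_\xi(x))$ descends to $X_\xi^+$. By part 2 of Proposition \ref{constr:70}, $\tau_\xi$ is constant on $\sim_\xi$-classes, and since the pseudo-metric vanishes exactly on $\sim_\xi$-pairs (part 2 of Theorem \ref{constr:160}), the Lipschitz estimate $|\zeta_\xi(x) - \zeta_\xi(y)| \leq 8 d_\xi(x,y)$ of part 3 of Lemma \ref{real:20} forces $\zeta_\xi$ to be constant on $\sim_\xi$-classes as well. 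Hence the map is well-defined on $X_\xi^+$, and continuity is immediate: the first coordinate is $\rho_\xi$, which is $1$-Lipschitz by part 1 of Theorem \ref{constr:180}, while the second is $8$-Lipschitz by the same estimate.

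The substance is \emph{injectivity}: assuming $\tau_\xi(x) = \tau_\xi(y)$ and $\zeta_\xi(x) = \zeta_\xi(y)$, I must show $x \sim_\xi y$. By part 2 of Proposition \ref{constr:40} we have $\kappa(x) = \kappa(y)$, and I distinguish cases on this common value. When $\kappa = 0$, every entry of $x$ and $y$ lies in $\xi(H^1)$, so $\zeta_\xi = \theta$ there; thus $\theta(x) = \theta(y)$, and combined with $\tau_\xi(x) = \tau_\xi(y)$ the binary-expansion argument from the ``only if'' direction of part 3 of Proposition \ref{constr:100} gives $x \sim_\xi y$.

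For $0 < \kappa(x) = \kappa(y) < \infty$ I would induct on $\kappa$. Writing $n = n(x) = n(y)$ (equal by Proposition \ref{constr:40}), part 2 of Lemma \ref{real:35} applies — its hypotheses $\tau_\xi(x_m) = \tau_\xi(y_m)$ for $1 \leq m \leq n$ hold because $\tau_\xi(x) = \tau_\xi(y)$ — and yields $x_m = y_m$ for $1 \leq m \leq n$. Consequently $\theta(x) = \theta(y)$, so the recursion of part 1 of Lemma \ref{real:35} forces $\zeta_\xi(\sigma^n(x)) = \zeta_\xi(\sigma^n(y))$; together with $\tau_\xi(\sigma^n(x)) = \tau_\xi(\sigma^n(y))$ (from $\tau_\xi \circ \sigma = \sigma \circ \tau_\xi$) and $\sigma^n(x), \sigma^n(y) \in X_{\kappa-1}^+$ (part 3 of Proposition \ref{constr:40}), the induction hypothesis gives $\sigma^n(x) \sim_\xi \sigma^n(y)$. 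Since $x_m = y_m$ for $m \leq n$, this lifts to $x \sim_\xi y$ exactly as in Proposition \ref{constr:100}.

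Finally, when $\kappa(x) = \kappa(y) = \infty$ the same two applications of Lemma \ref{real:35} peel off the first $n$ coordinates and pass both $\tau_\xi$-agreement and $\zeta_\xi$-agreement to $\sigma^n(x), \sigma^n(y)$, which again have infinite $\kappa$; iterating, the prefixes on which $x$ and $y$ agree grow without bound, so $x = y$. The genuine content of the whole argument is packaged in Lemma \ref{real:35}, and that lemma's proof is where the real obstacle lives: it rests on the estimate $\sin(\pi 2^{-n}) > 2^{-2-n}$, which guarantees that the tail-correction term in $\zeta_\xi$ (of size $\approx 2^{-3-n}$) is too small to move $\theta(x)$ across the gap $2\sin(\pi 2^{-n})$ separating distinct $2^{n-1}$-th roots of unity — precisely what lets $\zeta_\xi$ recover the binary digits $\varepsilon(x_m)$.
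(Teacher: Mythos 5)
Your proposal is correct and follows essentially the same route as the paper's proof: well-definedness and continuity from the Lipschitz estimates of Lemma \ref{real:20} (with $\zeta_{\xi}$ descending because the pseudo-metric vanishes on $\sim_{\xi}$-pairs), and injectivity by cases on $\kappa$, using part 2 of Lemma \ref{real:35} to recover the initial segment up to $n(x)=n(y)$ and part 1 to pass $\zeta_{\xi}$-agreement to $\sigma^{n}(x), \sigma^{n}(y)$, then iterating (finitely to reach the $\kappa=0$ case, or indefinitely to conclude $x=y$ when $\kappa=\infty$). Your explicit induction on $\kappa$ is just a cleaner packaging of the paper's ``repeat this argument'' step, and your identification of the estimate $\sin(\pi 2^{-n}) > 2^{-2-n}$ in Lemma \ref{real:35} as the technical heart matches the paper exactly.
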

 
 \begin{proof}
 The facts that the  map is well-defined and continuous
  follows immediately from Theorem
  \ref{constr:70} and part 2
 of Lemma \ref{real:20}. It remains for us to prove that
 it is injective. It suffices to show that, for any $x,y$ in $X_{G}^{+}$, 
 if $\tau_{\xi}(x) = \tau_{\xi}(y)$ and $\zeta_{\xi}(x) = \zeta_{\xi}(y)$, then 
 $x \sim_{\xi} y$. The first hypothesis implies that $\kappa(x) = \kappa(y)$. 
 
 If $\kappa(x) = \kappa(y)=0$, then $x, y$ are in $X_{0}^{+}$, so 
 by definition,
 $\zeta_{\xi}(x) = \theta(x), \zeta_{\xi}(y) = \theta(y)$ and hence
 $d_{0}(x,y) = 0$ so $x \sim_{\xi} y$.
 
 We now assume $\kappa(x) = \kappa(y) > 0$. It is an immediate consequence 
 of part 2 of Lemma \ref{real:35} that $x_{m} = y_{m}, 1 \leq m \leq n(x)=n(y)$.
 It is then obvious that 
 $\tau_{\xi}(\sigma^{n(x)}(x)) = \sigma^{n(x)}(\tau_{\xi}(x)) =
 \sigma^{n(x)}(\tau_{\xi}(y))  = \tau_{\xi}(\sigma^{n(x)}(y))$.
 We also note that $\kappa( \sigma^{n(x)}(x)) = \kappa(x) - 1
  = \kappa(y) - 1 = \kappa( \sigma^{n(y)}(y))$.  
 In addition, $x_{m} = y_{m}, 1 \leq m \leq n(x)=n(y)$ implies that 
 $\theta(x) = \theta(y)$ and hence, from part 1 of Lemma \ref{real:35}
 that 
$ \zeta_{\xi}(\sigma^{n(x)}(x)) = \zeta_{\xi}(\sigma^{n(y)}(y))$. 
If $\kappa(x)$ is finite, we may repeat this argument to find a finite $n$ such 
that $x_{m}=y_{m}$ for $ 1 \leq m \leq n$ and 
$\kappa(\sigma^{n}(x)) = \kappa(\sigma^{n}(y))=0$. It follows that 
$x \sim_{\xi} y$. If $\kappa(x)$ is infinite, we may repeat this argument 
to show that 
that $x_{m}=y_{m}$ for all $ 1 \leq m$, so $x=y$.
 \end{proof}

Under our hypotheses, $X_{G_{\xi}}$ is homeomorphic to the 
Cantor ternary set, so we conclude the following.

 \begin{cor}
 \label{real:50}
 Under the standing hypotheses,
 the space $X_{\xi}^{+}$
  can be embedded 
 in $\R \times \C$.
 \end{cor}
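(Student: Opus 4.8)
The plan is to deduce the embedding directly from Theorem \ref{real:40} by a standard compactness argument. First I would record that $X_{\xi}^{+}$ is compact: it is the image of the compact space $X_{G}^{+}$ under the continuous quotient map $\pi_{\xi}$, hence compact, and since it carries the metric $d_{\xi}$ it is in particular Hausdorff. I would also note, as in the remark preceding the corollary, that under the standing hypotheses the graph $G_{\xi}$ is primitive, so the one-sided shift space $X_{G_{\xi}}^{+}$ is a nonempty, perfect, totally disconnected compact metric space, i.e. homeomorphic to the Cantor ternary set $C \subseteq \R$.

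Next I would invoke the map $\Phi : X_{\xi}^{+} \rightarrow X_{G_{\xi}}^{+} \times \C$ of Theorem \ref{real:40}, sending $\pi_{\xi}(x)$ to $(\tau_{\xi}(x), \zeta_{\xi}(x))$, which is proved there to be well-defined, continuous and injective. Its target is metric, hence Hausdorff, so by the standard fact that a continuous injection from a compact space into a Hausdorff space is a homeomorphism onto its image, $\Phi$ is an \emph{embedding}. Fixing a homeomorphism $\iota : X_{G_{\xi}}^{+} \rightarrow C \subseteq \R$, the product map $\iota \times \mathrm{id}_{\C}$ is an embedding of $X_{G_{\xi}}^{+} \times \C$ into $\R \times \C$. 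Composing, $(\iota \times \mathrm{id}_{\C}) \circ \Phi$ is a continuous injection $X_{\xi}^{+} \rightarrow \R \times \C$, which is again an embedding by the same compactness-plus-Hausdorff argument. This gives the desired conclusion (and, since $\R \times \C \cong \R^{3}$, matches the embedding into $\R^{3}$ promised in the introduction).

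The substantive content is already carried by Theorem \ref{real:40} — in particular the fact that $\zeta_{\xi}$ separates $\sim_{\xi}$-classes sharing the same $\tau_{\xi}$-image — so this corollary is a purely formal consequence. The only points needing care are the routine compactness argument upgrading a continuous injection to an embedding, and the identification of $X_{G_{\xi}}^{+}$ with the Cantor set; both are standard, so I expect no genuine obstacle here.
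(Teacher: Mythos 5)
Your proof is correct and follows the same route as the paper, which deduces the corollary from Theorem \ref{real:40} together with the observation that $X_{G_{\xi}}^{+}$ is homeomorphic to the Cantor ternary set; the paper merely leaves implicit the compactness-plus-Hausdorff upgrade from continuous injection to embedding, which you spell out. No gaps.
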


  We also observe the following, which already appeared in \cite{Ha:fac}.
   We provide a 
  different proof
  here which employs our metric.

  \begin{cor}
  \label{real:70}
  Recall that we have factor maps
  
  \vspace{.5cm}
 \hspace{2cm} \xymatrix{
  (X_{G}^{+}, \sigma_{G}) \ar^{\pi_{\xi}} [r] & 
  (X_{\xi}^{+}, \sigma_{\xi}) \ar^{\rho_{\xi}} [r]  &
   (X_{G_{\xi}}^{+}, \sigma_{G_{\xi}}) }
   
    \vspace{.5cm} 
\noindent and $\rho_{\xi} \circ \pi_{\xi} = \tau_{\xi}$.   
   
  Under the standing hypotheses, 
  for each $x$ in $X_{G}^{+}$, $\rho_{\xi}^{-1} \{ \tau_{\xi}(x) \}$ is
   \begin{enumerate} 
   \item 
   a finite collection of pairwise disjoint 
    circles if $\kappa(x)$ is finite,
    \item 
 $2^{m}$ points if $ m = \#\{ n \mid x_{n} \in \xi(H^{1}) \}$ is finite,
   \item    
  totally disconnected 
     if $\kappa(x)$ is infinite.
 \end{enumerate}
  \end{cor}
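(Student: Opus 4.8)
The plan is to reduce the statement to a concrete planar computation using the embedding of Theorem \ref{real:40}. Since
\[
\rho_{\xi}^{-1}\{\tau_{\xi}(x)\} = \{\pi_{\xi}(z) \mid z \in X_{G}^{+},\ \tau_{\xi}(z) = \tau_{\xi}(x)\},
\]
and the map $\pi_{\xi}(z) \mapsto (\tau_{\xi}(z), \zeta_{\xi}(z))$ is a continuous injection of the compact space $X_{\xi}^{+}$ into $X_{G_{\xi}}^{+} \times \C$, while $\tau_{\xi}$ is constant on this fiber, the fiber is homeomorphic (compact to Hausdorff) to the planar set
\[
\Lambda(x) = \{\zeta_{\xi}(z) \mid z \in X_{G}^{+},\ \tau_{\xi}(z) = \tau_{\xi}(x)\} \subseteq \C.
\]
Writing $S = \{n \mid x_{n} \in \xi(H^{1})\}$, a sequence $z$ with $\tau_{\xi}(z) = \tau_{\xi}(x)$ is exactly one obtained by choosing $z_{n} \in \{\xi^{0}(h_{n}), \xi^{1}(h_{n})\}$ independently at each $n \in S$ (where $x_{n} = \xi^{i_{n}}(h_{n})$) and setting $z_{n} = x_{n}$ for $n \notin S$. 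So the whole problem is to understand $\Lambda(x)$ and its topology.

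First I would record the self-similar structure of $\Lambda(x)$ directly from Definition \ref{real:10}. If $\kappa(x) = 0$, every such $z$ lies in $X_{0}^{+}$, so $\zeta_{\xi}(z) = \theta(z)$, and as $(\varepsilon(z_{j}))_{j}$ ranges over all of $\{0,1\}^{\N}$ the value $\theta(z)$ sweeps out all of $\T$; thus $\Lambda(x) = \T$, a single circle. If $\kappa(x) \geq 1$, put $n_{1} = n(x)$; every $z$ in the fiber has $n(z) = n_{1}$ (Proposition \ref{constr:40}), its first $n_{1}-1$ coordinates are free, and $(\varepsilon(z_{1}), \ldots, \varepsilon(z_{n_{1}-1})) \mapsto \theta(z)$ is a bijection onto the $2^{n_{1}-1}$-th roots of unity. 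Since the choices before and after position $n_{1}$ are independent and $\sigma^{n_{1}}(z)$ ranges over the entire $\tau_{\xi}(\sigma^{n_{1}}(x))$-fiber, the definition of $\zeta_{\xi}$ yields
\[
\Lambda(x) = \bigcup_{\omega^{2^{n_{1}-1}}=1} \Bigl( a\,\omega + r\,\Lambda(\sigma^{n_{1}}(x)) \Bigr), \qquad a = 1 - 2^{1-n_{1}},\quad r = 2^{-3-n_{1}},
\]
where $\kappa(\sigma^{n_{1}}(x)) = \kappa(x) - 1$.

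The geometric heart is a separation estimate. By Lemma \ref{real:20}, $\Lambda(\cdot)$ lies in the closed unit disc, so each piece $a\omega + r\Lambda(\sigma^{n_{1}}(x))$ sits inside the disc of radius $r$ about $a\omega$. These centre discs are pairwise disjoint: for $n_{1} = 1$ there is a single root, while for $n_{1} \geq 2$ one has $a \geq \tfrac{1}{2}$ and the minimal gap between distinct $2^{n_{1}-1}$-th roots is $2\sin(\pi 2^{-(n_{1}-1)}) \geq 2^{3-n_{1}}$, so $a\cdot 2\sin(\pi 2^{-(n_{1}-1)}) \geq 2^{2-n_{1}} > 2^{-2-n_{1}} = 2r$. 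Case 1 ($\kappa(x)$ finite) then follows by induction on $\kappa(x)$: the base $\Lambda = \T$ is one circle, and the inductive step writes $\Lambda(x)$ as a disjoint union of $2^{n_{1}-1}$ affine copies of a finite disjoint union of circles, lying in the disjoint centre discs, hence again a finite pairwise disjoint family of circles. Case 3 ($\kappa(x)$ infinite) follows by iterating: letting $n_{1} < n_{2} < \cdots$ list the positions not in $\xi(H^{1})$, after $j$ steps $\Lambda(x)$ is covered by finitely many pairwise disjoint closed discs of radius $\prod_{i=1}^{j} 2^{-3-n_{i}} \leq 2^{-4j}$, nested as $j$ grows; since these radii tend to $0$, any two distinct points are separated by a clopen set, so every connected component is a point.

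Case 2 is then a refinement: if $m = \#S$ is finite the fiber consists of only $2^{m}$ sequences, and I would check that no two are $\sim_{\xi}$-equivalent — the carry operations of Definition \ref{constr:60} require a tail lying entirely in $\xi(H^{1})$, which is impossible once only finitely many coordinates belong to $\xi(H^{1})$; hence $\pi_{\xi}$ is injective on the fiber and $\rho_{\xi}^{-1}\{\tau_{\xi}(x)\}$ has exactly $2^{m}$ points. I expect the main obstacle to be the separation estimate together with the bookkeeping that makes the nested-disc and inductive arguments rigorous; everything else is a routine unwinding of the definition of $\zeta_{\xi}$ and the embedding of Theorem \ref{real:40}.
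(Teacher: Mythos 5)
Your proof is correct, and it takes a genuinely different route from the paper's. The paper makes the same reduction via Theorem \ref{real:40} but then decomposes the fiber ``horizontally'': it introduces the set $I(x)$ of branching positions, takes $n$ to be the maximum of $I(x)$ (in case 1), writes $\tau_{\xi}^{-1}\{\tau_{\xi}(x)\}$ as the union of cylinders $C(p,x)$ over the finitely many length-$n$ paths $p$ with $\tau_{\xi}(p_{m})=\tau_{\xi}(x_{m})$, shows each $\zeta_{\xi}(C(p,x))$ is a circle of radius $2^{-3-n}$, and obtains pairwise disjointness from part 2 of Lemma \ref{real:35}; for case 3 it separates two given points by the clopen partition formed by the circles at one suitable level $n \in I(x)$ together with their interior discs, via part 1 of Lemma \ref{real:35}. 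You instead iterate the one-step self-similar identity $\Lambda(x)=\bigcup_{\omega}\bigl(a\omega + r\,\Lambda(\sigma^{n_{1}}(x))\bigr)$ and induct on $\kappa(x)$, proving disjointness by the explicit estimate $a\cdot 2\sin(\pi 2^{-(n_{1}-1)}) > 2r$ --- essentially the same trigonometric fact ($\sin(\pi 2^{-n}) > 2^{-2-n}$) that the paper establishes inside the proof of Lemma \ref{real:35}. Your route buys a cleaner recursive picture (an explicit IFS-like structure, consonant with Example \ref{real:110}) and, notably, a more complete case 2: the paper only remarks that the fiber is finite, whereas you verify that no two of the $2^{m}$ fiber points are $\sim_{\xi}$-equivalent (the carry moves of Definition \ref{constr:60} need a tail in $\xi(H^{1})$), which is exactly what the stated count requires.

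Two small repairs are needed, neither fatal. First, when $\kappa(x)=\infty$ the recursion for $\zeta_{\xi}$ is not literally given by Definition \ref{real:10}, which defines $\zeta_{k}$ only on $X_{k}^{+}$; for points outside $\bigcup_{k} X_{k}^{+}$ you must invoke the extension identity of part 1 of Lemma \ref{real:35} (or run a continuity-and-density argument), so cite that lemma where you set up the decomposition in case 3. Second, your radius bookkeeping in case 3 is mis-indexed: the scalings compound as $2^{-3-(n_{i}-n_{i-1})}$, so after $j$ steps the discs have radius $2^{-3j-n_{j}}$, not $\prod_{i=1}^{j} 2^{-3-n_{i}}$; since $n_{j}\geq j$, your bound $\leq 2^{-4j}$ remains valid for the true radius, and the nested-disc separation argument goes through unchanged.
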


 \begin{proof}
If we apply the map of Theorem \ref{real:40} from $X_{\xi}^{+}$ to 
 $X_{G_{\xi}} \times \C$ and restrict it to 
 $\rho_{\xi}^{-1}\{ \tau_{\xi}(x) \}$, it 
 is a homeomorphism to its image, which is
  $ \{ \tau_{\xi}(x) \} \times \zeta_{\xi}( \tau_{\xi}^{-1}\{ \tau_{\xi}(x) \})$.
  Hence, it suffices for us to prove
   $  \zeta_{\xi}( \tau_{\xi}^{-1}\{ \tau_{\xi}(x) \})$ is as described, 
   in each case. 

For $x$ in $X_{G}^{+}$, define $I(x)$ to 
be the set of  positive integers such that $x_{n}$ is
 not in $\xi(H^{1})$ while $t(x_{n})$ is in $\xi^{0}(H^{0})$. 
For any $n$ in $I(x)$, we define $P(x,n)$ to be the set of all 
 paths $p = (p_{1}, \ldots, p_{n})$ in $G$ such that
  $\tau_{\xi}(p_{m}) = \tau_{\xi}(x_{m})$, for all $ 1 \leq m \leq n$, 
  which is obviously a finite set.
  For such a path  $p$, let $C(p,x)$ denote the set of $y$ in $X_{G}^{+}$ such that
  $y_{m} = p_{m}$, for $1 \leq m \leq n$ and 
  $\tau_{\xi}(y_{m}) = \tau_{\xi}(x_{m})$, for $m > n$.
  
   If $\kappa(x) =0$, then it is a simple matter
  to check that
  \[
  \zeta_{0}( \tau_{\xi}^{-1}( \{ \tau_{\xi}(x) \} ) = \T.
  \]
  If $0 < \kappa(x)< \infty$, then $I(x)$ is finite.
  Let $n$ be its maximum element. It is clear that
  $\tau_{\xi}^{-1} \{ \tau_{\xi}(x) \} = \cup_{p \in P(x,n)} C(p,x)$.
  Again, an easy computation shows that, for each $p$, 
   $\zeta_{\xi}(C(p)) $ is a circle
  (of radius $2^{-3-n} $) and several applications of 
  part 2 of Lemma \ref{real:35} proves 
  that they are pairwise disjoint, for different values of 
  $p$. This proves the first statement.
  
  In the second case, we have $\tau_{\xi}^{-1} \{ \tau_{\xi}(x) \}$ 
  is finite, so its image under $\zeta_{\xi}$ is also.
  
  For the third, it suffices to consider
  the case when $x_{n}$ is in $\xi(H^{1})$, for infinitely many $n$
  and not in $\xi(H^{1})$ for infinitely many $n$. In this case, 
  $I(x)$ is infinite.
  
  Suppose $\tau_{\xi}(y) = \tau_{\xi}(z) = \tau_{\xi}(x)$ and 
  $\zeta_{\xi}(y) \neq \zeta_{\xi}(z)$. This implies that $y \neq z$.
  It follows that we may find $n$ in $I(x)$ and $ 1 \leq m \leq n$ such 
  that $y_{m} \neq z_{m}$. As before, the collection $\zeta_{\xi}(C(p,x)), 
  p \in P(x,n)$, is a finite number of pairwise disjoint circles. Moreover, 
  from part 1 of Lemma \ref{real:35}, 
  $\zeta_{\xi}(\tau_{\xi}^{-1} \{ \tau_{\xi}(x) \})$
  is contained in these circles together with their interior discs.
  These form a partition of   $\zeta_{\xi}(\tau_{\xi}^{-1} \{ \tau_{\xi}(x) \})$
  into pairwise disjoint closed, and hence also open, sets. Moreover, from 
  our choice on $n$, $\zeta_{\xi}(y)$ and $ \zeta_{\xi}(z)$ lie in distinct
  elements. This completes the proof.  
 \end{proof}
 
 \begin{cor}
 \label{real:80}
Under the standing hypotheses, the connected subsets of $X_{\xi}^{+}$ are 
either points or circles and both occur.
 \end{cor}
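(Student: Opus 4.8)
The plan is to transport connectedness through the factor map $\rho_\xi$ onto a totally disconnected base, thereby confining every connected set to a single fibre, and then to read off the structure from the fibrewise description already established in Corollary~\ref{real:70}. I read the statement as a classification of the connected components, i.e.\ the maximal connected subsets, of $X_\xi^+$.

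First I would record two facts. The map $\rho_\xi\colon X_\xi^+\to X_{G_\xi}^+$ is continuous, indeed $1$-Lipschitz by part~1 of Theorem~\ref{constr:180}, and $X_{G_\xi}^+$ is totally disconnected, being the one-sided edge shift of a finite graph (a Cantor set under the standing hypotheses). Consequently, if $A\subseteq X_\xi^+$ is connected, then $\rho_\xi(A)$ is a connected subset of a totally disconnected space and hence a single point; equivalently, $A$ is contained in one fibre $\rho_\xi^{-1}\{\tau_\xi(x)\}$ for some $x\in X_G^+$.

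Next I would pin down the components. Let $C$ be a connected component of $X_\xi^+$. By the preceding step $C$ lies in a single fibre $F=\rho_\xi^{-1}\{\tau_\xi(x)\}$, hence in some connected component $D$ of $F$; since $D$ is connected in $X_\xi^+$ and contains the maximal connected set $C$, maximality forces $C=D$, so $C$ is exactly a connected component of $F$. Now Corollary~\ref{real:70} describes $F$ up to homeomorphism: when $\kappa(x)<\infty$ it is a finite disjoint union of circles, while when $\kappa(x)=\infty$ it is totally disconnected---a set of $2^m$ points if finitely many coordinates of $x$ lie in $\xi(H^1)$, and a totally disconnected set otherwise. In the former case $C$ is one of the circles; in the latter $C$ is a single point. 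This gives the dichotomy.

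Finally, to see that both alternatives are realized I would exhibit explicit $x$. Since $G^1-\xi(H^1)$ is primitive it carries an infinite path $x$, for which $\kappa(x)=\infty$ and no coordinate lies in $\xi(H^1)$; Corollary~\ref{real:70} then makes its fibre a single point, producing a point component. On the other hand $X_0^+\neq\emptyset$ (as used implicitly in Proposition~\ref{constr:50}), so choosing $x\in X_0^+$ gives $\kappa(x)=0$ and a fibre that is a genuine circle, producing a circle component. The substantive work having been carried out in Theorem~\ref{real:40} and Corollary~\ref{real:70}, the only delicate point that remains is the maximality argument identifying a component of $X_\xi^+$ with a component of a fibre; everything else is assembly.
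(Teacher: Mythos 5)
Your proposal is correct and follows essentially the same route as the paper: push a connected set through $\rho_{\xi}$ into the totally disconnected space $X_{G_{\xi}}^{+}$ to confine it to a single fibre, then invoke Corollary~\ref{real:70} for the fibre structure and the occurrence of both cases. Your added care in reading the statement as one about maximal connected sets (so that arcs inside the circle fibres are excluded) and in exhibiting explicit witnesses for both alternatives only makes explicit what the paper leaves implicit.
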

 
 \begin{proof}
 If $C$ is a connected subset of $X_{\xi}$ then $\rho_{\xi}(C)$ 
 is a connected subset of $X_{G_{\xi}}$ and hence is a single point, say $x$.
 So $C$ is a subset of $\rho_{\xi}^{-1}\{ x \}$ and the 
 conclusion follows
 from Corollary \ref{real:70}. The fact that both circles and single 
 points occur follows from the fact easy fact that both cases 1 and 2 occur
 in \ref{real:70}.
 \end{proof}

 We finish this section by looking at a couple of specific examples.
 The first is instructive even if it does not 
 satisfy hypothesis (H2).
 
 \begin{ex}
 \label{real:100}
 Suppose $G^{0} = H^{0}$ contains a single vertex, $H^{1}$ contains a single edge
 and $G^{1}$ contains exactly two edges. There is essentially only one choice for
 $\xi$.
 The space $X_{G}^{+}$ may be identified
  with $\{ 0, 1\}^{\N}$ in an obvious way. 
 Then $X_{\xi}$ is the unit circle and $\pi_{\xi}$ is binary expansion.
 
  Rather more generally (as noted in \cite{Ha:fac}), if $G^{0}=H^{0}$ and 
  $G^{1} = H^{1} \times \{ 0,1 \}$ with $\xi^{i}$ being the identity map
  on $H^{0}$ and  $\xi^{i}(x) = (x,i)$, for $x$ in $H^{1}$, then 
  $X_{\xi}^{+} $ is homeomorphic to $X_{H}^{+} \times \T$. 
  This can be seen from the first part of Theorem  
  \ref{constr:50} and the fact that
  $X_{k}^{+}$ is empty, for $k \geq 1$.
 \end{ex}

 \begin{ex}
 \label{real:110}
 Suppose $G^{0} = H^{0}$ contains a single vertex, $H^{1}$ contains a single edge
 and $G^{1}$ contains exactly three edges. There is essentially only one choice for
 $\xi$. The space $X_{G}^{+}$ may be identified with $\{ 0, 1, 2\}^{\N}$ in an obvious way.
 Consider the set $A= \{ x \in X_{1}^{+} \mid n(x) \leq 4 \}$.
 The following is a picture of $\zeta_{0}(X_{0}^{+}) \cup \zeta_{1}(A)$:
 
\vspace{1cm}

\hspace{2cm}
\begin{tikzpicture}
  \draw (4,4) circle (4);
  \draw (4,4) circle (1);
   \draw (6,4) circle (.5);
    \draw (2,4) circle (.5);
     \draw (7,4) circle (.25);
    \draw (1,4) circle (.25);
     \draw (4,7) circle (.25);
    \draw (4,1) circle (.25);
     \draw (7.5,4) circle (.12);
    \draw (.5,4) circle (.12);
     \draw (4,7.5) circle (.12);
    \draw (4,.5) circle (.12);
          \draw (6.47,6.475) circle (.12);
                \draw (1.52,6.47) circle (.12);
                      \draw (6.47,1.52) circle (.12);
                            \draw (1.52,1.52) circle (.12);
  \node[below] {Figure 1};                          
    \end{tikzpicture}
  \vspace{1cm}

Hopefully, the reader can see how to sketch all of $\zeta_{1}(X_{1}^{+})$. 
    To get an idea of $\zeta_{2}(X_{2}^{+})$, we suggest the reader verifies 
    the following easy result:
    \[
    \bigcup_{j=0}^{k} \zeta_{\xi}(X_{j}^{+}) = \bigcup 
    \hspace{.5cm} c + r \zeta_{\xi}(X_{0}^{+} \cup X_{1}^{+}), 
    \]
    where the union is over all $c$ in $\C$ and positive real numbers $r$ such that
    $c + r \T \subseteq   \cup_{j=0}^{k-1} \zeta_{\xi}(X_{j}^{+}) $. 
    That is, for every $k$, the space  
     $\cup_{j=0}^{k} \zeta_{\xi}(X_{k}^{+}) $ is a union of 
    circles. To get the next one,
     one replaces each circle, $c + r\T$ in the current one, 
     by $c + r  \zeta_{\xi}(X_{0}^{+} \cup X_{1}^{+}) $.
     
     Finally, $\zeta_{\xi}(X^{+}_{\xi})$ is the closure of the union
     of  $ \zeta_{\xi}(X_{k}^{+}) $, over all $k \geq 0$.
     
     We leave it as an exercise to check that, in this case, $\zeta_{\xi}$
     alone 
     is injective so $X_{\xi}^{+}$ can be embedded in the plane. 
     
     It appears that this set is an example of a fractal with 
     condensation set (see pages 91-94 of \cite{Ba:FE}). Probably a caution
     is in order: the embedding to the plane 
     we have given depends on several 
     parameters which were chosen rather arbitrarily. The underlying iterated function
     system would be affected by these choices and perhaps in a rather bad way.
 \end{ex}
 
 \begin{ques}
 \label{real:120}
 In which cases can  $X_{\xi}^{+}$ be embedded in the plane?
 \end{ques}



\end{document}